\documentclass[9pt]{amsart}
\textwidth=14.5cm \oddsidemargin=1cm
\evensidemargin=1cm
\usepackage{amsmath}
\usepackage{amsxtra}
\usepackage{amscd}
\usepackage{amsthm}
\usepackage{amsfonts}
\usepackage{amssymb}
\usepackage{eucal}
\usepackage[all]{xy}
\usepackage{graphicx}
\usepackage[usenames]{color}

\newtheorem{cor}[subsubsection]{Corollary}

\newtheorem{lem}[subsubsection]{Lemma}
\newtheorem{prop}[subsubsection]{Proposition}

\newtheorem{thm}[subsubsection]{Theorem}
\newtheorem{mainthm}[subsubsection]{Main Theorem}

\theoremstyle{remark}
\newtheorem{rem}[subsubsection]{Remark}
\newtheorem{example}[subsubsection]{Example}

  % to make the notation environment unnumbered

\theoremstyle{definition}

\theoremstyle{remark}

\newcommand{\thmref}[1]{Theorem~\ref{#1}}

\newcommand{\secref}[1]{Sect.~\ref{#1}}
\newcommand{\lemref}[1]{Lemma~\ref{#1}}
\newcommand{\propref}[1]{Proposition~\ref{#1}}
\newcommand{\corref}[1]{Corollary~\ref{#1}}

\newcommand{\exref}[1]{Example~\ref{#1}}

\numberwithin{equation}{section}

\newcommand{\nc}{\newcommand}
\nc{\renc}{\renewcommand}
\nc{\ssec}{\subsection}
\nc{\sssec}{\subsubsection}
\nc{\on}{\operatorname}

\nc{\ips}{{\iota_P^{(S)}}}
\nc{\ipms}{{\iota_{P^-}^{(S)}}}
\nc{\sfpps}{{\sfp_P^{(S)}}}
\nc{\sfppms}{{\sfp_{P^-}^{(S)}}}

\nc\ol{\overline}
\nc\ul{\underline}
\nc\wt{\widetilde}
\nc\tboxtimes{\wt{\boxtimes}}
\nc\tstar{\wt{\star}}
\nc{\alp}{\alpha}

\nc{\ZZ}{{\mathbb Z}}
\nc{\NN}{{\mathbb N}}
%\nc{\CC}{{\mathbb C}}
\nc{\OO}{{\mathbb O}}
\renc{\SS}{{\mathbb S}}
\nc{\DD}{{\mathbb D}}
\nc{\GG}{{\mathbb G}}

\nc{\Fq}{{\mathbb F}_q}
\nc{\Fqb}{\ol{\mathbb F}_q}
\nc{\Ql}{{\mathbb Q}_\ell}
\nc{\Qlb}{{\ol{\mathbb Q}_\ell}}
\nc{\id}{\text{id}}
\nc\X{\mathcal X}

\nc{\red}{\on{red}}
\nc{\Ho}{\on{Ho}}
\nc{\Hom}{\on{Hom}}
\nc{\coHom}{\ul{\on{coHom}}}
\nc{\coMaps}{{\bf{coMaps}}}
\nc{\coef}{\on{coef}}
\nc{\Lie}{\on{Lie}}
\nc{\Loc}{\on{Loc}}
\nc{\coLoc}{\on{coLoc}}
\nc{\Pic}{\on{Pic}}
\nc{\Bun}{\on{Bun}}
\nc{\IC}{\on{IC}}
\nc{\Aut}{\on{Aut}}
\nc{\rk}{\on{rk}}
\nc{\Sh}{\on{Sh}}
\nc{\Perv}{\on{Perv}}
\nc{\pos}{{\on{pos}}}
\nc{\Conv}{\on{Conv}}
\nc{\Sph}{\on{Sph}}
\nc{\Sym}{\on{Sym}}
%\nc{\dim}{\on{dim}}
%\nc{\deg}{\on{deg}}
\nc{\BunBb}{\overline{\Bun}_B}
\nc{\BunNb}{\overline{\Bun}_N}
\nc{\BunTb}{\overline{\Bun}_T}
\nc{\BunBbm}{\overline{\Bun}_{B^-}}
\nc{\BunBbel}{\overline{\Bun}_{B,el}}
\nc{\BunBbmel}{\overline{\Bun}_{B^-,el}}
\nc{\Buno}{\overset{o}{\Bun}}
\nc{\BunPb}{{\overline{\Bun}_P}}
\nc{\BunBM}{\Bun_{B(M)}}
\nc{\BunBMb}{\overline{\Bun}_{B(M)}}
\nc{\BunPbw}{{\widetilde{\Bun}_P}}
\nc{\BunBP}{\widetilde{\Bun}_{B,P}}
\nc{\GUb}{\overline{G/U}}
\nc{\GUPb}{\overline{G/U(P)}}

\nc{\Hhom}{\underline{\on{Hom}}}
\nc\syminfty{\on{Sym}^{\infty}}
\nc\lal{\ol{\lambda}}
\nc\xl{\ol{x}}
\nc\thl{\ol{\theta}}
\nc\nul{\ol{\nu}}
\nc\mul{\ol{\mu}}
\nc\Sum\Sigma
\nc{\oX}{\overset{o}{X}{}}
\nc{\hl}{\overset{\leftarrow}h{}}
\nc{\hr}{\overset{\rightarrow}h{}}
\nc{\M}{{\mathcal M}}
\nc{\N}{{\mathcal N}}
\nc{\F}{{\mathcal F}}
\nc{\D}{{\mathcal D}}
\nc{\Q}{{\mathcal Q}}
\nc{\Y}{{\mathcal Y}}
\nc{\G}{{\mathcal G}}
\nc{\E}{{\mathcal E}}
\nc{\CalC}{{\mathcal C}}
%\renc{\Sb}{\overline{S}}
\nc\Dh{\widehat{\D}}

\nc{\C}{{\mathcal C}}
\nc{\K}{{\mathcal K}}
\renewcommand{\H}{{\mathcal H}}

\nc{\T}{{\mathcal T}}
\nc{\V}{{\mathcal V}}
\renc{\P}{{\mathcal P}}
\nc{\A}{{\mathcal A}}
\nc{\B}{{\mathcal B}}
\nc{\U}{{\mathcal U}}

\nc{\Gr}{{\on{Gr}}}

%\nc{\fA}{{\mathfrak A}}
%\nc{\fP}{{\mathfrak P}}
\nc{\frn}{{\check{\mathfrak u}(P)}}

\nc{\fC}{\mathfrak C}
\nc{\fT}{\mathfrak T}
\nc{\p}{\mathfrak p}
\nc{\q}{\mathfrak q}
\nc\f{{\mathfrak f}}

\nc{\qo}{{\mathfrak q}}
\nc{\po}{{\mathfrak p}}
\nc{\s}{{\mathfrak s}}
\nc\w{\text{w}}

\nc\mathi\iota
\nc\Spec{\on{Spec}}
\nc\Proj{\on{Proj}}
\nc\Mod{\on{Mod}}
\nc{\tw}{\widetilde{\mathfrak t}}
\nc{\pw}{\widetilde{\mathfrak p}}
\nc{\qw}{\widetilde{\mathfrak q}}
\nc{\jw}{\widetilde j}

\nc{\grb}{\overline{\Gr}}
\nc{\I}{\mathcal I}

\nc{\lambdach}{{\check\lambda}}
\nc{\Lambdach}{{\check\Lambda}{}}
\nc{\much}{{\check\mu}}
\nc{\omegach}{{\check\omega}}
\nc{\nuch}{{\check\nu}}
\nc{\etach}{{\check\eta}}
\nc{\alphach}{{\check\alpha}}
\nc{\oblvtach}{{\check\oblvta}}
\nc{\rhoch}{{\check\rho}}
\nc{\ch}{{\check h}}

\nc{\Hb}{\overline{\H}}

%%%%%%%%%%%%%%%%%%%%%%%%%%
%                        %
%Finkelberg's newcommands%
%                        %
%%%%%%%%%%%%%%%%%%%%%%%%%%

\emergencystretch=2cm

\nc{\BA}{{\mathbb{A}}}
\nc{\BC}{{\mathbb{C}}}
\nc{\BE}{{\mathbb{E}}}
\nc{\BF}{{\mathbb{F}}}
\nc{\BG}{{\mathbb{G}}}
\nc{\BL}{{\mathbb{L}}}
\nc{\BM}{{\mathbb{M}}}
\nc{\BO}{{\mathbb{O}}}
\nc{\BD}{{\mathbb{D}}}
\nc{\BN}{{\mathbb{N}}}
\nc{\BP}{{\mathbb{P}}}
\nc{\BQ}{{\mathbb{Q}}}
\nc{\BR}{{\mathbb{R}}}
\nc{\BZ}{{\mathbb{Z}}}
\nc{\BS}{{\mathbb{S}}}
\nc{\Deep}{{\bf{deep}}}
\nc{\deep}{deep}

\nc{\CA}{{\mathcal{A}}}
\nc{\CB}{{\mathcal{B}}}

\nc{\CE}{{\mathcal{E}}}
\nc{\CF}{{\mathcal{F}}}
\nc{\CH}{{\mathcal{H}}}

\nc{\CL}{{\mathcal{L}}}
\nc{\CC}{{\mathcal{C}}}
\nc{\CG}{{\mathcal{G}}}
\nc{\CalD}{{\mathcal{D}}}
\nc{\CM}{{\mathcal{M}}}
\nc{\CN}{{\mathcal{N}}}
\nc{\CK}{{\mathcal{K}}}
\nc{\CO}{{\mathcal{O}}}
\nc{\CP}{{\mathcal{P}}}
\nc{\CQ}{{\mathcal{Q}}}
\nc{\CR}{{\mathcal{R}}}
\nc{\CS}{{\mathcal{S}}}
\nc{\CT}{{\mathcal{T}}}
\nc{\CU}{{\mathcal{U}}}
\nc{\CV}{{\mathcal{V}}}
\nc{\CW}{{\mathcal{W}}}
\nc{\CX}{{\mathcal{X}}}
\nc{\CY}{{\mathcal{Y}}}
\nc{\CZ}{{\mathcal{Z}}}
\nc{\CI}{{\mathcal{I}}}

%\nc{\cM}{{\check{\mathcal M}}{}}
\nc{\csM}{{\check{\mathcal A}}{}}
\nc{\oM}{{\overset{\circ}{\mathcal M}}{}}
\nc{\obM}{{\overset{\circ}{\mathbf M}}{}}
\nc{\oCA}{{\overset{\circ}{\mathcal A}}{}}
\nc{\obA}{{\overset{\circ}{\mathbf A}}{}}
\nc{\ooM}{{\overset{\circ}{M}}{}}
\nc{\osM}{{\overset{\circ}{\mathsf M}}{}}
\nc{\vM}{{\overset{\bullet}{\mathcal M}}{}}
\nc{\nM}{{\underset{\bullet}{\mathcal M}}{}}
\nc{\oD}{{\overset{\circ}{\mathcal D}}{}}
\nc{\obD}{{\overset{\circ}{\mathbf D}}{}}
\nc{\oA}{{\overset{\circ}{A}}{}}
\nc{\op}{{\overset{\bullet}{\mathbf p}}{}}
\nc{\cp}{{\overset{\circ}{\mathbf p}}{}}
\nc{\oU}{{\overset{\bullet}{\mathcal U}}{}}
\nc{\oZ}{{\overset{\circ}{\mathcal Z}}{}}
\nc{\ofZ}{{\overset{\circ}{\mathfrak Z}}{}}
\nc{\oF}{{\overset{\circ}{\fF}}}

\nc{\fa}{{\mathfrak{a}}}
\nc{\ofa}{\overset{\circ}{\mathfrak{a}}}
\nc{\fb}{{\mathfrak{b}}}
\nc{\fd}{{\mathfrak{d}}}
\nc{\ff}{{\mathfrak{f}}}
\nc{\fg}{{\mathfrak{g}}}
\nc{\fgl}{{\mathfrak{gl}}}
\nc{\fh}{{\mathfrak{h}}}
\nc{\fj}{{\mathfrak{j}}}
\nc{\fl}{{\mathfrak{l}}}
\nc{\fm}{{\mathfrak{m}}}
\nc{\ofm}{\overset{\circ}{\mathfrak{m}}}
\nc{\fn}{{\mathfrak{n}}}
\nc{\fu}{{\mathfrak{u}}}
\nc{\fp}{{\mathfrak{p}}}
\nc{\fr}{{\mathfrak{r}}}
\nc{\fs}{{\mathfrak{s}}}
\nc{\ft}{{\mathfrak{t}}}
\nc{\oft}{\overset{\circ}{\mathfrak{t}}}
\nc{\fz}{{\mathfrak{z}}}
\nc{\fsl}{{\mathfrak{sl}}}
\nc{\hsl}{{\widehat{\mathfrak{sl}}}}
\nc{\hgl}{{\widehat{\mathfrak{gl}}}}
\nc{\hg}{{\widehat{\mathfrak{g}}}}
\nc{\chg}{{\widehat{\mathfrak{g}}}{}^\vee}
\nc{\hn}{{\widehat{\mathfrak{n}}}}
\nc{\chn}{{\widehat{\mathfrak{n}}}{}^\vee}

\nc{\fA}{{\mathfrak{A}}}
\nc{\fB}{{\mathfrak{B}}}
\nc{\fD}{{\mathfrak{D}}}
\nc{\fE}{{\mathfrak{E}}}
\nc{\fF}{{\mathfrak{F}}}
\nc{\fG}{{\mathfrak{G}}}
\nc{\fK}{{\mathfrak{K}}}
\nc{\fL}{{\mathfrak{L}}}
\nc{\fM}{{\mathfrak{M}}}
\nc{\fN}{{\mathfrak{N}}}
\nc{\fP}{{\mathfrak{P}}}
\nc{\fU}{{\mathfrak{U}}}
\nc{\fV}{{\mathfrak{V}}}
\nc{\fZ}{{\mathfrak{Z}}}

\nc{\ba}{{\mathbf{a}}}
\nc{\bb}{{\mathbf{b}}}
\nc{\bc}{{\mathbf{c}}}
\nc{\bd}{{\mathbf{d}}}
\nc{\bbf}{{\mathbf{f}}}
\nc{\be}{{\mathbf{e}}}
\nc{\bi}{{\mathbf{i}}}
\nc{\bj}{{\mathbf{j}}}
\nc{\bh}{{\mathbf{h}}}
\nc{\bm}{{\mathbf{m}}}
\nc{\bn}{{\mathbf{n}}}
\nc{\bo}{{\mathbf{o}}}
\nc{\bp}{{\mathbf{p}}}
\nc{\bq}{{\mathbf{q}}}
\nc{\bu}{{\mathbf{u}}}
\nc{\bv}{{\mathbf{v}}}
\nc{\bx}{{\mathbf{x}}}
\nc{\bs}{{\mathbf{s}}}
\nc{\by}{{\mathbf{y}}}
\nc{\bw}{{\mathbf{w}}}
\nc{\bA}{{\mathbf{A}}}
\nc{\bK}{{\mathbf{K}}}
\nc{\bB}{{\mathbf{B}}}
\nc{\bC}{{\mathbf{C}}}
\nc{\bG}{{\mathbf{G}}}
\nc{\bD}{{\mathbf{D}}}
\nc{\bE}{{\mathbf{E}}}
\nc{\bH}{{{\mathbf{H}}}}
\nc{\bM}{{\mathbf{M}}}
\nc{\bN}{{\mathbf{N}}}
\nc{\bO}{{\mathbf{O}}}
\nc{\bQ}{{\mathbf{Q}}}
\nc{\bV}{{\mathbf{V}}}
\nc{\bW}{{\mathbf{W}}}
\nc{\bX}{{\mathbf{X}}}
\nc{\bZ}{{\mathbf{Z}}}
\nc{\bS}{{\mathbf{S}}}

\nc{\sA}{{\mathsf{A}}}
\nc{\sB}{{\mathsf{B}}}
\nc{\sC}{{\mathsf{C}}}
\nc{\sD}{{\mathsf{D}}}
\nc{\sF}{{\mathsf{F}}}
\nc{\sG}{{\mathsf{G}}}
\nc{\sH}{{\mathsf{H}}}
\nc{\sK}{{\mathsf{K}}}
\nc{\sM}{{\mathsf{M}}}
\nc{\sN}{{\mathsf{N}}}
\nc{\sO}{{\mathsf{O}}}
\nc{\sW}{{\mathsf{W}}}
\nc{\sQ}{{\mathsf{Q}}}
\nc{\sP}{{\mathsf{P}}}
\nc{\sR}{{\mathsf{R}}}
\nc{\sT}{{\mathsf{T}}}
\nc{\sZ}{{\mathsf{Z}}}
\nc{\sfp}{{\mathsf{p}}}
\nc{\sfq}{{\mathsf{q}}}
\nc{\sft}{{\mathsf{t}}}
\nc{\sr}{{\mathsf{r}}}
\nc{\bk}{{\mathsf{k}}}
\nc{\sa}{{\mathsf{s}}}
\nc{\sg}{{\mathsf{g}}}
\nc{\sn}{{\mathsf{n}}}
\nc{\sh}{{\mathsf{h}}}
\nc{\sff}{{\mathsf{f}}}
\nc{\sfb}{{\mathsf{b}}}
\nc{\sfc}{{\mathsf{c}}}
\nc{\sfe}{{\mathsf{e}}}
\nc{\sd}{{\mathsf{d}}}

\nc{\BK}{{\bar{K}}}

\nc{\tA}{{\widetilde{\mathbf{A}}}}
\nc{\tB}{{\widetilde{\mathcal{B}}}}
\nc{\tg}{{\widetilde{\mathfrak{g}}}}
\nc{\tG}{{\widetilde{G}}}
\nc{\TM}{{\widetilde{\mathbb{M}}}{}}
\nc{\tO}{{\widetilde{\mathsf{O}}}{}}
\nc{\tU}{{\widetilde{\mathfrak{U}}}{}}
\nc{\TZ}{{\tilde{Z}}}
\nc{\tx}{{\tilde{x}}}
\nc{\tbv}{{\tilde{\bv}}}
\nc{\tfP}{{\widetilde{\mathfrak{P}}}{}}
\nc{\tz}{{\tilde{\zeta}}}
\nc{\tmu}{{\tilde{\mu}}}

\nc{\urho}{\underline{\rho}}
\nc{\uB}{\underline{B}}
\nc{\uC}{{\underline{\mathbb{C}}}}
\nc{\ui}{\underline{i}}
\nc{\uj}{\underline{j}}
\nc{\ofP}{{\overline{\mathfrak{P}}}}
\nc{\oB}{{\overline{\mathcal{B}}}}
\nc{\og}{{\overline{\mathfrak{g}}}}
\nc{\oI}{{\overline{I}}}

\nc{\eps}{\varepsilon}
\nc{\hrho}{{\hat{\rho}}}

\nc{\one}{{\mathbf{1}}}
\nc{\two}{{\mathbf{t}}}

\nc{\Rep}{{\mathop{\operatorname{\rm Rep}}}}
%\nc{\Sym}{{\mathop{\operatorname{\rm Sym}}}}
\nc{\Tot}{{\mathop{\operatorname{\rm Tot}}}}
%\nc{\Spec}{{\mathop{\operatorname{\rm Spec}}}}
\nc{\Ker}{{\mathop{\operatorname{\rm Ker}}}}
\nc{\im}{{\mathop{\operatorname{\rm Im}}}}
\nc{\Hilb}{{\mathop{\operatorname{\rm Hilb}}}}
%\nc{\Mod}{{{\mathcal M}od}}
\nc{\End}{{\mathop{\operatorname{\rm End}}}}
\nc{\Ext}{{\mathop{\operatorname{\rm Ext}}}}
%\nc{\Hom}{{\mathop{\operatorname{\rm Hom}}}}
\nc{\CHom}{{\mathop{\operatorname{{\mathcal{H}}\it om}}}}
\nc{\CEnd}{{\mathop{\operatorname{{\mathcal{E}}\it nd}}}}
\nc{\GL}{{\mathop{\operatorname{\rm GL}}}}
\nc{\gr}{{\mathop{\operatorname{\rm gr}}}}
\nc{\HN}{{\mathop{\operatorname{\rm HN}}}}
\nc{\Id}{{\mathop{\operatorname{\rm Id}}}}
%\nc{\rk}{{\mathop{\operatorname{\rm r}}}}
\nc{\de}{{\mathop{\operatorname{\rm def}}}}
\nc{\length}{{\mathop{\operatorname{\rm length}}}}
\nc{\supp}{{\mathop{\operatorname{\rm supp}}}}

%\nc{\Bun}{{\mathsf{Bun}}}
\nc{\Cliff}{{\mathsf{Cliff}}}
%\nc{\Gr}{{\mathsf{Gr}}}
\nc{\Fl}{\on{Fl}}
\nc{\Fib}{{\mathsf{Fib}}}
\nc{\Coh}{{\on{Coh}}}
\nc{\QCoh}{{\on{QCoh}}}
\nc{\IndCoh}{{\on{IndCoh}}}
\nc{\FCoh}{{\mathsf{FCoh}}}

\nc{\reg}{{\text{\rm reg}}}

\nc{\cplus}{{\mathbf{C}_+}}
\nc{\cminus}{{\mathbf{C}_-}}
\nc{\cthree}{{\mathbf{C}_\bullet}}
\nc{\Qbar}{{\bar{Q}}}
\nc\Eis{\on{Eis}}
\nc\Eisb{\ol\Eis{}}
\nc\Eisr{\on{Eis}^{rat}{}}
\nc\wh{\widehat}
\nc{\Def}{\on{Def_{\check{\fb}}(E)}}
\nc{\barZ}{\overline{Z}{}}
\nc{\barbarZ}{\overline{\barZ}{}}
\nc{\barpi}{\overline\pi}
\nc{\barbarpi}{\overline\barpi}
\nc{\barpip}{\overline\pi{}^+}
\nc{\barpim}{\overline\pi{}^-}

\nc{\fq}{\mathfrak q}

\nc{\fqb}{\ol{\sfq}{}}
\nc{\fpb}{\ol{\sfp}{}}
\nc{\fpr}{{\mathsf{pair}^{rat}}{}}
\nc{\fqr}{{\sfq^{rat}}{}}

\nc{\hattimes}{\wh\otimes}

%\nc{\bh}{{\bar{h}}}
\nc{\bOmega}{{\overline{\Omega(\check \fn)}}}

\nc{\seq}[1]{\stackrel{#1}{\sim}}

%
%
%End of Finkelberg's newcommands
%
%

\nc{\cT}{{\check{T}}}
\nc{\cG}{{\check{G}}}
\nc{\cM}{{\check{M}}}
\nc{\cB}{{\check{B}}}

\nc{\ct}{{\check{\mathfrak t}}}
\nc{\cg}{{\check{\fg}}}
\nc{\cb}{{\check{\fb}}}
\nc{\cn}{{\check{\fn}}}

\nc{\cLambda}{{\check\Lambda}}

\nc{\cla}{{\check\lambda}}
\nc{\cmu}{{\check\mu}}
\nc{\cnu}{{\check\nu}}
\nc{\ceta}{{\check\eta}}

\nc{\DefbE}{{\on{Def}_{\cB}(E_\cT)}}

\nc{\imathb}{{\ol{\imath}}}
\nc{\rlr}{\overset{\longrightarrow}{\underset{\longrightarrow}\longleftarrow}}

\nc{\oBun}{\overset{\circ}\Bun}
\nc{\LocSys}{\on{LocSys}}
\nc{\BunBbb}{\ol{\ol{Bun}}_B}
\nc{\BunBr}{\Bun_B^{rat}}
\nc{\BunBrsg}{\Bun_B^{rat,\on{s.g.}}}
\nc{\BunBrp}{\Bun_B^{rat,polar}}
\nc{\BunBrpbg}{\Bun_B^{rat,polar,\on{b.g.}}}
\nc{\BunBrpsg}{\Bun_B^{rat,polar,\on{s.g.}}}
\nc{\BunTrp}{\Bun_T^{rat,polar}}
\nc{\BunTrpbg}{\Bun_T^{rat,polar,\on{b.g.}}}
\nc{\BunTrpsg}{\Bun_T^{rat,polar,\on{s.g.}}}
\nc{\BunNr}{\Bun_N^{rat}}
\nc{\BunNre}{\Bun_N^{enh,rat}}
\nc{\BunTr}{\Bun_T^{rat}}
\nc{\Vect}{\on{Vect}}
\nc{\Whit}{\on{Whit}}
%\nc{\CT}{\on{CT}}
\nc{\CTb}{\ol{\on{CT}}}
\nc{\Ran}{{\on{Ran}}}
\nc{\CTr}{\on{CT}^{rat}{}}
\nc\jmathr{\jmath^{rat}{}}
\nc{\ux}{\underline{x}}
\nc{\clambda}{{\check\lambda}}
\nc{\calpha}{{\check\alpha}}
\nc{\ind}{{\mathbf{ind}}}
\nc{\coinv}{{\mathbf{coinv}}}
\nc{\oblv}{{\mathbf{oblv}}}
\nc{\free}{{\mathbf{free}}}
\nc{\ox}{{\overline{x}}}
\nc{\cLa}{\check{\Lambda}}
\nc{\StinftyCat}{\on{DGCat}}
\nc{\inftyCat}{\infty\on{-Cat}}
\nc{\inftygroup}{\infty\on{-Grpd}}
\nc{\Dmod}{\on{D-mod}}
\nc{\CMaps}{{\mathcal Maps}}
\nc{\Maps}{\on{Maps}}
\nc{\affSch}{\on{Sch}^{\on{aff}}}
\nc{\dr}{{\on{dR}}}
\nc{\oCF}{\overset{\circ}\CF}
\nc{\oCY}{\overset{\circ}\CY}
\nc{\opi}{\overset{\circ}\pi}
\nc{\leqG}{\underset{G}\leq}
\nc{\leqM}{\underset{M}\leq}
\nc{\leqGad}{\underset{G_{ad}}\leq}
\nc{\leqMad}{\underset{M_{ad}}\leq}
\nc{\Tr}{\on{Tr}}
\nc{\Frob}{{\on{Frob}}}
\nc{\DGCat}{\on{DGCat}}
\nc{\tDGCat}{2\on{-DGCat}_{\on{u.g.}}}
\nc{\ev}{\on{ev}}
\nc{\mmod}{\on{-}\mathbf{mod}}
\nc{\sotimes}{\overset{!}\otimes}
\nc{\Shv}{\on{Shv}}
\nc{\Spc}{\on{Spc}}
\nc{\LS}{\Lisse}
\nc{\Res}{\on{Res}}
\nc{\bDelta}{{\mathbf{\Delta}}}
\nc{\bMaps}{{\mathbf{Maps}}}
\nc{\cD}{\mathcal D}
\nc{\ocD}{\overset{\circ}\cD}
\nc{\ppart}{(\!(t)\!)}
\nc{\qqart}{[\![t]\!]}
\nc{\oCU}{\overset{\circ}{\CU}}
\nc{\Exc}{{\mathcal{E}xc}}
\nc{\Sht}{\on{Sht}}
\nc{\Nilp}{{\on{Nilp}}}
\nc{\Drinf}{\on{Drinf}}
\nc{\Sing}{\on{Sing}}
\nc{\IndLisse}{\Lisse}
\nc{\Shvl}{\on{Shv}_{\on{lisse}}} 
\nc{\Lisse}{\on{Lisse}}
\nc{\Mir}{\on{Mir}}
\nc{\fSet}{\on{fSet}}
\nc{\qLisse}{\on{QLisse}}
\nc{\Ev}{\on{Ev}}
\nc{\Sat}{\on{Sat}}
\nc{\Se}{\on{Se}}
\nc{\coSht}{\on{co-Sht}}
\nc{\coCK}{\on{co-}\!\CK}

\begin{document}

\title{Automorphic functions as the trace of Frobenius}

%\title[The trace of Frobenius on the category of 
%automorphic sheaves]{The trace of Frobenius on the category of 
%automorphic sheaves with nilpotent singular support}

\author{D.~Arinkin, D.~Gaitsgory, D.~Kazhdan, 
S.~Raskin, N.~Rozenblyum, Y.~Varshavsky}

\date{\today}

\begin{abstract}
We prove that the trace of the Frobenius endofunctor of 
the category of automorphic sheaves with nilpotent singular support is isomorphic
to the space of unramified automorphic functions, settling a conjecture
from \cite{AGKRRV1}. More generally, we show that traces of 
Frobenius-Hecke functors produce shtuka cohomologies.
\end{abstract} 

%\begin{abstract}
%In this paper we prove the conjecture from \cite{AGKRRV1} that says that
%the trace of the Frobenius endofunctor on the category $\Shv_\Nilp(\Bun_G)$
%of automorphic sheaves with nilpotent singular support maps isomorphically 
%to the space of (unramified) automorphic functions.
%\end{abstract} 

\maketitle

\tableofcontents

\section*{Introduction}

\ssec{Overview} 

This work is part of a series, following \cite{AGKRRV1} and \cite{AGKRRV2},
attempting to understand the (unramified, function field) 
arithmetic Langlands conjectures via geometric and categorical techniques. 
We begin with an overview of the problems considered here.

\sssec{}

Some of the most striking applications of geometric representation theory pass through
the sheaves-functions dictionary of Grothendieck--Deligne. 

\medskip 

Recall the setting: one has an algebraic stack $\CY$ over $\ol\BF_q$, assumed to be defined 
over $\BF_q$, and an $\ell$-adic Weil sheaf $\CF$ on $\CY$. For a rational point 
$y \in \CY_0(\BF_q)$, one takes the trace of Frobenius on the stalk 
$y^*(\CF)$ to obtain an element of $\ol\BQ_\ell$; this defines a function
$\on{funct}(\CF):\CY_0(\BF_q) \to \ol\BQ_\ell$.

\medskip 

One finds that many functions of interest in harmonic analysis over
finite fields arise by this procedure, and that the perspective offered by sheaf
theory provides deep insights into function theory.

\medskip 

For example, this is the case in the theory of automorphic functions 
(for function fields), whose realizations via $\ell$-adic sheaves exhibit
explicit constructions of Langlands's conjectures.

\sssec{} 

In this paper, we establish a categorical analogue of 
the sheaves-functions dictionary: instead of passing from 
sheaves to functions, we pass from categories (of sheaves) 
to vector spaces (of functions). 

\medskip 

As in the previous setting, we decategorify using trace of Frobenius.
However, whereas before we considered the trace of a Frobenius endomorphism
of a vector space (and thus produced a scalar), we now consider the trace 
of a Frobenius endofunctor of a category (and thus produce a vector space). 

\medskip 

Unlike the usual Grothendieck--Deligne paradigm, where one may take a general 
algebraic stack $\CY$, our results are specialized to spherical automorphic
functions. The results of this paper establish conjectures from \cite{AGKRRV1}, 
which specify a relation between the category of automorphic sheaves and the
vector space of automorphic functions via the trace of Frobenius. 

\sssec{}

We give a precise formulation of our main results below. 
However, by way of motivation, we highlight the following application, 
which illustrates how sheaf-theoretic considerations provide insights
into the classical theory of automorphic functions. 

\medskip 

In \cite{AGKRRV1}, we introduced a version of the geometric Langlands
conjecture suitable for $\ell$-adic sheaves. From this conjecture, combined
with the \emph{Trace Conjecture}, proved in this paper, 
we deduced that the space of compactly supported spherical automorphic functions
(denoted $\on{Funct}_c(\Bun_G(\BF_q))$ in the body of this text) 
can be described as
\begin{equation} \label{e:function conj intro}
\on{Funct}_c(\Bun_G(\BF_q))\simeq \Gamma(\LocSys^{\on{arthm}}_\cG(X),\omega_{\LocSys^{\on{arthm}}_\cG(X)}).
\end{equation}

In the above formula, $X$ is the smooth projective curve corresponding to our choice of 
function field and 
$\LocSys^{\on{arthm}}_\cG(X)$ is the algebraic stack over 
$\ol{\BQ}_\ell$, defined in \cite{AGKRRV1},
parametrizing (unramified) Langlands parameters.\footnote{An alternative construction 
of $\LocSys^{\on{arthm}}_\cG(X)$ due to P.~Scholze and X.~Zhu, may be 
found in \cite{Zhu}. Their construction proceeds along very different lines,
but conjecturally produces an equivalent object.} 

\medskip 

In the everywhere unramified function field setting, this conjecture provides an interesting 
alternative to Langlands's original perspective: for general reductive $G$, 
the above conjecture yields a compete description of the space of automorphic functions in terms
of Galois data, whereas classical Langlands conjectures only concern $L$-packets.

\begin{rem}

The work \cite{VLaf} of V.~Lafforgue and its extension \cite{Xue1} by C.~Xue
provide a decomposition of the space of (not necessarily unramified!) 
automorphic functions in terms of Langlands parameters. It should come as no 
surprise that our results are closely related to their work. 

\medskip 

First, our work is also based on considering cohomologies of shtukas. 

\medskip 

And second, our constructions show that $\on{Funct}_c(\Bun_G(\BF_q))$
arises as global sections of \emph{some} quasi-coherent sheaf $\Drinf^{\on{arithm}}$ on 
$\LocSys^{\on{arthm}}_\cG(X)$. (The conjecture expressed
by formula \eqref{e:function conj intro} says that $\Drinf^{\on{arithm}}$ is the dualizing sheaf of $\LocSys^{\on{arthm}}_\cG(X)$.)
The existence of $\Drinf^{\on{arithm}}$ recovers the spectral decomposition of
$\on{Funct}_c(\Bun_G(\BF_q))$ along the set of classical Langlands parameters,
see Remark \ref{r:Vinc decomp}. 

\medskip 

We refer to \cite[Sect. 24]{AGKRRV1} for further discussion of the relation to V.~Lafforgue's work.

\end{rem}

\begin{rem}

The principle that geometric methods enrich Langlands's conjectures is an old one,
dating (at least) to \cite{De} and \cite{Dr}. But precise refinements of 
Langlands's conjectures using geometric ideas have emerged recently. 
The present paper provides one example. 
Similarly, the Fargues--Scholze geometrization program (see \cite{FS}) aims
to produce a more robust form of the local Langlands conjectures. 

\end{rem}

\ssec{Formulation of the main result}

We now proceed to the statement of our main result.

\sssec{Notation} \label{sss:From intro}

Throughout the paper, we use algebraic geometry over the two fields 
$k := \ol{\BF}_q$ and $\sfe := \ol{\BQ}_\ell$ (where $\ell \in \BF_q^{\times}$).

\medskip 

When we work over $\ol{\BF}_q$, we generally work with algebraic stacks
$\CY$ that are assumed to be defined over $\BF_q$; we abuse notation somewhat
in letting $\CY(\BF_q)$ denote the groupoid of $\BF_q$-points of the corresponding stack.
We let $\Frob_{\CY}:\CY \to \CY$ denote the geometric ($q$-)Frobenius morphism,
whose stack of fixed-points $\CY^{\Frob}$ is a \emph{discrete} stack, and identifies with 
(the \'etale sheafification of) the groupoid $\CY(\BF_q)$.

\medskip 

Let $X$ be a smooth projective curve 
over $\ol\BF_q$, but assumed defined over $\BF_q$. 
Let $G/\ol\BF_q$ be a reductive group, considered over $\BF_q$ via its split form. 
Let $\Bun_G$ denote the moduli stack of principal $G$-bundles on $X$.

\medskip 

We refer to \secref{ss:notation} for further details on our conventions.

\sssec{Categorical trace}

Throughout this paper, all DG categories are enriched
over the field $\sfe$. In particular, $\Vect = \Vect_\sfe$
(the DG category of chain complexes of $\sfe$-vector spaces), and so on.

\medskip 

We remind that the category $\DGCat$ of presentable DG categories 
is equipped with a canonical
symmetric monoidal structure $\otimes$, the \emph{Lurie tensor product}.
On general grounds, this means one may speak about categorical duals and traces
as follows.

\medskip 

If $\bC \in \DGCat$ is dualizable, there is another DG category 
$\bC^{\vee}$ equipped with canonical unit and counit maps
$$\on{u}_\bC:\Vect \to \bC \otimes \bC^{\vee}$$
and
$$\on{ev}_\bC:\bC \otimes \bC^{\vee} \to \Vect.$$
For an endofunctor $\Phi:\bC \to \bC$ of $\bC$, we have
$\on{Tr}(\Phi,\bC) \in \Vect$ defined as 
$$\on{Tr}(\Phi,\bC) := \on{ev}_\bC\big((\Phi\otimes \on{Id})(\on{u}_\bC)\big).$$

We refer to \cite{GKRV} for further discussion. 

\sssec{Categories of sheaves}

We consider the category $\Shv(\Bun_G)$ of \emph{automorphic sheaves}. 
Precisely, $\Shv(\Bun_G)$ is the category of ind-constructible 
$\ol{\BQ}_\ell$-sheaves on $\Bun_G$.
As in \cite{AGKRRV1}, we also consider its full DG subcategory
\begin{equation} \label{e:Nilp intro}
\Shv_\Nilp(\Bun_G)\subset \Shv(\Bun_G)
\end{equation} 
of objects with singular support in the global nilpotent cone.

\medskip 

The categories $\Shv(\Bun_G)$ and $\Shv_\Nilp(\Bun_G)$ have favorable
finiteness properties: they are 
compactly generated and therefore dualizable in $\DGCat$.
Moreover, if we assume \cite[Conjecture 14.1.8]{AGKRRV1}, 
the embedding $\Shv_\Nilp(\Bun_G)\to \Shv(\Bun_G)$ preserves
compactness.

\sssec{}

Pushforward with respect to the geometric Frobenius endomorphism (see \secref{sss:Frob})
defines an auto-equivalence 
$(\Frob_{\Bun_G})_*$ of $\Shv(\Bun_G)$, which preserves the subcategory $\Shv_\Nilp(\Bun_G)$.

\medskip

Hence, it makes sense to consider the categorical trace
$$\Tr((\Frob_{\Bun_G})_*,\Shv_\Nilp(\Bun_G)) \in \Vect.$$ 
%As we describe below, the goal of this paper is to show that
%this vector space maps isomorphically onto the space automorphic functions via the sheaves-functions 
%dictionary.

\sssec{}

At this point we can state one of three main results of this paper (it appears as \corref{c:trace} in the main body of the paper):

\begin{mainthm} \label{t:Tr conj intro}
There exists \emph{a} canonical isomorphism 
$$\Tr((\Frob_{\Bun_G})_*,\Shv_\Nilp(\Bun_G)) \simeq \on{Funct}_c(\Bun_G(\BF_q)).$$
\end{mainthm}

This statement appears in \cite{AGKRRV1} as Conjecture 22.3.7(a).

\ssec{Insertion of Hecke functors}

So far, we have not mentioned Langlands duality. Remarkably, 
it plays an essential role in the proof of \thmref{t:Tr conj intro}. 

\medskip 

In fact, our proof of \thmref{t:Tr conj intro} 
constructs isomorphisms between two families 
of objects, which are, roughly speaking, indexed by Hecke functors. 
As we explain below, this amounts to a proof of \cite[Conjecture 22.5.7]{AGKRRV1},
referred to as the \emph{Shtuka Conjecture} of \emph{loc. cit.}
%\footnote{We draw the reader's
%attention to our conventions regarding the cohomological shift in the Shtuka Conjecture,
%see Remarks \ref{r:shift} and \ref{r:! vs *}. In particular, in \cite[Conjecture 22.5.7]{AGKRRV1},
%the embedding $\qLisse(X^I)\hookrightarrow \Shv(X^I)$ must be understood in the sense of 
%\eqref{e:* emb} of the present paper.}  

\medskip 

Moreover, our method of proof relies in an essential way on consideration
of \emph{all} Hecke functors simultaneously. 

\medskip 

We presently explain what these objects are, and what our results about them assert.

\sssec{}

Let $\cG$ denote the Langlands dual group of $G$ 
considered as an algebraic group over the coefficient field
$\sfe:=\ol\BQ_\ell$.
We let $\Rep(\cG)$ denote the (symmetric monoidal) DG category
of representations of $\cG$. 

\sssec{}

First, recall the formalism of Hecke functors in geometric Langlands.
The key feature is that they may be indexed by tuples of moving, possibly colliding
points.

\medskip 

Precisely, given 
a finite set $I$ and an object $V\in \Rep(\cG)^{\otimes I}$, we can consider the
Hecke functor
\begin{equation}\label{eq:hecke intro}
\sH(V,-):\Shv(\Bun_G)\to \Shv(\Bun_G\times X^I).
\end{equation}
Restricting to $\Shv_\Nilp(\Bun_G)\subset \Shv(\Bun_G)$, we obtain a functor that lands in the subcategory
$$\Shv_\Nilp(\Bun_G) \otimes \qLisse(X)^{\otimes I}\subset \Shv(\Bun_G\times X^I)$$
(see \corref{c:Hecke on Nilp}), where $\qLisse(X)$ denotes the category of lisse sheaves on $X$, 
see \secref{sss:sheaves}. 

\sssec{}

As we recall in \secref{sss:Rep Ran}, there is a symmetric monoidal category 
$\Rep(\cG)_\Ran$, the \emph{Ran version} of the category $\Rep(\cG)$, 
which is the universal source of Hecke functors. 

\medskip

More precisely, there is an action of $\Rep(\cG)_\Ran$ on $\Shv(\Bun_G)$ by
\emph{integral Hecke functors},
preserving the subcategory $\Shv_\Nilp(\Bun_G)$. 
By design, this action encodes
the Hecke actions for varying finite sets $I$.
We refer to \secref{sss:Ran action} for the construction. 

\medskip 

We denote the monoidal product on $\Rep(\cG)_\Ran$ by $\star$ and
its monoidal unit by $\one_{\Rep(\cG)_\Ran}$.

\sssec{}  \label{sss:Ran I intro}

Below, we will be considering functors $\CS:\Rep(\cG)_\Ran \to \Vect$. 

\medskip 

By the definition of $\Rep(\cG)_\Ran$, such functors amount to compatible systems of functors
$$\CS_I:\Rep(\cG)^{\otimes I} \to \Shv(X^I)$$
defined for $I \in \fSet$.

\medskip 

In examples, the functors $\CS_I$ tend to be more familiar avatars of the
functor $\CS$, so it is
convenient to reference them.

\sssec{}

On the one hand, we have a functor
$$\Sht^{\Tr}:\Rep(\cG)_\Ran \to \Vect$$
constructed as the composition 
$$\Rep(\cG)_\Ran \to \End_{\DGCat}(\Shv_{\Nilp}(\Bun_G)) 
\overset{-\circ 
(\Frob_{\Bun_G})_*}{\to} 
\End_{\DGCat}(\Shv_{\Nilp}(\Bun_G)) 
\overset{\on{Tr}}{\to} \Vect.$$ 
In other words, we take $\CV \in \Rep(\cG)_\Ran$, form the corresponding
Hecke functor, compose with Frobenius, and take the trace of the resulting
endofunctor of $\Shv_\Nilp(\Bun_G)$.

\medskip 

By construction, we have
$$\Sht^{\Tr}(\one_{\Rep(\cG)_\Ran}) = 
\Tr((\Frob_{\Bun_G})_*,\Shv_\Nilp(\Bun_G)).$$

\begin{rem}

To make the above more explicit, we describe the corresponding functors
$\Sht_I^{\Tr}:\Rep(\cG)^{\otimes I} \to \Shv(X^I)$ for 
a finite set $I$ (see \secref{sss:Ran I intro} just above). 

\medskip 

Precomposing \eqref{eq:hecke intro} with $(\Frob_{\Bun_G})_*$, we obtain a functor
$$\sH(V,-)\circ (\Frob_{\Bun_G})_*: \Shv_\Nilp(\Bun_G)\to \Shv_\Nilp(\Bun_G) \otimes \qLisse(X)^{\otimes I},$$
and we can consider its \emph{parameterized} trace
$$\Tr(\sH(V,-)\circ (\Frob_{\Bun_G})_*,\Shv_\Nilp(\Bun_G))\in \qLisse(X)^{\otimes I}.$$

\medskip 

Unwinding the constructions, the resulting functor
$$\Rep(\cG)^{\otimes I}\to \qLisse(X)^{\otimes I},$$
followed by the embedding
$$\qLisse(X)^{\otimes I}\hookrightarrow \Shv(X^I),$$
is our $\Sht_I^{\Tr}$.

\end{rem}

\sssec{}

On the other hand, following \cite{VLaf}, to the data 
$(I \in \fSet,V \in \Rep(\cG)^{\otimes I})$ we can attach the \emph{compactly supported shtuka cohomology},
which is an object 
$$\Sht_I(V)\in \Shv(X^I),$$
see \secref{ss:sht} in the main body of the 
paper. These functors satisfy the requisite compatibilities 
needed to define a functor $$\Sht:\Rep(\cG)_\Ran \to \Vect.$$

\begin{example}

For $I = \emptyset$, the functor $\Sht_{\emptyset}$
amounts to a map $\Vect \to \Vect$, i.e., a vector space, which corresponds
to $\Sht(\one_{\Rep(\cG)_\Ran})$.
This vector space is $\on{Funct}_c(\Bun_G(\BF_q))$.

\end{example}

%\medskip 
%
%According to a recent result of C.~Xue, the objects $\Sht_I(V)$ actually lie in 
%$$\qLisse(X)^{\otimes I}\simeq \qLisse(X^I)\subset \Shv(X^I).$$
%
%Thus, we can regard the assignment $V\rightsquigarrow \Sht_I(V)$ as another functor
%$$\Sht_I:\Rep(\cG)^{\otimes I}\to \qLisse(X)^{\otimes I}.$$

\sssec{}

Our second main result (it appears as \thmref{t:main} in the main body of the paper) asserts:

\begin{mainthm} \label{t:Sht conj intro}
There is a canonical equivalence 
$$\Sht^{\Tr} \simeq \Sht$$ 
of functors $\Rep(\cG)_\Ran \to \Vect$.
%Moreover, the resulting map
%$$\Tr((\Frob_{\Bun_G})_*,\Shv_\Nilp(\Bun_G)) = \Sht^{\Tr}(\one_{\Rep(\cG)_\Ran}) 
%\overset{\sim}{\to} \Sht(\one_{\Rep(\cG)_\Ran}) = \on{Funct}_c(\Bun_G(\BF_q))$$
%is the local term map.
\end{mainthm}

In particular, \thmref{t:Sht conj intro} implies that the functors $\Sht_I^{\Tr}$ and $\Sht_I$ are canonically 
isomorphic. This is exactly the assertion of the Shtuka Conjecture 
\cite[Conjecture 22.5.7]{AGKRRV1}.
%\footnote{Actually, the cohomological shift by $2|I|$ from our \lemref{l:sht'} is missing in \emph{loc. cit}.}

\medskip

\thmref{t:Tr conj intro} stated above is obtained from \thmref{t:Sht conj intro} by evaluating 
both functors on the object $\one_{\Rep(\cG)_\Ran}\in \Rep(\cG)_\Ran$.

\ssec{How do we compute the trace?}

In this subsection we explain a key ingredient that goes into the computation of the trace
of the Frobenius endofunctor. 

\sssec{} \label{sss:trace Betti}

By way of motivation, let us try to compute the trace of an endofunctor of the form
$\phi_!$, where:

\begin{itemize}

\item $\CY$ is an algebraic stack over $\BC$;

\item $\phi$ is its endomorphism;

\item $\phi_!$ is the direct image with compact supports functor on the category
$\Shv^{\on{Betti}}(\CY)$ of all\footnote{I.e., not necessarily ind-constructible.} Betti sheaves on $\CY$ with coefficients in $\sfe$-vector
spaces (see, e.g., \cite[Sects. G.1 and G.7]{AGKRRV1}). 

\end{itemize}

We claim that there is a canonical isomorphism
\begin{equation} \label{e:Trace Betti}
\Tr(\phi_!,\Shv^{\on{Betti}}(\CY))\simeq \on{C}^\cdot_c(\CY^\phi,\ul\sfe_{\CY^\phi}),
\end{equation}
where $\CY^\phi$ is the stack of $\phi$-fixed points on $\CY$.

\medskip

The computation proceeds as follows. First, one shows that the external tensor product
functor
\begin{equation} \label{e:Kunneth Betti}
\Shv^{\on{Betti}}(\CY)\otimes \Shv^{\on{Betti}}(\CY) \to \Shv^{\on{Betti}}(\CY\times \CY).
\end{equation}
is an equivalence (see \cite[Corollary A.2.9]{GKRV}). 

\medskip

Thus, we can consider the object
$$\on{ps-u}_\CY:=(\Delta_\CY)_!(\ul\sfe_\CY)\in \Shv^{\on{Betti}}(\CY\times \CY)$$
as an object of $\Shv^{\on{Betti}}(\CY)\otimes \Shv^{\on{Betti}}(\CY)$. 

\medskip

Base change shows that the above object and the pairing
$$\ev_\CY^l:\Shv^{\on{Betti}}(\CY)\otimes \Shv^{\on{Betti}}(\CY) \to \Vect, \quad 
\CF_1,\CF_2\mapsto \on{C}^\cdot_c(\CY,\CF_1\overset{*}\otimes \CF_2)$$
define an identification
$$\Shv^{\on{Betti}}(\CY)\simeq \Shv^{\on{Betti}}(\CY)^\vee.$$

Using the above identification, we compute $\Tr(\phi_!,\Shv^{\on{Betti}}(\CY))$
as pull-push along the following diagram, where we apply $*$-pullback along
vertical arrows and $!$-pushforward along horizontal ones:
$$
\CD
& & & & & & \CY @>>> \on{pt} \\
& & & & & & @VV{\Delta_\CY}V \\
\CY @>{\Delta_\CY}>> \CY \times \CY @>{\phi\times \on{id}}>> \CY \times \CY \\
@VVV  \\
\on{pt}.
\endCD
$$

By base change, we can replace this diagram by
$$
\CD
\CY^\phi @>>> \CY @>>> \on{pt} \\
@VVV \\
\CY \\
@VVV \\
\on{pt},
\endCD
$$
and pull-push along the latter diagram exactly gives $\on{C}^\cdot_c(\CY^\phi,\ul\sfe_{\CY^\phi})$.

\sssec{}

Let us now take $\CY$ to be an algebraic stack over $\ol\BF_q$,
defined over $\BF_q$ (see \secref{sss:From intro}), and $\phi=\Frob_\CY$. Instead of 
$\Shv^{\on{Betti}}(\CY)$, we take the category of ind-constructible sheaves $\Shv(\CY)$,
as defined in \cite[Sect. F.1]{AGKRRV1}. 

\medskip

Let us try to compute $\Tr((\Frob_\CY)_!,\Shv(\CY))$. Note that $(\Frob_\CY)_!\simeq (\Frob_\CY)_*$,
since $\Frob_\CY$ is a proper morphism. 

\medskip

Note that $\CY^{\Frob_\CY}\simeq \CY(\BF_q)$, viewed as a \emph{discrete} algebraic stack. 
Hence, the counterpart of the right-hand side of \eqref{e:Trace Betti} yields
$$\on{Funct}_c(\CY(\BF_q)).$$

\medskip 

In the example, of $\CY=\Bun_G$, we would thus obtain the space of unramified automorphic functions
with compact supports. 

\medskip

However, the counterpart of the computation of the left-hand side of \eqref{e:Trace Betti} along the
lines of \secref{sss:trace Betti} \emph{invalid} and, as a result, it is, in general, emphatically \emph{not} true that 
$\Tr((\Frob_\CY)_!,\Shv(\CY))$ is isomorphic\footnote{There is always a natural map $\Tr((\Frob_\CY)_!,\Shv(\CY))\to \on{Funct}_c(\CY(\BF_q))$,
see \eqref{e:LT intro} below.} to $\on{Funct}_c(\CY(\BF_q))$. 

\medskip 

The reason for the failure of the calculation is that the corresponding functor
\begin{equation} \label{e:Kunneth intro}
\Shv(\CY)\otimes \Shv(\CY) \to \Shv(\CY\times \CY)
\end{equation}
is fully faithful, but \emph{not} an equivalence. 

\medskip

In particular, the object 
$$\on{ps-u}_\CY:=(\Delta_\CY)_!(\ul\sfe_\CY)\in \Shv(\CY\times \CY)$$
does not belong to the essential image of \eqref{e:Kunneth intro}.

\medskip

Moreover, it is in general not true that the pairing
\begin{equation} \label{e:left pairing intro}
\ev_\CY^l:\Shv(\CY)\otimes \Shv(\CY) \to \Vect, \quad 
\CF_1,\CF_2\mapsto \on{C}^\cdot_c(\CY,\CF_1\overset{*}\otimes \CF_2)
\end{equation}
is the counit of \emph{any} self-duality on $\Shv(\CY)$.

\begin{rem} \label{r:Verdier intro}

That said, the category $\Shv(\CY)$ is self-dual (at least when $\CY$ is quasi-compact) 
by a different procedure, for which the counit is given by 
$$\ev_\CY:\Shv(\CY)\otimes \Shv(\CY) \to \Vect, \quad 
\CF_1,\CF_2\mapsto \on{C}^\cdot_\blacktriangle(\CY,\CF_1\sotimes \CF_2).$$

The corresponding equivalence
$$(\Shv(\CY)^c)^{\on{op}}\to \Shv(\CY)^c$$
is given by the Verdier duality involution.

\medskip

We refer to the resulting self-duality of $\Shv(\CY)$ as the \emph{Verdier self-duality}.

\medskip

When $\CY$ is not quasi-compact, a variant of this construction is still applicable,
but the dual of $\Shv(\CY)$ is the category $\Shv(\CY)_{\on{co}}$, see 
\cite[Sects. C.2 and C.3]{AGKRRV2}.

\end{rem}

\sssec{}

Thus, as was mentioned above, it is \emph{not} true, in general, that  $\Tr((\Frob_\CY)_*,\Shv(\CY))$ is isomorphic to 
$\on{Funct}_c(\CY(\BF_q))$. 

\medskip

We now take $\CY=\Bun_G$, but instead of all of $\Shv(\Bun_G)$, we take 
$\Shv_\Nilp(\Bun_G)$. 

\medskip

A key observation is that if we restrict the pairing \eqref{e:left pairing intro}, the resulting pairing
$$\Shv_\Nilp(\Bun_G)\otimes \Shv_\Nilp(\Bun_G)\to \Vect$$
is \emph{miraculously} the counit of a self-duality. This is the content of the main result of
\cite{AGKRRV2}, stated therein as Theorem 3.2.2.

\begin{rem}
The usage of the word ``miraculous"  in the above phrase is not accidental:

\medskip

In \cite[Theorem 3.3.3]{AGKRRV2}, we show that the self-duality of $\Shv_\Nilp(\Bun_G)$
induced by \eqref{e:left pairing intro} is related to the Verdier self-duality of Remark 
\ref{r:Verdier intro} via the \emph{miraculous functor}, denoted $\Mir_{\Bun_G}$. 

\end{rem}

\sssec{} \label{sss:ps-u nilp intro}

The self-duality of $\Shv_\Nilp(\Bun_G)$, induced by \eqref{e:left pairing intro} paves a way
to computing traces of endofunctors of this category. 

\medskip 

However, in order to do so, we need one more ingredient: we need to have an explicit
description of the unit of this self-duality. Another key takeaway from the paper \cite{AGKRRV2} 
is such a description: 

\medskip

There exists a particular object 
$$\sR\in \Rep(\cG)_\Ran$$
(termed \emph{Beilinson's spectral projector}, see \secref{sss:Beil intro} below) such that when we apply it along the left (or right) factor 
of $\Bun_G\times \Bun_G$ to $\on{ps-u}_{\Bun_G}$, the resulting object belongs to the essential
image of the functor
$$\Shv_\Nilp(\Bun_G)\otimes \Shv_\Nilp(\Bun_G)\hookrightarrow
\Shv(\Bun_G)\otimes \Shv(\Bun_G)\overset{\boxtimes}\hookrightarrow \Shv(\Bun_G\times \Bun_G)$$
and is the unit of the above self-duality of $\Shv_\Nilp(\Bun_G)$. 

\ssec{Outline of the proof}

We now give an overview of the proof of \thmref{t:Sht conj intro}.
It requires a few additional objects and some relations between them.

\sssec{} \label{sss:Loc intro}

First, we recall the (non-algebraic) derived stack
$$\LocSys_\cG^{\on{restr}}(X)$$
over $\sfe$ introduced in \cite{AGKRRV1}, the 
\emph{stack of local systems with restricted variation}.

\medskip 

We have a naturally defined symmetric monoidal localization functor
$$\Loc:\Rep(\cG)_\Ran\to \QCoh(\LocSys_\cG^{\on{restr}}(X)),$$
see \secref{ss:Loc}. 

\sssec{} \label{sss:Beil intro}

We use \emph{Beilinson's spectral projector}, $\sR\in \Rep(\cG)_\Ran$, see  \secref{sss:ps-u nilp intro}. 

\medskip

It has the following property
vis-a-vis $\LocSys_\cG^{\on{restr}}(X)$:
$$\Loc(\sR)\simeq \CO_{\LocSys_\cG^{\on{restr}}(X)},$$
see \thmref{t:diagonal LocSys}, which is a restatement
of \cite[Theorem 12.7.4]{AGKRRV1}.

\sssec{}  \label{sss:quote Xue intro}

Using the result of \cite{Xue2} mentioned above, we show in \corref{c:Cong cor}
that the functor $\Sht$ factors canonically as a composition
$$\Rep(\cG)_\Ran \overset{\Loc}{\longrightarrow} 
\QCoh(\LocSys^{\on{restr}}_\cG(X)) \overset{\Sht_{\Loc}}{\longrightarrow} \Vect$$
for a certain functor $\Sht_{\Loc}$. 

\sssec{}
 
At this point, the assertion of \thmref{t:Sht conj intro} follows easily from the properties
of $\sR$ mentioned above:

\medskip

The property of $\sR$ mentioned in \secref{sss:ps-u nilp intro} implies that we have a canonical
isomorphism
$$\Sht^{\Tr}(-) \simeq  \Sht(\sR \star -);$$
this follows by essentially rerunning the calculation in \secref{sss:trace Betti}.

\medskip

Applying \secref{sss:quote Xue intro}, we rewrite this further as 
$$\Sht_{\Loc}\circ \Loc(\sR \star -).$$ 

\medskip

Since $\Loc$ is monoidal and
sends $\sR$ to the unit of $\QCoh(\LocSys^{\on{restr}}_\cG(X))$, we have 
$$\Loc(\sR \star -)\simeq \Loc(-).$$

\medskip

Hence, combining, we obtain
$$\Sht^{\Tr}(-) \simeq \Sht(\sR \star -)\simeq 
\Sht_{\Loc}\circ \Loc(\sR \star -) \simeq \Sht_{\Loc}\circ \Loc(-) \simeq \Sht(-),$$
which is exactly the assertion of \thmref{t:Sht conj intro}. 

%\sssec{}
%
%In order to prove the stronger \thmref{t:Tr conj intro strong} (assuming the validity of \cite[Conjecture 14.1.8]{AGKRRV1}),
%we need to show that the isomorphism $\Sht^{\Tr}\simeq \Sht$ of \thmref{t:Sht conj intro}, evaluated on on the 
%unit object of $\Rep(\cG)_\Ran$, is given by the local term map.
%
%\medskip
%
%This is the content of \thmref{t:lt}, which requires some additional ideas.
%
%\newpage 

\ssec{Relation to the classical sheaves-function dictionary} 

The Trace Conjecture as stated in \cite{AGKRRV1} is somewhat stronger
than \thmref{t:Tr conj intro} stated above, in that it specifies a particular morphism
from $\Tr((\Frob_{\Bun_G})_*,\Shv_\Nilp(\Bun_G))$ to $\on{Funct}_c(\Bun_G(\BF_q))$
that is supposed to be an isomorphism, see \cite[Conjecture 22.3.7(b) and Remark 22.3.9]{AGKRRV1}. 

\medskip

In this subsection we describe the results of the present paper in this direction.

\sssec{}

Let $\CF$ be a compact object of $\Shv_\Nilp(\Bun_G)$, equipped with a structure of 
\emph{weak Weil equivariance}, i.e., a morphism
$$\alpha:\CF\to (\Frob_{\Bun_G})_*(\CF),$$
which is equivalent to the datum of a map 
$$\alpha':(\Frob_{\Bun_G})^*(\CF)\to \CF.$$

By functoriality of the categorical trace, we can attach to the pair $(\CF,\alpha)$ its class
$$\on{cl}(\CF,\alpha)\in \Tr((\Frob_{\CY})_*,\Shv_\Nilp(\CY)).$$ 

Hence, applying \thmref{t:Tr conj intro}, we can further attach to it an element of $\on{Funct}_c(\CY(\BF_q))$.

\medskip

We wish to say that this function equals the function attached to $(\CF,\alpha')$ by the classical 
sheaves-function correspondence, i.e., by taking pointwise traces of the Frobenius on them stalks 
of $\CF$. 

\medskip

However, here is an issue: we do not know that $\CF$ is constructible as a sheaf on $\Bun_G$.
So, the above pointwise trace operation is not a priori well-defined, as the stalks in question may
be inifinite-dimensional.  

\sssec{}

To overcome to issue mentioned above, 
for the duration of this subsection we will assume \cite[Conjecture 14.1.8]{AGKRRV1}. This
conjecture has several equivalent formulations:

\begin{itemize}

\item The category $\Shv_\Nilp(\Bun_G)$ is generated by objects that are compact in the ambient
category $\Shv(\Bun_G)$;

\item Compact objects of $\Shv_\Nilp(\Bun_G)$ are compact as objects of $\Shv(\Bun_G)$;

\item Compact objects of $\Shv_\Nilp(\Bun_G)$ are constructible as objects of $\Shv(\Bun_G)$;

\item Compact objects of $\Shv_\Nilp(\Bun_G)$ are bounded below in the natural t-structure on 
$\Shv(\Bun_G)$.

\end{itemize}

An analog of \cite[Conjecture 14.1.8]{AGKRRV1} is known when our ground field $k$ has characteristic $0$,
and $\Shv(-)$ is either the category of ind-holonomic D-modules or the category of ind-constructible Betti 
sheaves (see \cite[Theorems 16.4.3 and 16.4.10]{AGKRRV1}). Furthermore, we believe that we can prove it
when $G=GL_n$.

\medskip

To summarize, this conjecture is a rather plausible statement.

\sssec{}

Assuming \cite[Conjecture 14.1.8]{AGKRRV1}, we obtain that the embedding \eqref{e:Nilp intro}
admits a continuous right adjoint.

\medskip

Hence, by functoriality of categorical trace, we obtain a map
\begin{equation} \label{e:forget Nilp intro}
\Tr((\Frob_{\Bun_G})_*,\Shv_\Nilp(\Bun_G))\to \Tr((\Frob_{\Bun_G})_*,\Shv(\Bun_G)).
\end{equation} 

\sssec{}

For any quasi-compact algebraic stack $\CY$ over $\ol\BF_q$ and defined over $\BF_q$, 
there is a canonical
\emph{local term} map (see \cite[Sect. 22.2]{AGKRRV1}), 
\begin{equation} \label{e:LT intro}
\on{LT}_\CY:\Tr((\Frob_{\CY})_*,\Shv(\CY)) \to \on{Funct}(\CY(\BF_q)),
\end{equation}
where $\on{Funct}(-)$ stands for the (classical) vector space $\sfe$-valued functions. 

\medskip 

In addition, in \emph{loc. cit}., we extended this construction to non-quasi-compact
stacks (such as $\Bun_G$). In this setting, the above map takes the form
$$\on{LT}_\CY:\Tr((\Frob_{\CY})_*,\Shv(\CY)) \to \on{Funct}_c(\CY(\BF_q)),$$  
where $\on{Funct}_c(\CY(\BF_q)) \subset \on{Funct}(\CY(\BF_q))$ is the subspace
of compactly supported $\sfe$-valued functions.

\begin{rem}\label{r:lt function}

By functoriality of traces, any (possibly lax) Weil sheaf 
$$(\CF \in \Shv(\CY)^c,\alpha:\CF \to (\Frob_{\CY})_*(\CF))$$ 
on $\CY$ defines an element
$$\on{cl}(\CF,\alpha)\in \Tr((\Frob_{\CY})_*,\Shv(\CY)).$$ 
According to \cite[Sect. 22.2]{AGKRRV1}, the value of the map
$\on{LT}_\CY$ on $\on{cl}(\CF,\alpha)$ produces the corresponding Grothendieck--Deligne
function, denoted $\on{funct}(\CF)$.

\end{rem}

\sssec{}

By the above, we obtain a map
\begin{equation} \label{e:trace map}
\Tr((\Frob_{\Bun_G})_*,\Shv_\Nilp(\Bun_G)) \overset{\text{\eqref{e:forget Nilp intro}}}\longrightarrow
\Tr((\Frob_{\Bun_G})_*,\Shv(\Bun_G)) \overset{\on{LT}_{\Bun_G}}\to 
\on{Funct}_c(\Bun_G(\BF_q)),
\end{equation} 
where we note that the vector space $\on{Funct}_c(\Bun_G(\BF_q))$ is the space of 
compactly supported unramified automorphic functions.

\sssec{}

We can now state our third main (it appears as \thmref{t:lt} in the main body of the paper): 

\begin{mainthm} \label{t:Tr conj intro strong}
The map \eqref{e:trace map} equals the \emph{isomorphism} of \thmref{t:Tr conj intro}.
 \end{mainthm}
 
\begin{cor} \label{c:Tr conj intro strong}
The map \eqref{e:trace map} is an
isomorphism. 
\end{cor}

\begin{rem}

Informally, one can view \corref{c:Tr conj intro strong} as saying that ``there are enough 
weak Weil sheaves on $\Bun_G$ with nilpotent singular support 
to recover all automorphic functions, and any relations
between the automorphic functions defined by such sheaves
have categorical origins." 

\medskip

Of course, the fact that we are considering sheaves with nilpotent singular support 
is crucial here. If we did not have the singular support condition, we would obviously
have enough sheaves to recover all functions.  However, the relations imposed by sheaves
would not match the relations on functions.

\end{rem}

\begin{rem}

The phrase in quotation makes in the previous remark would be a correct assertion if the phrase
inside the quotation marks was understood in the derived sense.

\medskip

More precisely, for a (compactly generated) category $\bC$ with an endofunctor $\Phi$,
the trace object $\on{Tr}(\Phi,\bC)\in \Vect$ is computed as the geometric realization of a 
certain canonically defined simplicial object of $\Vect$; let us denote it $\on{Tr}(\Phi,\bC)^\bullet$.

\medskip

In particular, we have a map $\on{Tr}(\Phi,\bC)^0\to \on{Tr}(\Phi,\bC)$ (here the superscript $0$
denotes the space of $0$-simplices), and hence a map
$$H^0(\on{Tr}(\Phi,\bC)^0)\to H^0(\on{Tr}(\Phi,\bC)).$$

Now, the image of the latter map is the span of the classes 
$$\on{cl}(\bc,\alpha), \quad \bc\in \bC^c,\alpha:\bc\to \Phi(\bc).$$

However, it is not true, in general, that $H^0(\on{Tr}(\Phi,\bC))$ is spanned by the above
classes: higher cohomologies of higher simplices can also contribute.

\end{rem}

\ssec{Organization of the paper}

We now describe how the present paper is structured.

\sssec{}

In \secref{s:Hecke} we review the formalism of Hecke functors acting
on $\Shv(\Bun_G)$.

\medskip

In particular, we introduce the Ran version of the category $\Rep(\cG)$, denoted $\Rep(\cG)_\Ran$,
which is a monoidal category that acts on $\Shv(\Bun_G)$ by \emph{integral Hecke functors}. 

\medskip

This section does not contain any new results. 

\sssec{}

In \secref{s:LocSys} we review some notions associated with the \emph{stack of local systems with restricted variation},
introduced in \cite{AGKRRV1}, and denoted $\LocSys^{\on{restr}}_\cG(X)$.

\medskip

Apart from the definition of $\LocSys^{\on{restr}}_\cG(X)$, the main points are:

\medskip

\noindent(i) Description of the dual category $\QCoh(\LocSys^{\on{restr}}_\cG(X))^\vee$ as the category of compatible
collections of functors 
$$\Rep(\cG)^{\otimes I}\to \qLisse(X)^{\otimes I}, \quad I\in \fSet;$$

\medskip 

\noindent(ii) The localization functor
$$\Loc:\Rep(\cG)_\Ran\to \QCoh(\LocSys^{\on{restr}}_\cG(X));$$

\medskip

\noindent(iii) Construction of Beilinson's spectral projector, which is an explicit object
$\sR\in \Rep(\cG)_\Ran$, one of whose main properties is the isomorphism
$$\Loc(\sR)\simeq \CO_{\LocSys^{\on{restr}}_\cG(X)}.$$

\medskip

\noindent(iv) \corref{c:Loc dual}, which asserts that a functor
$\CS:\Rep(\cG)_\Ran \to \Vect$ factors (uniquely) 
through the localization functor $\Loc$ exactly when the functors $\CS_I$ are valued
in $\qLisse(X)^{\otimes I}\subset \Shv(X^I)$.

\medskip 

The entirety of the material of this section is a reformulation of the results in Parts I and II of \cite{AGKRRV1}. 

\sssec{}

In \secref{s:shtuka} we review the shtuka construction and some of its variants.

\medskip 

First, we introduce the functor $\Sht:\Rep(\cG)_\Ran \to \Vect$. 

\medskip 

We then quote the main result from \cite{Xue2}, which we interpret as saying that
the functors $\Sht_I$ take values in $\qLisse(X)^{\otimes I}\subset \Shv(X^I)$. 
Using (iv) above, this yields the existence of the functor 
$\Sht_{\Loc}$ from above. 

%\medskip 
%
%By duality, the functor $\Sht_{\Loc}$ yields an
%object of $\QCoh(\LocSys^{\on{restr}}_\cG(X))$,
%which we denote by $\Drinf$. 

\sssec{}

In \secref{s:main} we formulate and prove the main result of this paper, 
\thmref{t:main} (which is the same as \thmref{t:Sht conj intro} above).
As particular cases, this statement contains both the (unrefined) Trace Conjecture and 
the Shtuka Conjecture.

\medskip

The argument is the one outlined above. 

\sssec{}

In Sects. \ref{s:LT} and \ref{s:serre-true} we assume the validity of \cite[Conjecture 14.1.8]{AGKRRV1}, 
and we show that the isomorphism 
$$\Tr(\Frob_*,\Shv_\Nilp(\Bun_G))\simeq \on{Funct}_c(\Bun_G(\BF_q))$$
of \thmref{c:trace} is induced by the local term map \eqref{e:trace map}, as in the statement of
\thmref{t:Tr conj intro strong}.

%\sssec{}
%
%The trace calculation in this paper is based on considering shtukas, as defined in \cite{VLaf}
%and further studied in \cite{Xue1,Xue2}, and on a \emph{non-standard} self-duality
%of the category $\Shv_\Nilp(\Bun_G)$, studied in \cite{AGKRRV2} and reviewed in \secref{ss:nilp self duality}.
%
%\medskip 
%
%In the Appendix we attempt to re-run the calculation of $\Tr(\Frob_*,\Shv_\Nilp(\Bun_G))$ 
%by relying on the usual Verdier self-duality of $\Shv_\Nilp(\Bun_G)$. In the process we 
%encounter a new object that we name ``co-shtukas". 

%%% add remarks about the proof??

\ssec{Notations and conventions}\label{ss:notation}

The notations in this paper largely follow those of \cite{AGKRRV1} and \cite{AGKRRV2}.

\sssec{Algebraic geometry}

There will be disjoint ``two algebraic geometries" at play in this paper: one on the automorphic side,
and another on the spectral side.

\medskip 

On the automorphic side, our algebraic geometry will be over the ground field $k$, which in this paper is 
$\ol\BF_q$. Our algebro-geometric objects will be either schemes or algebraic stacks locally of finite type over
$k$. In this paper we will not need more general prestacks.  Moreover, the algebraic geometry that we consider
over $k$ is classical (i.e. \emph{not} derived).

\medskip

On the spectral side, our algebraic geometry will be over the field of coefficients $\sfe:=\ol\BQ_\ell$,
see below. We will consider just one algebro-geometric object over $\sfe$--the (pre)stack $\LocSys^{\on{restr}}_\cG(X)$
(see \secref{s:LocSys}), but it will play quite a prominent role.  Importantly, the algebraic geometry we
consider over $\sfe$ is \emph{derived}: by default, all schemes, stacks, etc. over $\sfe$ are derived.

%\medskip
%
%We will assume that all our algebraic stacks are locally of the form $Z/H$, where $Z$ is a scheme (of finite type) and 
%$H$ a linear algebraic group. The existence of such a presentation ensures that the category of $\ell$-adic sheaves
%has particularly good finiteness properties, see \cite[Sects. 1.1.3 and 1.1.5]{AGKRRV2}; we will use this assumption in 
%\secref{sss:pairing on Y}. 

\sssec{Frobenius endomorphism} \label{sss:Frob}

Let $\CY$ be an algebraic stack over $\ol\BF_q$, but defined over $\BF_q$. In this case,
we can consider the geometric Frobenius endomorphism of $\CY$, denoted
$$\Frob_\CY:\CY\to \CY.$$

\medskip

Thus, whenever we refer to the Frobenius endomorphism of $\CY$, we will assume 
that $\CY$ is defined over $\BF_q$. This is the case of our curve $X$, the reductive group $G$,
and the stack $\Bun_G$ of principal $G$-bundles on $X$. 

\sssec{Higher algebra}

We will work with DG categories over the field of coefficients $\sfe:=\ol\BQ_\ell$.

\medskip

All our conventions and notations regarding DG categories are imported from \cite[Sects. 0.5.2-0.5.3]{AGKRRV2}.

\medskip

There will be two kinds of sources that feed into higher algebra, i.e., the sources of DG categories.

\medskip

One will be various categories produced out of $\ell$-adic sheaves on the automorphic side. 
Another will be categories of quasi-coherent sheaves on the spectral side (specifically, the category 
of quasi-coherent sheaves on $\LocSys^{\on{restr}}_\cG(X)$). 

\sssec{Sheaves} \label{sss:sheaves}

For a scheme $S$ of finite type, we let $\Shv(S)^{\on{constr}}$ denote the category of constructible
$\ol\BQ_\ell$-adic sheaves on $S$, viewed as a (small) DG category over the field of coefficients
$\sfe=\ol\BQ_\ell$.

\medskip

We let $\Shv(S)$ denote the (cocomplete) DG category $\on{Ind}(\Shv(S)^{\on{constr}})$.
We extend the assignment 
$$S\mapsto \Shv(S)$$
from schemes to algebraic stacks by the procedure explained in \cite[Sect. A.1]{AGKRRV2}.

\medskip

For a given stack $\CY$, we will denote by
$$\ul\sfe_\CY,\omega_\CY\in \Shv(\CY)$$
the constant and dualizing sheaves, respectively. 

\medskip

If $\CY$ is smooth, inside $\Shv(\CY)$ we consider a full subcategory
$$\qLisse(\CY)\subset \Shv(\CY),$$
defined as in \cite[Sect. 1.2]{AGKRRV1}.

\sssec{Singular support} 

Let $\CY$ be an algebraic stack and $\CN$ a conical Zariski-closed subset of $T^*(\CY)$. We will denote by
$$\Shv_\CN(\CY)\subset \Shv(\CY)$$
the corresponding full subcategory, defined as in \cite[Sects. E.5 and F.6]{AGKRRV1}.

\medskip

If $\CY$ is smooth and $\CN$ is the zero-section, usually denoted $\{0\}$, we will also use the notation $\qLisse(\CY)$
for $\Shv_{\{0\}}(\CY)$.

%\sssec{Lisse sheaves on smooth stacks} \label{sss:lisse sheaves}
%
%We usually regard $\Shv(\CY)$ as endowed with a symmetric monoidal structure, given by
%the $\sotimes$ operation; we will use the notation $\Shv^!(\CY)$ when we want to emphasize
%this structure. The unit for this symmetric monoidal structure is the dualizing
%sheaf $\omega_\CY$. 
%
%\medskip
%
%Suppose that $\CY$ is smooth. In this case $\qLisse(\CY)$ is a unital symmetric
%monoidal subcategory of $\Shv(\CY)$; in particular $\omega_\CY\in \qLisse(\CY)$.
%I.e., we have a (fully faithful) symmetric monoidal functor
%\begin{equation} \label{e:lisse !}
%\qLisse(\CY) \hookrightarrow \Shv^!(\CY).
%\end{equation}
%
%We can also regard $\Shv(\CY)$ as endowed with a symmetric monoidal structure, given by
%the $\overset{*}\otimes$ operation; we will use the notation $\Shv^*(\CY)$ when we want to emphasize
%this structure. The unit for this symmetric monoidal structure is the constant
%sheaf $\ul\sfe_\CY$. 
%
%\medskip
%
%Suppose again that $\CY$ smooth. We will still regard $\qLisse(\CY)$ as endowed with
%the $\sotimes$-symmetric monoidal structure. But we will consider a (fully faithful) symmetric monoidal 
%functor 
%\begin{equation} \label{e:lisse *}
%\qLisse(\CY) \hookrightarrow \Shv^*(\CY), \quad \CE \mapsto \CE \overset{*}\otimes \ul\sfe_\CY. 
%\end{equation}

\sssec{Functors (co)defined by kernels} \label{sss:ker}

In a few places in this paper we will make reference to functors \emph{defined} or \emph{codefined}
by a kernel. We refer the reader to \cite[Sect. B]{AGKRRV2}, where these notions are 
introduced.

\medskip

Following {\it loc. cit.}, given a functor $\sF:\Shv(\CY_1)\to \Shv(\CY_2)$, defined or codefined by a kernel,
we will denote by
$$\on{Id}_\CZ\boxtimes \sF: \Shv(\CZ\times \CY_1)\to \Shv(\CZ\times \CY_2)$$
the corresponding functor for an algebraic stack $\CZ$ (thought of as a stack of parameters). 

\ssec{Acknowledgements}

We are grateful to C.~Xue for communicating to us her results from \cite{Xue2} before making them publicly available.
We are also grateful to A.~Beilinson and V.~Lafforgue for inspiring discussions.

\medskip

The entire project was supported by David Kazhdan's ERC grant No 669655
and BFS grant 2020189. The work of D.K. and Y.V. was supported BSF grant 2016363. 

\medskip

The work of D.A. was supported by NSF grant DMS-1903391. 
The work of D.G. was supported by NSF grant DMS-2005475. 
The work of D.K. was partially supported by ISF grant 1650/15. 
The work of S.R. was supported by NSF grant DMS-2101984.
The work of Y.V. was partially supported by ISF grants 822/17 and 2019/21. 

\section{The Hecke action} \label{s:Hecke}

In this section we will recall the pattern of Hecke acton of the category $\Rep(\cG)$ on
$\Shv(\Bun_G)$, and some related formalism. The section contains no original material. 

\ssec{Hecke functors}

\sssec{}

In this paper, by the phenomenon of Hecke action we will understand a system of functors, defined for every finite set $I$ and every 
algebraic stack $\CZ$ (thought of as a stack of parameters):
\begin{equation} \label{e:Hecke !}
\on{Id}_\CZ \boxtimes \sH:\Rep(\cG)^{\otimes I}\otimes \Shv(\CZ\times \Bun_G)\to \Shv(\CZ\times \Bun_G\times X^I).
\end{equation}

For a fixed $V\in \Rep(\cG)^{\otimes I}$, we will denote by $\on{Id}_\CZ \boxtimes \sH(V,-)$ the resulting functor
$$\Shv(\CZ\times \Bun_G)\to \Shv(\CZ\times \Bun_G\times X^I).$$

\medskip

When $\CZ=\on{pt}$, we will simply write $\sH$ (resp., $\sH(V,-)$). 

\sssec{}\label{sss:associativity of Hecke}
The Hecke functions \eqref{e:Hecke !} are associative in the following sense: we have a natural commutative diagram
of functors
$$
\CD
\Rep(\cG)^{\otimes I} \otimes \Rep(\cG)^{\otimes I} \otimes \Shv(\CZ\times \Bun_G) @>{\on{mult}}>>  \Rep(\cG)^{\otimes I} \otimes \Shv(\CZ\times \Bun_G) \\
@V{\on{Id}\otimes (\on{Id}_\CZ \boxtimes \sH)}VV \\
\Rep(\cG)^{\otimes I} \otimes \Shv(\CZ\times \Bun_G\times X^I) & & @VV{\on{Id}_\CZ \boxtimes \sH}V \\
@V{\on{Id}_\CZ \boxtimes \sH}VV \\
 \Shv(\CZ \times \Bun_G \times X^I \times X^I) @>{(\on{Id} \times \Delta)^!}>> \Shv(\CZ\times \Bun_G\times X^I),
\endCD
$$
%
%\xymatrix{
%\Rep(\cG)^{\otimes I} \otimes \Rep(\cG)^{\otimes I} \otimes \Shv(\CZ\times \Bun_G) \ar[r]^-{\on{Id}\otimes \sH^!}\ar[d]_{\on{mult} \otimes \on{Id}} &
%\Rep(\cG)^{\otimes I} \otimes \Shv(\CZ\times \Bun_G\times X^I) \ar[r]^-{\sH^!} & \Shv(\CZ \times \Bun_G \times X^I \times X^I) \ar[d]^{(\on{Id} \times \Delta)^!}\\
% \Rep(\cG)^{\otimes I} \otimes \Shv(\CZ\times \Bun_G) \ar[rr]^{\sH^!} && \Shv(\CZ\times \Bun_G\times X^I)
%},$$
where $$\on{mult}: \on{Rep}(\cG)^{\otimes I} \otimes \on{Rep}(\cG)^{\otimes I} \to \on{Rep}(\cG)^{\otimes I}$$ is the tensor product functor and 
$\Delta_{X^I}: X^I \to X^I \times X^I$ is the diagonal embedding.  The data of associativity of \eqref{e:Hecke !} comes additionally with higher coherence for higher powers of $\Rep(\cG)^{\otimes I}$.

\medskip

We can rephrase this as saying that the category $\Shv(\CZ \times \Bun_G \times X^I)$ is a module category for the  monoidal category $\on{Rep}(\cG)^{\otimes I}$, 
and the action is $\Shv(X^I)$-linear (i.e. it is a module category for the monoidal category $\on{Rep}(\cG)^{\otimes I} \otimes \Shv(X^I)$),
where the symmetric monoidal structure on $\Shv(X^I)$ is given by $\sotimes$.

\sssec{}

The functors \eqref{e:Hecke !} are naturally compatible with maps between finite sets. Namely, for a map 
$\psi:I\to J$, we have a data of commutativity for the diagram
\begin{equation} \label{e:IJ com}
\CD
\Rep(\cG)^{\otimes I}\otimes \Shv(\CZ\times \Bun_G) @>{\on{Id}_\CZ \boxtimes \sH}>>  \Shv(\CZ\times \Bun_G\times X^I) \\
@V{\on{Id}\otimes \on{mult}^\psi}VV @VV{(\on{Id}\times \Delta_\psi)^!}V \\
\Rep(\cG)^{\otimes J}\otimes \Shv(\CZ\times \Bun_G) @>{\on{Id}_\CZ \boxtimes \sH}>>  \Shv(\CZ\times \Bun_G\times X^J)
\endCD
\end{equation}
where
$$\on{mult}^\psi:\Rep(\cG)^{\otimes I}\to \Rep(\cG)^{\otimes J}$$
is the functor given by the symmetric monoidal structure on $\Rep(\cG)$, and 
$$\Delta_\psi:X^J\to X^I$$
the diagonal map defined by $\psi$. 

\medskip

The above data of commutativity are endowed with a homotopy coherent system of compatibilities for
compositions of maps of finite sets.

\medskip
Moreover, this data is compatible with the associativity described in \secref{sss:associativity of Hecke}.  Namely,
the functor
$$ (\on{Id} \times \Delta_{\psi})^!: \Shv(\CZ \times \Bun_G \times X^I) \to \Shv(\CZ \times \Bun_G \times X^J) $$
is a functor of $\Shv(X^I) \otimes \Rep(\cG)^{\otimes I}$-module categories.

\sssec{}\label{sss:hecke defined/codefined}

A feature of the functors $\on{Id}_\CZ \boxtimes \sH(V,-)$ is that they are functors that are both 
\emph{defined and codefined by kernels}; see \secref{sss:ker} for what this means. In practical 
terms, this implies that for a map $\CZ_1\to \CZ_2$ between algebraic stacks, we have a datum
of commutativity for the diagrams
$$
\CD
\Shv(\CZ_1\times \Bun_G) @>{\on{Id}_\CZ \boxtimes \sH(V,-)}>>  \Shv(\CZ_1\times \Bun_G\times X^I) \\
@VVV @VVV \\
\Shv(\CZ_2\times \Bun_G) @>{\on{Id}_\CZ \boxtimes \sH(V,-)}>>  \Shv(\CZ_2\times \Bun_G\times X^I),
\endCD
$$
where the vertical arrows are given by either $\blacktriangle$- or !- pushforwards, and also for the diagrams
$$
\CD
\Shv(\CZ_1\times \Bun_G) @>{\on{Id}_\CZ \boxtimes \sH(V,-)}>>  \Shv(\CZ_1\times \Bun_G\times X^I) \\
@AAA @AAA \\
\Shv(\CZ_2\times \Bun_G) @>{\on{Id}_\CZ \boxtimes \sH(V,-)}>>  \Shv(\CZ_2\times \Bun_G\times X^I),
\endCD
$$
where the vertical arrows are given by either !- or *- pullbacks. 

\medskip

Moreover, this datum of commutativity is functorial in $V\in \Rep(\cG)^{\otimes I}$, and is compatible with the datum
of commutativity of the diagrams \eqref{e:IJ com}. 

%\sssec{} \label{sss:Cartan}

\ssec{The ULA property of the Hecke action}

\sssec{} \label{sss:ULA Hecke}

Another key feature of the functors $\on{Id}_\CZ \boxtimes \sH$ is that for $\CF\in \Shv(\CZ\times \Bun_G)^{\on{constr}}$ and 
$V\in (\Rep(\cG)^{\otimes I})^c$, the object
$$(\on{Id}_\CZ \boxtimes \sH)(V,\CF)\in \Shv(\CZ\times \Bun_G\times X^I)$$
is ULA with respect to the projection
$$\CZ\times \Bun_G\times X^I\to X^I.$$

\sssec{} \label{sss:Satake ! and *}

Let us denote by $\on{Id}_\CZ \boxtimes \sH^l$ the functor
$$\Rep(\cG)^{\otimes I}\otimes \Shv(\CZ\times \Bun_G)\to \Shv(\CZ\times \Bun_G\times X^I)$$
defined as
$$(\on{Id}_\CZ \boxtimes \sH^l)(V,\CF):=(\on{Id}_\CZ \boxtimes \sH)(V,\CF)\sotimes p_3^!(\ul\sfe_{X^I}).$$

Note that for a fixed $I$, the difference between $\on{Id}_\CZ \boxtimes \sH$ and $\on{Id}_\CZ \boxtimes \sH^l$ amounts to a cohomological
shift by $2|I|$ since $\ul\sfe_{X^I}\simeq \omega_{X^I}[-2|I|]$. 

\sssec{} \label{sss:ULA Hecke bis}

The ULA property of the objects $(\on{Id}_\CZ \boxtimes \sH)(V,\CF)$ implies that we have canonical isomorphisms
\begin{equation} \label{e:ULA Hecke}
(\on{Id}_\CZ \boxtimes \sH)(V,\CF)\sotimes p_3^!(\CM) \simeq (\on{Id}_\CZ \boxtimes \sH^l)(V,\CF)\overset{*}\otimes p_3^*(\CM), \quad \CM\in \Shv(X^I).
\end{equation}

\medskip

Furthermore, for a map of finite sets $\psi:I\to J$, we have a data of commutativity for the diagram
\begin{equation} \label{e:IJ com *}
\CD
\Rep(\cG)^{\otimes I}\otimes \Shv(\CZ\times \Bun_G) @>{\on{Id}_\CZ \boxtimes \sH^l}>>  \Shv(\CZ\times \Bun_G\times X^I) \\
@V{\on{mult}^\psi\otimes \on{Id}}VV @VV{(\on{Id}\times \Delta_\psi)^*}V \\
\Rep(\cG)^{\otimes J}\otimes \Shv(\CZ\times \Bun_G) @>{\on{Id}_\CZ \boxtimes \sH^l}>>  \Shv(\CZ\times \Bun_G\times X^J),
\endCD
\end{equation}
endowed with a homotopy coherent system of compatibilities for
compositions of maps of finite sets.

\sssec{}

Thus, we can regard the functors $\on{Id}_\CZ \boxtimes \sH^l(V,-)$ also as defined and codefined by kernels, and they
have the formal properties parallel to those of the functors $\on{Id}_\CZ \boxtimes \sH(V,-)$. 

\ssec{Hecke action on $\Shv_\Nilp(\Bun_G)$}

\sssec{}

One of the main actor in this paper is the full subcategory
$$\Shv_\Nilp(\Bun_G)\subset \Shv(\Bun_G).$$
%or more generally 
%$$\Shv_{T^*(\CZ)\times \Nilp}(\Bun_G)\subset \Shv(\CZ \times \Bun_G),$$
%for a stack of parameters $\CZ$.

\medskip

The following result, essentially due to \cite{NY}, describes the behavior of this subcategory under the Hecke functors
(this is stated as \cite[Theorem 14.2.4]{AGKRRV1}):
%(this is a variant of \cite[Theorem 10.2.3]{AGKRRV1} when we include a stack $\CZ$):

%\begin{thm} \label{t:Hecke on Nilp}
%The Hecke functor $\sH^!$ for $I=\{*\}$ sends the full subcategory 
%$$\Rep(\cG)\otimes \Shv_{T^*(\CZ)\times \Nilp}(\Bun_G) \subset \Rep(\cG)\otimes \Shv(\CZ\times \Bun_G)$$
%to the full subcategory
%$$\Shv_{T^*(\CZ)\times \Nilp \times \{0\}}(\CZ\times \Bun_G\times X) \subset 
%\Shv(\CZ\times \Bun_G\times X).$$
%\end{thm} 

\begin{thm} \label{t:Hecke on Nilp}
The Hecke functor $\sH$ for $I=\{*\}$ sends the full subcategory 
$$\Rep(\cG)\otimes \Shv_{\Nilp}(\Bun_G) \subset \Rep(\cG)\otimes \Shv(\Bun_G)$$
to the full subcategory
$$\Shv_{\Nilp \times \{0\}}(\Bun_G\times X) \subset 
\Shv(\Bun_G\times X).$$
\end{thm}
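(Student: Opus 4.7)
The plan is to reduce to a statement about compact objects and then combine two ingredients: the ULA property in the curve direction (which pins down the singular support along $X$ to be the zero section), and the classical Nadler--Yun preservation result for the nilpotent cone in the $\Bun_G$ direction.

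First I would reduce to a statement about compact generators. Both $\Rep(\cG)$ and $\Shv_{\Nilp}(\Bun_G)$ are compactly generated (the latter by \cite[Theorem 10.1.6]{AGKRRV1}), and $\Shv_{\Nilp \times \{0\}}(\Bun_G \times X)$ is closed under colimits inside $\Shv(\Bun_G \times X)$. So it suffices to verify that for compact $V \in \Rep(\cG)^c$ and compact $\CF \in \Shv_{\Nilp}(\Bun_G)^c$, the object $\sH^!(V,\CF)$ has singular support contained in $\Nilp \times \{0\} \subset T^*(\Bun_G) \times T^*(X)$.

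Next I would exploit the ULA property stated in \secref{sss:ULA Hecke}: the object $\sH^!(V,\CF)$ is ULA with respect to the projection $p: \Bun_G \times X \to X$. A standard consequence is that its singular support is annihilated by the vertical codirections, i.e.\ is contained in $T^*(\Bun_G) \times \{0\} \subset T^*(\Bun_G \times X)$. This takes care of the $X$-factor. To control the $\Bun_G$-factor, I would take $*$-restrictions $i_x^*\sH^!(V,\CF)$ at points $x \in X$; by the compatibility of $\sH^!$ with kernels (\secref{sss:hecke defined/codefined}), these restrictions compute the pointwise Hecke action at $x$, and by the main result of \cite{NY} (which is exactly the statement of the theorem in the special case $X = \{x\}$), they lie in $\Shv_\Nilp(\Bun_G)$. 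Combined with the ULA property, which implies that singular support is essentially determined fiberwise over $X$, this forces the full singular support to lie in $\Nilp \times \{0\}$.

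The main obstacle is the precise step of combining the $X$-direction and $\Bun_G$-direction bounds: singular support does not literally commute with restriction, so one needs the ULA hypothesis to legitimately reduce the global singular support calculation on $\Bun_G \times X$ to the fiberwise calculation on $\Bun_G$ at each $x \in X$. In practice this is the content of the ULA behavior of singular support along a smooth base, and it is where the argument of \cite{NY} does its real work; we are free to quote it here since it is precisely the geometric input behind \cite[Theorem 10.2.3]{AGKRRV1}.
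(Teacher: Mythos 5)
The paper does not provide a proof of this result: immediately before the statement it is described as ``essentially due to \cite{NY}'' and is imported verbatim from \cite[Theorem 10.2.3]{AGKRRV1}. So there is no internal argument to compare yours against; the question is only whether your sketch stands on its own.

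Your step 1 (reduction to compact generators) is fine, and your step 3 correctly identifies that the $*$-restriction $i_x^*\sH^!(V,\CF)$ computes the pointwise Hecke action, which preserves $\Shv_\Nilp(\Bun_G)$ by \cite{NY}. The problem is step 2. ULA of $\CG\in\Shv(\Bun_G\times X)$ over $X$ is a \emph{transversality} condition on singular support: it asserts that $\on{SS}(\CG)$ meets $T^*_{\Bun_G}(\Bun_G)\times T^*(X)$ only in the zero section. It does \emph{not} say that the $T^*(X)$-component of $\on{SS}(\CG)$ vanishes. Concretely, take $Z=X=\mathbb{A}^1$ and let $\CG$ be the constant sheaf on the diagonal $\{z=x\}\subset\mathbb{A}^1_z\times\mathbb{A}^1_x$, $*$-extended to the plane. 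The diagonal maps isomorphically to $X$, so $\CG$ is ULA over $X$; yet $\on{SS}(\CG)$ is the full conormal to the diagonal, whose $T^*X$-component is all of $T^*X$. So ``ULA annihilates the vertical codirections'' is false, and $\on{SS}(\sH^!(V,\CF))\subset T^*(\Bun_G)\times\{0\}$ does not follow from the ULA property you quote.

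This gap propagates to your combination step: ULA together with the fiberwise bound $\on{SS}((\sH^!(V,\CF))_x)\subset\Nilp$ gives at best $\on{SS}(\sH^!(V,\CF))\subset\Nilp\times T^*X$, which is coisotropic rather than Lagrangian and strictly larger than $\Nilp\times\{0\}$. The lisseness in the $X$-direction is genuine additional content: it reflects the local constancy of the Hecke correspondence (the Beilinson--Drinfeld structure on the Grassmannian) along the curve, not the ULA property of the output sheaf. That extra input is exactly what \cite[Theorem 10.2.3]{AGKRRV1} supplies beyond the pointwise statement of \cite{NY}, and you should not attempt to recover it from ULA alone.
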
 

\sssec{}

Recall also (see \cite[Theorem F.9.7 combined with Corollary E.4.7]{AGKRRV1}) that for any algebraic stack $\CY$ and a conical half-dimensional
Zariski-closed subset $\CN\subset T^*(\CY)$, the (a priori fully faithful) functor
$$\Shv_\CN(\CY)\otimes \qLisse(X) \to \Shv_{\CN\times \{0\}}(\CY\times X)$$
is an equivalence. %In the above formula, the functor
%$\qLisse(X) \to \Shv(X)$ is \eqref{e:lisse !}. 

\medskip

Thus, from \thmref{t:Hecke on Nilp} we obtain:

%\begin{cor} \label{c:Hecke on Nilp single}
%The Hecke functor $\sH^!$ for $I=\{*\}$  sends the full subcategory 
%$$\Rep(\cG)\otimes \Shv_{T^*(\CZ)\times \Nilp}(\Bun_G) \subset \Rep(\cG)\otimes \Shv(\CZ\times \Bun_G)$$
%to the full subcategory
%$$\Shv_{T^*(\CZ)\times \Nilp}(\CZ\times \Bun_G)\otimes \qLisse(X) \subset 
%\Shv(\CZ\times \Bun_G\times X).$$
%\end{cor} 

\begin{cor} \label{c:Hecke on Nilp single}
The Hecke functor $\sH$ for $I=\{*\}$  sends the full subcategory 
$$\Rep(\cG)\otimes \Shv_{\Nilp}(\Bun_G) \subset \Rep(\cG)\otimes \Shv(\Bun_G)$$
to the full subcategory
$$\Shv_{\Nilp}(\Bun_G)\otimes \qLisse(X) \subset 
\Shv(\Bun_G\times X).$$
\end{cor}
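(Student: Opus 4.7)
The plan is to chain the two statements that have just been recalled. By \thmref{t:Hecke on Nilp}, for any $V\in \Rep(\cG)$ and $\CF\in \Shv_\Nilp(\Bun_G)$, the object $\sH^!(V,\CF)$ a priori lands in the subcategory $\Shv_{\Nilp\times\{0\}}(\Bun_G\times X)\subset \Shv(\Bun_G\times X)$. Thus the corollary reduces to identifying this subcategory with $\Shv_\Nilp(\Bun_G)\otimes \qLisse(X)$.

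For this identification I would invoke \cite[Theorem B.5.8]{AGKRRV1} with $\CY=\Bun_G$ and $\CN=\Nilp\subset T^*(\Bun_G)$. The cited theorem asserts that whenever $\CN$ is a conical \emph{half-dimensional} subset, the natural fully faithful exterior-product embedding
$$\Shv_\CN(\CY)\otimes \qLisse(X)\hookrightarrow \Shv_{\CN\times\{0\}}(\CY\times X)$$
is an equivalence. Applied to our situation, this gives
$$\Shv_\Nilp(\Bun_G)\otimes \qLisse(X)\;\iso\; \Shv_{\Nilp\times\{0\}}(\Bun_G\times X),$$
and combining with the previous paragraph yields precisely the image containment claimed in the corollary. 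The only hypothesis that needs checking is that the global nilpotent cone $\Nilp$ is conical (clear from its construction) and half-dimensional. The latter is Laumon's classical theorem, which is in any case a standing input into the definition of $\Shv_\Nilp(\Bun_G)$ throughout \cite{AGKRRV1} and so may be regarded as part of the setup.

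No genuine obstacle is anticipated: the argument is a one-line concatenation of \thmref{t:Hecke on Nilp} with a result that has just been quoted. The additional $\Rep(\cG)$ tensor factor on the source plays no role in the reasoning, since the conclusion is formulated uniformly in $V\in \Rep(\cG)$ and the relevant singular-support statement concerns only the $\Shv(\Bun_G\times X)$ variable.
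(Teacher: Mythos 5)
Your proof is correct and is precisely the argument the paper uses: it recalls \thmref{t:Hecke on Nilp} and the equivalence $\Shv_\CN(\CY)\otimes\qLisse(X)\simeq\Shv_{\CN\times\{0\}}(\CY\times X)$ of \cite[Theorem B.5.8]{AGKRRV1} for conical half-dimensional $\CN$, and concatenates them. Your added remark that half-dimensionality of $\Nilp$ is Laumon's theorem is a sensible piece of due diligence, though the paper takes it as part of the ambient setup.
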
 

Iterating, from \corref{c:Hecke on Nilp single} we further obtain:

%\begin{cor} \label{c:Hecke on Nilp}
%The Hecke functora $\sH^!$ sends the full subcategory 
%$$\Rep(\cG)^{\otimes I}\otimes \Shv_{T^*(\CZ)\times \Nilp}(\CZ \times \Bun_G) \subset \Rep(\cG)^{\otimes I}\otimes \Shv(\CZ\times \Bun_G)$$
%to the full subcategory
%$$\Shv_{T^*(\CZ)\times \Nilp}(\CZ\times \Bun_G)\otimes \qLisse(X)^{\otimes I} \subset 
%\Shv(\CZ\times \Bun_G\times X^I),$$
%where $\qLisse(X)^{\otimes I} \to \Shv(X^I)$ is given by \eqref{e:lisse !}. 
%\end{cor} 

\begin{cor} \label{c:Hecke on Nilp}
The Hecke functors $\sH$ map the full subcategory 
$$\Rep(\cG)^{\otimes I}\otimes \Shv_{\Nilp}(\Bun_G) \subset \Rep(\cG)^{\otimes I}\otimes \Shv(\Bun_G)$$
to the full subcategory
$$\Shv_{\Nilp}(\Bun_G)\otimes \qLisse(X)^{\otimes I} \subset 
\Shv(\Bun_G\times X^I).$$
%where $\qLisse(X)^{\otimes I} \to \Shv(X^I)$ is given by \eqref{e:lisse !}. 
\end{cor}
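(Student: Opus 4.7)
My plan is to induct on $|I|$, reducing everything to \corref{c:Hecke on Nilp single}. The base cases $|I|=0$ (vacuous) and $|I|=1$ (which \emph{is} \corref{c:Hecke on Nilp single}) are immediate. For $|I| \geq 2$, decompose $I = I' \sqcup \{i_0\}$. Since $\Rep(\cG)^{\otimes I}$ is generated under colimits by pure tensors $V' \boxtimes V_{i_0}$ with $V' \in \Rep(\cG)^{\otimes I'}$ and $V_{i_0} \in \Rep(\cG)$, and since the target $\Shv_\Nilp(\Bun_G) \otimes \qLisse(X)^{\otimes I}$ is closed under colimits inside $\Shv(\Bun_G \times X^I)$, it suffices to verify the containment on such pure tensors.

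Combining the associative/module structure of \secref{sss:associativity of Hecke} with the cross-set compatibilities of \secref{sss:hecke defined/codefined}, one obtains the iterative identification
\[
\sH^!_I(V' \boxtimes V_{i_0}, \CF) \;\simeq\; \sH^!_{\{i_0\}}\bigl(V_{i_0},\, \sH^!_{I'}(V', \CF)\bigr),
\]
where the outer Hecke is applied with parameter stack $\CZ := X^{I'}$, so its input $\sH^!_{I'}(V', \CF) \in \Shv(\Bun_G \times X^{I'})$ is regarded as an object of $\Shv(\CZ \times \Bun_G)$. The inductive hypothesis places this input in $\Shv_\Nilp(\Bun_G) \otimes \qLisse(X)^{\otimes I'}$. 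The parameterized form of \corref{c:Hecke on Nilp single}, together with the $\Shv(\CZ)$-linearity of $\sH^!_{\{i_0\}}$ noted in \secref{sss:associativity of Hecke} and \secref{sss:hecke defined/codefined} (which preserves the $\qLisse(X^{I'})$ factor), then lands the output in $\Shv_\Nilp(\Bun_G) \otimes \qLisse(X)^{\otimes I'} \otimes \qLisse(X) = \Shv_\Nilp(\Bun_G) \otimes \qLisse(X)^{\otimes I}$, completing the induction.

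The one piece requiring actual verification --- and the main (if minor) obstacle I foresee --- is the parameterized form of \corref{c:Hecke on Nilp single} with an arbitrary parameter stack $\CZ$. Via the external-product equivalence $\Shv_\CN(\CY) \otimes \qLisse(X) \simeq \Shv_{\CN \times \{0\}}(\CY \times X)$ cited just before \corref{c:Hecke on Nilp single}, this reduces to the singular-support assertion of \thmref{t:Hecke on Nilp} in the presence of parameters; the latter holds because the parameterized Hecke functor is obtained from the unparameterized one by tensoring with $\on{Id}_{\Shv(\CZ)}$, and therefore automatically preserves any external-product subcategory of the form $\Shv_{\CN'}(\CZ) \otimes -$. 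Modulo this bookkeeping, the argument is formal.
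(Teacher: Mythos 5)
Your induction on $|I|$ is precisely the content of the paper's single-word proof (``Iterating''), so you have correctly reconstructed the intended argument: decompose $I = I' \sqcup \{i_0\}$, express $\sH^!_I$ on pure tensors as $\sH^!_{\{i_0\}}$ with parameter stack $X^{I'}$ applied after $\sH^!_{I'}$, and feed in \corref{c:Hecke on Nilp single} at each step. You are also right that the one step requiring care is the parameterized form of \corref{c:Hecke on Nilp single}; the defined-by-kernel formalism of \secref{sss:hecke defined/codefined} is indeed what makes this work, though the cleaner way to phrase it is that the restriction of the parameterized $\sH^!$ to the full subcategory $\Shv(\CZ) \otimes \Shv(\Bun_G) \subset \Shv(\CZ \times \Bun_G)$ agrees with $\on{Id}_{\Shv(\CZ)} \otimes \sH^!$, after which \thmref{t:Hecke on Nilp} applies in the second factor and the $\Shv(\CZ)$-factor is untouched. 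This is exactly the bookkeeping the authors elide.
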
 

\sssec{} \label{sss:* and ! lisse}

Note that for a scheme or stack $\CY$, we can consider $\qLisse(\CY)$ as a full subcategory
of $\Shv(\CY)$ in two different ways.

\medskip

One is the tautological embedding\footnote{Recall that our conventions are such that the default pullback functor is $!$-pullback, and therefore, by definition, lisse sheaves are those that are locally $!$-pulled back from a point.  As explained below, it will be important to also consider the usual notion of lisse sheaves, i.e. those which are locally $*$-pulled back from a point.} 
\begin{equation} \label{e:! emb}
\qLisse(\CY)\hookrightarrow \Shv(\CY), \quad \CL\mapsto \CL;
\end{equation}
it endows $\qLisse(\CY)$ with a symmetric monoidal structure induced by the !-tensor product on $\Shv(\CY)$.

\medskip

We also have a different embedding:
\begin{equation} \label{e:* emb}
\qLisse(\CY)\hookrightarrow \Shv(\CY), \quad \CL\mapsto \CL\sotimes \ul\sfe_\CY;
\end{equation}
it endows $\qLisse(\CY)$ with a symmetric monoidal structure induced by the *-tensor product on $\Shv(\CY)$.

\medskip

However, it follows tautologically that the two symmetric monoidal structures on $\qLisse(\CY)$ coincide. Moreover,
the operations
$$\qLisse(\CY)\otimes \Shv(\CY)\overset{\text{\eqref{e:! emb}}\otimes \on{Id}}\longrightarrow \Shv(\CY)\otimes \Shv(\CY)
\overset{\sotimes}\to \Shv(\CY)$$
and 
$$\qLisse(\CY)\otimes \Shv(\CY)\overset{\text{\eqref{e:* emb}}\otimes \on{Id}}\longrightarrow \Shv(\CY)\otimes \Shv(\CY)
\overset{\overset{*}\otimes}\to \Shv(\CY)$$
define the \emph{same} monoidal action of $\qLisse(\CY)$ on $\Shv(\CY)$. 

\medskip

We will apply this discussion to $\CY=X^I$. 

\begin{rem} \label{r:shift}
Note also that when $\CY$ is smooth of dimension $n$, the embeddings \eqref{e:! emb} and \eqref{e:* emb}
differ by the cohomological shift $[2n]$. Yet they should not be confused. 
\end{rem}

\sssec{}\label{sss:Hecke on Nilp}

An assertion parallel to \corref{c:Hecke on Nilp single} holds for the $\sH^l$ functors. Namely, these functors send
%$$\Rep(\cG)^{\otimes I}\otimes \Shv_{T^*(\CZ)\times \Nilp}(\CZ \times \Bun_G) \subset \Rep(\cG)^{\otimes I}\otimes \Shv(\CZ\times \Bun_G)$$
%to
%$$\Shv_{T^*(\CZ)\times \Nilp}(\CZ\times \Bun_G)\otimes \qLisse(X)^{\otimes I} \subset 
%\Shv(\CZ\times \Bun_G\times X^I),$$
%where $\qLisse(X)^{\otimes I} \to \Shv(X^I)$ is given by \eqref{e:lisse *}. 

$$\Rep(\cG)^{\otimes I}\otimes \Shv_{\Nilp}(\Bun_G) \subset \Rep(\cG)^{\otimes I}\otimes \Shv(\Bun_G)$$
to
$$\Shv_{\Nilp}(\Bun_G)\otimes \qLisse(X)^{\otimes I} \subset 
\Shv(\Bun_G\times X^I).$$
where we will think of the embedding $\qLisse(X)^{\otimes I} \hookrightarrow \Shv(X^I)$ as given by \eqref{e:* emb}. 

\sssec{}

Thus, we can think of the Hecke action on $\Shv_{\Nilp}(\Bun_G)$ either by means of the functors 
\begin{equation} \label{e:H! qLisse}
\sH:\Rep(\cG)^{\otimes I}\otimes \Shv_{\Nilp}(\Bun_G) \to \Shv_{\Nilp}(\Bun_G)\otimes \qLisse(X)^{\otimes I},
\end{equation}
when we think of $\qLisse(X^I)$ as embedded into $\Shv(X^I)$ via \eqref{e:! emb}, or, equivalently, as
\begin{equation} \label{e:H* qLisse}
\sH^l:\Rep(\cG)^{\otimes I}\otimes \Shv_{\Nilp}(\Bun_G) \to \Shv_{\Nilp}(\Bun_G)\otimes \qLisse(X)^{\otimes I},
\end{equation}
when we think of $\qLisse(X^I)$ as embedded into $\Shv(X^I)$ via \eqref{e:* emb}. 

\medskip

The functors \eqref{e:H! qLisse} and \eqref{e:H* qLisse} are canonically isomorphic. 
Thus, in what follows we will not distinguish notationally between $\sH$ and $\sH^l$, when applied to objects from $\Shv_{\Nilp}(\Bun_G)$,
and just use the notation 
\begin{equation} \label{e:H!* qLisse}
\sH:\Rep(\cG)^{\otimes I}\otimes \Shv_{\Nilp}(\Bun_G) \to \Shv_{\Nilp}(\Bun_G)\otimes \qLisse(X)^{\otimes I}.
\end{equation}

For a map of finite sets $\psi:I\to J$, we have a data of commutativity for the diagram
%\begin{equation} \label{e:IJ qLisse}
%\CD
%\Rep(\cG)^{\otimes I}\otimes \Shv_{T^*(\CZ)\times \Nilp}(\CZ\times \Bun_G) @>{\sH^*}>>  \Shv(\CZ\times \Bun_G)\otimes \qLisse(X)^{\otimes I} \\
%@V{\on{Id}\otimes \on{mult}^\psi}VV @VV{(\on{id}\times \on{mult}^\psi}V \\
%\Rep(\cG)^{\otimes J}\otimes \Shv_{T^*(\CZ)\times \Nilp}(\CZ\times \Bun_G) @>{\sH^*}>>  \Shv(\CZ\times \Bun_G)\otimes \qLisse(X)^{\otimes J},
%\endCD
%\end{equation}
\begin{equation} \label{e:IJ qLisse}
\CD
\Rep(\cG)^{\otimes I}\otimes \Shv_{\Nilp}(\Bun_G) @>{\sH}>>  \Shv(\Bun_G)\otimes \qLisse(X)^{\otimes I} \\
@V{\on{Id}\otimes \on{mult}^\psi}VV @VV{\on{Id}\times \on{mult}^\psi}V \\
\Rep(\cG)^{\otimes J}\otimes \Shv_{\Nilp}(\Bun_G) @>{\sH}>>  \Shv(\Bun_G)\otimes \qLisse(X)^{\otimes J},
\endCD
\end{equation}
where in the right vertical arrow the functor 
$$\on{mult}^\psi:\qLisse(X)^{\otimes I}\to \qLisse(X)^{\otimes J}$$
is given by the symmetric monoidal structure on $\qLisse(X)$.

\medskip

These data of commutativity are endowed with a homotopy coherent system of compatibilities for
compositions of maps of finite sets.

\ssec{The category $\Rep(\cG)_\Ran$}   \label{ss:Rep Ran}

We will now introduce a device that allows us to express the Hecke action 
on $\Shv(\Bun_G)$ in terms of a single monoidal category, the \emph{Ran version} of
$\Rep(\cG)$, denoted $\Rep(\cG)_\Ran$. 

\medskip

We refer the reader to \cite[Sect. 11]{AGKRRV1} for a detailed discussion of this construction. 

\sssec{} \label{sss:Rep Ran}

Let $\CC$ be a symmetric monoidal DG category.
We define a new symmetric monoidal DG category $\CC_\Ran$ by
the following construction.

\medskip

Let  $\on{TwArr}(\on{fSet})$ be the category of
\emph{twisted arrows} on $\fSet$, see \cite[Sect. 1.2.2]{GKRV}. 

\medskip

The category $\CC_\Ran$ is the colimit over $\on{TwArr}(\on{fSet})$ of the functor
\begin{equation}\label{e:Ran diagram}
\on{TwArr}(\on{fSet})\to \DGCat
\end{equation}
that sends
$$(I\to J) \mapsto \CC^{\otimes I}\otimes \Shv(X^J).$$

\medskip

Here for a map
\begin{equation} \label{e:mor Tw arr}
\CD
I_1 @>>> J_1 \\
@V{\phi_I}VV @AA{\phi_J}A \\
I_2 @>>> J_2,
\endCD
\end{equation} 
in $\on{TwArr}(\on{fSet})$,
the corresponding functor
$$\CC^{\otimes I_1}\otimes \Shv(X^{J_1})\to \CC^{\otimes I_2}\otimes \Shv(X^{J_2})$$
is given by the tensor product functor along the fibers of $\phi_I$
\begin{equation} \label{e:mult phi}
\on{mult}^{\phi_I}:\CC^{\otimes I_1}\to \CC^{\otimes I_2}
\end{equation} 
and the functor
\begin{equation} \label{e:Delta phi}
(\Delta_{\phi_J})_!:\Shv(X^{J_1})\to \Shv(X^{J_2}),
\end{equation} 
where $\Delta_{\phi_J}:X^{J_2}\to X^{J_1}$ is the diagonal map induced by $\phi_J$. 

\sssec{}

The functor \eqref{e:Ran diagram} is naturally right-lax symmetric monoidal.  Therefore, the colimit $\CC_\Ran$
carries a natural symmetric monoidal structure.  Explicitly, this symmetric monoidal structure can be described
as follows.
For
$$V_1\otimes \CM_1\in \CC^{\otimes I_1}\otimes \Shv(X^{J_1}) \text{ and }
V_2\otimes \CM_2\in \CC^{\otimes I_2}\otimes \Shv(X^{J_2}),$$
the tensor product of their images in $\CC_\Ran$ is the image of the object
$$(V_1\otimes V_2)\otimes (\CM_1\boxtimes \CM_2)\in \CC^{\otimes (I_1\sqcup I_2)}\otimes \Shv(X^{J_1\sqcup J_2}).$$

\medskip

We will denote the resulting monoidal operation on $\CC_\Ran$ by
$$\CV_1,\CV_2\mapsto \CV_1\star \CV_2.$$

\medskip

We denote the unit object by $\one_{\CC_\Ran}$. It arises
from $(\Id:\emptyset \to \emptyset) \in \on{TwArr}(\fSet)$ and the
corresponding map 
$$\Vect = \CC^{\otimes \emptyset} \otimes \Shv(X^{\emptyset}) \to \CC_\Ran.$$

\sssec{}\label{sss:rep ran lim radj}

Let $(\psi:I \to J) \in \on{TwArr}(\fSet)$ be given.

\medskip 

We denote by 
$$
\on{ins}_\psi:\CC^{\otimes I} \otimes \Shv(X^J) \to 
\CC_\Ran
$$ 
the corresponding functor. 

\medskip

In the important special case $\psi = \on{Id}_I:I \to I$, we 
use the notation $\on{ins}_I$ in place of $\on{ins}_{\on{Id}_I}$.

\sssec{}

We will apply the above discussion to the case $\CC = \Rep(\cG)$.

\medskip

We denote the resulting (symmetric monoidal) category by $\Rep(\cG)_\Ran$. 

\ssec{A Ran version of the Hecke action}

\sssec{} \label{sss:Ran action}

We now claim that the datum of the functors \eqref{e:Hecke !} together with the compatibilities 
\eqref{e:IJ com} allow to define an action of $\Rep(\cG)_\Ran$ on $\Shv(\CZ \times \Bun_G)$.

\medskip

Namely, for $(I\overset{\psi}\to J)\in \on{TwArr}(\fSet)$ and
$$V\otimes \CM\in \Rep(\cG)^{\otimes I}\otimes \Shv(X^J),$$
we let the corresponding endofunctor of $\Shv(\CZ\times \Bun_G)$ be the composition
\begin{multline} \label{e:Hecke action !}
\Shv(\CZ\times \Bun_G) \overset{\on{Id}_\CZ \boxtimes \sH(V,-)}\longrightarrow \Shv(\CZ\times \Bun_G\times X^I)
\overset{-\sotimes p_3^!((\Delta_\psi)_*(\CM))}\longrightarrow \\
\to \Shv(\CZ\times \Bun_G\times X^I)
\overset{(p_{1,2})_*}\longrightarrow 
\Shv(\CZ\times \Bun_G).
\end{multline}

\sssec{}

Note, however, that using \eqref{e:ULA Hecke}, and the fact that $X$ is proper, 
we can rewrite the expression in \eqref{e:Hecke action !} as 
\begin{multline} \label{e:Hecke action *}
\Shv(\CZ\times \Bun_G) \overset{\on{Id}_\CZ \boxtimes \sH^l(V,-)}\longrightarrow \Shv(\CZ\times \Bun_G\times X^I)
\overset{-\overset{*}\otimes p_3^*((\Delta_\psi)_*(\CM))}\longrightarrow \\
\to \Shv(\CZ\times \Bun_G\times X^I)
\overset{(p_{1,2})_!}\longrightarrow \Shv(\CZ\times \Bun_G).
\end{multline}

\sssec{}

The interpretation of the Hecke action via \eqref{e:Hecke action !} implies that it commutes with
!-pullbacks and $\blacktriangle$-pushforwards along maps $f:\CZ_1\to \CZ_2$. And the interpretation of the Hecke 
action via \eqref{e:Hecke action *} implies that it commutes with *-pullbacks and !-pushforwards along 
maps $f:\CZ_1\to \CZ_2$.

\medskip

This implies that for a given $\CV\in \Rep(\cG)_{\Ran}$ its Hecke action is a functor both 
\emph{defined and codefined by a kernel}, (see \secref{sss:ker} for what this means). 

\medskip

We will denote the resulting endofunctor of 
$\Shv(\Bun_G)$ by $\sH_\CV$, and of $\Shv(\CZ\times \Bun_G)$
by $\on{Id}_\CZ \boxtimes \sH_\CV$ (see \secref{sss:ker} for the $\otimes$ notation). 

\medskip

We will refer to endofunctors of $\Shv(\Bun_G)$ (or, more generally, $\Shv(\CZ\times \Bun_G)$) that arise in this
way as \emph{integral Hecke functors}. 

\sssec{} \label{sss:conv notation} \label{sss:kernel notation}

For later use, we introduce the following notation. For $\CV\in \Rep(\cG)_{\Ran}$ we let 
$$\CK_\CV\in \Shv(\Bun_G \times \Bun_G)$$ 
denote the object equal to
$$(\on{Id}_{\Bun_G}\boxtimes \sH_\CV)(\on{ps-u}_{\Bun_G}),$$
where 
$$\on{ps-u}_{\Bun_G}:=(\Delta_{\Bun_G})_!(\ul\sfe_{\Bun_G}).$$

\sssec{}

By \thmref{t:Hecke on Nilp}, the action of $\Rep(\cG)_{\Ran}$ on $\Shv(\Bun_G)$
preserves the full subcategory
$$\Shv_{\Nilp}(\Bun_G)\subset \Shv(\Bun_G).$$

\ssec{The dual category of $\Rep(\cG)_\Ran$}

For what follows we will need to recall some constructions pertaining to duality on $\Rep(\cG)_\Ran$.
%We will do so in the general setting of \secref{ss:Rep Ran}.
 
\medskip

We refer the reader to \cite[Sects. 11.3 and 11.4]{AGKRRV1} for a detailed discussion. 
 
\sssec{}

Let $\CC$ be a general symmetric monoidal DG category. 

\medskip

Assume that $\CC$ is dualizable, and that for every $(I\overset{\psi}\to J)\in \on{TwArr}(\on{fSet})$, the functor
$$\on{mult}^\psi:\CC^{\otimes I}\to \CC^{\otimes J}$$
is such that the dual functor $(\CC^{\otimes J})^\vee\to (\CC^{\otimes I})^\vee$ 
admits a left adjoint. 

\medskip

In this case, one shows that the category $\CC_\Ran$ is dualizable (see, e.g., \cite[Chapter 1, Proposition 6.3.4]{GR}).
%\cite[Proposition 1.8.3]{DrGa}). 

\sssec{}

The dual category $(\CC_\Ran)^\vee$ is the category of continuous functors $\CC_\Ran\to \Vect$, and hence it can be described as 
\begin{equation} \label{e:dua Ran as limit prel prel}
\underset{(I\overset{\psi}\to J)\in \on{TwArr}(\fSet)^{op}}{\on{lim}} \, (\CC^{\otimes I} \otimes \Shv(X^J))^\vee,
\end{equation}
where the limit is formed using the functors dual to the ones used in the formation of the colimit in \secref{sss:Rep Ran}. 

\medskip

Using the Verdier self-duality on $\Shv(X^J)$, we can rewrite 
\begin{equation} \label{e:dua Ran as limit prel}
(\CC_\Ran)^\vee \simeq \underset{(I\overset{\psi}\to J)\in \on{TwArr}(\fSet)^{op}}{\on{lim}} \, 
\bMaps_{\DGCat}(\CC^{\otimes I},\Shv(X^J)),
\end{equation}
where $\bMaps_{\DGCat}(-,-)$ stands for the DG category of continuous $\sfe$-linear functor
between two objects of $\DGCat$.

\medskip

Explicitly, the transition functors in \eqref{e:dua Ran as limit prel} are defined as follows. For a morphism in $\on{TwArr}(\fSet)$
given by \eqref{e:mor Tw arr}, the corresponding functor
$$\bMaps_{\DGCat}(\CC^{\otimes I_2},\Shv(X^{J_2})) \to \bMaps_{\DGCat}(\CC^{\otimes I_1},\Shv(X^{J_1}))$$
is given by precomposition \eqref{e:mult phi} and postcomposition with 
$$(\Delta_{\phi_J})^!:\Shv(X^{J_2})\to \Shv(X^{J_1}),$$
which is the functor dual to \eqref{e:Delta phi}, under the Verdier self-duality of $\Shv(X^?)$. 

\begin{rem} \label{r:Ran self dual}
Suppose for a moment that $\CC$ is rigid (see \cite[Chapter 1, Sect. 9.1]{GR} for what this means). 
In this case, we have a natural identification $\CC^\vee\simeq \CC$, and 
we can further rewrite the right-hand side
in \eqref{e:dua Ran as limit prel} as 
$$\underset{(I\overset{\psi}\to J)\in \on{TwArr}(\fSet)^{op}}{\on{lim}} \, 
\CC^{\otimes I}\otimes \Shv(X^J),$$
where the limit is formed using the functors \emph{right adjoint} to the ones used in the formation of the colimit in \secref{sss:Rep Ran}. 
Hence, the above limit is isomorphic to the colimit
$$\underset{(I\overset{\psi}\to J)\in \on{TwArr}(\fSet)}{\on{colim}} \, 
\CC^{\otimes I}\otimes \Shv(X^J)$$
(see \cite[Chapter 1, Proposition 2.5.7]{GR}), i.e., to $\CC_\Ran$ itself. 

\medskip

This implies that for $\CC$ rigid, the category $\CC_\Ran$ is naturally self-dual\footnote{In fact, the category $\CC_\Ran$ is itself
rigid, see \cite[Sect. 11.3]{AGKRRV1}.}. However, we will not use this
self-duality for the purposes of this paper. 
\end{rem} 

\sssec{} \label{sss:DGCat fSet}

Consider the $(\infty,2)$-category 
$$\DGCat^{\fSet}:=\on{Funct}(\fSet,\DGCat).$$

There will be several DG categories of interest in this paper that will arise as
$$\bMaps_{\DGCat^{\fSet}}(\fC_1,\fC_2)$$
for some particular $\fC_1,\fC_2\in \DGCat^{\fSet}$.

\medskip

Concretely, an object of $\bMaps_{\DGCat^{\fSet}}(\fC_1,\fC_2)$ is a collection of functors between DG categories
$$\fC_1(I)\to \fC_2(I), \quad I\in \fSet$$
that make the diagrams
$$
\CD 
\fC_1(I) @>>>  \fC_2(I) \\
@V{\fC_1(\psi)}VV @VV{\fC_2(\psi)}V \\
\fC_1(J) @>>>  \fC_2(J)
\endCD
$$
commute for $I\overset{\psi}\to J$, along with a homotopy coherent system of higher compatibilities.

\sssec{} \label{sss:obj in DGCat fSet}

Here are the first few objects of $\DGCat^{\fSet}$ that we will need.

\medskip

One is the object denoted $\CC^{\otimes \fSet}$ and defined by
$$I\in \fSet \, \rightsquigarrow\, \CC^{\otimes I}\in \DGCat,$$
where the functoriality is furnished by the symmetric monoidal structure on $\CC$.

\medskip

Another object, denoted $\Shv^!(X^{\fSet})$, is defined by
$$I\in \fSet \, \rightsquigarrow\, \Shv(X^I)\in \DGCat,$$
where for $I \overset{\phi}\to J$, the corresponding functor
$\Shv(X^I)\to \Shv(X^J)$ is $(\Delta_\phi)^!$.

\sssec{}

Consider the category 
$$\bMaps_{\DGCat^{\fSet}}(\CC^{\otimes \fSet},\Shv^!(X^{\fSet})).$$

Note that this category identifies with the limit \eqref{e:dua Ran as limit prel} (see e.g. \cite[Lemma 1.3.12]{GKRV}).

\sssec{} \label{sss:dual of Ran}

Thus, to summarize, we obtain a canonical equivalence
\begin{equation} \label{e:three incarnations}
\bMaps_{\DGCat^{\fSet}}(\CC^{\otimes \fSet},\Shv^!(X^{\fSet})) \simeq (\CC_\Ran)^{\vee}.
\end{equation} 

Explicitly, given 
$$\CS:\CC_\Ran\to \Vect,$$
the corresponding system of functors
$$\CS_I:\CC^{\otimes I}\to  \Shv(X^I)$$
is recovered as follows:

\medskip

We precompose $\CS$ with $\on{ins}_I$ to obtain a functor
$$\CC^{\otimes I}\otimes \Shv(X^I)\to \Vect.$$

By Verdier duality, the datum of the latter functor is equivalent to the datum of a 
functor $\CS_I$: 
$$\CS\circ \on{ins}_I(c\otimes \CM)=\on{C}^\cdot(X^I,\CS_I(c)\sotimes \CM), \quad c\in \CC^{\otimes I},\,\CM\in \Shv(X^I).$$

\sssec{}

Vice versa, the pairing 
$$\CC_{\Ran} \otimes \bMaps_{\DGCat^{\fSet}}(\CC^{\otimes \fSet},\Shv^!(X^{\fSet})) \to \Vect$$
is explicitly given as follows:

\medskip

For an object $\{\CS_I\}\in  \bMaps_{\DGCat^{\fSet}}(\CC^{\otimes \fSet},\Shv^!(X^{\fSet}))$, the corresponding
functor $$\CS:\CC_{\Ran}\to \Vect$$ is such that for $(I\overset{\psi}\to J)\in \on{TwArr}(\on{fSet})$, the resulting functor 
$$\CC^{\otimes I} \otimes \Shv(X^J) \overset{\on{ins}_\psi}\longrightarrow \CC_{\Ran}\overset{\CS}\to \Vect,$$
equals
$$\CC^{\otimes I} \otimes \Shv(X^J) \overset{\on{mult}^\psi\otimes \on{Id}}\longrightarrow 
\CC^{\otimes J} \otimes \Shv(X^J) \overset{\CS_J\otimes \on{Id}}\longrightarrow  
\Shv(X^J) \otimes \Shv(X^J) \overset{\ev_{X^J}}\longrightarrow \Vect,$$
where $\ev_{X^J}$ is the Verdier duality pairing on $\Shv(X^J)$, i.e., 
$$\Shv(X^J) \otimes \Shv(X^J) \overset{\Delta_{X^J}^!}\to \Shv(X^J) \overset{\on{C}^\cdot(X^J,-)}\to \Vect.$$

\section{Quasi-coherent sheaves on $\LocSys^{\on{restr}}_\cG(X)$} \label{s:LocSys}

Although the statement of the Trace Conjecture does not involve Langlands duality, we will
need some of its ingredients for the proof. Indeed, one of the key tools in the proof will be the
category of quasi-coherent sheaves on the (pre)stack $\LocSys^{\on{restr}}_\cG(X)$, classifying   
local systems with restricted variation with respect to the Langlands dual group $\cG$ of $G$. 

\ssec{The (pre)stack $\LocSys^{\on{restr}}_\cG(X)$}

We start by recalling the definition of the prestack $$\LocSys^{\on{restr}}_\cG(X),$$ following \cite[Sect. 1.4]{AGKRRV1}.

\sssec{}

For a test affine (derived) scheme $S$, we let $\Maps(S,\LocSys^{\on{restr}}_\cG(X))$ be the space of right t-exact 
symmetric monoidal functors 
$$\Rep(\cG)\to \QCoh(S)\otimes \qLisse(X).$$

\medskip

According to \cite[Theorem 1.4.5]{AGKRRV1}, the prestack $\LocSys^{\on{restr}}_\cG(X)$ can be written
as the quotient $\CZ/\cG$, where $\CZ$ is a disjoint union of \emph{formal affine schemes} locally
almost of finite type (over the field of coefficients $\sfe$). 

\sssec{} \label{sss:ls recap}

The main results of this paper will be based on considering the (symmetric monoidal)
DG category 
$$\QCoh(\LocSys^{\on{restr}}_\cG(X)).$$

\medskip

We will now explain a certain feature that this category possesses, which is a consequence of
properties of $\LocSys^{\on{restr}}_\cG(X)$ as a prestack.

\medskip

\noindent(i) First, according to \cite[Lemma 7.3.2 and Sect. 7.9.1]{AGKRRV1}, the diagonal map
$$\Delta_{\LocSys^{\on{restr}}_\cG(X)}:\LocSys^{\on{restr}}_\cG(X)\to \LocSys^{\on{restr}}_\cG(X)\times \LocSys^{\on{restr}}_\cG(X)$$
is affine, so the functor
$$(\Delta_{\LocSys^{\on{restr}}_\cG(X)})_*:\QCoh(\LocSys^{\on{restr}}_\cG(X)) \to
\QCoh(\LocSys^{\on{restr}}_\cG(X)\times \LocSys^{\on{restr}}_\cG(X))$$
is continuous.

\medskip

\noindent(ii) Second, according to \cite[Corollary 7.1.8(b) and Sect. 7.9.1]{AGKRRV1}, the DG category $\QCoh(\LocSys^{\on{restr}}_\cG(X))$ is dualizable.
By \cite[Chapter 3, Proposition 3.1.7]{GR}, this implies that for any prestack $\CY$ over $\sfe$, the functor of external tensor product
$$\QCoh(\LocSys^{\on{restr}}_\cG(X)) \otimes \QCoh(\CY)\to
\QCoh(\LocSys^{\on{restr}}_\cG(X)\times \CY)$$
is an equivalence.

\medskip

In particular, we can view
$$(\Delta_{\LocSys^{\on{restr}}_\cG(X)})_*(\CO_{\LocSys^{\on{restr}}_\cG(X)})$$
as an object of 
$$\QCoh(\LocSys^{\on{restr}}_\cG(X))\otimes \QCoh(\LocSys^{\on{restr}}_\cG(X)).$$

\medskip

\noindent(iii) And third, according to \cite[Proposition 7.5.4 and Sect. 7.9.1]{AGKRRV1}, the above object
$$(\Delta_{\LocSys^{\on{restr}}_\cG(X)})_*(\CO_{\LocSys^{\on{restr}}_\cG(X)}) \in 
\QCoh(\LocSys^{\on{restr}}_\cG(X))\otimes \QCoh(\LocSys^{\on{restr}}_\cG(X))$$
defines the unit of a self-duality on $\QCoh(\LocSys^{\on{restr}}_\cG(X))$.

\begin{rem}

Points (i) and (ii) above mean that the prestack $\LocSys^{\on{restr}}_\cG(X)$ is \emph{semi-passable}
in the terminology of \cite[Sect. 7.4.3]{AGKRRV1}. 

\medskip

Point (iii) is the combination of \cite[Lemma 7.4.2]{AGKRRV1},
which says that $\QCoh(-)$ on a semi-passable prestack is a semi-rigid symmetric monoidal category
(see \cite[Appendix C]{AGKRRV1} for what this means), combined with the description of the unit of the canonical
self-duality on a semi-rigid symmetric monoidal category (see \cite[Lemma C.3.3]{AGKRRV1}). 

\end{rem}

\sssec{The functor $\Gamma_!$} \label{sss:coSect}

We can view the structure sheaf 
$\CO_{\LocSys^{\on{restr}}_\cG(X)}\in \QCoh(\LocSys^{\on{restr}}_\cG(X))$ 
as defining a functor
\begin{equation} \label{e:structure sheaf LocSys}
\Vect\to \QCoh(\LocSys^{\on{restr}}_\cG(X)).
\end{equation} 

\noindent Note that the object 
$\CO_{\LocSys^{\on{restr}}_\cG(X)}\in \QCoh(\LocSys^{\on{restr}}_\cG(X))$ is not
compact, so the functor of global sections
$$\Gamma(\LocSys^{\on{restr}}_\cG(X),-):\QCoh(\LocSys^{\on{restr}}_\cG(X))\to \Vect,$$
the right adjoint to \eqref{e:structure sheaf LocSys}, is \emph{not} continuous. 

\medskip

However, due to the self-duality of $\QCoh(\LocSys^{\on{restr}}_\cG(X))$ of \secref{sss:ls recap}(iii), we can consider the functor
\emph{dual} to \eqref{e:structure sheaf LocSys}, which is a functor
\begin{equation} \label{e:cosec LocSys}
\QCoh(\LocSys^{\on{restr}}_\cG(X))\to \Vect,
\end{equation} 
denoted $\Gamma_!(\LocSys^{\on{restr}}_\cG(X),-)$.
 
\medskip 

We refer the reader to \cite[Sects. 7.6 and 7.7]{AGKRRV1} for a more detailed discussion of
this functor.

 % and refer to as the functor of
% \emph{sections with scheme-theoretic support}.  
 
%\begin{rem} 
% 
%The terminology for $\Gamma_!(\LocSys^{\on{restr}}_\cG(X),-)$ is explained by the following assertion
%(see \cite[Corollary 5.7.8 and Sect. 5.7.11]{AGKRRV1}):
% 
%\begin{prop} \hfill
%
%\smallskip
%
%\noindent{\em(a)}
%The functor
%$$\underset{S\in (\on{Sch}_{\on{aft}})_{/\LocSys^{\on{restr}}_\cG(X)}}{\on{colim}}\, \QCoh(S)$$
%is an equivalence, where the transition functors in the family are
%$$f:S_1\to S_2\, \rightsquigarrow\, f_*:\QCoh(S_1)\to \QCoh(S_2).$$ 
%
%\smallskip
%
%\noindent{\em(b)} In terms of the above equivalence, 
%the functor $\Gamma_!(\LocSys^{\on{restr}}_\cG(X),-)$ corresponds to the
%functor
%$$\underset{S\in (\on{Sch}_{\on{aft}})_{/\LocSys^{\on{restr}}_\cG(X)}}{\on{colim}}\, \QCoh(S)\to \Vect$$
%given by the compatible family of functors
%$$\Gamma(S,-):\QCoh(S)\to \Vect.$$
%
%\end{prop}
%
%\end{rem}

\sssec{The tautological objects} \label{sss:taut obj}

For a finite set $I$ and an object $V\in \Rep(\cG)^{\otimes I}$, let
$$\Ev(V)\in \QCoh(\LocSys^{\on{restr}}_\cG(X))\otimes \qLisse(X)^{\otimes I}$$
be the corresponding tautological object:

\medskip

For $S\to \LocSys^{\on{restr}}_\cG(X)$ corresponding to a symmetric monoidal functor
$$\Phi_S:\Rep(\cG)\to \QCoh(S)\otimes \qLisse(X),$$
the pullback of $\on{Ev}(V)$ to $S$, viewed as an object in $\QCoh(S)\otimes  \qLisse(X)^{\otimes I}$ equals the value on $V$ of the composition
$$\Rep(\cG)^{\otimes I}\overset{\Phi_S^{\otimes I}}\to (\QCoh(S)\otimes \qLisse(X))^{\otimes I} \to \QCoh(S)\otimes \qLisse(X)^{\otimes I},$$
where the second arrow is given by the $I$-fold tensor product functor
$$\QCoh(S)^{\otimes I}\to \QCoh(S).$$

\ssec{Description of the category $\QCoh(\LocSys^{\on{restr}}_\cG(X))$}

The prestack $\LocSys^{\on{restr}}_\cG(X))$ was defined using the symmetric monoidal 
categories $\Rep(\cG)$ and $\qLisse(X)$. Therefore, it would not be very surprising to have
a description of the category $\QCoh(\LocSys^{\on{restr}}_\cG(X))$ purely in terms of functors
between the above two categories.

\medskip

In this subsection, we will provide such a description, following \cite{AGKRRV1}.

\sssec{} 

Recall the category $\DGCat^{\fSet}$, see \secref{sss:DGCat fSet}, and the object
$$\qLisse(X)^{\otimes \fSet}\in \DGCat^{\fSet},$$
see \secref{sss:obj in DGCat fSet}.

\medskip

Note that we could also consider the object $\qLisse(X^{\fSet})$ of $\DGCat^{\fSet}$:
$$I\in \fSet \, \rightsquigarrow\, \qLisse(X^I)\in \DGCat.$$

\medskip

We have a naturally defined map in $\DGCat^{\fSet}$
\begin{equation} \label{e:Kun fSet}
\qLisse(X)^{\otimes \fSet} \to \qLisse(X^{\fSet}). 
\end{equation}

\medskip

However, from \cite[Theorem E.9.9 and Corollary E.4.7]{AGKRRV1}, we obtain:

\begin{lem} \label{l:Kun fSet}
The map \eqref{e:Kun fSet} is an isomorphism.
\end{lem}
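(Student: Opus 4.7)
The plan is to reduce the statement to a pointwise check in $\DGCat^{\fSet}$ and then apply Theorem B.5.8 of \cite{AGKRRV1} inductively on the cardinality of the finite set.

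A morphism in the functor category $\DGCat^{\fSet}$ is an equivalence if and only if it is an equivalence on each object, so it suffices to show that for every finite set $I$, the natural Künneth map
\[
\qLisse(X)^{\otimes I} \to \qLisse(X^I)
\]
is an equivalence of DG categories. I would proceed by induction on $|I|$. The case $|I|=0$ reduces to the identity $\Vect \iso \Vect$. For the inductive step, write $I = I' \sqcup \{*\}$; the Künneth map for $I$ factors as
\[
\qLisse(X)^{\otimes I'} \otimes \qLisse(X) \to \qLisse(X^{I'}) \otimes \qLisse(X) \to \qLisse(X^{I'} \times X),
\]
where the first arrow is the Künneth map for $I'$ tensored with $\on{Id}_{\qLisse(X)}$ and the second is the binary Künneth map. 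The first arrow is an equivalence by the inductive hypothesis. For the second, apply Theorem B.5.8 of \cite{AGKRRV1} with $\CY = X^{I'}$ and $\CN \subset T^*(X^{I'})$ the zero section (which is conical, and half-dimensional since $X^{I'}$ is smooth): the theorem yields
\[
\Shv_{\{0\}}(X^{I'}) \otimes \qLisse(X) \iso \Shv_{\{0\} \times \{0\}}(X^{I'} \times X),
\]
which is precisely the claim, since $\Shv_{\{0\}}(-) = \qLisse(-)$ on smooth schemes. Chaining the two equivalences completes the induction.

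The functoriality in $I \in \fSet$ (i.e., the data assembling the pointwise equivalences into a morphism in $\DGCat^{\fSet}$) is already built into the definition of the map \eqref{e:Kun fSet}, so there is nothing further to verify once pointwise equivalence is established; Corollary B.3.7 of \cite{AGKRRV1} is what ensures the compatibility of $!$-pullback along diagonals with the subcategories of lisse sheaves, making the definition of \eqref{e:Kun fSet} in $\DGCat^{\fSet}$ sensible in the first place. The main (and only real) technical input is Theorem B.5.8; the rest is formal induction plus the general fact that equivalences in a functor category are detected objectwise, so I expect no serious obstacle beyond invoking the cited result.
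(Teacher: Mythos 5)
Your proof is correct and essentially matches what the paper intends: the paper merely cites Theorem B.5.8 and Corollary B.3.7 of \cite{AGKRRV1} as the source of the lemma, and your argument is exactly the standard way to unpack that citation — reduce to a componentwise check in $\DGCat^{\fSet}$, then induct on $|I|$ using the binary K\"unneth statement of Theorem B.5.8 applied with $\CY = X^{I'}$ and $\CN$ the zero section (which is conical and half-dimensional since $X^{I'}$ is smooth). Your reading of the role of Corollary B.3.7 (compatibility of $!$-pullback along diagonals with the lisse subcategories, needed for the map \eqref{e:Kun fSet} to be well-defined in $\DGCat^{\fSet}$) is also consistent with how the paper sets things up.
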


%\begin{rem}
%Note that \lemref{l:Kun fSet} crucially uses the fact that $X$ is a complete curve.
%\end{rem} 

%\begin{rem} \label{r:Kunneth gen}
%Note that an analog of \lemref{e:Kun fSet} fails for the generic point of $X$:  the functors
%$$\qLisse(\eta_X)^{\otimes I}\to \qLisse(\eta_{X^I})$$
%are fully faithful, but \emph{not} equivalences.
%\end{rem}

\sssec{}\label{sss:qcoh recap}

Consider now the DG category $\bMaps_{\DGCat^{\fSet}}(\Rep(\cG)^{\otimes \fSet},\qLisse(X)^{\otimes \fSet})$, and the following functor,
to be denoted 
\begin{equation} \label{e:descr QCoh}
\coLoc:\QCoh(\LocSys^{\on{restr}}_\cG(X)) \to
\bMaps_{\DGCat^{\fSet}}(\Rep(\cG)^{\otimes \fSet},\qLisse(X)^{\otimes \fSet}).
\end{equation}

Namely, $\coLoc$ sends $\CF\in \QCoh(\LocSys^{\on{restr}}_\cG(X))$ to the collection of functors
$$\CF_I:\Rep(\cG)^{\otimes I}\to \qLisse(X)^{\otimes I}$$
defined by
$$\CF_I(V):=(\Gamma_!(\LocSys^{\on{restr}}_\cG(X),-) \otimes \on{Id}_{\qLisse(X)^{\otimes I}})
(\CF\otimes \on{Ev}(V)),$$
where $\Gamma_!$ is as in \secref{sss:coSect}, and $\on{Ev}(V)$ is as in \secref{sss:taut obj},
and we view $\CF\otimes \on{Ev}(V)$ as an object of
$$\LocSys^{\on{restr}}_\cG(X)\otimes \qLisse(X)^{\otimes I}.$$

\sssec{}

The following is a reformulation of one of the main results of the paper \cite{AGKRRV1}:

\begin{thm} \label{t:descr QCoh}
The functor $\coLoc$ is an equivalence.
\end{thm}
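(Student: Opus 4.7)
The plan is to construct an explicit inverse to $\coLoc$ using Tannakian reconstruction and the self-duality of $\QCoh(\LocSys^{\on{restr}}_\cG(X))$ recalled in \secref{sss:ls recap}. First, I would reinterpret the target: by \lemref{l:Kun fSet}, an object of $\bMaps_{\DGCat^{\fSet}}(\Rep(\cG)^{\otimes \fSet},\qLisse(X)^{\otimes \fSet})$ amounts to a compatible system of functors $F_I:\Rep(\cG)^{\otimes I}\to \qLisse(X^I)$ intertwining $\on{mult}^\psi$ on the source with $(\Delta_\psi)^!$ on the target for every map $\psi:I\to J$. Comparing with the description of $(\Rep(\cG)_\Ran)^\vee$ in \secref{sss:dual of Ran}, this realizes the target of $\coLoc$ as the full subcategory of $(\Rep(\cG)_\Ran)^\vee$ consisting of those functors whose components factor through $\qLisse(X)^{\otimes I}\hookrightarrow \Shv(X^I)$.

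\medskip

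Second, I would build a candidate inverse $\Loc^\vee$ as follows. Using self-duality of $\QCoh(\LocSys^{\on{restr}}_\cG(X))$, to produce a QC sheaf is to produce a continuous functor $\QCoh(\LocSys^{\on{restr}}_\cG(X))\to \Vect$. Given $\{F_I\}$, define $\Loc^\vee(\{F_I\})$ by specifying its pairing with any $\CF'\in \QCoh(\LocSys^{\on{restr}}_\cG(X))$: present $\CF'$ via the colimit description $\QCoh(\LocSys^{\on{restr}}_\cG(X))\simeq \underset{S}{\on{colim}}\,\QCoh(S)$ from \secref{sss:coSect} and then apply the $F_I$ to the ``matrix coefficients'' of the tautological $\Phi_S:\Rep(\cG)\to \QCoh(S)\otimes\qLisse(X)$. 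By construction, the pairing $\Gamma_!(\LocSys^{\on{restr}}_\cG(X),\Loc^\vee(\{F_I\})\otimes \Ev(V))$ recovers $F_I(V)$, so $\coLoc\circ\Loc^\vee\simeq \on{Id}$.

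\medskip

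Third, to check $\Loc^\vee\circ\coLoc\simeq \on{Id}$, I would reduce to the affine pieces using \cite[Theorem 1.3.2]{AGKRRV1}, which writes $\LocSys^{\on{restr}}_\cG(X) = \CZ/\cG$ with $\CZ$ a disjoint union of formal affine schemes $\on{Spec}(A_\alpha)$. On each piece, equipped with its tautological $\Phi_{A_\alpha}$, the equivalence reduces to Tannakian reconstruction for the rigid symmetric monoidal category $\Rep(\cG)$ acting via $\Phi_{A_\alpha}$. The main obstacle I anticipate is essential surjectivity---verifying that an abstract compatible system $\{F_I\}$ genuinely arises from a bona fide QC sheaf rather than merely a formal pro-object. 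This relies decisively on the affineness of the diagonal (giving continuity of $(\Delta_{\LocSys^{\on{restr}}_\cG(X)})_*$ and hence the self-duality of $\QCoh(\LocSys^{\on{restr}}_\cG(X))$), and on the fact that the presentation $\CZ\to \LocSys^{\on{restr}}_\cG(X)$ is sufficiently controlled---through the Künneth statement of \lemref{l:Kun fSet}---to transport the $\fSet$-indexed data into genuine quasi-coherent descent data on $\CZ$.
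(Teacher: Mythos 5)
The paper does not supply a proof of this statement: it is quoted as one of the main results of \cite{AGKRRV1} (Theorem 6.2.11 and Sect.\ 9.2.3 there), so there is no in-paper argument to measure your proposal against.

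Judged on its own, your outline identifies several correct ingredients (the colimit description $\QCoh(\LocSys^{\on{restr}}_\cG(X))\simeq \on{colim}_S\QCoh(S)$ with $*$-pushforward transition functors, the self-duality of this category, and the K\"unneth statement \lemref{l:Kun fSet}), but the step that would carry the actual content of the theorem is left as a gesture. You flag essential surjectivity as ``the main obstacle,'' then dispose of it by saying the statement ``reduces to Tannakian reconstruction for the rigid symmetric monoidal category $\Rep(\cG)$'' on the pieces $\Spec(A_\alpha)$, without explaining either the reduction or the reconstruction. Two concrete problems hide there. First, the pieces of $\CZ$ are \emph{formal} affine schemes locally almost of finite type; naive Tannakian reconstruction of $\QCoh(S)$ from symmetric monoidal functors out of $\Rep(\cG)$ is a statement about ordinary affine schemes, and passing to the formal completion changes the category $\QCoh$ in a way you do not control. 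Second, passing from $\LocSys^{\on{restr}}_\cG(X)\simeq \CZ/\cG$ down to $\CZ$ requires a genuine descent argument relating the $\fSet$-indexed data to descent data along the \v{C}ech nerve of $\CZ\to\CZ/\cG$; the K\"unneth lemma identifies $\qLisse(X)^{\otimes I}$ with $\qLisse(X^I)$, which simplifies the target of $\coLoc$, but it is not a descent statement and does not ``transport'' anything. Step 2 also has a well-definedness issue you do not address: the ``matrix coefficients'' pairing with a test sheaf $\CF'\in\QCoh(S)$ must be shown compatible with the $*$-pushforward transition functors in the colimit presentation, which is not automatic. Finally, a notational warning: the paper already reserves $\Loc^\vee$ for the dual of the localization functor $\Loc:\Rep(\cG)_\Ran\to\QCoh(\LocSys^{\on{restr}}_\cG(X))$, which by \corref{c:Loc dual} is only fully faithful, not an equivalence; the functor you are trying to build is an inverse to $\coLoc$, and should not be confused with $\Loc^\vee$.
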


\begin{proof}

Recall the category 
$$\Rep(\cG)^{\otimes X\on{-lisse}}=\coHom(\Rep(\cG),\qLisse(X)),$$
see \cite[Sects. 8.2.4 and 8.4.2]{AGKRRV1}.  

\medskip

According to \cite[Lemma 8.2.8(b)]{AGKRRV1}, the category
$$\bMaps_{\DGCat^{\fSet}}(\Rep(\cG)^{\otimes \fSet},\qLisse(X)^{\otimes \fSet})$$
identifies with the category 
$$\bMaps_{\DGCat}(\coHom(\Rep(\cG),\qLisse(X)),\Vect).$$

Consider now the functor
\begin{equation} \label{e:pre-Loc AGKRRV}
\coHom(\Rep(\cG),\qLisse(X))\to \QCoh(\LocSys^{\on{restr}}_\cG(X))
\end{equation}
of \cite[Equation (8.8)]{AGKRRV1}. 

\medskip

Unwinding the definitions, we obtain that the composite functor
\begin{multline*}
\bMaps_{\DGCat}(\QCoh(\LocSys^{\on{restr}}_\cG(X)),\Vect) \simeq 
\QCoh(\LocSys^{\on{restr}}_\cG(X))^\vee \overset{\text{\secref{sss:ls recap}(iii)}}\simeq \\
\simeq \QCoh(\LocSys^{\on{restr}}_\cG(X)) \overset{\coLoc}\longrightarrow
\bMaps_{\DGCat^{\fSet}}(\Rep(\cG)^{\otimes \fSet},\qLisse(X)^{\otimes \fSet}) \simeq \\
\simeq \bMaps_{\DGCat}(\coHom(\Rep(\cG),\qLisse(X)),\Vect)
\end{multline*}
identifies with the dual of \eqref{e:pre-Loc AGKRRV}.

\medskip

Now, \cite[Theorem 8.3.7]{AGKRRV1} says that \eqref{e:pre-Loc AGKRRV} is an equivalence.
Hence, so is $\coLoc$.

\end{proof}

%\begin{rem}
%We note that the results of this subsection and the previous one hold more generally:
%instead of a curve $X$ one can take any scheme of finite type (not necessarily proper).
%
%\medskip
%
%The only place were properness was used was \lemref{l:Kun fSet}, but \thmref{t:descr QCoh}
%does not rely on it.
%\end{rem}

\ssec{Localization} \label{ss:Loc}

In this subsection we will recall how $\Rep(\cG)_\Ran$ is related to $\QCoh(\LocSys^{\on{restr}}_\cG(X))$.

\sssec{}

Consider the symmetric monoidal category $\Rep(\cG)_\Ran$, 
see \secref{sss:Rep Ran}. We are going
to construct a symmetric monoidal functor
\begin{equation} \label{e:functor Loc}
\on{Loc}:\Rep(\cG)_\Ran\to \QCoh(\LocSys^{\on{restr}}_\cG(X))
\end{equation}
that plays a key role in this work.

\medskip

For an individual 
$$(I\overset{\psi}\to J) \in \on{TwArr}(\on{fSet}),$$
the corresponding functor
$$\on{Loc}^{I\overset{\psi}\to J}:\Rep(\cG)^{\otimes I}\otimes \Shv(X^J)\to  \QCoh(\LocSys^{\on{restr}}_\cG(X))$$
sends $V\in \Rep(\cG)^{\otimes I}$ to the functor $\Shv(X^J)\to \QCoh(\LocSys^{\on{restr}}_\cG(X))$ equal to the composition 
\begin{multline*}
\Shv(X^J) \overset{\on{Ev}(V)\otimes \on{Id}}\longrightarrow \QCoh(\LocSys^{\on{restr}}_\cG(X))\otimes \qLisse(X)^{\otimes I} \otimes  \Shv(X^J) 
\overset{\on{Id}\otimes \on{mult}^\psi\otimes \on{Id}}\longrightarrow \\
\to \QCoh(\LocSys^{\on{restr}}_\cG(X))\otimes \qLisse(X)^{\otimes J} \otimes  \Shv(X^J) \longrightarrow \\
\to \QCoh(\LocSys^{\on{restr}}_\cG(X))\otimes \Shv(X^J) 
\overset{\on{Id}\otimes \on{C}_c^\cdot(X^J,-)}\longrightarrow \QCoh(\LocSys^{\on{restr}}_\cG(X)),
\end{multline*}
where the third arrow using the canonical action of $\qLisse(X)^{\otimes J}\simeq \qLisse(X^J)$ on $\Shv(X^J)$,
see \secref{sss:* and ! lisse}. 

\medskip

It is easy to see that the functors $\on{Loc}^{I\overset{\psi}\to J}$ indeed combine to define a functor, to be denote $\Loc$, 
as in \eqref{e:functor Loc}. Moreover, this functor carries a naturally defined symmetric monoidal structure. 

\sssec{} 

Consider the dual functor
$$\Loc^{\vee}:\QCoh(\LocSys^{\on{restr}}_\cG(X))^\vee \to (\Rep(\cG)_\Ran)^\vee.$$

Recall now that the category $\QCoh(\LocSys^{\on{restr}}_\cG(X))$ is self-dual (see \secref{sss:ls recap}(iii)),
and that the category $(\Rep(\cG)_\Ran)^\vee$ can be described as 
$\bMaps_{\DGCat^{\fSet}}(\CC^{\otimes \fSet},\Shv^!(X^{\fSet}))$ (see \eqref{e:three incarnations}).

\medskip

Thus, we can view $\Loc^\vee$ as a functor
$$\QCoh(\LocSys^{\on{restr}}_\cG(X)) \to \bMaps_{\DGCat^{\fSet}}(\CC^{\otimes \fSet},\Shv^!(X^{\fSet})).$$

Unwinding the definitions, we obtain that $\Loc^\vee$ identifies with the composition
\begin{multline*}
\QCoh(\LocSys^{\on{restr}}_\cG(X)) \overset{\coLoc}\to 
\bMaps_{\DGCat^{\fSet}}(\CC^{\otimes \fSet},\qLisse(X)^{\otimes \fSet})\simeq \\
\simeq \bMaps_{\DGCat^{\fSet}}(\CC^{\otimes \fSet},\qLisse(X^{\fSet}))
\to \bMaps_{\DGCat^{\fSet}}(\CC^{\otimes \fSet},\Shv^!(X^{\fSet})),
\end{multline*}
where the last arrow is given by the (fully faithful) functor \eqref{e:! emb}. 

\medskip

Hence, combining with \thmref{t:descr QCoh}, we obtain: 

\begin{cor} \hfill \label{c:Loc dual}

\smallskip

\noindent{\em(a)} The functor $\Loc^\vee$ is fully faithful. 

\smallskip

\noindent{\em(b)}  An object $\CS\in (\Rep(\cG)_\Ran)^\vee$ lies in the essential image of $\Loc^\vee$ if and
only if the corresponding family of functors $\{\CS_I\}$
$$\Rep(\cG)^{\otimes I}\to \Shv(X^I)$$
takes values in $\qLisse(X^I)\subset \Shv(X^I)$. 

\end{cor}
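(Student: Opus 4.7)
The plan is to deduce the corollary directly from the factorization of $\Loc^\vee$ displayed just above the statement. Since that factorization writes $\Loc^\vee$ as
$$\coLoc \quad \text{followed by}\quad \bMaps_{\DGCat^{\fSet}}(\Rep(\cG)^{\otimes\fSet}, -) \text{ applied to } \qLisse(X)^{\otimes\fSet}\simeq \qLisse(X^{\fSet}) \hookrightarrow \Shv^!(X^{\fSet}),$$
the first step is to note that $\coLoc$ is an equivalence by \thmref{t:descr QCoh}, and the intermediate identification $\qLisse(X)^{\otimes\fSet}\simeq \qLisse(X^{\fSet})$ is a (pointwise, hence termwise in $\DGCat^{\fSet}$) equivalence by \lemref{l:Kun fSet}. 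Hence both properties (a) and (b) reduce to assertions about the map of mapping categories induced by the fully faithful inclusion of objects
$$\iota:\qLisse(X^{\fSet}) \hookrightarrow \Shv^!(X^{\fSet})$$
in $\DGCat^{\fSet}$.

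For part (a), I would invoke the general fact that if $\fC_2 \to \fC_2'$ is a morphism in $\DGCat^{\fSet}$ whose value on each $I\in\fSet$ is a fully faithful embedding, then the induced functor
$$\bMaps_{\DGCat^{\fSet}}(\fC_1,\fC_2) \to \bMaps_{\DGCat^{\fSet}}(\fC_1,\fC_2')$$
is fully faithful. This follows because $\bMaps_{\DGCat^{\fSet}}(\fC_1,-)$ is a limit over $\fSet$ (or more precisely over the coherence data) of the pointwise mapping spaces $\bMaps_{\DGCat}(\fC_1(I),-)$, and postcomposition with a fully faithful functor is fully faithful in $\DGCat$ (the Hom is computed as an end of pointwise Homs, and fully faithful functors are stable under such limits). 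Pointwise full faithfulness of $\iota$ is part of the definition of $\qLisse(X^I)\subset \Shv(X^I)$ as a full subcategory.

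For part (b), the characterization of the essential image follows from the same pointwise description: an object of $\bMaps_{\DGCat^{\fSet}}(\Rep(\cG)^{\otimes\fSet}, \Shv^!(X^{\fSet}))$ is precisely a compatible system of functors $\CS_I:\Rep(\cG)^{\otimes I}\to \Shv(X^I)$, and it lifts through $\iota$ if and only if each $\CS_I$ factors through the full subcategory $\qLisse(X^I)\subset \Shv(X^I)$. The coherence data for such a lift is unique up to contractible choice, since the full subcategories are genuinely full, so lifts are detected at the level of objects.

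The only nontrivial step is verifying that the object-level factorization through $\qLisse(X^I)$ automatically upgrades to a factorization respecting the coherences (i.e., the compatibilities \eqref{e:IJ qLisse}); but this is automatic because the transition functors $(\Delta_\phi)^!$ in $\Shv^!(X^{\fSet})$ preserve $\qLisse$ (they are just the pullback functors between the corresponding $\qLisse$ categories under the identification $\qLisse(X^{\fSet})\simeq \qLisse(X)^{\otimes\fSet}$), so no obstruction arises. With these observations assembled, both (a) and (b) follow.
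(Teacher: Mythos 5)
Your proposal is correct and follows essentially the same route as the paper: the paper proves the corollary by exhibiting $\Loc^\vee$ as the composite of $\coLoc$ (an equivalence by \thmref{t:descr QCoh}), the equivalence $\qLisse(X)^{\otimes\fSet}\simeq\qLisse(X^{\fSet})$ from \lemref{l:Kun fSet}, and the pointwise fully faithful inclusion into $\Shv^!(X^{\fSet})$, and then declares the corollary immediate. You simply unpack the remaining formal step that the paper leaves implicit — that postcomposition with a pointwise fully faithful morphism in $\DGCat^{\fSet}$ induces a fully faithful functor on mapping categories whose essential image is detected objectwise, the coherence lifting being automatic because the transition functors $(\Delta_\phi)^!$ preserve $\qLisse$.
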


%Note that point (a) of the above corollary says in particular that if a continuous functor
%$$\CS: \Rep(\cG)_\Ran \to \Vect$$
%factors as 
%$$\Rep(\cG)_\Ran \overset{\Loc}\to \QCoh(\LocSys^{\on{restr}}_\cG(X)) \overset{\CS_{\Loc}}\to \Vect,$$
%then functor $\CS_{\Loc}$ is uniquely defined. 

\sssec{}

Let 
$$(\Rep(\cG)_\Ran)^\vee_{\qLisse} \subset (\Rep(\cG)_\Ran)^\vee$$
be the full subcategory that under the equivalence \eqref{e:three incarnations} corresponds to
$$\bMaps_{\DGCat^{\fSet}}(\CC^{\otimes \fSet},\qLisse(X^{\fSet})) \subset \bMaps_{\DGCat^{\fSet}}(\CC^{\otimes \fSet},\Shv^!(X^{\fSet})),$$
where the embedding $\qLisse(X^{\fSet})\hookrightarrow \Shv^!(X^{\fSet})$ is \eqref{e:! emb}. 

\medskip

Thus, \corref{c:Loc dual} can be reformulated as follows:

\begin{cor} \label{c:Loc dual bis}
The functor $\Loc^\vee$ defines an equivalence 
$$\QCoh(\LocSys^{\on{restr}}_\cG(X)) \to (\Rep(\cG)_\Ran)^\vee_{\qLisse}.$$
\end{cor}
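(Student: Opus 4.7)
The plan is to deduce Corollary~\ref{c:Loc dual bis} immediately from Corollary~\ref{c:Loc dual}, by unpacking what it means for an object of $(\Rep(\cG)_\Ran)^\vee$ to lie in $(\Rep(\cG)_\Ran)^\vee_{\qLisse}$ in terms of the family $\{\CS_I\}$.

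First, I would translate the definition. By definition, $(\Rep(\cG)_\Ran)^\vee_{\qLisse}$ corresponds under \eqref{e:three incarnations} to $\bMaps_{\DGCat^{\fSet}}(\Rep(\cG)^{\otimes \fSet}, \qLisse(X^{\fSet}))$, sitting inside $\bMaps_{\DGCat^{\fSet}}(\Rep(\cG)^{\otimes \fSet}, \Shv^!(X^{\fSet}))$ via the termwise embedding \eqref{e:! emb}. Since \eqref{e:! emb} is fully faithful on each finite set $I$, and since the transition maps $\Delta_\phi^!$ in $\Shv^!(X^{\fSet})$ preserve the subcategory $\qLisse$ (which is built into Lemma~\ref{l:Kun fSet}, given that under the induced identification the transition maps come from the symmetric monoidal structure on $\qLisse(X)$), a family $\{\CS_I\}$ lies in this subcategory if and only if each individual functor $\CS_I : \Rep(\cG)^{\otimes I} \to \Shv(X^I)$ factors through $\qLisse(X^I) \hookrightarrow \Shv(X^I)$.

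Then I would simply invoke Corollary~\ref{c:Loc dual}: part (a) gives that $\Loc^\vee$ is fully faithful, and part (b) identifies its essential image as precisely those $\CS \in (\Rep(\cG)_\Ran)^\vee$ whose family $\{\CS_I\}$ takes values in $\qLisse(X^I)$. Combining with the previous paragraph, this essential image is exactly $(\Rep(\cG)_\Ran)^\vee_{\qLisse}$, so $\Loc^\vee$ restricts to an equivalence onto that subcategory.

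There is no real obstacle here; the result is a bookkeeping corollary of Corollary~\ref{c:Loc dual}. Alternatively, one can give a direct argument bypassing Corollary~\ref{c:Loc dual}: the explicit factorization of $\Loc^\vee$ displayed just above Corollary~\ref{c:Loc dual} decomposes it as $\coLoc$ (an equivalence by Theorem~\ref{t:descr QCoh}), followed by the equivalence $\qLisse(X)^{\otimes \fSet} \simeq \qLisse(X^{\fSet})$ induced termwise by Lemma~\ref{l:Kun fSet}, followed by the inclusion that by definition has essential image $(\Rep(\cG)_\Ran)^\vee_{\qLisse}$; composing shows $\Loc^\vee$ is an equivalence onto its image $(\Rep(\cG)_\Ran)^\vee_{\qLisse}$.
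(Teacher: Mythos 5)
Your proof is correct and, in both of its variants, tracks the paper's own derivation exactly: the paper obtains Corollary~\ref{c:Loc dual bis} as a reformulation of Corollary~\ref{c:Loc dual}, which in turn is read off from the displayed factorization of $\Loc^\vee$ through $\coLoc$, the K\"unneth equivalence of Lemma~\ref{l:Kun fSet}, and the embedding \eqref{e:! emb}. The only slight looseness is attributing to Lemma~\ref{l:Kun fSet} the fact that $(\Delta_\phi)^!$ preserves $\qLisse$; that fact is what makes $\qLisse(X^{\fSet})$ a well-defined object of $\DGCat^{\fSet}$ in the first place, and the lemma presupposes it rather than proving it, but this does not affect the argument.
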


\ssec{Beilinson's spectral projector} \label{ss:proj}

In this subsection we will consider a certain object
$$\sR\in \Rep(\cG)_\Ran,$$
that will play a crucial role in the proof of the main results in this paper. 

\sssec{}

Let 
$$\sR_{\Rep(\cG),\Ran}\in \Rep(\cG)_\Ran\otimes \Rep(\cG)_\Ran$$
be the object introduced in \cite[Sect. 11.5]{AGKRRV1}.

\medskip

The quickest way to define it is as the value on the unit object 
$$\one_{\Rep(\cG)}\in  \Rep(\cG)_\Ran$$
of the right adjoint to the monoidal operation
$$\Rep(\cG)_\Ran\otimes  \Rep(\cG)_\Ran\to  \Rep(\cG)_\Ran.$$

Since the above right adjoint carries a right-lax symmetric monoidal structure, 
the object $\sR_{\Rep(\cG),\Ran}$ is naturally a commutative algebra in
$\Rep(\cG)_\Ran\otimes \Rep(\cG)_\Ran$. 

\begin{rem}
As was mentioned in Remark \ref{r:Ran self dual}, the symmetric monoidal category 
$\Rep(\cG)_\Ran$ is rigid. The object $\sR_{\Rep(\cG),\Ran}$
is the unit of the canonical self-duality defined for any rigid symmetric monoidal category.
\end{rem}

%\sssec{} \label{sss:original defn}
%
%Let 
%$$R_\cG\in \Rep(\cG)\otimes \Rep(\cG)$$
%denote the regular representation.
%
%\medskip
%
%For $(I\overset{\psi}\to J) \in \on{TwArr}(\on{fSet})$ let $R^{I\overset{\psi}\to J}_{\LocSys^{\on{restr}}_\cG(X)}$ be the object of the tensor
%product category
%$$\QCoh(\LocSys^{\on{restr}}_\cG(X))\otimes \left(\Rep(\cG)^{\otimes I}\otimes \Shv(X^J)\right),$$
%equal to the image of $(R_\cG)^{\otimes I}\in \Rep(\cG)^{\otimes I}\otimes \Rep(\cG)^{\otimes I}$ along the functor 
%\begin{multline*}
%\Rep(\cG)^{\otimes I}\otimes \Rep(\cG)^{\otimes I} \overset{\on{Ev}(-)\otimes \on{Id}}\longrightarrow 
%\QCoh(\LocSys^{\on{restr}}_\cG(X)) \otimes \qLisse(X)^{\otimes I} \otimes \Rep(\cG)^{\otimes I} 
%\overset{\on{Id}\otimes \on{mult}^\psi \otimes \on{Id}}\longrightarrow \\
%\to \QCoh(\LocSys^{\on{restr}}_\cG(X)) \otimes \qLisse(X)^{\otimes J} \otimes \Rep(\cG)^{\otimes I} \to \\
%\to \QCoh(\LocSys^{\on{restr}}_\cG(X)) \otimes \Shv(X^J) \otimes \Rep(\cG)^{\otimes I}=
%\QCoh(\LocSys^{\on{restr}}_\cG(X)) \otimes \Rep(\cG)^{\otimes I} \otimes \Shv(X^J).
%\end{multline*}
%
%\sssec{}
%
%Mapping each $\Rep(\cG)^{\otimes I}\otimes \Shv(X^J)$ to $\Rep(\cG)_\Ran$, the 
%assignment
%$$(I\overset{\psi}\to J) \mapsto R^{I\overset{\psi}\to J}_{\LocSys^{\on{restr}}_\cG(X)}$$
%naturally extends to a functor
%\begin{equation} \label{e:proj functor}
%\on{TwArr}(\on{fSet})\to \QCoh(\LocSys^{\on{restr}}_\cG(X))\otimes \Rep(\cG)_\Ran
%\end{equation}

\sssec{} \label{sss:R ls}

We define the commutative algebra object 
$$\sR_{\LocSys^{\on{restr}}_\cG(X)}\in \QCoh(\LocSys^{\on{restr}}_\cG(X))\otimes \Rep(\cG)_\Ran$$
to be
$$(\on{Loc}\otimes \on{Id})(\sR_{\Rep(\cG),\Ran}).$$

\sssec{}

Consider also the commutative algebra object 
\begin{multline} \label{e:diagonal LocSys}
(\on{Id}\otimes \on{Loc})(\sR_{\LocSys^{\on{restr}}_\cG(X)})=\\
=(\on{Loc}\otimes \on{Loc})(\sR_{\Rep(\cG),\Ran})\in \QCoh(\LocSys^{\on{restr}}_\cG(X))\otimes \QCoh(\LocSys^{\on{restr}}_\cG(X)).
\end{multline}

\medskip

\noindent The following is \cite[Theorem 12.7.4]{AGKRRV1}:

\begin{thm} \label{t:diagonal LocSys}
The image of $(\on{Id}\otimes \on{Loc})(\sR_{\LocSys^{\on{restr}}_\cG(X)})$ under
$$\QCoh(\LocSys^{\on{restr}}_\cG(X))\otimes \QCoh(\LocSys^{\on{restr}}_\cG(X))\overset{\sim}\to
\QCoh(\LocSys^{\on{restr}}_\cG(X)\times \LocSys^{\on{restr}}_\cG(X)),$$
identifies canonically with 
$$(\Delta_{\LocSys^{\on{restr}}_\cG(X)})_*(\CO_{\LocSys^{\on{restr}}_\cG(X)}),$$
as commutative algebra objects.  
\end{thm}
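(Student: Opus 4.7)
The plan is to check that both sides represent the identity endofunctor of $\QCoh(\LocSys^{\on{restr}}_\cG(X))$ under the self-duality recalled in \secref{sss:ls recap}(iii). By construction, the right-hand side $(\Delta_{\LocSys^{\on{restr}}_\cG(X)})_*(\CO_{\LocSys^{\on{restr}}_\cG(X)})$ is the unit of that self-duality, hence represents $\on{Id}$ tautologically; the real work is to show the same for $(\on{Id}\otimes \Loc)(R_{\LocSys^{\on{restr}}_\cG(X)})$.

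First I would unfold $R_{\LocSys^{\on{restr}}_\cG(X)}$ as a colimit over $\on{TwArr}(\fSet)$ of the objects $R^{I\overset{\psi}\to J}_{\LocSys^{\on{restr}}_\cG(X)}$ and apply $\on{Id}\otimes \Loc$ termwise. For a fixed $(I\overset{\psi}\to J)$, each piece in $\QCoh(\LocSys^{\on{restr}}_\cG(X))\otimes \QCoh(\LocSys^{\on{restr}}_\cG(X))$ is the image of $R_\cG^{\otimes I}\in \Rep(\cG)^{\otimes I}\otimes \Rep(\cG)^{\otimes I}$ under $\on{Ev}\otimes \on{Ev}$, with an intervening multiplication along $\psi$ and direct image from $X^J$. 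Viewed as an endofunctor of $\QCoh(\LocSys^{\on{restr}}_\cG(X))$, it sends $\CF$ to a construction assembled universally in $V\in \Rep(\cG)^{\otimes I}$ out of $\Gamma_!(\LocSys^{\on{restr}}_\cG(X),\CF\otimes \on{Ev}(V))$ tensored back with $\on{Ev}(V)$ --- the universality being exactly the coevaluation for the rigid self-duality of $\Rep(\cG)$ encoded by $R_\cG$.

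Next I would take the colimit over $\on{TwArr}(\fSet)$ and compare with \thmref{t:descr QCoh}. The diagram defining $R_{\LocSys^{\on{restr}}_\cG(X)}$ is built to match the $\fSet$-functoriality that $\coLoc$ extracts from $\CF$: its $(I\overset{\psi}\to J)$-term produces the datum $V\mapsto \Gamma_!(\LocSys^{\on{restr}}_\cG(X),\CF\otimes \on{Ev}(V))$ (the $I$-piece of $\coLoc(\CF)$), while the transition maps of the twisted-arrow diagram impose the compatibilities that cut out the target category $\bMaps_{\DGCat^{\fSet}}(\Rep(\cG)^{\otimes \fSet},\qLisse(X)^{\otimes \fSet})$. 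The colimit then reassembles $\CF$ from its compatible family via the inverse of the equivalence $\coLoc$, so the endofunctor is indeed $\on{Id}$.

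The main obstacle is reconciling the twisted-arrow colimit structure on $R_{\LocSys^{\on{restr}}_\cG(X)}$ with the limit-style description of $\QCoh(\LocSys^{\on{restr}}_\cG(X))$ from \thmref{t:descr QCoh}. Carefully tracking the symmetric monoidal multiplications $\on{mult}^\psi$, the interplay of $!$- and $*$-functoriality along the diagonals $\Delta_\psi$, and the action of $\qLisse(X^J)$ on $\Shv(X^J)$ that appears in the definition of $\Loc$ is the principal technical burden. Once these coherences are pinned down, the theorem becomes a formal consequence of the universal property of the regular representation $R_\cG$ as the coevaluation for the rigid self-duality of $\Rep(\cG)$, combined with \thmref{t:descr QCoh}.
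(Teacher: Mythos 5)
The paper does not prove this statement; it is quoted verbatim from \cite[Theorem 9.1.3]{AGKRRV1}, so there is no in-paper argument to compare against. That said, your sketch is a sensible reconstruction of what such a proof must look like: reducing to the assertion that $(\on{Id}\otimes\Loc)(R_{\LocSys^{\on{restr}}_\cG(X)})$ represents the identity endofunctor under the self-duality of \secref{sss:ls recap}(iii) is legitimate, because the map from $\QCoh(\LocSys^{\on{restr}}_\cG(X))\otimes\QCoh(\LocSys^{\on{restr}}_\cG(X))$ to continuous endofunctors is an equivalence for this dualizable, self-dual category, and the right-hand side tautologically goes to $\on{Id}$. You also isolate the two essential inputs correctly: the rigidity of $\Rep(\cG)$ expressed by $R_\cG$ as the coevaluation, and the equivalence $\coLoc$ of \thmref{t:descr QCoh}. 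The technical burden you flag --- matching the $\on{TwArr}(\fSet)$-colimit defining $R_{\LocSys^{\on{restr}}_\cG(X)}$ against the $\fSet$-limit description of $\QCoh(\LocSys^{\on{restr}}_\cG(X))$ underlying $\coLoc$, and keeping track of the $*/!$-interplay in the definition of $\Loc$ --- is real, and I see no conceptual gap, only the coherence work that makes the cited result a theorem.

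One small streamlining you could have used: Remark \ref{sss:r defin Sam} already records the identification $R_{\LocSys^{\on{restr}}_\cG(X)}\simeq(\Loc\otimes\on{Id})(\on{u}_{\Rep(\cG)_\Ran})$ in terms of the unit of the Ran self-duality of Remark \ref{r:Ran self dual}. Granting it, \thmref{t:diagonal LocSys} becomes the single statement that $(\Loc\otimes\Loc)(\on{u}_{\Rep(\cG)_\Ran})$ is the unit of the self-duality of $\QCoh(\LocSys^{\on{restr}}_\cG(X))$, i.e.\ a unit-compatibility of the localization $\Loc$ against the two self-dualities; this packages the ``reassembly via $\coLoc$'' step you describe into one functoriality, though the content of the remark itself of course still has to be established.
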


\begin{rem}

In other words, \thmref{t:diagonal LocSys} says that we have a canonical isomorphism
$$(\Loc \otimes \Loc)(\sR_{\Rep(\cG),\Ran})\simeq (\Delta_{\LocSys^{\on{restr}}_\cG(X)})_*(\CO_{\LocSys^{\on{restr}}_\cG(X)}),$$
as commutative algebra objects in $\QCoh(\LocSys^{\on{restr}}_\cG(X)\times \LocSys^{\on{restr}}_\cG(X))$.

\end{rem}

\sssec{} \label{sss:the projector}

Denote 
$$\sR:=(\Gamma_!(\LocSys^{\on{restr}}_\cG(X),-)\otimes \on{Id})(\sR_{\LocSys^{\on{restr}}_\cG(X)})\in \Rep(\cG)_\Ran,$$
where $\Gamma_!(\LocSys^{\on{restr}}_\cG(X),-)$ is as in \secref{sss:coSect}.

\medskip 

In other words,
$$\sR=((\Gamma_!(\LocSys^{\on{restr}}_\cG(X),-) \circ \Loc)\otimes \on{Id})(\sR_{\Rep(\cG),\Ran}).$$

Recall (see \cite[Sect. 7.6.1]{AGKRRV1}) that the functor $\Gamma_!(\LocSys^{\on{restr}}_\cG(X),-)$ carries
a canonically defined (non-unital) right-lax symmetric monoidal structure. Hence, $\sR$ is naturally a commutative algebra in $\Rep(\cG)_\Ran$.

\sssec{}

We claim: 

\begin{cor} \label{c:diagonal LocSys}
The object $$\Loc(\sR)\in \QCoh(\LocSys^{\on{restr}}_\cG(X))$$ identifies canonically with $\CO_{\LocSys^{\on{restr}}_\cG(X)}$,
as commutative algebra objects.  
\end{cor}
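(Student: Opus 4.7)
The plan is to obtain the statement as a formal consequence of \thmref{t:diagonal LocSys} together with the self-duality of $\QCoh(\LocSys^{\on{restr}}_\cG(X))$ recalled in \secref{sss:ls recap}.

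First, I would unwind the definition of $\CR$ by observing that $\Loc$ commutes with tensor products, so that
\[
\Loc(\CR) \;=\; \Loc\bigl((\Gamma_!(\LocSys^{\on{restr}}_\cG(X),-)\otimes \on{Id})(R_{\LocSys^{\on{restr}}_\cG(X)})\bigr) \;\simeq\; (\Gamma_!(\LocSys^{\on{restr}}_\cG(X),-)\otimes \on{Id})\circ (\on{Id}\otimes \Loc)(R_{\LocSys^{\on{restr}}_\cG(X)}),
\]
where on the right I view the result as an object of $\Vect\otimes \QCoh(\LocSys^{\on{restr}}_\cG(X)) \simeq \QCoh(\LocSys^{\on{restr}}_\cG(X))$.

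Next, I would apply \thmref{t:diagonal LocSys} to rewrite
\[
(\on{Id}\otimes \Loc)(R_{\LocSys^{\on{restr}}_\cG(X)}) \;\simeq\; (\Delta_{\LocSys^{\on{restr}}_\cG(X)})_*(\CO_{\LocSys^{\on{restr}}_\cG(X)}),
\]
viewed as an object of $\QCoh(\LocSys^{\on{restr}}_\cG(X))\otimes \QCoh(\LocSys^{\on{restr}}_\cG(X))$ via the external product equivalence from \secref{sss:ls recap}(ii). Substituting, we are reduced to identifying
\[
(\Gamma_!(\LocSys^{\on{restr}}_\cG(X),-)\otimes \on{Id})\bigl((\Delta_{\LocSys^{\on{restr}}_\cG(X)})_*(\CO_{\LocSys^{\on{restr}}_\cG(X)})\bigr) \;\simeq\; \CO_{\LocSys^{\on{restr}}_\cG(X)}.
\]

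Finally, this last identification is the zig-zag for the self-duality of $\QCoh(\LocSys^{\on{restr}}_\cG(X))$. Indeed, by \secref{sss:ls recap}(iii), $(\Delta_{\LocSys^{\on{restr}}_\cG(X)})_*(\CO_{\LocSys^{\on{restr}}_\cG(X)})$ is the unit of the self-duality, while by the definition of $\Gamma_!$ in \secref{sss:coSect}, the functor $\Gamma_!(\LocSys^{\on{restr}}_\cG(X),-):\QCoh(\LocSys^{\on{restr}}_\cG(X))\to \Vect$ is, by construction, the functor dual to the unit-inclusion $\Vect\to \QCoh(\LocSys^{\on{restr}}_\cG(X))$ sending $\sfe\mapsto \CO_{\LocSys^{\on{restr}}_\cG(X)}$. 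Hence the zig-zag identity for the self-duality produces exactly $\CO_{\LocSys^{\on{restr}}_\cG(X)}$, as required.

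The only subtlety, and thus the main (mild) obstacle, is to verify that the compositions of tensor-factor functors are being interpreted consistently: one must check that the external product identification of \secref{sss:ls recap}(ii), the self-duality of \secref{sss:ls recap}(iii), and the definition of $\Gamma_!$ all fit together so that applying $\Gamma_!\otimes \on{Id}$ to the unit really does yield the identity functor on $\QCoh(\LocSys^{\on{restr}}_\cG(X))$ evaluated on $\CO_{\LocSys^{\on{restr}}_\cG(X)}$. Once this bookkeeping is in place, the corollary is immediate and no further geometric input beyond \thmref{t:diagonal LocSys} is needed.
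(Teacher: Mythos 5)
Your proposal is correct and is essentially the argument the paper intends (the paper only says ``From \thmref{t:diagonal LocSys} we obtain:'' without spelling it out). You correctly unwind the definition of $\CR$, commute $\Gamma_!\otimes\on{Id}$ past $\on{Id}\otimes\Loc$, apply \thmref{t:diagonal LocSys}, and observe that the resulting evaluation is the zig-zag identity for the self-duality of $\QCoh(\LocSys^{\on{restr}}_\cG(X))$ given that $\Gamma_!$ is defined in \secref{sss:coSect} as the dual of the unit-inclusion $\sfe\mapsto\CO_{\LocSys^{\on{restr}}_\cG(X)}$ and $(\Delta_{\LocSys^{\on{restr}}_\cG(X)})_*(\CO_{\LocSys^{\on{restr}}_\cG(X)})$ is the unit of that self-duality per \secref{sss:ls recap}(iii).
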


\begin{proof}

Applying \thmref{t:diagonal LocSys}, it suffices to show that 
$$(\Gamma_!(\LocSys^{\on{restr}}_\cG(X),-)\otimes \on{Id}) \circ (\Delta_{\LocSys^{\on{restr}}_\cG(X)})_*(\CO_{\LocSys^{\on{restr}}_\cG(X)})
\simeq \CO_{\LocSys^{\on{restr}}_\cG(X)}$$
as commutative algebra objects.  

\medskip

We claim that 
$$(\Gamma_!(\LocSys^{\on{restr}}_\cG(X),-)\otimes \on{Id}) \circ (\Delta_{\LocSys^{\on{restr}}_\cG(X)})_*
\simeq \on{Id}$$
as symmetric monoidal endofunctors of $\QCoh(\LocSys^{\on{restr}}_\cG(X))$. 

\medskip

This is a feature of any semi-rigid category, see \cite[Appendix C]{AGKRRV1}. Indeed, 
the identification as plain endofunctors follows by passage to the dual functors in 
\begin{multline*}
\QCoh(\LocSys^{\on{restr}}_\cG(X)) \simeq \Vect\otimes \QCoh(\LocSys^{\on{restr}}_\cG(X))
\overset{\CO_{\LocSys^{\on{restr}}_\cG(X)}\otimes \on{Id}}\longrightarrow \\
\to \QCoh(\LocSys^{\on{restr}}_\cG(X))\otimes \QCoh(\LocSys^{\on{restr}}_\cG(X))
\overset{\on{mult}_{\QCoh(\LocSys^{\on{restr}}_\cG(X))}}\longrightarrow \QCoh(\LocSys^{\on{restr}}_\cG(X)),
\end{multline*}
where we identify the dual of $\on{mult}_{\QCoh(\LocSys^{\on{restr}}_\cG(X))}$ with its adjoint
(see \cite[Lemma C.3.5]{AGKRRV1}).

\medskip

The compatibility with the symmetric monoidal structures follows by unwinding the definition of 
the symmetric monoidal structure on $\Gamma_!(\LocSys^{\on{restr}}_\cG(X),-)$ in 
\cite[Sect. C.3.8]{AGKRRV1}. 

\end{proof}

%\begin{rem}
%
%Unwinding the definitions, the isomorphism of \thmref{t:diagonal LocSys}
%can be reformulated as an equivalence $\Loc\Loc^{\vee} \simeq 
%\on{Id}_{\QCoh(\LocSys^{\on{restr}}_\cG(X))}$. 
%
%\end{rem}

%\medskip

%By construction, $R_{\LocSys^{\on{restr}}_\cG(X)}$ has a natural structure of commutative algebra in 
%$R_{\LocSys^{\on{restr}}_\cG(X)}\in \QCoh(\LocSys^{\on{restr}}_\cG(X))\otimes \Rep(\cG)_\Ran$. 

\section{The reciprocity law for shtuka cohomology} \label{s:shtuka}

The main result of this section is \corref{c:Cong cor}, which asserts
that the functor $\Sht$ introduced below  
factors through the localization functor $\Loc$. We will deduce it from a theorem
of C.~Xue on lisseness of shtuka cohomology.  

\medskip 

As an application, we construct an object 
$\Drinf \in \QCoh(\LocSys^{\on{restr}}_{\cG})$ that encodes the cohomology of shtuka moduli spaces. 

\ssec{Functorial shtuka cohomology} \label{ss:sht}

In this subsection we will interpret shtuka cohomology as a functor 
$$\Sht:\Rep(\cG)_\Ran \to \Vect,$$
i.e., as an object of the category $(\Rep(\cG)_\Ran)^\vee$.

\sssec{} \label{sss:sht}

Recall the functor
$$\CV\in \Rep(\cG)_\Ran\, \rightsquigarrow \CK_\CV\in \Shv(\Bun_G\times \Bun_G),$$
see \secref{sss:kernel notation}. 

\medskip

Let $\on{Graph}_{\Frob_{\Bun_G}}:\Bun_G\to \Bun_G\times \Bun_G$
be the graph of Frobenius, i.e., the map
$$\Bun_G \overset{\Delta_{\Bun_G}}\longrightarrow \Bun_G\times \Bun_G
\overset{\Frob_{\Bun_G}\times \on{Id}}\longrightarrow \Bun_G\times \Bun_G.$$

\medskip

We define the functor $\Sht:\Rep(\cG)_\Ran \to \Vect$ as the composition 
$$\Rep(\cG)_\Ran \overset{\CV \mapsto \CK_\CV}{\longrightarrow} 
\Shv(\Bun_G \times \Bun_G) \overset{(\on{Graph}_{\Frob_{\Bun_G}})^*}{\longrightarrow}
\Shv(\Bun_G) \overset{\on{C}^\cdot_c(\Bun_G,-)}{\longrightarrow} \Vect.$$

\sssec{}

We will prove:

\begin{thm} \label{t:Cong cor}
The object $\Sht\in (\Rep(\cG)_\Ran)^\vee$ belongs to the full subcategory
$$(\Rep(\cG)_\Ran)^\vee_{\qLisse}\subset (\Rep(\cG)_\Ran)^\vee.$$
\end{thm}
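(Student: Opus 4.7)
By \corref{c:Loc dual}(b), the assertion is equivalent to showing that for each $I\in\fSet$, the functor $\Sht_I:\Rep(\cG)^{\otimes I}\to \Shv(X^I)$ that corresponds to $\Sht$ under the equivalence \eqref{e:three incarnations} takes values in $\qLisse(X^I)$. Under the explicit dictionary recorded in \secref{sss:dual of Ran}, the $\Sht_I$ are characterized by
$$\on{C}^\cdot(X^I, \Sht_I(V) \sotimes \CM) \simeq \Sht(\on{ins}_I(V \otimes \CM))$$
for $V \in \Rep(\cG)^{\otimes I}$ and $\CM \in \Shv(X^I)$. So I would begin by computing the right-hand side and identifying it with a Verdier pairing against a shtuka cohomology sheaf on $X^I$.

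The first step is the unwinding. Using the definition of $\Sht$ from \secref{sss:sht}, the definition of the kernel $\CK_\CV$ from \secref{sss:kernel notation}, and the formula \eqref{e:Hecke action !} for the integral Hecke action specialized to $\psi=\on{Id}_I$ (so that $(\Delta_\psi)_*(\CM)=\CM$), one describes $\CK_{\on{ins}_I(V\otimes \CM)}$ as the pushforward along $p_{1,2}:\Bun_G\times \Bun_G\times X^I\to \Bun_G\times \Bun_G$ of the object $\sH^!(V,(\Delta_{\Bun_G})_!(\ul\sfe_{\Bun_G}))\sotimes p_3^!(\CM)$. Applying proper base change to commute $(\on{Graph}_{\Frob_{\Bun_G}})^*$ and $\on{C}^\cdot_c(\Bun_G,-)$ past the $X^I$-pushforward, the right-hand side takes the form $\on{C}^\cdot(X^I, \CF_V \sotimes \CM)$, where $\CF_V\in \Shv(X^I)$ is the pushforward to $X^I$ of $(\on{Graph}_{\Frob_{\Bun_G}}\times \on{Id}_{X^I})^*\sH^!(V,(\Delta_{\Bun_G})_!(\ul\sfe_{\Bun_G}))$. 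The stack obtained by this fiber product with the Frobenius graph is precisely V.~Lafforgue's moduli of $G$-shtukas with $I$ legs and modification type determined by $V$, so $\CF_V$ is the classical shtuka cohomology and $\Sht_I(V)\simeq \CF_V$.

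The second step is to invoke C.~Xue's theorem from \cite{Xue2}, which is the assertion that the shtuka cohomology sheaves $\CF_V$ are lisse on $X^I$. This yields $\Sht_I(V)\in \qLisse(X^I)$ for every $I$ and $V$, whence \corref{c:Loc dual}(b) implies $\Sht\in (\Rep(\cG)_\Ran)^\vee_{\qLisse}$. The main difficulty is the unwinding step: one must carefully reconcile the abstract $\Rep(\cG)_\Ran$-machinery (especially the simultaneous tracking of $\CK_\CV$, the Ran parameter $\CM$, and the pullback by the graph of Frobenius) with V.~Lafforgue's concrete shtuka stacks, in order for Xue's result---which is quoted as a black box---to apply.
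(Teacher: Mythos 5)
Your proof is correct and follows essentially the same approach as the paper's: both reduce to showing the functors $\Sht_I$ take values in $\qLisse(X^I)$ via the duality dictionary of \eqref{e:three incarnations} (equivalently, \corref{c:Loc dual}(b)), both unwind the definition of $\Sht$ by base change to identify $\Sht_I$ with V.~Lafforgue's shtuka cohomology, and both quote Xue's theorem (\thmref{t:Cong}) as the essential black-box input. The only thing worth flagging is a notational subtlety: the paper reserves $\Sht_I$ for the classical shtuka cohomology functor of \eqref{e:classical shtuka}, built from $\sH^*$, whereas the system of functors produced from $\Sht$ by the equivalence \eqref{e:three incarnations} is $V\mapsto \Sht_I(V)\overset{*}\otimes\omega_{X^I}\simeq \Sht_I(V)[2|I|]$; your computation uses $\sH^!$ and hence produces the shifted object directly. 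Since cohomological shifts preserve lisseness, this has no effect on the argument, but it is worth keeping the two objects distinguished to avoid an off-by-$[2|I|]$ confusion elsewhere in the paper (cf.\ Remarks \ref{r:shift} and \ref{r:! vs *}).
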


Applying \corref{c:Loc dual bis}, from \thmref{t:Cong cor}, we obtain:

\begin{cor} \label{c:Cong cor}
The functor $\Sht:\Rep(\cG)_\Ran \to \Vect$ factors as $\Sht_{\Loc} \circ \Loc$ for a uniquely defined functor 
$\Sht_{\Loc}:\QCoh(\LocSys^{\on{restr}}_\cG(X)) \to \Vect$.
\end{cor}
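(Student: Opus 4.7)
The plan is to reduce \corref{c:Cong cor} to \thmref{t:Cong cor} by purely formal duality, and then to focus the real work on the theorem, which is where all the content lies.

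For the formal deduction: \corref{c:Loc dual bis} furnishes an equivalence
$$\Loc^\vee:\QCoh(\LocSys^{\on{restr}}_\cG(X)) \iso (\Rep(\cG)_\Ran)^\vee_{\qLisse}.$$
Granting \thmref{t:Cong cor}, the functor $\Sht$, viewed as an object of $(\Rep(\cG)_\Ran)^\vee$, lies in the essential image of $\Loc^\vee$; hence $\Sht \simeq \Loc^\vee(\CF)$ for a unique $\CF \in \QCoh(\LocSys^{\on{restr}}_\cG(X))$. Via the self-duality of $\QCoh(\LocSys^{\on{restr}}_\cG(X))$ recorded in \secref{sss:ls recap}, the object $\CF$ corresponds to a continuous functor $\Sht_{\Loc}:\QCoh(\LocSys^{\on{restr}}_\cG(X))\to \Vect$, and by construction of $\Loc^\vee$ (as the functor dual to $\Loc$) we have $\Sht\simeq \Sht_{\Loc}\circ \Loc$. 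Uniqueness follows from the fact that $\Loc^\vee$ is an equivalence onto $(\Rep(\cG)_\Ran)^\vee_{\qLisse}$.

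For \thmref{t:Cong cor} itself, the first step is to identify the associated functors $\Sht_I:\Rep(\cG)^{\otimes I}\to \Shv(X^I)$ explicitly. Following the prescription of \secref{sss:dual of Ran}, one precomposes $\Sht$ with $\on{ins}_I$ and applies Verdier duality on $X^I$. Unwinding the definition of $\Sht$ via the assignment $\CV\mapsto \CK_\CV$, pullback along $\on{Graph}_{\Frob_{\Bun_G}}$, and $\on{C}^\cdot_c(\Bun_G,-)$, together with the description of $\CK_\CV$ in terms of the integral Hecke action (\secref{sss:kernel notation}), one identifies $\Sht_I(V)$ for $V\in \Rep(\cG)^{\otimes I}$ with the V.~Lafforgue shtuka cohomology sheaf on $X^I$ with coefficients in $V$, up to a controlled cohomological shift. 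The final and essential step is to invoke C.~Xue's theorem from \cite{Xue2}, which asserts precisely that these shtuka cohomology sheaves are lisse on $X^I$.

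The hard part will be the matching of the abstractly defined $\Sht_I$ with the classical Lafforgue construction on which \cite{Xue2} operates: one must be careful about shift conventions coming from the two embeddings $\qLisse(X^I)\hookrightarrow \Shv(X^I)$ of \secref{sss:* and ! lisse} (cf.~\remref{r:shift}), about passing between the $!$- and $*$-variants of the Hecke action as in \secref{sss:Hecke on Nilp}, and about commuting compactly supported cohomology on $\Bun_G$ past pushforward to $X^I$ (which relies on the ULA property of \secref{sss:ULA Hecke}). Once this identification is in place, the lisseness of each $\Sht_I(V)$ is a direct citation, and no further independent input is needed.
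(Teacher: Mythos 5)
Your proposal is correct and takes essentially the same approach as the paper: the corollary is deduced formally from \thmref{t:Cong cor} via \corref{c:Loc dual bis}, and \thmref{t:Cong cor} itself is reduced to C.~Xue's lisseness theorem after matching the abstract $\Sht_I$ with V.~Lafforgue's shtuka cohomology. The paper's actual computation confirms the shift you flag: the object of $\bMaps_{\DGCat^{\fSet}}(\Rep(\cG)^{\otimes \fSet},\Shv^!(X^{\fSet}))$ corresponding to $\Sht$ is $V\mapsto \Sht_I(V)\overset{*}\otimes\omega_{X^I}\simeq\Sht_I(V)[2|I|]$, obtained by base change together with properness of $X$.
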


\ssec{Proof of \thmref{t:Cong cor} and relation to the usual shtuka cohomology}

Once we relate the functor $\Sht$ to shtuka cohomology, the proof of \thmref{t:Cong cor} will be almost 
immediate from a recent theorem of C.~Xue \cite{Xue2}. 

\sssec{} \label{sss:classical shtuka}

For a finite set $I$, consider the functor 
$$\Rep(\cG)^{\otimes I} \to \Shv(X^I),$$
to be denoted $\Sht_I$, that sends $V\in \Rep(\cG)^{\otimes I}$ to
\begin{equation} \label{e:classical shtuka}
\Sht_I(V):=(p_{2})_!\circ (\on{Graph}_{\Frob_{\Bun_G}})^*\circ (\on{Id}_{\Bun_G}\boxtimes \sH^l(V,-)) \circ \Delta_!(\ul\sfe_{\Bun_G}).
\end{equation} 

\medskip

The above functor $\Sht_I$ is the usual functor of (compactly supported) shtuka cohomology,
studied by \cite{VLaf}.

\sssec{}

We now quote the following crucial result of \cite{Xue2}:

\begin{thm} \label{t:Cong}
The functor $\Sht_I$ takes values in $\qLisse(X^I) \subset \Shv(X^I)$.
\end{thm}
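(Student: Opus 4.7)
The plan is to first reinterpret the formula \eqref{e:classical shtuka} geometrically. By the geometric Satake correspondence, the composition $\sH^*(V,-)\circ \Delta_!(\ul\sfe_{\Bun_G})$ is (up to a shift) the !-pushforward of an IC-sheaf from a moduli space of modifications $\on{Mod}_G^{I,V} \to \Bun_G\times\Bun_G\times X^I$. Base changing along $\on{Graph}_{\Frob_{\Bun_G}}$ and then pushing forward to $X^I$ identifies $\Sht_I(V)$ with the compactly supported cohomology (relative to $X^I$) of the classical moduli stack of $G$-shtukas with $I$ legs and modification datum $V$, namely
\[
\on{Sht}_G^{I,V} := \on{Mod}_G^{I,V}\underset{\Bun_G\times\Bun_G}{\times}\Bun_G,
\]
as studied in \cite{VLaf,Xue1}. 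This reduction places us in the classical setting, where the question becomes: is the !-pushforward along $\on{Sht}_G^{I,V}\to X^I$ of the IC-sheaf a lisse sheaf?

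The next step is Harder--Narasimhan truncation. For each $\mu$, the open substack $\on{Sht}_G^{I,V,\leq\mu}\subset \on{Sht}_G^{I,V}$ cut out by bounding the HN-polygon of the underlying bundle has a finite type morphism to $X^I$ (after quotienting by a cocompact lattice in the center, to deal with the stacky non-quasi-compactness in the unipotent direction). The resulting pushforwards $\Sht_I(V)^{\leq\mu}\in \Shv(X^I)$ are constructible, and $\Sht_I(V) = \underset{\mu}{\on{colim}}\,\Sht_I(V)^{\leq\mu}$. The goal now is to upgrade constructibility of each piece to lisseness of the colimit.

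The key structural input is the action of \emph{partial Frobenius} morphisms: for each $i\in I$ there is a natural endomorphism $F_i$ of $\on{Sht}_G^{I,V}$ lifting the Frobenius on the $i$-th factor of $X^I$, with the property that $\prod_{i\in I} F_i$ recovers the total Frobenius. One then applies Drinfeld's lemma, which recognizes lisse sheaves on $X^I$ from Frobenius-compatible constructible sheaves with a compatible system of partial Frobenii, provided suitable finiteness holds. The main obstacle — and the crux of the result of \cite{Xue2} — is the interaction of the HN-truncation with the partial Frobenius action: the individual truncations $\on{Sht}_G^{I,V,\leq \mu}$ are \emph{not} preserved by the $F_i$, so one must first prove a stabilization statement showing that on each fixed cohomological degree the excursion algebra (or an appropriate finiteness of Hecke/cuspidal decomposition) forces the partial Frobenius action to descend to a well-defined action on the colimit, after which Drinfeld's lemma yields lisseness. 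Everything else in the argument — base change, HN-exhaustion, constructibility of finite-type pushforwards — is standard; the technical heart lies in the finiteness of the excursion-induced decomposition that underpins the applicability of Drinfeld's lemma.
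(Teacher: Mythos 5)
The paper does not prove this theorem: it is imported verbatim from \cite{Xue2}, introduced with the phrase ``We now quote the following crucial result of \cite{Xue2}.'' There is thus no internal proof to compare your attempt against. Your sketch is, however, a reasonable high-level description of the strategy in \cite{Xue2} (building on \cite{Xue1} and \cite{VLaf}): the reduction, via geometric Satake and base change, of $\Sht_I(V)$ to compactly supported cohomology of the classical moduli of shtukas $\on{Sht}_G^{I,V}$ relative to $X^I$; Harder--Narasimhan truncation together with a quotient by a discrete cocompact lattice to land in a finite-type situation and obtain constructible pieces; the partial Frobenius morphisms $F_i$ and Drinfeld's lemma; and, decisively, the fact that the truncations $\on{Sht}_G^{I,V,\leq\mu}$ are \emph{not} preserved by the $F_i$, so that a stabilization/finiteness statement must be established before Drinfeld's lemma can be invoked. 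One minor imprecision: the quotient by the lattice $\Xi$ addresses the non-quasi-compactness of $\Bun_G$ coming from the \emph{center} of $G$, not a ``unipotent direction''; the HN-truncation handles the rest. More substantively, be aware that your proposal is a sketch of a substantial external theorem: the genuinely difficult content --- establishing the finiteness (\`a la \cite{Xue1}) and the resulting descent of the partial Frobenius action to the colimit --- is only flagged, not carried out, so this should not be mistaken for a self-contained proof.
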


\sssec{}

Let us show how \thmref{t:Cong} implies the assertion of \thmref{t:Cong cor}. 

\medskip

By base change (and using the fact that $X$ is proper), we obtain that for $I\in \fSet$, the functor
$$\Rep(\cG)^{\otimes I}\otimes \Shv(X^I) \overset{\on{ins}_I}\to \Rep(\cG)_\Ran \overset{\Sht}\to \Vect$$
is given by sending 
$$V\in \Rep(\cG)^{\otimes I},\,\CM\in \Shv(X^I)\,\, \mapsto \on{C}^\cdot\left(X^I,(\Sht_I(V)\overset{*}\otimes \omega_{X^I})\sotimes \CM\right).$$

Thus, the object of $\bMaps_{\DGCat^{\fSet}}(\Rep(\cG)^{\otimes \fSet},\Shv^!(X^{\fSet}))$ corresponding to $\Sht$
is given by
$$V\in \Rep(\cG)^{\otimes I}\,\mapsto \Sht_I(V)\overset{*}\otimes \omega_{X^I}\simeq \Sht_I(V)[2|I|],$$
see \secref{sss:dual of Ran}.

\medskip

This object belongs to $\qLisse(X^I)$ by \thmref{t:Cong}, as required.

\begin{rem}
Note that by \thmref{t:Cong}, the expression $\on{C}^\cdot\left(X^I,(\Sht_I(V)\overset{*}\otimes \omega_{X^I})\sotimes \CM\right)$,
which appears above, can be also rewritten as
$$\on{C}^\cdot\left(X^I,\Sht_I(V)\overset{*}\otimes \CM\right),$$
see \secref{sss:ULA Hecke bis}. 
\end{rem}

\begin{rem} \label{r:! vs *}
Along with the object
$\Shv^!(X^{\fSet})\in \DGCat^{\fSet}$, we can consider the object, denoted $\Shv^*(X^{\fSet})$, whose value 
on $I$ is again the category $\Shv(X^I)$, but now for $I\overset{\psi}\to J$ we use the functor
$$(\Delta_\psi)^*: \Shv(X^I)\to \Shv(X^J).$$

As in \secref{sss:* and ! lisse}, we have the fully faithful embeddings
%\begin{equation} \label{e:! and * fset}
$$\Shv^*(X^{\fSet}) \hookleftarrow \qLisse(X^{\fSet}) \hookrightarrow \Shv^!(X^{\fSet}),$$
%\end{equation} 
given by \eqref{e:! emb} and \eqref{e:* emb}.

\medskip

By its construction, the system of functors $\{\Sht_I\}$ is naturally an object of the category
$\bMaps_{\DGCat^{\fSet}}(\Rep(\cG)^{\otimes \fSet},\Shv^*(X^{\fSet}))$, and 
we can view \thmref{t:Cong} as saying that it actually belongs to the essential image of the functor
$$\bMaps_{\DGCat^{\fSet}}(\Rep(\cG)^{\otimes \fSet},\qLisse(X^{\fSet})) \hookrightarrow \bMaps_{\DGCat^{\fSet}}(\Rep(\cG)^{\otimes \fSet},\Shv^*(X^{\fSet})).$$

\medskip

Now, the object of $\bMaps_{\DGCat^{\fSet}}(\Rep(\cG)^{\otimes \fSet},\Shv^!(X)^{\fSet})$ corresponding to $\Sht$ equals the image of the
resulting object of $\bMaps_{\DGCat^{\fSet}}(\Rep(\cG)^{\otimes \fSet},\qLisse(X^{\fSet}))$ under
$$\bMaps_{\DGCat^{\fSet}}(\Rep(\cG)^{\otimes \fSet},\qLisse(X^{\fSet})) \hookrightarrow \bMaps_{\DGCat^{\fSet}}(\Rep(\cG)^{\otimes \fSet},\Shv^!(X^{\fSet})).$$

Thus, we denote by the same symbol $\{\Sht_I\}$ the above objects of the categories
\begin{multline*}
\bMaps_{\DGCat^{\fSet}}(\Rep(\cG)^{\otimes \fSet},\Shv^*(X^{\fSet})) \hookleftarrow 
\bMaps_{\DGCat^{\fSet}}(\Rep(\cG)^{\otimes \fSet},\qLisse(X^{\fSet})) \hookrightarrow \\
\hookrightarrow \bMaps_{\DGCat^{\fSet}}(\Rep(\cG)^{\otimes \fSet},\Shv^!(X^{\fSet})).
\end{multline*}

\end{rem}

\begin{example}\label{e:sht triv}

We have a canonical isomorphism
%\begin{multline*}
$$\Sht(\one_{\Rep(\cG)_\Ran}) \simeq \Sht_\emptyset(\sfe)  = \\
\on{C}_c\left(\Bun_G,(\on{Graph}_{\Frob_{\Bun_G}})^*\circ (\Delta_{\Bun_G})_!(\ul\sfe_{\Bun_G})\right)
\simeq  \on{Funct}_c(\Bun_G(\BF_q))$$
%\end{multline*}
where the last isomorphism is by base-change.

\end{example}

\ssec{Drinfeld's sheaf}\label{ss:drinf}

\sssec{}

By \corref{c:Cong cor}, there is a canonical functor
$\Sht_{\Loc}: \QCoh(\LocSys^{\on{restr}}_\cG(X)) \to \Vect$, 
i.e., $$\Sht_{\Loc} \in \QCoh(\LocSys^{\on{restr}}_\cG(X))^{\vee}.$$
As $\QCoh(\LocSys^{\on{restr}}_\cG(X))$ is canonically
self-dual (cf. \secref{sss:ls recap}(iii)),
to $\Sht_{\Loc}$ there corresponds an 
object\footnote{This object is named after V.~Drinfeld, since it was his idea, upon learning about
V.~Lafforgue's work, that shtuka cohomology should be encoded by a quasi-coherent sheaf
on the stack of Langlands parameters.}
$$\Drinf \in \QCoh(\LocSys^{\on{restr}}_\cG(X)).$$

\sssec{}

Let us observe some basic properties of $\Drinf$.

\medskip 

First, we have
$$\Gamma_!(\LocSys^{\on{restr}}_\cG(X),\Drinf) \simeq 
\Sht_{\Loc}(\CO_{\LocSys^{\on{restr}}_\cG(X)}).$$
by duality. 

\medskip 

As $$\CO_{\LocSys^{\on{restr}}} = \Loc(\one_{\Rep(\cG)_{\Ran}}),$$
we deduce from \exref{e:sht triv} that there is a canonical isomorphism
\begin{equation} \label{e:sect Drinf}
\Gamma_!(\LocSys^{\on{restr}}_\cG(X),\Drinf) = 
\Sht(\one_{\Rep(\cG)_{\Ran}}) = \on{Funct}_c(\Bun_G(\BF_q)).
\end{equation} 

\sssec{}

More generally, recall from \secref{sss:qcoh recap}
that for any $\CF \in \QCoh(\LocSys^{\on{restr}}_\cG(X))$ and a finite set $I$, 
one defines a functor 
$$\CF_I:\Rep(\cG)^{\otimes I} \to \qLisse(X^I).$$

\medskip 

By construction, for $\CF = \Drinf$, the functor $\Drinf_I$ coincides
with $\Sht_I$. In this manner, we see
that $\Drinf$ encodes cohomology of shtuka moduli spaces.

\ssec{Some remarks}

We now provide some additional remarks on the object $\Drinf$.

%\begin{rem} \label{r:Sht on eta}
%
%If one wants to avoid using the (still unavailable at the time of writing of this paper) work \cite{Xue2}, 
%one can obtain $\Drinf$ as an object of $\QCoh(\LocSys^{\on{restr}}_\cG(\eta_X))$ (see Remark \ref{r:eta});
%this would be sufficient for proving our main result, \corref{c:trace}. However, in doing so, there is
%one more step that one needs to perform:
%
%\medskip
%
%One needs to show that the functors 
%\begin{equation} \label{e:shtuka eta}
%\Rep(\cG)^{\otimes I} \overset{\Sht_I}\to \Shv(X^I) \to \Shv(\eta_{X^I})=\qLisse(\eta_{X^I})
%\end{equation}
%take values in the full subcategory
%$$\qLisse(\eta_X)^{\otimes I}\subset \qLisse(\eta_{X^I})$$
%(see Remark \ref{r:Kunneth gen}). 
%
%\medskip
%
%This can be done using previous work of C.~Xue, \cite{Xue1}. Namely, the results of {\it loc. cit.} show 
%that for $V\in \Rep(\cG)^{\otimes I}$ its image under \eqref{e:shtuka eta} satisfies conditions for a 
%version of Drinfeld's lemma that guarantee that it belongs to $\qLisse(\eta_X)^{\otimes I}$. 
%
%\end{rem}

\begin{rem} \label{r:partial Frob}
Recall (see \cite[Sect. 24.1]{AGKRRV1}) that the stack $\LocSys^{\on{arithm}}_\cG(X)$ is defined  as
$$(\LocSys^{\on{restr}}_\cG(X))^{\Frob}.$$
Let $\iota$ denote the forgetful map
$$\LocSys^{\on{arithm}}_\cG(X)\to \LocSys^{\on{restr}}_\cG(X).$$

A priori, $\LocSys^{\on{arithm}}_\cG(X)$ is a \emph{formal stack} (i.e., a quotient of a formal affine 
scheme by an action of $\sG$), but \cite[Theorem 24.1.4]{AGKRRV1} says that $\LocSys^{\on{arithm}}_\cG(X)$ is
actually an algebraic stack. 

\medskip

One can show that the object $\Drinf$ can be \emph{a priori} obtained as
$\iota_*(\Drinf^{\on{arithm}})$
for a canonically defined object 
$$\Drinf^{\on{arithm}}\in \QCoh(\LocSys^{\on{arithm}}_\cG(X)).$$

This additional structure on $\Drinf$ encodes the equivariance of the objects
$$\Sht_I(V)\in \Shv(X^I), \quad V\in \Rep(\cG)^{\otimes I}$$
with respect to the \emph{partial Frobenius} maps acting on $X^I$. See also \secref{sss:partial Frob post}. 

\end{rem}

\begin{rem} \label{r:Vinc decomp}

The object $\Drinf^{\on{arithm}}$ allows to recover the spectral decomposition of the space 
of automorphic functions along classical Langlands parameters, established in \cite{VLaf} 
for the cuspidal subspace and extended in \cite{Xue1} to the entire space. 

\medskip

Namely, by \eqref{e:sect Drinf}, we have:
$$\Gamma(\LocSys^{\on{arithm}}_\cG(X),\Drinf^{\on{arithm}})\simeq \on{Funct}_c(\Bun_G(\BF_q)).$$

Set 
$$\CA:=\Gamma(\LocSys^{\on{arithm}}_\cG(X),\CO_{\LocSys^{\on{arithm}}_\cG(X)});$$
this is  is a commutative DG algebra over $\sfe$ that lives in non-positive cohomological
degrees. Set
$$\LocSys^{\on{arithm,coarse}}_\cG(X):=\Spec(\CA);$$
this is an affine (derived) scheme over $\sfe$.

\medskip

Let $\CA^0$ denote the $0$-th cohomology of $\CA$, so that $\Spec(\CA^0)$ is the classical affine scheme
$^{\on{cl}}\!\LocSys^{\on{arithm,coarse}}_\cG(X)$ underlying $\LocSys^{\on{arithm,coarse}}_\cG(X)$

\medskip

Now, one can show that the set of $\sfe$-points of $\Spec(\CA^0)$ is in bijection with isomorphism classes of semi-simple 
Frobenius-equivariant $\cG$-local systems on $X$, see \cite[Theorem 4.6.5]{AGKRRV1}.%\footnote{The quoted result
%of \cite{AGKRRV1} applies to $\LocSys^{\on{restr}}_\cG(X)$, but the case of $\LocSys^{\on{arithm}}_\cG(X)$ is similar.}. 
I.e., we can view $^{\on{cl}}\!\LocSys^{\on{arithm,coarse}}_\cG(X)$ as the scheme of classical Langlands parameters.

\medskip

By construction, $\CA$ acts on the space of global sections of any object of $\QCoh(\LocSys^{\on{arithm}}_\cG(X))$.
In particular, we obtain an action of $\CA$ on $\on{Funct}_c(\Bun_G(\BF_q))$. However, since $\on{Funct}_c(\Bun_G(\BF_q))$
sits in cohomological degree $0$, this action factors through an action of $\CA^0$ on $\on{Funct}_c(\Bun_G(\BF_q))$.

\medskip

Thus, we can view $\on{Funct}_c(\Bun_G(\BF_q))$ as global sections of a canonically defined quasi-coherent sheaf
on $^{\on{cl}}\!\LocSys^{\on{arithm,coarse}}_\cG(X)$. This indeed may be viewed as a spectral decomposition of
$\on{Funct}_c(\Bun_G(\BF_q))$ over classical Langlands parameters.

\medskip

Furthermore, one can show that $\CA^0$ is a quotient of V.~Lafforgue's algebra of excursion operators. So the above
action of $\CA^0$ recovers the action of the excursion algebra on $\on{Funct}_c(\Bun_G(\BF_q))$, established in
\cite{Xue1}.

\end{rem}

\begin{rem} \label{r:partial Frob level}

The above construction of the object $\Drinf$ (resp., $\Drinf^{\on{arithm}}$) was specific to the 
\emph{everywhere unramified} situation. In a subsequent publication, we will show this construction
can be generalized to allow for level structure. 

\medskip

I.e., given a divisor $D\subset X$ defined over $\BF_q$,
one can construct objects
$$\Drinf_D\in \LocSys^{\on{restr}}_\cG(X-D) \text{ and }
\Drinf^{\on{arithm}}_D\in \LocSys^{\on{arithm}}_\cG(X-D)$$
that encode shtuka cohomology with level structure.

\medskip

What is for now a far-fetched goal is to interpret $\Drinf^{\on{arithm}}_D$ also as categorical
trace, see \secref{sss:partial Frob post level} for what we mean by that. 

\end{rem}

\section{Calculating the trace} \label{s:main}

In this section we will prove the main result of this paper,
\corref{c:trace}, which asserts that the space of (compactly supported) automorphic functions
can be obtained as the (categorical) trace of Frobenius on the category of automorphic sheaves
with nilpotent singular support. 

\ssec{Traces of Frobenius-Hecke operators}

\sssec{}

Recall that the $\Rep(\cG)_\Ran$-action on $\Shv(\Bun_G)$ preserves
its subcategory $\Shv_\Nilp(\Bun_G)$. 
Therefore, we obtain a functor
$$\Rep(\cG)_\Ran \to \bMaps_{\DGCat}(\Shv_\Nilp(\Bun_G),\Shv_\Nilp(\Bun_G))$$
sending $\CV$ to the functor $\CV \star -$. 

\medskip

Note also that the subcategory $\Shv_\Nilp(\Bun_G)\subset \Shv(\Bun_G)$ is preserved by
the endofunctor $(\Frob_{\Bun_G})_*$, see \cite[Sect. 22.3.3]{AGKRRV1}.

\medskip 

We define a functor $$\Sht^{\Tr}:\Rep(\cG)_\Ran \to \Vect$$
as the functor 
$$\CV \mapsto \Tr\big(\sH_\CV \ \circ 
(\Frob_{\Bun_G})_*,\Shv_{\Nilp}(\Bun_G),\big).$$
In other words, we compose $\sH_\CV$ with pushforward along
the geometric Frobenius $\Frob_{\Bun_G}$ endomorphism of $\Bun_G$
and form the trace (as an endofunctor of the (dualizable) DG category
$\Shv_{\Nilp}(\Bun_G)$).

\medskip 

Our main theorem asserts:

\begin{thm}\label{t:main}

There is a canonical isomorphism of functors $\Sht \simeq \Sht^{\Tr}:
\Rep(\cG)_\Ran \to \Vect$.

\end{thm}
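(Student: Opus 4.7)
The plan is to follow the outline sketched in the introduction, deducing the theorem from two key inputs: Beilinson's spectral projector $\CR \in \Rep(\cG)_\Ran$ (see \secref{sss:the projector}), and the factorization of $\Sht$ through $\Loc$ supplied by \corref{c:Cong cor}.

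The first step is to establish the identification
\[
\Sht^{\Tr}(-) \;\simeq\; \Sht(\CR \star -)
\]
as functors $\Rep(\cG)_\Ran \to \Vect$. For each $\CV\in \Rep(\cG)_\Ran$, the trace $\Tr((\CV\star -)\circ (\Frob_{\Bun_G})_*,\Shv_\Nilp(\Bun_G))$ is computed via the canonical unit and counit of a self-duality on $\Shv_\Nilp(\Bun_G)$. I would use the non-standard self-duality of $\Shv_\Nilp(\Bun_G)$ studied in \cite{AGKRRV2} (and alluded to in \secref{ss:nilp self duality}), which is designed so that the unit of the self-duality is tightly related to the object $R_{\LocSys^{\on{restr}}_\cG(X)}$ of \secref{sss:R ls} under the $\Rep(\cG)_\Ran$-action. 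Unwinding the definitions of $\CK_\CV$ and of the graph of Frobenius (\secref{sss:sht}), the trace of $(\CV\star -)\circ (\Frob_{\Bun_G})_*$ becomes the $!$-sections of the $*$-pullback along $\on{Graph}_{\Frob_{\Bun_G}}$ of the kernel $\CK_{\CR\star \CV}$, which is exactly $\Sht(\CR\star \CV)$. This step — which should appear as Theorem~\ref{t:trace r} in the main body — is the main obstacle, because it requires verifying that the self-duality from \cite{AGKRRV2} and the spectral projector $\CR$ are compatible in precisely the right way (and producing the comparison functorially in $\CV$).

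Given this, the remainder of the argument is formal. By \corref{c:Cong cor}, there is a unique functor $\Sht_{\Loc}:\QCoh(\LocSys^{\on{restr}}_\cG(X))\to \Vect$ with $\Sht = \Sht_{\Loc}\circ \Loc$. Since $\Loc$ is symmetric monoidal (see \secref{ss:Loc}) and $\Loc(\CR)\simeq \CO_{\LocSys^{\on{restr}}_\cG(X)}$ by \corref{c:diagonal LocSys} — which is the monoidal unit of $\QCoh(\LocSys^{\on{restr}}_\cG(X))$ — we obtain a canonical isomorphism of functors $\Rep(\cG)_\Ran\to \QCoh(\LocSys^{\on{restr}}_\cG(X))$:
\[
\Loc(\CR\star -) \;\simeq\; \Loc(\CR)\star \Loc(-) \;\simeq\; \Loc(-).
\]

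Combining the two steps yields
\[
\Sht^{\Tr}(-) \;\simeq\; \Sht(\CR\star -) \;\simeq\; \Sht_{\Loc}\circ \Loc(\CR\star -) \;\simeq\; \Sht_{\Loc}\circ \Loc(-) \;\simeq\; \Sht(-),
\]
as functors $\Rep(\cG)_\Ran\to \Vect$, which is the asserted equivalence. I expect the bulk of the work to lie in the first step; the second is a clean formal consequence of the monoidality of $\Loc$ together with \thmref{t:diagonal LocSys}. The compatibility of the resulting isomorphism with the local term map, evaluated on $\one_{\Rep(\cG)_\Ran}$, is not part of \thmref{t:main} itself and is addressed separately in \secref{s:LT}.
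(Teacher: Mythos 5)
Your proposal follows precisely the paper's own proof: step one is \thmref{t:trace r} (the identification $\Sht^{\Tr}\simeq\Sht(\CR\star-)$ via the non-standard self-duality of $\Shv_\Nilp(\Bun_G)$ from \cite{AGKRRV2}), and step two is the formal reduction via $\Sht=\Sht_{\Loc}\circ\Loc$, monoidality of $\Loc$, and $\Loc(\CR)\simeq\CO_{\LocSys^{\on{restr}}_\cG(X)}$. Your assessment that the bulk of the work lies in \thmref{t:trace r} and that the rest is formal matches the paper exactly.
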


We will prove this result in \secref{ss:main pf}.

\sssec{}

For the moment, we assume \thmref{t:main} and deduce further results from it.

\medskip 

First, observe that by definition, we have
$$\Sht^{\Tr}(\one_{\Rep(\cG)_\Ran}) = \Tr((\Frob_{\Bun_G})_*,\Shv_{\Nilp}(\Bun_G)).$$
On the other hand, by \exref{e:sht triv} we have
$$\Sht(\one_{\Rep(\cG)_\Ran}) = 
\on{Funct}_c(\Bun_G(\BF_q)).$$

\medskip 

Hence, we obtain: 

\begin{cor} \label{c:trace}
There exists a canonical isomorphism in $\Vect$
$$\Tr((\Frob_{\Bun_G})_*,\Shv_\Nilp(\Bun_G))\simeq \on{Funct}_c(\Bun_G(\BF_q)).$$
\end{cor}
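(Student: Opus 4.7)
The plan is to derive \corref{c:trace} from the stronger \thmref{t:main} by evaluating the asserted isomorphism $\Sht \simeq \Sht^{\Tr}$ at $\one_{\Rep(\cG)_\Ran}$: by \exref{e:sht triv} this evaluates on the left side to $\on{Funct}_c(\Bun_G(\BF_q))$, and by construction on the right side to $\Tr((\Frob_{\Bun_G})_*,\Shv_\Nilp(\Bun_G))$. So the entire content lies in proving \thmref{t:main}.

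The strategy is to factor both functors through $\QCoh(\LocSys^{\on{restr}}_\cG(X))$ via the symmetric monoidal localization functor $\Loc$ of \secref{ss:Loc}, and to use Beilinson's spectral projector $\CR$ of \secref{ss:proj} as the bridge between the trace side and the shtuka side. Two inputs are required. First, \corref{c:Cong cor} (an application of C.~Xue's lisseness theorem) supplies a canonical factorization $\Sht \simeq \Sht_{\Loc}\circ \Loc$. Second, one must establish the ``trace identity''
$$\Sht^{\Tr}(\CV) \;\simeq\; \Sht(\CR \star \CV), \qquad \CV \in \Rep(\cG)_\Ran, \tag{$\ast$}$$
at the level of functors $\Rep(\cG)_\Ran \to \Vect$. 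Granting both, the rest is a short formal manipulation: by monoidality of $\Loc$ and \corref{c:diagonal LocSys} (which gives $\Loc(\CR)\simeq \CO_{\LocSys^{\on{restr}}_\cG(X)}$) one has $\Loc(\CR \star -) \simeq \Loc(\CR) \otimes \Loc(-) \simeq \Loc(-)$, whence
$$\Sht^{\Tr} \;\simeq\; \Sht(\CR \star -) \;\simeq\; \Sht_{\Loc}\circ \Loc(\CR\star -) \;\simeq\; \Sht_{\Loc}\circ \Loc \;\simeq\; \Sht.$$

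The hard part is $(\ast)$. Computing $\Tr((\CV \star -)\circ (\Frob_{\Bun_G})_*,\Shv_\Nilp(\Bun_G))$ in concrete sheaf-theoretic terms requires an explicit and usable self-duality of $\Shv_\Nilp(\Bun_G)$: the trace must be presented as a pull-push along the graph of Frobenius of an explicit kernel on $\Bun_G\times\Bun_G$ so that, after base change, it is visibly identified with $\on{C}^\cdot_c(\Bun_G,(\on{Graph}_{\Frob_{\Bun_G}})^* \CK_{\CR\star \CV}) = \Sht(\CR \star \CV)$. As \remref{sss:r defin Sam} previews and the appendix emphasizes, the standard Verdier self-duality is not adequate here — it would yield ``co-shtukas'' rather than shtukas. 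The correct tool is the non-standard (``miraculous'') self-duality of $\Shv_\Nilp(\Bun_G)$ established in \cite{AGKRRV2}; under that self-duality, the unit/counit is precisely the Hecke kernel $\CK_\CR$ attached to Beilinson's projector, and a formal diagram chase then produces $(\ast)$. This is the step where genuine categorical input beyond bookkeeping is consumed, and it is where the bulk of the proof should be concentrated.
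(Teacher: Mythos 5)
Your proposal is correct and follows essentially the same route as the paper: deduce the corollary from Theorem \ref{t:main} by evaluating at the unit, prove Theorem \ref{t:main} by combining the trace identity $\Sht^{\Tr}\simeq \Sht(\CR\star -)$ (Theorem \ref{t:trace r}) with the factorization $\Sht\simeq \Sht_{\Loc}\circ\Loc$ (\corref{c:Cong cor}) and $\Loc(\CR)\simeq\CO$ (\corref{c:diagonal LocSys}), and establish the trace identity via the non-standard self-duality of $\Shv_\Nilp(\Bun_G)$ with unit $\CK_\CR$ and counit $\on{ev}^*_{\Bun_G}$ (Theorem \ref{t:duality}). The only ingredient you do not make explicit is the computation $((\Frob_{\Bun_G})_*)^\vee\simeq(\Frob_{\Bun_G})^*$ with respect to that self-duality (Lemma \ref{l:frob}), which is the small formal step that lets the trace be rewritten as a pull-push along $\on{Graph}_{\Frob_{\Bun_G}}$.
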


%\begin{rem} \label{r:local term map}
%
%In \secref{s:LT}, we will show that the map 
%$$\Tr((\Frob_{\Bun_G})_*,\Shv_\Nilp(\Bun_G))\to \on{Funct}_c(\Bun_G(\BF_q))$$
%equals the \emph{local term map} (see \secref{sss:truncatable} for what we mean),
%thereby proving the Trace Conjecture of \cite{AGKRRV1} (Conjecture 15.3.5 in {\it loc.cit.}).
%
%\end{rem}

\sssec{} 

By \eqref{e:three incarnations}, the functor $\Sht^{\Tr}$ corresponds to a system of functors $\{\Sht^{\Tr}_I\}$
\begin{equation} \label{e:Sht Tr I}
\Sht^{\Tr}_I:\Rep(\cG)^{\otimes I}\to \Shv(X^I).
\end{equation} 

From \thmref{t:main} we immediately obtain:

\begin{cor} \label{c:main}
For an individual finite set $I$, the functors $\Sht_I$ and $\Sht^{\Tr}_I$
$$\Rep(\cG)^{\otimes I}\to \qLisse(X)^{\otimes I}$$
are canonically isomorphic.
\end{cor}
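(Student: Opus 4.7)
\medskip

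The plan is to derive \corref{c:main} directly from \thmref{t:main} by invoking the dictionary \eqref{e:three incarnations} between functors out of $\Rep(\cG)_\Ran$ and $\fSet$-indexed coherent families of functors. Since \thmref{t:main} gives a canonical isomorphism $\Sht \simeq \Sht^{\Tr}$ in $(\Rep(\cG)_\Ran)^\vee$, it suffices to identify the $I$-th component of each of these objects under the equivalence of \secref{sss:dual of Ran}, and then observe that both components land in the full subcategory $\qLisse(X)^{\otimes I}$.

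First I would identify the families corresponding to $\Sht$ and $\Sht^{\Tr}$. On the $\Sht$ side, the calculation carried out in the proof of \thmref{t:Cong cor} already shows that the family associated to $\Sht$ is precisely $\{\Sht_I\}$, provided we interpret the embedding $\qLisse(X^I) \hookrightarrow \Shv(X^I)$ correctly --- namely via \eqref{e:! emb} after the shift $[2|I|]$, equivalently via \eqref{e:* emb} without a shift (cf.\ \remref{r:! vs *}). On the $\Sht^{\Tr}$ side, the family extracted from $\Sht^{\Tr}$ by precomposing with $\on{ins}_I$ and applying the Verdier duality pairing of \secref{sss:dual of Ran} is, by construction and by the associativity/codefined-by-kernel properties recorded in \secref{sss:hecke defined/codefined} and \secref{sss:Hecke on Nilp}, exactly the parameterized trace $\Sht^{\Tr}_I$ of \eqref{e:Sht Tr I}. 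This step is essentially unwinding definitions.

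Next I would verify that both families really take values in $\qLisse(X)^{\otimes I}$: for $\{\Sht_I\}$ this is \thmref{t:Cong} of C.~Xue, and for $\{\Sht^{\Tr}_I\}$ it follows from \corref{c:Hecke on Nilp}, because the Hecke endofunctor $\sH(V,-) \circ (\Frob_{\Bun_G})_*$ of $\Shv_{\Nilp}(\Bun_G)$ is $\qLisse(X)^{\otimes I}$-parameterized, so its parameterized trace lies in $\qLisse(X)^{\otimes I}$. With both families matched to the intrinsic objects $\Sht_I, \Sht^{\Tr}_I$ and confirmed to land in the expected target, restricting the isomorphism of \thmref{t:main} to its $I$-component yields the desired canonical equivalence $\Sht_I \simeq \Sht^{\Tr}_I$ of functors $\Rep(\cG)^{\otimes I} \to \qLisse(X)^{\otimes I}$.

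The only real obstacle is bookkeeping around the two embeddings \eqref{e:! emb} and \eqref{e:* emb} of $\qLisse(X^I)$ into $\Shv(X^I)$ and the cohomological shift $[2|I|]$ between them (the point emphasized in \remref{r:shift} and \remref{r:! vs *}); the substantive mathematical content is fully absorbed into \thmref{t:main}.
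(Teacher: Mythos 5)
Your proposal is correct and follows the paper's own argument: the paper defines $\Sht^{\Tr}_I$ in \eqref{e:Sht Tr I} as the $I$-th component of $\Sht^{\Tr}$ under the equivalence \eqref{e:three incarnations}, notes that the $I$-th component of $\Sht$ is $\Sht_I$ (as computed in the proof of \thmref{t:Cong cor}, modulo the $[2|I|]$-shift / choice between \eqref{e:! emb} and \eqref{e:* emb} you correctly flag), and then observes the corollary is immediate from \thmref{t:main} since both sides land in the full subcategory $\qLisse(X)^{\otimes I}$ (by \thmref{t:Cong} on the $\Sht$ side and by \corref{c:Hecke on Nilp} on the $\Sht^{\Tr}$ side). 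The only extra material in your write-up — unwinding $\Sht^{\Tr}_I$ as the parameterized trace — is something the paper also records but defers to just after the corollary; it is not logically needed for this statement.
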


Let us describe the functors $\Sht^{\Tr}_I$ explicitly.

\sssec{} 

We will use the following construction.

\medskip 

Let $\bC$ be a dualizable DG category. Let
$$\Vect\overset{\on{u}_\bC}\to \bC^\vee\otimes \bC \text{ and } \bC^\vee\otimes \bC\overset{\on{ev}_\bC}\to \Vect$$
be the unit and counit of the duality, respectively.

\medskip

Let $T:\bC\to \bC\otimes \bD$ be a functor, where $\bD$ is another DG category.
We can consider the \emph{relative trace} object
$$\Tr(T,\bC)\in \bD$$ defined as the composition
$$\Vect \overset{\on{u}_\bC}\to \bC^\vee\otimes \bC \overset{\on{Id}\otimes T}\to
\bC^\vee\otimes \bC \otimes \bD \overset{\on{ev}_\bC\otimes \on{Id}}\to \bD.$$

\sssec{}

Let $I$ be a finite set. Recall that we have a canonically defined functor
$$\sH:\Rep(\cG)^{\otimes I}\otimes \Shv_\Nilp(\Bun_G) \to \Shv_\Nilp(\Bun_G)\otimes \qLisse(X^I),$$
see \eqref{e:H!* qLisse}.

\medskip

In particular, for $V\in \Rep(\cG)^{\otimes I}$ we can consider the object
$$\Tr(\sH(V,-)\circ (\Frob_{\Bun_G})_*,\Shv_\Nilp(\Bun_G))\in \qLisse(X^I),$$
and this operation defines a functor
\begin{equation} \label{e:Tr Frob Hecke}
\Rep(\cG)^{\otimes I} \to \qLisse(X^I).
\end{equation}

Unwinding the definitions, it is easy to see that the functor $\Sht^{\Tr}_I$ of \eqref{e:Sht Tr I} identifies with the composition
of \eqref{e:Tr Frob Hecke} and the embedding $\qLisse(X^I)\to \Shv(X^I)$ of \eqref{e:! emb}.

\sssec{}

Hence, from \corref{c:main} we obtain:

\begin{cor} \label{c:shtuka}
For an individual finite set $I$, the functor 
$$\Sht_I:\Rep(\cG)^{\otimes I}\to \qLisse(X^I)$$ 
of \eqref{e:classical shtuka} identifies canonically with the functor
$$V \mapsto \Tr(\sH(V,-)\circ (\Frob_{\Bun_G})_*,\Shv_\Nilp(\Bun_G)).$$
\end{cor}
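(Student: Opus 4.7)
The plan is to deduce \corref{c:shtuka} directly from \corref{c:main}, after identifying the system of functors $\{\Sht^{\Tr}_I\}$, obtained from $\Sht^{\Tr}\in(\Rep(\cG)_\Ran)^\vee$ via the equivalence \eqref{e:three incarnations}, with the parameterized trace construction appearing in the statement. The content is essentially unwinding: most of the work has already been done in the proofs of \thmref{t:main} and \corref{c:main}, and the present corollary packages those as a statement about shtuka cohomology.

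First I would recall the explicit recipe from \secref{sss:dual of Ran} that recovers $\Sht^{\Tr}_I$ from $\Sht^{\Tr}$: precompose $\Sht^{\Tr}$ with $\on{ins}_I:\Rep(\cG)^{\otimes I}\otimes \Shv(X^I)\to\Rep(\cG)_\Ran$ and then use Verdier duality on $X^I$. Concretely, this means that for $V\in\Rep(\cG)^{\otimes I}$ the object $\Sht^{\Tr}_I(V)\in\Shv(X^I)$ is characterized by the formula
\[
\Sht^{\Tr}\bigl(\on{ins}_I(V\otimes\CM)\bigr)
\;\simeq\;
\on{C}^{\cdot}\bigl(X^I,\,\Sht^{\Tr}_I(V)\sotimes\CM\bigr),
\qquad \CM\in\Shv(X^I).
\]
In parallel, I would recall from \secref{sss:Ran action} that the integral Hecke action of $\on{ins}_I(V\otimes\CM)\in\Rep(\cG)_\Ran$ on $\Shv_\Nilp(\Bun_G)$ is obtained from $\sH(V,-)$ followed by tensoring (with respect to the $\qLisse(X^I)$-action) with $\CM$ and pushing forward off $X^I$.

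The key step is then the following formal computation in the trace formalism. Given a dualizable category $\bC$ together with a functor $T:\bC\to\bC\otimes\bD$ (in our case $\bC=\Shv_\Nilp(\Bun_G)$, $\bD=\qLisse(X^I)$, and $T=\sH(V,-)\circ(\Frob_{\Bun_G})_*$), and given an endofunctor of $\bC$ of the form ``postcompose $T$ with pairing against $\CM\in\Shv(X^I)$ and integrate over $X^I$'', the ordinary trace in $\Vect$ of this endofunctor equals the pairing of the relative trace $\Tr(T,\bC)\in\qLisse(X^I)$ with $\CM$ under the Verdier pairing on $X^I$. Combined with the description of the $\Rep(\cG)_\Ran$-action via \eqref{e:Hecke action !}--\eqref{e:Hecke action *}, this identifies $\Sht^{\Tr}_I(V)$ (viewed inside $\Shv(X^I)$ via \eqref{e:! emb}) with $\Tr(\sH(V,-)\circ(\Frob_{\Bun_G})_*,\Shv_\Nilp(\Bun_G))\in\qLisse(X^I)$, which is exactly the functor \eqref{e:Tr Frob Hecke}. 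At this point \corref{c:main} delivers the desired isomorphism with $\Sht_I$.

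The main obstacle, though mild, is the bookkeeping between the $!$- and $*$-incarnations of the Hecke action (compare \eqref{e:Hecke action !} with \eqref{e:Hecke action *}) and the corresponding embeddings $\qLisse(X^I)\hookrightarrow\Shv(X^I)$ from \secref{sss:* and ! lisse}, which are responsible for the cohomological shift discussed in \remref{r:shift} and \remref{r:! vs *}. One must check that the shift produced by passing between \eqref{e:! emb} and \eqref{e:* emb} cancels against the shift implicit in the recipe of \secref{sss:dual of Ran}, so that the resulting object truly lives in $\qLisse(X^I)$ in the normalization of \eqref{e:classical shtuka}. Once this compatibility is verified, the corollary follows formally from \corref{c:main}.
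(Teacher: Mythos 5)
Your proposal is correct and matches the paper's own argument: the paper also obtains \corref{c:shtuka} by combining \corref{c:main} with the observation (stated as "unwinding the definitions") that the functor $\Sht^{\Tr}_I$ defined via the duality recipe of \secref{sss:dual of Ran} coincides with $V \mapsto \Tr(\sH(V,-)\circ(\Frob_{\Bun_G})_*,\Shv_\Nilp(\Bun_G))$ viewed in $\Shv(X^I)$ via \eqref{e:! emb}. You have simply spelled out more of the unwinding — in particular the compatibility between the ordinary trace of the $\CM$-integrated endofunctor and the Verdier pairing against the relative trace, and the $!$/$*$ normalization matching — which the paper leaves to the reader.
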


\corref{c:shtuka} is the \emph{Shtuka Conjecture} from
\cite{AGKRRV1} (Conjecture 22.5.7 in {\it loc.cit.}). 

\sssec{}

Finally, we note that \thmref{t:main} may be tautologically be reformulated in terms
of Drinfeld sheaves. 

\medskip 

First, as was noted above, the functors $\Sht_I^{\Tr}$ take
values in $\qLisse(X^I)$. Therefore, by \corref{c:Loc dual bis}, 
$\Sht^{\Tr}$ factors uniquely as $\Sht^{\Tr}_{\Loc} \circ \Loc$
for a uniquely defined functor 
$$\Sht^{\Tr}_{\Loc}:\QCoh(\LocSys^{\on{restr}}_\cG(X)) \to \Vect.$$

\medskip 

As in \secref{ss:drinf}, by self-duality of $\QCoh(\LocSys^{\on{restr}}_\cG(X))$, to $\Sht^{\Tr}_{\Loc}$
there corresponds an object
$\Drinf^{\Tr} \in \QCoh(\LocSys^{\on{restr}}_\cG(X))$.

\medskip

From \thmref{t:main}, we obtain:

\begin{cor} \label{c:drinf}
The objects $\Drinf$ and $\Drinf^{\Tr}$ of $\QCoh(\LocSys_\cG^{\on{restr}})$
are canonically isomorphic.
\end{cor}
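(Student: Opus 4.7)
The plan is to derive \corref{c:drinf} as a formal consequence of \thmref{t:main}, using that both $\Drinf$ and $\Drinf^{\Tr}$ are constructed from functors on $\Rep(\cG)_\Ran$ via the same two-step procedure: factor through $\Loc$, then apply the self-duality of $\QCoh(\LocSys^{\on{restr}}_\cG(X))$.

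First I would verify that $\Sht^{\Tr}$ factors canonically as $\Sht^{\Tr}_\Loc \circ \Loc$ for a unique functor $\Sht^{\Tr}_\Loc : \QCoh(\LocSys^{\on{restr}}_\cG(X)) \to \Vect$. By \corref{c:Loc dual bis}, this reduces to checking that the associated family $\Sht^{\Tr}_I : \Rep(\cG)^{\otimes I} \to \Shv(X^I)$ lands in $\qLisse(X^I) \subset \Shv(X^I)$. Unwinding the description of $\Sht^{\Tr}_I$ given in \eqref{e:Tr Frob Hecke} and the discussion preceding it, $\Sht^{\Tr}_I$ is the composition of the parametrized trace $V \mapsto \Tr(\sH(V,-) \circ (\Frob_{\Bun_G})_*, \Shv_\Nilp(\Bun_G))$ with the embedding $\qLisse(X)^{\otimes I} \hookrightarrow \Shv(X^I)$ of \eqref{e:! emb}. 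The target of the trace lies in $\qLisse(X)^{\otimes I}$ thanks to \corref{c:Hecke on Nilp}, so the factorization exists and is unique.

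Next I would transport the isomorphism $\Sht \simeq \Sht^{\Tr}$ of \thmref{t:main} through the equivalence of \corref{c:Loc dual bis}. Since $\Loc^\vee : \QCoh(\LocSys^{\on{restr}}_\cG(X)) \to (\Rep(\cG)_\Ran)^\vee_{\qLisse}$ is fully faithful, and both $\Sht$ and $\Sht^{\Tr}$ lie in its essential image (by \corref{c:Cong cor} and the previous paragraph, respectively), the isomorphism $\Sht \simeq \Sht^{\Tr}$ in $(\Rep(\cG)_\Ran)^\vee_{\qLisse}$ lifts to a canonical isomorphism $\Sht_\Loc \simeq \Sht^{\Tr}_\Loc$ of functors $\QCoh(\LocSys^{\on{restr}}_\cG(X)) \to \Vect$. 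Finally, under the self-duality of $\QCoh(\LocSys^{\on{restr}}_\cG(X))$ recalled in \secref{sss:ls recap}(iii), such continuous functionals correspond canonically to objects; applying the correspondence to both sides yields the desired canonical isomorphism $\Drinf \simeq \Drinf^{\Tr}$.

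The main obstacle in the whole chain is of course \thmref{t:main} itself, whose substantive content (established in the preceding subsection) comes from combining Beilinson's spectral projector $\CR$ of \secref{ss:proj}, the identification $\Loc(\CR) \simeq \CO_{\LocSys^{\on{restr}}_\cG(X)}$ of \corref{c:diagonal LocSys}, and C.~Xue's lisseness theorem \thmref{t:Cong}. Given \thmref{t:main}, the deduction of \corref{c:drinf} is strictly formal bookkeeping in the six-functor/self-duality formalism, requiring no further geometric input.
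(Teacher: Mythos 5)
Your proposal is correct and follows the same route as the paper: the paper's own derivation is exactly this bookkeeping (that $\Sht^{\Tr}$ factors through $\Loc$ because the $\Sht^{\Tr}_I$ land in $\qLisse(X^I)$, and that transporting the isomorphism of \thmref{t:main} through \corref{c:Loc dual bis} and the self-duality of $\QCoh(\LocSys^{\on{restr}}_\cG(X))$ yields $\Drinf \simeq \Drinf^{\Tr}$), simply stated more tersely in the two paragraphs immediately preceding the corollary.
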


\ssec{Proof of Theorem \ref{t:main}} \label{ss:main pf}

\sssec{}

Recall the object $\sR\in \Rep(\cG)_\Ran$ of \secref{ss:proj}. The proof of \thmref{t:main} will
be based on the following result.

\begin{thm}\label{t:trace r}
There is a canonical isomorphism $$\Sht^{\Tr}(-) \simeq \Sht(\sR \star -)$$
as functors $\Rep(\cG)_\Ran \to \Vect$.
\end{thm}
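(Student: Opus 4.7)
The plan is to compute $\Sht^{\Tr}(\CV) = \Tr((\CV \star -) \circ (\Frob_{\Bun_G})_*, \Shv_\Nilp(\Bun_G))$ by means of the non-standard self-duality of $\Shv_\Nilp(\Bun_G)$ from \cite{AGKRRV2} (reviewed in \secref{ss:nilp self duality}), and to recognize the resulting expression as $\Sht(\CR \star \CV)$. The crucial input is that the unit of this non-standard self-duality is explicitly governed by the spectral projector $\CR$.

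I would first recall the categorical trace formula: for a dualizable $\bC \in \DGCat$ with self-duality $(\on{u}_\bC, \on{ev}_\bC)$ and endofunctor $\Phi$, one has
$$\Tr(\Phi,\bC) = \on{ev}_\bC \circ (\Phi \otimes \on{Id}_\bC)(\on{u}_\bC).$$
The key input from \cite{AGKRRV2} should assert that the unit $\on{u}_{\Shv_\Nilp(\Bun_G)}$ of the non-standard self-duality is the image in $\Shv_\Nilp(\Bun_G) \otimes \Shv_\Nilp(\Bun_G)$ (via the canonical embedding into $\Shv(\Bun_G \times \Bun_G)$) of the kernel $\CK_\CR$ from \secref{sss:kernel notation}, and that the evaluation is (up to standard shifts) the operation of $!$-restricting to the diagonal and computing compactly supported cohomology. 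Heuristically this is consistent with the facts that $\Loc(\CR) \simeq \CO_{\LocSys^{\on{restr}}_\cG(X)}$ (\corref{c:diagonal LocSys}) and that the self-duality of $\QCoh(\LocSys^{\on{restr}}_\cG(X))$ is given by $(\Delta_{\LocSys^{\on{restr}}_\cG(X)})_*(\CO_{\LocSys^{\on{restr}}_\cG(X)})$ (\secref{sss:ls recap}): the non-standard duality on $\Shv_\Nilp(\Bun_G)$ is the pullback along $\Loc$ of the tautological self-duality on the spectral side.

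With the unit so described, I would unwind the trace formula. Applying $(\CV \star -) \circ (\Frob_{\Bun_G})_*$ to the first factor of $\CK_\CR$ produces, via the commutativity of the $\Rep(\cG)_\Ran$-action with $*$-pushforwards (\secref{sss:hecke defined/codefined}) and the monoidality of $\star$ (which is symmetric, see \secref{sss:Rep Ran}), the kernel $(\Frob_{\Bun_G}\times \on{Id})_*\,\CK_{\CR\star \CV}$. Evaluating against $\on{ev}_{\Shv_\Nilp(\Bun_G)}$ then amounts to pulling back along $\Delta_{\Bun_G}$ and taking $\on{C}^\cdot_c$; composing the $(\Frob_{\Bun_G}\times \on{Id})_*$ with $\Delta_{\Bun_G}^*$ converts it into pullback along $\on{Graph}_{\Frob_{\Bun_G}}$. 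Matching with the definition of $\Sht$ in \secref{sss:sht} gives the desired isomorphism $\Sht^{\Tr}(\CV) \simeq \Sht(\CR \star \CV)$ pointwise.

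The main obstacle, and what turns this outline into a proof, is to verify that the unit of the non-standard self-duality has exactly the form stated (as opposed to a variant swapping factors, dualizing $\CV$, or carrying extra Frobenius twists), and to upgrade the pointwise isomorphism to a natural equivalence of functors $\Rep(\cG)_\Ran \to \Vect$, coherent with respect to the $\on{TwArr}(\fSet)$-diagram defining $\Rep(\cG)_\Ran$ (\secref{sss:Rep Ran}). The first of these points is exactly the content of the main result of \cite{AGKRRV2}; the second is a matter of checking that all the kernel-level manipulations above are natural in $\CV$ and compatible with maps $\psi:I\to J$ of finite sets, which should follow from the $\Shv(X^I)$-linearity of the Hecke action noted in \secref{sss:associativity of Hecke}.
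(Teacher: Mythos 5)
Your proposal has the right high-level shape — using the non-standard self-duality from \cite{AGKRRV2}, with unit $\CK_\CR$ and evaluation $\on{ev}^*_{\Bun_G}$, and recognizing the result as $\Sht(\CR\star\CV)$ — but the kernel-level computation contains a genuine gap, and it is the step that the paper's proof takes most care with.

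You apply $\Phi=(\CV\star-)\circ(\Frob_{\Bun_G})_*$ directly to the first factor of the unit, obtaining $(\Frob_{\Bun_G}\times\on{Id})_*\,\CK_{\CR\star\CV}$, and then assert that composing with $\Delta_{\Bun_G}^*$ ``converts it into pullback along $\on{Graph}_{\Frob_{\Bun_G}}$.'' This does not go through. Since $\on{Graph}_{\Frob_{\Bun_G}}=(\Frob_{\Bun_G}\times\on{Id})\circ\Delta_{\Bun_G}$, the identity $\on{Graph}_{\Frob_{\Bun_G}}^*=\Delta_{\Bun_G}^*\circ(\Frob_{\Bun_G}\times\on{Id})^*$ involves a $*$-\emph{pullback}, not a $*$-pushforward, along $\Frob_{\Bun_G}\times\on{Id}$. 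Your expression $\Delta_{\Bun_G}^*\circ(\Frob_{\Bun_G}\times\on{Id})_*$ must instead be computed by base change, and doing so produces the pullback along $y\mapsto(y,\Frob(y))$, i.e.\ the \emph{transposed} graph of Frobenius, not $\on{Graph}_{\Frob_{\Bun_G}}$ itself; you then need a separate swap-symmetry identity for the kernel $\CK_{\CR\star\CV}$ to fix things up. The paper avoids all of this by using the trace identity $\Tr(S\circ T)\simeq\on{ev}\circ(T^\vee\otimes S)(\on{u})$, with $T=(\Frob_{\Bun_G})_*$ and $S=\CV\star-$, and the key input is \lemref{l:frob}: under the non-standard self-duality of $\Shv_\Nilp(\Bun_G)$, $((\Frob_{\Bun_G})_*)^\vee\simeq(\Frob_{\Bun_G})^*$ (because $\Frob_*\simeq\Frob_!$ is finite, is an auto-equivalence, and preserves the duality data, so its dual is its inverse, which is the left adjoint $\Frob^*$). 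Inserting $(\Frob_{\Bun_G})^*$ on the first factor then gives $\Delta^*\circ(\Frob\times\on{Id})^*=\on{Graph}_{\Frob}^*$ on the nose.

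A secondary issue: you justify $(\CV\star-)$ applied to the first factor of $\CK_\CR$ producing $\CK_{\CR\star\CV}$ by citing the symmetry of the monoidal structure $\star$ on $\Rep(\cG)_\Ran$. That is not the relevant fact. Since $\CK_\CR=(\on{Id}\otimes(\CR\star-))(\Delta_!\ul\sfe)$ is defined by Hecke acting on the \emph{second} factor, moving $\CV\star-$ onto the first factor requires a geometric symmetry of the Hecke kernel under the swap of $\Bun_G\times\Bun_G$, which is not the symmetry of $\star$. The paper sidesteps this too: in the $T^\vee\otimes S$ formulation, $(\CV\star-)$ lands on the second factor, where the only step needed is the tautology $(\on{Id}\otimes(\CV\star-))(\CK_\CR)\simeq\CK_{\CV\star\CR}\simeq\CK_{\CR\star\CV}$ (and only the last identification uses the symmetry of $\star$).

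So: same strategic idea, but your proof is missing the computation of the Frobenius dual, and the base-change/transposition bookkeeping that your variant of the trace formula forces is not carried out.
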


\begin{rem}
As the proof of \thmref{t:trace r} will show, we will rather construct an isomorphism
$$\Sht^{\Tr}(-) \simeq \Sht(-\star \sR),$$
and we will swap $\Sht(-\star \sR)$ for $\Sht(\sR \star -)$ using the fact that the category
$\Rep(\cG)_{\Ran}$ is \emph{symmetric monoidal}. 

\medskip

Our preference of one over the other is purely notational.

\end{rem} 

\sssec{}

We postpone the proof of \thmref{t:trace r} to \secref{ss:trace r pf}. It is
essentially a calculation using the results of \cite{AGKRRV2}, which 
we recall in \secref{ss:nilp self duality}.

\medskip 

For the present, we assume \thmref{t:trace r} and deduce 
\thmref{t:main} from it.

\sssec{} 

The argument is straightforward at this point using 
\thmref{t:Cong cor}. Recall that \corref{c:Cong cor}
provides a factorization $\Sht = \Sht_{\Loc} \circ \Loc$.

\medskip 

Now recall that $\Loc$ is monoidal and sends
$\sR$ to the structure sheaf by \corref{c:diagonal LocSys}.
Therefore, $$\Loc(\sR\star -) \simeq \Loc(-)$$ as functors
$\Rep(\cG)_\Ran \to \QCoh(\LocSys^{\on{restr}}_\cG(X))$. 

\medskip 

By \thmref{t:trace r}, we obtain
$$\Sht^{\Tr} \simeq \Sht(\sR \star -) \simeq \Sht_{\Loc}\circ \Loc(\sR \star -)
\simeq \Sht_{\Loc}\circ \Loc(-) \simeq \Sht$$
as desired. 

\ssec{Self-duality for $\Shv_\Nilp(\Bun_G)$}\label{ss:nilp self duality}

To prove \thmref{t:trace r}, we use the 
explicit description 
of the dual of $\Shv_\Nilp(\Bun_G)$ from \cite{AGKRRV2}. 
We review these results below.

\sssec{}\label{sss:ps-u-nilp}

%Consider the object $$\on{ps-u}_{\Bun_G} := \Delta_!(\ul\sfe_{\Bun_G}) = 
%\CK_{\one_{\Rep(\cG)_\Ran}} \in \Shv(\Bun_G \times \Bun_G).$$
%Here we are using the kernel notation from \secref{sss:kernel notation}.
%
%\medskip

Consider the object
$$\CK_\sR\in \Shv(\Bun_G \times \Bun_G),$$
where $\sR \in \Rep(\cG)_\Ran$ was defined in \secref{ss:proj}, and where the notation $\CK_{-}$
is an in \secref{sss:kernel notation}. 

\medskip

A priori, it is defined as an object of the category 
$\Shv(\Bun_G\times \Bun_G)$. However, we have the following
key result, see \cite[Sect. 3.1.2 and Proposition 3.1.4]{AGKRRV2}:

\begin{thm} \label{t:u Nilp Bun}
The object $\CK_\sR$ lies in the essential image of
the fully faithful functor
$$\Shv_{\Nilp}(\Bun_G)\otimes \Shv_{\Nilp}(\Bun_G) \to \Shv(\Bun_G\times \Bun_G).$$
\end{thm}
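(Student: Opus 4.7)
The statement decomposes into two singular-support conditions: $\CK_\CR$ should have nilpotent singular support in each factor of $\Bun_G\times \Bun_G$. One of these is essentially built into the definition: since $\CK_\CR = (\on{Id}\otimes (\CR \star -))((\Delta_{\Bun_G})_!(\ul\sfe_{\Bun_G}))$ is obtained by letting $\CR$ act on the second factor of the diagonal kernel, the $\Rep(\cG)_\Ran$-version of \thmref{t:Hecke on Nilp} guarantees nilpotent singular support in the second factor regardless of what happens in the first. The substance of the theorem is therefore the constraint on the first factor.

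My strategy is to exhibit a hidden symmetry of $\CK_\CR$ that exchanges the roles of the two factors. By \remref{sss:r defin Sam}, one has $R_{\LocSys^{\on{restr}}_\cG(X)} \simeq (\Loc\otimes \on{Id})(\on{u}_{\Rep(\cG)_\Ran})$, where $\on{u}_{\Rep(\cG)_\Ran}$ is the unit of the self-duality on $\Rep(\cG)_\Ran$ from \remref{r:Ran self dual}. The crucial input is that this unit is invariant, up to Langlands-style duality of representations, under swapping its two tensor factors; at the level of the individual terms $R^{I\overset{\psi}\to J}_{\LocSys^{\on{restr}}_\cG(X)}$, this symmetry comes from the Peter--Weyl decomposition $R_\cG \simeq \bigoplus_\lambda V_\lambda \boxtimes V_\lambda^*$ of the regular representation used in \secref{sss:original defn}. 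Combining this with the tautological $\Rep(\cG)_\Ran$-bi-equivariance of the diagonal kernel $(\Delta_{\Bun_G})_!(\ul\sfe_{\Bun_G})$ (left Hecke action equals right Hecke action when restricted along the diagonal, as in \secref{sss:hecke defined/codefined}), one should obtain an alternative description of $\CK_\CR$ in which the integral Hecke action now takes place on the \emph{first} factor. Applying \thmref{t:Hecke on Nilp} to this rewritten expression yields the missing constraint on the first factor as well.

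It then remains to pass through the colimit. The fully faithful embedding $\Shv_\Nilp(\Bun_G)\otimes \Shv_\Nilp(\Bun_G) \hookrightarrow \Shv(\Bun_G\times \Bun_G)$ is closed under colimits (since $\Shv_\Nilp(\Bun_G)$ is compactly generated and stable under colimits in $\Shv(\Bun_G)$), so the singular-support conclusion survives the colimit over $\on{TwArr}(\on{fSet})$ defining $R_{\LocSys^{\on{restr}}_\cG(X)}$ as well as the application of $\Gamma_!$ on the spectral factor that produces $\CR$ from $R_{\LocSys^{\on{restr}}_\cG(X)}$.

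The hard part will be making the symmetry argument precise in the Ran setup: the colimit indexed by $\on{TwArr}(\on{fSet})$ interlaces Hecke actions at different finite collections of points, and the interchange of left and right actions must be carried out compatibly with the transition functors \eqref{e:mult phi} and \eqref{e:Delta phi}. In practice, the cleanest route may be to first recognize $\CK_\CR$ as the kernel of a non-standard self-duality on $\Shv_\Nilp(\Bun_G)$, at which point membership in $\Shv_\Nilp(\Bun_G)\otimes \Shv_\Nilp(\Bun_G)$ becomes part of the definition of the duality data rather than a singular-support computation.
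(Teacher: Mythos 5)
You are right that the heart of the matter is the singular-support constraint in the first factor; the constraint in the second factor is immediate. But I want to flag several issues, some of substance and some of citation.

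First, the citation for the ``easy'' half is not quite right. \thmref{t:Hecke on Nilp} and \corref{c:Hecke on Nilp} say that the Hecke action \emph{preserves} $\Shv_\Nilp(\Bun_G)$: the input must already lie in the nilpotent subcategory. What you actually need, since the diagonal kernel $\on{ps-u}_{\Bun_G}$ does not lie in $\Shv(\Bun_G)\otimes \Shv_\Nilp(\Bun_G)$, is the stronger statement \thmref{t:the projector}(a) and (b): applying $(\on{Id}\otimes \sP_\Nilp)$ to \emph{anything} lands in $\Shv_{T^*(\Bun_G)\times\Nilp}(\Bun_G\times\Bun_G)$, and this category is identified with $\Shv(\Bun_G)\otimes \Shv_\Nilp(\Bun_G)$. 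This is specific to $\CR$ (the projector) and would not hold for an arbitrary $\CV\in\Rep(\cG)_\Ran$.

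Second, and more seriously, the symmetry argument as stated does not close. Let $\sigma$ be the flip on $\Bun_G\times\Bun_G$. A purely formal computation (using only that $\sP_\Nilp$ is defined/codefined by a kernel and that $\on{ps-u}_{\Bun_G}$ is flip-invariant) gives $\sigma^!(\CK_\CR)\simeq (\sP_\Nilp\otimes\on{Id})(\on{ps-u}_{\Bun_G})$, which has singular support in $\Nilp\times T^*(\Bun_G)$. Transporting back via $\sigma$ yields exactly the constraint $T^*(\Bun_G)\times\Nilp$ you already had: no new information. What you actually need is the \emph{nontrivial} identity $(\on{Id}\otimes\sP_\Nilp)(\on{ps-u}_{\Bun_G})\simeq(\sP_\Nilp\otimes\on{Id})(\on{ps-u}_{\Bun_G})$, i.e., flip-invariance of $\CK_\CR$ itself. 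You appeal to ``bi-equivariance'' of the diagonal, but the section you cite, \secref{sss:hecke defined/codefined}, is about compatibility with push/pull along the parameter factor $\CZ$, not about exchanging left and right Hecke actions; that exchange is a different phenomenon, involves dualizing the representation (Cartan involution), and is not stated in this paper. Moreover, the Peter--Weyl symmetry of $R_\cG$ is not by itself enough: $\CR$ is produced from $R_\cG^{\otimes I}$ by applying $\on{Ev}(-)$ and then $\Gamma_!(\LocSys^{\on{restr}}_\cG(X),-)$ to one Rep factor and leaving the other alone, which visibly breaks the manifest swap symmetry. You would need a separate argument to show this asymmetric construction nonetheless yields a swap-invariant kernel.

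Third, the ``cleanest route'' you suggest at the end is circular: \thmref{t:duality} asserts a self-duality of $\Shv_\Nilp(\Bun_G)$ with unit $\CK_\CR$, but the statement of that theorem already presupposes $\CK_\CR\in \Shv_\Nilp(\Bun_G)\otimes\Shv_\Nilp(\Bun_G)$, which is precisely \thmref{t:u Nilp Bun}. You cannot use the duality to prove membership.

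Finally, note that the present paper does not actually prove \thmref{t:u Nilp Bun}: it is imported from \cite[Sect. 2.1.3 and Proposition 2.1.10]{AGKRRV2}. So there is no in-paper proof to compare to. Your strategy has the right coarse shape (two singular-support constraints plus a swap symmetry), but the part you yourself flag as ``the hard part'' is where all the content lies, and the available ingredients in this paper are not sufficient to execute it. In particular the bi-equivariance you need is a genuine geometric input about Hecke correspondences, not a formal consequence of the (co)definedness by kernels, and the self-duality of $\CR$ (if even true on the nose) requires its own proof.
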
 

\begin{rem}
In \cite[Sect. 3.1.2]{AGKRRV2}, the object $\CK_\sR$ was denoted $\on{ps-u}_{\Bun_G,\Nilp}$.
\end{rem}

\sssec{} \label{sss:ev*}

For a stack $\CY$ let 
$$\on{ev}^l_\CY:\Shv(\CY)\otimes \Shv(\CY)\to \Vect$$
be the functor given by
$$\CF_1,\CF_2 \mapsto \on{C}^\cdot_c(\CY,\CF_1\overset{*}\otimes \CF_2).$$

\noindent{\it Warning}: in general, the pairing $\on{ev}^l_\CY$ is \emph{not} perfect, i.e., it does not
define a self-duality on $\Shv(\CY)$.

\sssec{}

Take $\CY=\Bun_G$, and we restrict the pairing $\on{ev}^l_{\Bun_G}$ to
$$\Shv_{\Nilp}(\Bun_G)\otimes \Shv_{\Nilp}(\Bun_G)\subset \Shv(\Bun_G)\otimes \Shv(\Bun_G).$$

\medskip

We now quote the following result (see \cite[Theorem 3.2.2]{AGKRRV2}):

\begin{thm} \label{t:duality}
The object 
$$\CK_\sR\in \Shv_\Nilp(\Bun_G)\otimes \Shv_\Nilp(\Bun_G)$$
together with the pairing
$$\Shv_\Nilp(\Bun_G)\otimes \Shv_\Nilp(\Bun_G) \to 
\Shv(\Bun_G)\otimes \Shv(\Bun_G) \overset{\on{ev}^l_{\Bun_G}}\longrightarrow \Vect$$
define an identification
$$\Shv_\Nilp(\Bun_G)^\vee \simeq \Shv_\Nilp(\Bun_G).$$
\end{thm}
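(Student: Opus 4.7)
To establish the stated self-duality, I would verify the triangle (zigzag) identities for the would-be unit–counit pair $(\CK_\CR, \on{ev}^*_{\Bun_G})$. Unwinding definitions, one zigzag reduces to the statement that the endofunctor
$$\Phi: \Shv_\Nilp(\Bun_G) \to \Shv_\Nilp(\Bun_G), \qquad \CF \mapsto \on{pr}_{2,!}\bigl(\CK_\CR \overset{*}{\otimes} \on{pr}_1^*(\CF)\bigr),$$
is canonically isomorphic to the identity (the other zigzag then follows by the evident symmetry of the construction). That $\Phi$ is well-defined and preserves the nilpotent subcategory uses \thmref{t:u Nilp Bun} crucially: a priori the $\overset{*}\otimes$ operation is not continuous, but it behaves well on sheaves landing in $\Shv_\Nilp(\Bun_G)\otimes \Shv_\Nilp(\Bun_G)$.

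The next step is to identify $\Phi$ with the Hecke action of $\CR$. Since $\CK_\CR=(\on{Id}\otimes (\CR\star-))(\Delta_!(\ul\sfe_{\Bun_G}))$ and the integral Hecke action is both defined and codefined by kernels (\secref{sss:Ran action}), one may commute $\CR\star-$ through $\on{pr}_{2,!}$, $\on{pr}_1^*$, and $\overset{*}\otimes\on{pr}_1^*(\CF)$. Combined with the projection formula for $\overset{*}\otimes$ and the identity $\on{pr}_2 \circ \Delta_{\Bun_G} = \on{Id}$, a short calculation yields a canonical isomorphism $\Phi(\CF) \simeq \CR \star \CF$. The problem thus reduces to the assertion that the Hecke action of $\CR$ on $\Shv_\Nilp(\Bun_G)$ is canonically isomorphic to the identity functor.

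For the latter, the natural strategy is to establish a \emph{spectral factorization}: the monoidal action of $\Rep(\cG)_\Ran$ on $\Shv_\Nilp(\Bun_G)$ descends along the monoidal localization $\Loc:\Rep(\cG)_\Ran\to \QCoh(\LocSys^{\on{restr}}_\cG(X))$ to an action of the latter. Once this is in hand, $\CR$ acts through $\Loc(\CR)\simeq \CO_{\LocSys^{\on{restr}}_\cG(X)}$ (\corref{c:diagonal LocSys}), the monoidal unit, hence as the identity endofunctor. To construct this factorization, I would invoke a module-categorical analogue of \corref{c:Loc dual}: for each compact $\CF\in \Shv_\Nilp(\Bun_G)$, the Hecke action gives a compatible system of functors $\Rep(\cG)^{\otimes I}\to \Shv_\Nilp(\Bun_G)\otimes \qLisse(X)^{\otimes I}$ by \corref{c:Hecke on Nilp}, which is exactly the lisseness input needed for descent along $\Loc$.

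The main obstacle is upgrading this pointwise lisseness statement to a genuine module structure: one must match the associativity and symmetric monoidal data of the Hecke action over the twisted-arrows diagram defining $\Rep(\cG)_\Ran$ with the corresponding data for $\QCoh(\LocSys^{\on{restr}}_\cG(X))$ as an algebra. This uses the full strength of \thmref{t:descr QCoh}, which identifies $\QCoh(\LocSys^{\on{restr}}_\cG(X))$ with a space of compatible functors in $\DGCat^{\fSet}$, together with \thmref{t:diagonal LocSys} to ensure that the monoidal unit matches up correctly; assembling these pieces into the required coherence data is the delicate technical heart of the proof.
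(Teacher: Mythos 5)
The paper does not prove this theorem; it is quoted from \cite[Theorem 2.1.5]{AGKRRV2}. Still, your overall reduction is sound and almost certainly parallels the proof there: unwind the zigzag composite as $\Phi(\CF)\simeq(p_2)_!\bigl(\CK_\CR\overset{*}{\otimes}p_1^*(\CF)\bigr)$, use that $\CK_\CR=(\on{Id}\otimes(\CR\star-))\bigl(\Delta_!(\ul\sfe_{\Bun_G})\bigr)$ together with the defined/codefined-by-kernels formalism (\secref{sss:hecke defined/codefined}) and the projection formula to identify $\Phi\simeq\CR\star-$, and reduce to showing that $\CR\star-$ restricted to $\Shv_\Nilp(\Bun_G)$ is the identity. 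The step "the other zigzag follows by symmetry" deserves an explicit note that the symmetry of $\CK_\CR$ under the swap is not a tautology — it uses that $\Delta_!(\ul\sfe)$ is symmetric and that the Hecke action can be moved to either factor.

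Where your plan goes off course is the final step. Establishing that $\CR\star-$ is the identity on $\Shv_\Nilp(\Bun_G)$ by first building a $\QCoh(\LocSys^{\on{restr}}_\cG(X))$-module structure descending the $\Rep(\cG)_\Ran$-action along $\Loc$ is both insufficient as described and unnecessary. On insufficiency: the module structure you want to construct is itself a substantial theorem — it is \cite[Theorem 10.5.2]{AGKRRV1}, invoked in the proof of \propref{p:nu kr} — and the ingredients you list do not give it. \corref{c:Hecke on Nilp} is a pointwise lisseness statement about the functors $\Rep(\cG)^{\otimes I}\to\Shv_\Nilp(\Bun_G)\otimes\qLisse(X)^{\otimes I}$; promoting this to a coherent descent of the full monoidal module structure along $\Loc$ is exactly the part you label "the delicate technical heart," and there is no "module-categorical analogue of \corref{c:Loc dual}" available off the shelf to fill it. On unnecessity: the paper already quotes \thmref{t:the projector} (\cite[Theorem 1.3.6]{AGKRRV2}), which says directly that $\sP_\Nilp=\CR\star-$ on $\Shv(\CZ\times\Bun_G)$ is the idempotent given by the inclusion of the nilpotent subcategory preceded by its right adjoint; restricting to the subcategory gives $\CR\star-|_{\Shv_\Nilp(\Bun_G)}\simeq\on{Id}$ with no further work, and this is consistent with the ordering of results in \cite{AGKRRV2}. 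You should replace the spectral-factorization detour with a direct appeal to \thmref{t:the projector}.
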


\begin{rem}

The trace calculation involved in the proof of \thmref{t:trace r} uses the explicit description of
the dual of $\Shv_\Nilp(\Bun_G)$, provided by \thmref{t:duality}. 
This may be the most conceptually non-trivial place in the paper:

\medskip

There is a more obvious candidate for the dual of $\Shv_\Nilp(\Bun_G)$, namely the category
denoted $\Shv_\Nilp(\Bun_G)_{\on{co}}$, see \cite[Corollary 2.6.5]{AGKRRV2}. The two descriptions 
of the dual are related by a non-trivial operation, namely the \emph{miraculous functor} $\Mir_{\Bun_G}$,
which defines an equivalence $$\Shv_\Nilp(\Bun_G)_{\on{co}}\to \Shv_\Nilp(\Bun_G),$$ see 
\cite[Corollary 2.9.5]{AGKRRV2}.  

\medskip

However, if we used the description of the dual of $\Shv_\Nilp(\Bun_G)$ as $\Shv_\Nilp(\Bun_G)_{\on{co}}$,
we would not be able to directly relate the functor $\on{Sht}^{\on{Tr}}$ to shtukas, as defined in \cite{VLaf}.
Rather, we would encounter \emph{co-shtukas}. The latter is a functor $\Rep(\cG)_\Ran\to \Vect$ given by
$$\Rep(\cG)_\Ran \overset{\CV \mapsto \on{co-}\CK_\CV} \longrightarrow
\Shv(\Bun_G \times \Bun_G)_{\on{co}_1} \overset{(\on{Graph}_{\Frob_{\Bun_G}})^!}\longrightarrow
\Shv(\Bun_G)_{\on{co}} \overset{\on{C}^\cdot_\blacktriangle(\Bun_G,-)}{\longrightarrow} \Vect,$$
where:

\begin{itemize}

\item $\Shv(\Bun_G)_{\on{co}}$ is the category defined in \cite[Sect. 2.5]{AGKRRV2};

\item $\Shv(\Bun_G \times \Bun_G)_{\on{co}_1}$ is as in \cite[Sect. C.4.5]{AGKRRV2};

\item $\on{C}^\cdot_\blacktriangle(\Bun_G,-):\Shv(\Bun_G)_{\on{co}}\to \Vect$ is the functor
of renormalized chains, see \cite[Sect. C.3.4]{AGKRRV2};

\item $\on{co-}\CK_\CV:=(\on{Id}_{\Bun_G}\boxtimes \sH_\CV)(\on{u}_{\Bun_G,\on{co}_1})$,
where $$\on{u}_{\Bun_G,\on{co}_1}\in \Shv(\Bun_G \times \Bun_G)_{\on{co}_1}$$ is as in \cite[Sect. C.4.6]{AGKRRV2}.

\end{itemize}

The problem with this approach is that co-shtukas seem to be a rather unwieldy objects, compared to shtukas.

\end{rem}

\ssec{Proof of \thmref{t:trace r}}\label{ss:trace r pf}

\sssec{}

Let us be given a dualizable DG category $\bC$ and functors $T,S:\bC \to \bC$.

\medskip

Note that we can compute $\Tr(S\circ T,\bC) \in \Vect$
as the composition
$$\Vect \overset{\on{u}_{\bC}}{\longrightarrow} \bC^{\vee} \otimes \bC 
\overset{T^{\vee} \otimes S}{\longrightarrow} \bC^{\vee} \otimes \bC 
\overset{\on{ev}_\bC \otimes \Id}{\longrightarrow} \Vect.$$

\sssec{} \label{sss:compute tr V}

For $\CV \in \Rep(\cG)_\Ran$, we deduce from \thmref{t:duality} and the above observation that
we can compute $\Sht^{\Tr}(\CV)$
as the composition
\begin{multline*}
\Vect \overset{\CK_\CR}{\longrightarrow}
\Shv_\Nilp(\Bun_G) \otimes \Shv_\Nilp(\Bun_G)  
\overset{((\Frob_{\Bun_G})_*)^{\vee} \otimes \sH_\CV} {\longrightarrow} \\
\to \Shv_\Nilp(\Bun_G) \otimes \Shv_\Nilp(\Bun_G)  
\overset{-\overset{*}{\otimes}-}{\longrightarrow}
\Shv(\Bun_G) \overset{\on{C}_c(\Bun_G,-)}{\longrightarrow} \Vect.
\end{multline*}
Here $((\Frob_{\Bun_G})_*)^{\vee}$ is the dual functor to
$(\Frob_{\Bun_G})_*$ with respect to the duality of
\thmref{t:duality}.

\sssec{}

To proceed further, we need to compute $((\Frob_{\Bun_G})_*)^{\vee}$.

\medskip

Note that the functor $(\Frob_{\Bun_G})^*$ also preserves the subcategory $\Shv_{\Nilp}(\Bun_G)\subset \Shv(\Bun_G)$
(this is established in the course of the proof of \cite[Lemma 22.3.2]{AGKRRV1}). 

\begin{lem}\label{l:frob}
In the above notation, the functor 
$$((\Frob_{\Bun_G})_*)^{\vee}:\Shv_{\Nilp}(\Bun_G) \to 
\Shv_{\Nilp}(\Bun_G)$$ 
is canonically isomorphic to $(\Frob_{\Bun_G})^*$.
\end{lem}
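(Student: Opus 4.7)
My plan: To prove \lemref{l:frob}, I would verify that $(\Frob_{\Bun_G})^*$ satisfies the characterizing relation for the dual of $(\Frob_{\Bun_G})_*$ with respect to the non-standard self-duality of \thmref{t:duality}. Since the pairing $\on{ev}^*_{\Bun_G}$ is perfect on $\Shv_\Nilp(\Bun_G)\otimes \Shv_\Nilp(\Bun_G)$, an endofunctor $T$ has dual $T^{\vee}$ uniquely characterized by a functorial isomorphism $\on{ev}^*_{\Bun_G}\circ(T\otimes\on{Id}) \simeq \on{ev}^*_{\Bun_G}\circ(\on{Id}\otimes T^{\vee})$. Hence it is enough to produce a natural isomorphism
$$\on{C}^\cdot_c(\Bun_G, (\Frob_{\Bun_G})_*(\CF_1) \overset{*}{\otimes} \CF_2) \simeq \on{C}^\cdot_c(\Bun_G, \CF_1 \overset{*}{\otimes} (\Frob_{\Bun_G})^*(\CF_2))$$
for $\CF_1,\CF_2\in \Shv_\Nilp(\Bun_G)$.

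This isomorphism is assembled from three standard ingredients: (i) $\Frob_{\Bun_G}$ is a universal homeomorphism, in particular a finite morphism, so there is a canonical equivalence $(\Frob_{\Bun_G})_*\simeq (\Frob_{\Bun_G})_!$; (ii) the projection formula for $!$-pushforward along a proper morphism with respect to the $*$-tensor product yields $(\Frob_{\Bun_G})_!(\CF_1) \overset{*}{\otimes} \CF_2 \simeq (\Frob_{\Bun_G})_!\bigl(\CF_1 \overset{*}{\otimes} (\Frob_{\Bun_G})^*(\CF_2)\bigr)$; and (iii) since $\Frob_{\Bun_G}$ is an endomorphism over $\Bun_G$, the structure map $p\colon \Bun_G\to\on{pt}$ satisfies $p\circ \Frob_{\Bun_G}=p$, so the compatibility of $!$-pushforward with composition gives $\on{C}^\cdot_c(\Bun_G,(\Frob_{\Bun_G})_!(-))\simeq \on{C}^\cdot_c(\Bun_G,-)$. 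Chaining these three identifications produces the desired isomorphism.

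The one point requiring verification beyond these standard functorialities is that both $(\Frob_{\Bun_G})_*$ and $(\Frob_{\Bun_G})^*$ preserve $\Shv_\Nilp(\Bun_G)$, so that the argument takes place entirely inside the self-dual subcategory and the identified functor can be read off as an endofunctor of $\Shv_\Nilp(\Bun_G)$. The former is recorded in \cite[Sect. 15.3.3]{AGKRRV1} and recalled in the body of the paper; the latter is seen by the same argument, since the global nilpotent cone in $T^*\Bun_G$ is itself Frobenius-invariant (being defined over $\BF_q$). I do not anticipate any serious obstacle: the proof is essentially formal, relying only on the nondegeneracy statement of \thmref{t:duality} together with standard functorialities of $\ell$-adic sheaves under finite morphisms.
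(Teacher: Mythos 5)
Your proof is correct, but it takes a genuinely different route from the one in the paper. Both arguments open with the observation that $\Frob_{\Bun_G}$ is finite, so $(\Frob_{\Bun_G})_!\simeq(\Frob_{\Bun_G})_*$. From there the paths diverge. The paper notes that $(\Frob_{\Bun_G})_!$ is an \emph{autoequivalence} of $\Shv_\Nilp(\Bun_G)$ under which both the unit $\CK_\CR$ and the counit $\on{ev}^*_{\Bun_G}$ of the duality are equivariant, and then invokes the formal fact that for such an autoequivalence the functional dual equals the inverse; one concludes since the inverse of $(\Frob_{\Bun_G})_*$ is $(\Frob_{\Bun_G})^*$ (or $(\Frob_{\Bun_G})^!$). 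You, by contrast, verify the counit compatibility directly: you chain $(\Frob)_*\simeq(\Frob)_!$, the projection formula $(\Frob)_!(\CF_1)\overset{*}{\otimes}\CF_2\simeq(\Frob)_!(\CF_1\overset{*}{\otimes}\Frob^*\CF_2)$, and the identity $p_!\circ(\Frob)_!\simeq p_!$ (from $p\circ\Frob_{\Bun_G}=p$) to exhibit $\on{ev}^*_{\Bun_G}\circ((\Frob)_*\otimes\on{Id})\simeq\on{ev}^*_{\Bun_G}\circ(\on{Id}\otimes(\Frob)^*)$, which pins down the dual by the nondegeneracy of \thmref{t:duality}. Your approach is more computational and has the virtue that it only uses the counit; you never need to address the Frobenius-equivariance of the unit object $\CK_\CR$, which the paper's route implicitly relies on (and which, while true, ultimately traces back to the $\BF_q$-structure on $X$ feeding into the construction of $\CR$). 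The paper's argument is terser and makes the ``dual $=$ inverse'' structural reason visible. Both are valid proofs.
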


\begin{proof}

For an algebraic stack $\CY$, the pullback functor $(\Frob_{\CY})^*$ is a self-equivalence of 
$\Shv(\CY)$. Hence, its right adjoint, which is $(\Frob_{\CY})_*$,
is the inverse of $(\Frob_{\CY})^*$.

\medskip

Take $\CY=\Bun_G$. Since the functors $(\Frob_{\Bun_G})_*$ and $(\Frob_{\Bun_G})^*$
both preserve $\Shv_{\Nilp}(\Bun_G)\subset \Shv(\Bun_G)$, we obtain that 
$(\Frob_{\Bun_G})^*|_{\Shv_{\Nilp}(\Bun_G)}$ and $(\Frob_{\Bun_G})_*|_{\Shv_{\Nilp}(\Bun_G)}$
are mutually inverse equivalences.

\medskip

The assertion of the lemma follows now formally from the fact that counit of the self-duality on $\Shv_{\Nilp}(\Bun_G)$,
i.e., the functor $\ev^l_{\Bun_G}$, is mapped to itself by $(\Frob_{\Bun_G})^*\otimes (\Frob_{\Bun_G})^*$. To check this,
we have to show that 
$$\on{C}^\cdot_c(\Bun_G,(\Frob_{\Bun_G})^*(-))\simeq \on{C}^\cdot_c(\Bun_G,-).$$
The latter is true for any algebraic stack
\begin{multline*}
\on{C}^\cdot_c(\CY,(\Frob_{\CY})^*(-))\simeq
\on{C}^\cdot_c(\CY,(\Frob_{\CY})_! \circ (\Frob_{\CY})^*(-)) \overset{\Frob_{\CY}\text{ is proper}}\simeq  \\
\simeq \on{C}^\cdot_c(\CY,(\Frob_{\CY})_* \circ (\Frob_{\CY})^*(-))\simeq  \on{C}^\cdot_c(\CY,-).
\end{multline*}

\end{proof}

\begin{rem}
The isomorphism\footnote{Note that for an algebraic stack, the Frobenius morphsm is proper and radicial, but not necessarily schematic.}
\begin{equation}  \label{e:Frob !*}
(\Frob_{\CY})_*\simeq (\Frob_{\CY})_!
\end{equation}
implies that $(\Frob_{\CY})_!$ is also an equivalence. Hence, its right adjoint, i.e., $(\Frob_{\CY})^!$ is its inverse. From here we obtain
$$(\Frob_{\CY})^! \simeq ((\Frob_{\CY})_!)^{-1}\simeq ((\Frob_{\CY})_*)^{-1}\simeq (\Frob_{\CY})^*.$$
\end{rem}

\sssec{}

Combining \secref{sss:compute tr V} and \lemref{l:frob}, we obtain that for 
$\CV \in \Rep(\cG)_\Ran$, we can compute
$\Sht^{\Tr}(\CV)$ as the composition:
\begin{multline*}
\Vect \overset{\CK_\sR}{\longrightarrow}
\Shv_\Nilp(\Bun_G) \otimes \Shv_\Nilp(\Bun_G)  
\overset{(\Frob_{\Bun_G})^* \otimes \sH_\CV} {\longrightarrow} \\
\to \Shv_\Nilp(\Bun_G) \otimes \Shv_\Nilp(\Bun_G)  
\overset{-\overset{*}{\otimes}-}{\longrightarrow}
\Shv(\Bun_G) \overset{\on{C}_c(\Bun_G,-)}{\longrightarrow} \Vect,
\end{multline*}
which is the same as 
\begin{multline*}
\Vect \overset{\CK_\sR}{\longrightarrow}
\Shv_\Nilp(\Bun_G) \otimes \Shv_\Nilp(\Bun_G)  \overset{\boxtimes}\to \Shv(\Bun_G\times \Bun_G)
\overset{\on{Id}_{\Bun_G}\boxtimes \sH_\CV}\longrightarrow \\ 
\to \Shv(\Bun_G\times \Bun_G) 
\overset{(\Frob_{\Bun_G} \times \on{id})^*}\longrightarrow 
\Shv(\Bun_G\times \Bun_G)  \overset{\Delta_{\Bun_G}^*}\longrightarrow \\
\to \Shv(\Bun_G) \overset{\on{C}_c(\Bun_G,-)}{\longrightarrow} \Vect.
\end{multline*}

\sssec{}

Note that 
$$(\on{Id}_{\Bun_G}\boxtimes \sH_\CV)(\CK_\sR) \simeq 
\CK_{\CV \star \sR} \simeq \CK_{\sR \star \CV}.$$

\medskip

Therefore, we can compute 
$\Sht^{\Tr}(\CV)$ as the composition
%\begin{multline*}
$$\Vect \overset{\CK_{\sR \star \CV}}{\longrightarrow}
\Shv(\Bun_G \times \Bun_G)  
\overset{(\Frob_{\Bun_G} \times \on{id})^*} {\longrightarrow} \Shv(\Bun_G \times \Bun_G)  
\overset{\Delta_{\Bun_G}^*}{\longrightarrow}
\Shv(\Bun_G) \overset{\on{C}_c(\Bun_G,-)}{\longrightarrow} \Vect.$$
%\end{multline*}
This coincides with 
$$\Vect \overset{\CK_{\sR \star \CV}}{\longrightarrow}
\Shv_\Nilp(\Bun_G) \otimes \Shv_\Nilp(\Bun_G)  
\overset{(\on{Graph}_{\Frob_{\Bun_G}})^*}{\longrightarrow}
\Shv(\Bun_G) \overset{\on{C}_c(\Bun_G,-)}{\longrightarrow} \Vect,$$
which is the same as $\Sht(\sR \star \CV)$, by definition.

\qed[\thmref{t:trace r}]

\ssec{Interpretation as enhanced trace}

The contents of this section are an extended remark 
and are not necessary for the rest of the paper.

\sssec{}

Recall (see \cite[Theorem 14.3.2]{AGKRRV1}) that the category $\Shv_{\Nilp}(\Bun_G)$ carries a monoidal
action of $\QCoh(\LocSys^{\on{restr}}_\cG(X))$.

\medskip

Moreover, the action of $(\Frob_{\Bun_G})_*$ is compatible with the action of the monoidal automorphism
of $\Frob^*$ of $\QCoh(\LocSys^{\on{restr}}_\cG(X))$, where $\Frob$ is the automorphism of $\LocSys^{\on{restr}}_\cG(X)$,
induced by Frobenius endomorphism $\Frob_X$ of $X$.

\medskip

In this case, following \cite[Sect. 3.8.2]{GKRV}, to the pair $(\Shv_{\Nilp}(\Bun_G),(\Frob_{\Bun_G})_*)$ we can attach its 
\emph{enhanced trace}, 
$$\Tr^{\on{enh}}_{\QCoh(\LocSys^{\on{restr}}_\cG(X))}((\Frob_{\Bun_G})_*,\Shv_{\Nilp}(\Bun_G))\in
\on{HH}_\bullet(\QCoh(\LocSys^{\on{restr}}_\cG(X)),\Frob^*),$$
where 
$\on{HH}_\bullet(\QCoh(\LocSys^{\on{restr}}_\cG(X)),\Frob^*)$ is the (symmetric monoidal) DG category of Hochschild chains on 
the (symmetric) monoidal category $\QCoh(\LocSys^{\on{restr}}_\cG(X))$ with respect to the (symmetric) monoidal endofunctor
$\Frob^*$.

\medskip

Furthermore, we have
$$\on{HH}_\bullet(\QCoh(\LocSys^{\on{restr}}_\cG(X)),\Frob^*)\simeq \QCoh((\LocSys^{\on{restr}}_\cG(X))^\Frob),$$
see \cite[Sect. 7.10.4]{AGKRRV1}.

\sssec{}

Recall that we denote
$$\LocSys^{\on{arithm}}_\cG(X):=(\LocSys^{\on{restr}}_\cG(X))^\Frob,$$
and by $\iota$ the forgetful map
$$\LocSys^{\on{arithm}}_\cG(X)\to \LocSys^{\on{restr}}_\cG(X),$$
see \cite[Sect. 24.1]{AGKRRV1}.

\medskip

Thus, we can interpret the above object $\Tr^{\on{enh}}_{\QCoh(\LocSys^{\on{restr}}_\cG(X))}((\Frob_{\Bun_G})_*,\Shv_{\Nilp}(\Bun_G))$
as an object, denoted
$$\Drinf^{\Tr,\on{arithm}}\in \QCoh(\LocSys^{\on{arithm}}_\cG(X)).$$

\medskip

We have
\begin{equation} \label{e:Dr Tr}
\Drinf^{\Tr}\simeq \iota_*(\Drinf^{\Tr,\on{arithm}});
\end{equation}
this assertion is essentially \cite[Theorem 4.4.4]{GKRV}, with a generalization supplied by \cite[Theorem 7.10.6]{AGKRRV1}. 

\sssec{}

Recall the objects 
$$\on{Ev}(V)\in \QCoh(\LocSys^{\on{restr}}_\cG(X))\otimes \qLisse(X)^{\otimes I}.$$

By construction, the corresponding objects
$$(\iota^*\otimes \on{Id})(\on{Ev}(V))\in \QCoh(\LocSys^{\on{arithm}}_\cG(X))\otimes \qLisse(X)^{\otimes I}$$
carry a structure of equivariance with respect to the partial Frobenius endomorphisms acting along $X^I$.

\medskip

By \eqref{e:Dr Tr} and the projection formula, we have
\begin{equation} \label{e:Sht Tr via arithm}
\Sht^{\Tr}_I(V) \simeq 
(\Gamma_!(\LocSys^{\on{arithm}}_\cG(X),-)\otimes \on{Id})\left((\iota^*\otimes \on{Id})(\on{Ev}(V))\otimes \Drinf^{\Tr,\on{arithm}})\right).
\end{equation}

This interpretation shows that the the objects 
$$\Sht^{\Tr}_I(V) \in \qLisse(X)^{\otimes I}\subset \Shv(X^I)$$
carry a natural structure of equivariance with respect to the partial Frobenius endomorphisms on $X^I$.

\begin{rem}
Recall that $\LocSys^{\on{arithm}}_\cG(X)$ is actually a quasi-compact algebraic stack, see Remark \ref{r:partial Frob}. This implies that
in formula \eqref{e:Sht Tr via arithm}, we have
$$\Gamma_!(\LocSys^{\on{arithm}}_\cG(X),-) \simeq \Gamma(\LocSys^{\on{arithm}}_\cG(X),-).$$
\end{rem}

\sssec{} \label{sss:two part Frob}

Having proved \thmref{t:main}, and hence \corref{c:main}, we obtain that the objects
$$\Sht_I(V) \in \Shv(X^I)$$
also carry a natural structure of equivariance with respect to the partial Frobenius endomorphisms on $X^I$.

\medskip

However, it is not difficult to show that this structure identifies with the structure of partial Frobenius equivariance 
on shtukas, defined in  \cite{VLaf} and \cite{Xue1}. 
The matching between the two is essentially \cite[Lemma 4.5.4]{GKRV}.

\sssec{} 

Now, given an object $\CF\in \QCoh(\LocSys^{\on{restr}}_\cG(X))$, the datum required to exhibit it as
$$\iota_*(\CF^{\on{arithm}})$$
for some $\CF^{\on{arithm}}\in \LocSys^{\on{arithm}}_\cG(X)$ is equivalent to a \emph{compatible} collection of
data of partial Frobenius equivariance on the associated functors
$$\CF_I:\Rep(\cG)^{\otimes I}\to \qLisse(X)^{\otimes I}, \quad 
V\mapsto (\Gamma_!(\LocSys^{\on{restr}}_\cG(X),-)\otimes \on{Id})(\on{Ev}(V)\otimes \CF).$$

\sssec{}  \label{sss:partial Frob post}

As was mentioned in \secref{sss:two part Frob}, the structure of partial Frobenius equivariance on the functors $\Sht_I$
is a priori defined (in \cite{VLaf} and \cite{Xue1}), thereby producing an object 
$$\Drinf^{\on{arithm}}\in \QCoh(\LocSys^{\on{arithm}}_\cG(X)).$$

Thus, the assertion in \secref{sss:two part Frob} can interpreted as an isomorphism
$$\Drinf^{\on{arithm}}\simeq \Drinf^{\Tr,\on{arithm}},$$
which induces the isomorphism
$$\Drinf\simeq \Drinf^{\Tr}$$
of \thmref{t:main} by applying $\iota_*$.

\sssec{} \label{sss:partial Frob post level}

Furthermore, as was mentioned in Remark \ref{r:partial Frob level}, the corresponding structure of partial Frobenius equivariance can
be constructed also on the functors
$$\Sht_{I,D}:\Rep(\cG)^{\otimes I}\to \qLisse(X-D)^{\otimes I}$$
that encode the cohomology of shtukas with level structure, thereby producing an object
$$\Drinf_D^{\on{arithm}}\in \QCoh(\LocSys^{\on{restr}}_\cG(X-D)).$$

What we do not have at the moment is the interpretation of this object $\Drinf_D^{\on{arithm}}$
as enhanced categorical trace (nor of the functors $\Sht_{I,D}$ as just categorical traces). 

\section{Local terms}\label{s:LT}

\ssec{A hypothesis} \label{ss:constraccess} 

In order to simplify the exposition, for the duration of this section, we will assume \cite[Conjecture 14.1.8]{AGKRRV1}. 
This conjecture says that the category $\Shv_\Nilp(\Bun_G)$ is generated by objects that are compact as objects
of the ambient category $\Shv(\Bun_G)$.

\medskip

Since we know that $\Shv_\Nilp(\Bun_G)$ is compactly generated as a DG category (see \cite[Theorem 16.1.1]{AGKRRV1}),
this conjecture is equivalent to the statement that the forgetful functor 
$$\Shv_\Nilp(\Bun_G)\hookrightarrow \Shv(\Bun_G)$$
preserves compactness, or, equivalently, that it admits a continuous right adjoint.

%\sssec{}
%
%Without assuming the above conjecture, the results of this section that talk about the different local term maps
%$$\Tr((\Frob_{\Bun_G})_*,\Shv_\Nilp(\Bun_G))\to \on{Funct}_c(\Bun_G(\BF_q))$$
%remain valid if instead of $\Shv_\Nilp(\Bun_G)$,
%we consider its full subcategory 
%$$\Shv_\Nilp(\Bun_G)^{\on{access}},$$
%see  \cite[Sect. 22.3.4]{AGKRRV1}. 

\ssec{Formulation of the problem}

\sssec{}

In this section we will construct four maps
$$\on{LT}^{\on{naive}}, \on{LT}^{\on{true}}, \on{LT}^{\on{Serre}}, \on{LT}^{\Sht}:
\Tr((\Frob_{\Bun_G})_*,\Shv_\Nilp(\Bun_G))\to \on{Funct}_c(\Bun_G(\BF_q))$$
that we refer to as \emph{local term} morphisms.

\medskip 

At this point, we have only encountered the last of these four morphisms: 
by definition, $\on{LT}^{\Sht}$ is the isomorphism of \corref{c:trace}.

\sssec{}

Assuming the construction of the other three morphisms, 
we can state the main result of this section and the next:

\begin{thm}\label{t:lt}

The four morphisms $\on{LT}^{\on{naive}}$, 
$\on{LT}^{\on{true}}$, $\on{LT}^{\on{Serre}}$, and $\on{LT}^{\Sht}$ are equal.

\end{thm}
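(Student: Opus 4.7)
The plan is to decompose the four-way comparison into pairwise identifications and in each case reduce to a universal identity. The common reference point is the Grothendieck--Deligne recipe of Remark \ref{r:lt function}: on a class $\on{cl}(\CF,\alpha)$ attached to a weak Weil sheaf $(\CF,\alpha:\CF\to (\Frob_{\Bun_G})_*(\CF))$ with $\CF\in \Shv_\Nilp(\Bun_G)^c$, each of the four maps should produce the Grothendieck--Deligne function $\on{funct}(\CF)\in \on{Funct}_c(\Bun_G(\BF_q))$. Although such classes do not span the trace in the naive sense (higher simplicial contributions matter), the comparison of two morphisms out of $\Tr((\Frob_{\Bun_G})_*,\Shv_\Nilp(\Bun_G))$ is a statement about natural maps of functors on the categorical input, and should follow from naturality of the constructions rather than from pointwise checks.

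For $\on{LT}^{\on{naive}} = \on{LT}^{\on{true}}$: I expect this to be essentially definitional, comparing two routes to the same map out of the trace in the non-quasi-compact setting, as in \cite[Sect.~15.2]{AGKRRV1}. Both local term maps factor through the analogous comparison for the ambient $\Shv(\Bun_G)$, and the embedding \eqref{e:Nilp intro} preserves compactness, so one can restrict the identity for $\Shv(\Bun_G)$ to the nilpotent subcategory.

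For $\on{LT}^{\on{Serre}} = \on{LT}^{\Sht}$: by construction $\on{LT}^{\Sht}$ is the specialization of the equivalence $\Sht^{\Tr}\simeq \Sht$ at the unit $\one_{\Rep(\cG)_\Ran}$. Unwinding the calculation of Sect.~\ref{ss:trace r pf} at $\CV=\one_{\Rep(\cG)_\Ran}$, the trace is realized via precisely the non-standard self-duality of \thmref{t:duality}, with unit $\CK_\CR$ and pairing $\on{ev}^*_{\Bun_G}$, and then computed as $\on{C}^\cdot_c(\Bun_G, (\on{Graph}_{\Frob_{\Bun_G}})^*(\CK_\CR))$. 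Since $\on{LT}^{\on{Serre}}$ is defined via the same self-duality, matching the two should reduce to a direct rewrite.

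The main obstacle is the identification $\on{LT}^{\on{true}} = \on{LT}^{\on{Serre}}$, which is the content of Section \ref{s:serre-true}. The two maps arise from \emph{different} self-dualities on $\Shv_\Nilp(\Bun_G)$: the restriction of standard Verdier self-duality on $\Shv(\Bun_G)$, versus the non-standard one of \thmref{t:duality} whose unit is the spectral projector $\CK_\CR$. My approach would be to construct the comparison via an explicit interpolating morphism between the two units on $\Bun_G\times\Bun_G$, and to exhibit the change-of-duality as an equivalence of dualized data that is compatible with $(\Frob_{\Bun_G})_*$; by functoriality of categorical trace, this automatically intertwines the two local term maps. The delicate point is that $\CK_\CR$ is not a pointwise-accessible object, so one cannot reduce to a Lefschetz--Verdier pointwise identity on $\Bun_G$; rather, one must work globally, using the structural compatibility between the Frobenius pushforward, the two self-dualities, and the definition of the trace map, and invoking the agreement of both maps on classes $\on{cl}(\CF,\alpha)$ as the input that pins down the common value.
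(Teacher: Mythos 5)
Your decomposition of the four-way comparison into the three pairwise identities, and your handling of $\on{LT}^{\on{naive}}=\on{LT}^{\on{true}}$ (restriction of the \cite{GV} identity to the nilpotent subcategory, using that the embedding preserves compactness), match the paper. But the other two comparisons contain genuine gaps.

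For $\on{LT}^{\on{Serre}}=\on{LT}^{\Sht}$, the claim that matching the two ``should reduce to a direct rewrite'' is wrong. Although both $\Sht^{\Tr}(\one)$ and the domain of $\on{LT}^{\on{Serre}}$ are computed via the same self-duality $(\CK_\CR,\ev^*_{\Bun_G})$, the map $\on{LT}^{\Sht}$ is defined through the \emph{spectral} identification $\Sht(\CR\star-)\simeq\Sht$ coming from $\Loc(\CR)\simeq\CO$, whereas $\on{LT}^{\on{Serre}}$ is the \emph{automorphic} counit $\CK_\CR\to\on{ps-u}_{\Bun_G}$ of the idempotent $\sP_\Nilp$. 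Matching these is the actual content: the paper first upgrades both maps to natural transformations $\Sht^{\Tr}\to\Sht$ over all of $\Rep(\cG)_\Ran$, then uses that the functors live in the essential image of $\Loc^{\vee}$ to reduce to agreement after precomposition with $\CR\star-$, and finally identifies both natural transformations with the map induced by the commutative algebra multiplication $m:\CR\star\CR\to\CR$; for $\on{LT}^{\on{Serre}}$ this last step requires \propref{p:nu kr}, which says that $m$ is homotopic to the idempotent structure map $\varepsilon\circ\sP_\Nilp$. None of this is present in your proposal.

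For $\on{LT}^{\on{true}}=\on{LT}^{\on{Serre}}$, your instinct that one should intertwine the two self-dualities is correct, and the interpolating functor is the miraculous endofunctor $\Mir_\CY$ on the second factor (so that $\on{u}_{\CY,\CN}$ and $\on{ps-u}_{\CY,\CN}$ are exchanged). However, you propose to finish by ``invoking the agreement of both maps on classes $\on{cl}(\CF,\alpha)$'' --- something you yourself flag as insufficient because higher simplices contribute; this cannot close the gap. The paper instead constructs a natural transformation $\on{C}_c^\cdot(\CY,\Delta^*_\CY\circ(\on{Id}\otimes\Mir_\CY)(-))\to\on{C}^\cdot_{\blacktriangle}(\CY,\Delta^!_\CY(-))$ on all of $\Shv(\CY\times\CY)$ and reduces the needed compatibility to \thmref{t:true and Serre geom}, which is then proved by an application of the \emph{local Grothendieck--Lefschetz trace formula} (\thmref{t:GR}) to the single constructible Weil sheaf $(\Delta_\CY)_!(\ul\sfe_\CY)$ with its tautological Frobenius structure. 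So, contrary to what you say, the paper \emph{does} reduce this comparison to a pointwise Lefschetz identity --- just not for $\CK_\CR$, but for a specific small object; and the paper's own remark warns that the statement is not a formality (it is the analogue of a Todd-class correction in GRR). The required hypotheses (Serre, miraculous-compatible, renormalization-adapted) and the reduction from the non-quasi-compact $\Bun_G$ to cotruncative opens are also missing from your outline.
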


\begin{rem}

By \corref{c:trace}, the source and target of
each local term map is in $\Vect^{\heartsuit}$, so 
equality (as opposed to homotopy) is the relevant notion.

\end{rem}

\sssec{}

As $\on{LT}^{\Sht}$ is an isomorphism, from \thmref{t:lt} we deduce:

\begin{cor}

Each of the four local term morphisms in an isomorphism.

\end{cor}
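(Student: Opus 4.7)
The plan is essentially to observe that this corollary is immediate once both preceding inputs are in hand: the identification $\on{LT}^{\Sht}$ with an already-known isomorphism, together with \thmref{t:lt} asserting that the four local-term morphisms coincide. So the proof I would write is a one-line deduction, but it is worth spelling out the logic carefully, since the non-trivial content all lives upstream.

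First I would recall that $\on{LT}^{\Sht}$ is by definition the isomorphism produced in \corref{c:trace}, which in turn arises by evaluating the canonical equivalence of functors $\Sht \simeq \Sht^{\Tr}$ of \thmref{t:main} on the monoidal unit $\one_{\Rep(\cG)_\Ran} \in \Rep(\cG)_\Ran$; the fact that it is an isomorphism in $\Vect$ (indeed, in $\Vect^{\heartsuit}$) requires no further input. Thus, even before engaging with \thmref{t:lt}, we already have that one of the four morphisms, namely $\on{LT}^{\Sht}$, is an isomorphism between $\Tr((\Frob_{\Bun_G})_*,\Shv_\Nilp(\Bun_G))$ and $\on{Funct}_c(\Bun_G(\BF_q))$.

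Next I would invoke \thmref{t:lt}, which asserts equality (not just equivalence up to homotopy, as noted in the remark following its statement, since both source and target sit in $\Vect^{\heartsuit}$) of the four morphisms $\on{LT}^{\on{naive}}$, $\on{LT}^{\on{true}}$, $\on{LT}^{\on{Serre}}$, and $\on{LT}^{\Sht}$. Since a morphism equal to an isomorphism is itself an isomorphism, each of the remaining three must also be an isomorphism. This completes the deduction.

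There is no real obstacle in the present statement: all the heavy lifting sits in the proof of \thmref{t:lt}, whose task is to match the trace-theoretic description of automorphic functions (coming from $\on{LT}^{\Sht}$, built via shtukas, Beilinson's spectral projector $\CR$, and the non-standard self-duality of $\Shv_\Nilp(\Bun_G)$ of \thmref{t:duality}) with the classical local term and Serre duality pictures. Once that comparison is in place, the isomorphism property propagates from $\on{LT}^{\Sht}$ to the other three morphisms formally, as above.
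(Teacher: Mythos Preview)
Your proposal is correct and takes essentially the same approach as the paper: the paper's argument is the single sentence ``As $\on{LT}^{\Sht}$ is an isomorphism, from \thmref{t:lt} we deduce'' preceding the corollary, and you have simply spelled out this immediate deduction in more detail.
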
 

In the case of $\on{LT}^{\on{naive}}$ (see below), 
this corollary recovers the Trace Conjecture as
formulated in \cite[Conjecture 22.3.7]{AGKRRV1}.

\medskip

We now proceed to the construction of the local term morphisms. 

\ssec{Naive and true local terms}

In what follows, we fix an algebraic stack $\CY$, which plays the role of $\Bun_G$. 
We will add certain additional assumptions to $\CY$ as we proceed. 

\medskip 

The material in this subsection closely follows \cite[Sect. 22.1-22.2]{AGKRRV1}, to which we
refer the reader for more detail.

\sssec{Naive local term}

Any algebraic stack $\CY$ has the property that $\Shv(\CY)$ is compactly generated
and every compact object is $!$-extended from some quasi-compact open 
in $\CY$ (see \cite[Sect. F.1.1]{AGKRRV1}). 

\medskip 

Suppose $y \in \CY(\BF_q)$ is given. 
In this case, the 
functor $y^*:\Shv(\CY) \to \Vect$ preserves compact objects and intertwines 
$(\Frob_{\CY})_*$ with the identity functor (by \eqref{e:Frob !*}).
Therefore, functoriality of traces yields a map
$$\on{Tr}((\Frob_{\CY})_*,\Shv(\CY)) \to 
\on{Tr}(\on{Id},\Vect) = \sfe.$$

\medskip 

In the quasi-compact case, this yields a map
$$\on{LT}_\CY^{\on{naive}}:\on{Tr}((\Frob_{\CY})_*,\Shv(\CY)) \to 
\on{Funct}(\CY(\BF_q)).$$

\begin{rem}

By definition, this construction satisfies the compatibility referenced
in Remark \ref{r:lt function}.

\end{rem}

\sssec{True local term} \label{sss:true}

Suppose first that $\CY$ is quasi-compact. Suppose in addition that 
$\CY$ is locally of the form $Z/H$, where $Z$ is a scheme of finite type and
$H$ is an affine algebraic group.

\medskip 

As in \cite[Sect. A.4]{AGKRRV2}, these assumptions imply that
$\Shv(\CY)$ is canonically self-dual (via Verdier duality), with pairing, denoted $\ev_\CY$:
$$\Shv(\CY) \otimes \Shv(\CY) \subset \Shv(\CY \times \CY)
\overset{\Delta_\CY^!}{\to} \Shv(\CY) \overset{\on{C}_{\blacktriangle}^{\cdot}(\CY,-)} \to 
\Vect.$$

\medskip 

The unit for this duality, denoted 
$$\on{u}_{\Shv(\CY)} \in \Shv(\CY) \otimes \Shv(\CY)$$
is obtained by applying the right adjoint to the embedding 
\begin{equation} \label{e:Kunneth Y}
\Shv(\CY) \otimes \Shv(\CY) \overset{\boxtimes}\hookrightarrow \Shv(\CY \times \CY)
\end{equation}
to $(\Delta_\CY)_*(\omega_{\CY})$.

\medskip

In what follows we will not distinguish notationally between $\on{u}_{\Shv(\CY)}$
and its image under the fully faithful functor \eqref{e:Kunneth Y}. Thus, by adjunction
we obtain a map
$$\on{u}_{\Shv(\CY)} \to (\Delta_\CY)_*(\omega_{\CY}).$$

\medskip 

From here, we obtain a map
\begin{multline*}
\Tr((\Frob_{\CY})_*,\Shv(\CY)) \simeq 
\on{C}_{\blacktriangle}^{\cdot}\big(\CY,
\Delta_\CY^!\circ ((\Frob_{\CY})_* \otimes \on{Id})(\on{u}_{\Shv(\CY)})\big) \simeq 
\on{C}_{\blacktriangle}^{\cdot}\big(\CY,
\Delta_\CY^!\circ (\Frob_\CY\times\on{Id})_*(\on{u}_{\Shv(\CY)})\big) 
\to \\
\to \on{C}_{\blacktriangle}^{\cdot}\big(\CY,
\Delta_\CY^!\circ (\Frob_\CY\times\on{Id})_*\circ (\Delta_\CY)_*(\omega_{\CY}))\big) \simeq 
\on{C}_{\blacktriangle}^{\cdot}(\CY^{\Frob},
\omega_{\CY^{\Frob}}) \simeq \on{Funct}(\CY(\BF_q))
\end{multline*}
whose composition we denote by 
$$\on{LT}_{\CY}^{\on{true}}:\Tr((\Frob_{\CY})_*,\Shv(\CY)) \to 
\on{Funct}(\CY(\BF_q)).$$

\sssec{}

We have (see \cite[Theorem 0.4]{GV}):

\begin{thm} \label{t:gv} \hfill 

\smallskip

\noindent{\em(a)}
The maps $\on{LT}_{\CY}^{\on{true}}$ and $\on{LT}_{\CY}^{\on{naive}}$ are canonically
homotopic.

\smallskip

\noindent{\em(b)} For a schematic map $f:\CY_1\to \CY_2$, the diagram
$$
\CD
\Tr((\Frob_{\CY_1})_*,\Shv(\CY_1)) @>{\on{LT}_{\CY_1}^{\on{true}}}>> \on{Funct}(\CY_1(\BF_q)) \\
@VVV @VVV \\
\Tr((\Frob_{\CY_2})_*,\Shv(\CY_2)) @>{\on{LT}_{\CY_2}^{\on{true}}}>> \on{Funct}(\CY_2(\BF_q)) 
\endCD
$$
is commutative, where the left vertical arrow is induced by the functor $f_!:\Shv(\CY_1)\to \Shv(\CY_2)$,
and the right vertical arrow is given by pushforward.  

\smallskip

\noindent{\em(c)} The commutative diagram in point (b) is compatible with the identification of point (a) 
with the commutative diagram
$$
\CD
\Tr((\Frob_{\CY_1})_*,\Shv(\CY_1)) @>{\on{LT}_{\CY_1}^{\on{naive}}}>> \on{Funct}(\CY_1(\BF_q)) \\
@VVV @VVV \\
\Tr((\Frob_{\CY_2})_*,\Shv(\CY_2)) @>{\on{LT}_{\CY_2}^{\on{naive}}}>> \on{Funct}(\CY_2(\BF_q)).
\endCD
$$
 \end{thm}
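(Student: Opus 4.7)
The strategy is to prove part (a) directly via a Lefschetz--Verdier calculation on classes of compact Weil sheaves, and then deduce parts (b) and (c) as formal consequences of the naturality of the constructions. For part (a), first observe that it suffices to verify equality of the two local term maps on classes $\on{cl}(\CF,\alpha)$ arising from compact Weil sheaves $(\CF, \alpha:\CF \to (\Frob_\CY)_*\CF)$, since these span the image of the $0$-simplices in the simplicial object computing $\Tr((\Frob_\CY)_*,\Shv(\CY))$, and both local term maps are induced from simplicial natural transformations of the resulting cyclic bar construction. By Remark \ref{r:lt function} and the very definition of $\on{LT}^{\on{naive}}_\CY$ as the map induced by functoriality of traces applied to the stalk functor $y^*$ (which intertwines $(\Frob_\CY)_*$ with the identity since $y \in \CY(\BF_q)$), its image on $\on{cl}(\CF,\alpha)$ is the classical Grothendieck--Deligne function $y \mapsto \on{tr}(\alpha_y, y^*(\CF))$.

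To complete part (a), I would explicitly compute the image of $\on{cl}(\CF,\alpha)$ under $\on{LT}^{\on{true}}_\CY$ by tracing through the chain of identifications in \secref{sss:true}. Concretely, $\on{cl}(\CF,\alpha)$ is built from the coevaluation $\omega_\CY \to \CF \sotimes \BD_\CY(\CF)$ (with $\BD_\CY$ the Verdier duality functor) composed with $\alpha$; applying $(\Delta_\CY)^! \circ (\Frob_\CY \times \on{Id})_*$ yields a sheaf on $\CY^{\Frob}$, whose $\on{C}^\cdot_{\blacktriangle}$-cohomology computes the true local term. Decomposing $\CY^{\Frob} \simeq \sqcup_{y \in \CY(\BF_q)} \on{pt}/\on{Aut}(y)$ and invoking the classical pointwise Lefschetz--Verdier calculation at each fixed point shows that the resulting function is precisely $y \mapsto \on{tr}(\alpha_y, y^*(\CF))$; matching this with the output of $\on{LT}^{\on{naive}}_\CY$ establishes part (a) on compact Weil sheaves, and the simplicial compatibilities extend the comparison to all of $\Tr((\Frob_\CY)_*,\Shv(\CY))$.

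For part (b), I would verify the commutativity of the true local term square by combining proper base change with functoriality of the coevaluation map. Schematicity of $f$ ensures that $(f\times f)_!$ preserves the K\"unneth embedding $\Shv(\CY_i)\otimes \Shv(\CY_i) \hookrightarrow \Shv(\CY_i \times \CY_i)$, and that the image of the unit $\on{u}_{\Shv(\CY_1)}$ is related to $\on{u}_{\Shv(\CY_2)}$ via the counit $f_! \circ f^! \to \on{Id}$ of the adjunction. Base change along the commutative diagram relating $\Delta_{\CY_i}^!$, $(\Frob_{\CY_i}\times \on{Id})_*$, and $(f\times f)_!$ (all available by schematicity of $f$), together with the naturality of $\on{C}^\cdot_{\blacktriangle}(-,-)$ under pushforward, then yields the desired commutative diagram with $f_!$ on the left and pushforward of functions on the right.

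Part (c), the compatibility of the identifications in (a) and (b), is a formal coherence statement: both the homotopy between $\on{LT}^{\on{naive}}_\CY$ and $\on{LT}^{\on{true}}_\CY$ and the functoriality of the local term maps under $f_!$ are constructed from the same underlying natural transformations (coevaluation, base change, stalk restriction), so their compatibility follows by chasing naturality in an $(\infty,1)$-categorical diagram. The main obstacle will be the Lefschetz--Verdier computation of part (a): in the stacky setting with $\CY$ locally of the form $Z/H$, one must carefully track the automorphism groups $\on{Aut}(y)$ and the $\blacktriangle$-cohomology conventions, features absent in the schematic case; once this computation is secured, parts (b) and (c) follow by formal manipulations with adjunctions and base change.
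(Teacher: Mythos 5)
The paper does not prove this theorem: it is quoted from Gaitsgory--Varshavsky \cite[Theorem 0.4]{GV}, and the statement here serves only as a black box. Your proposal is therefore attempting a proof of a result the paper deliberately outsources; there is no argument in the paper to compare against.

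That said, the proposal has a genuine gap in part (a). You observe that the classes $\on{cl}(\CF,\alpha)$ span the image of the $0$-simplices in the simplicial object computing $\Tr((\Frob_\CY)_*,\Shv(\CY))$, and conclude that a pointwise Lefschetz--Verdier check on such classes suffices. But the paper itself cautions (in the remark following the Main Theorem in the introduction) that $H^0(\Tr(\Phi,\bC))$ is \emph{not} in general spanned by these classes---higher simplices contribute. More to the point, the theorem asserts a \emph{canonical homotopy} between two maps of spectra (or chain complexes), and agreeing on a subset of $H^0$-elements, even a spanning one, does not produce such a homotopy. The sentence ``both local term maps are induced from simplicial natural transformations of the resulting cyclic bar construction'' is asserted but never established, and it is precisely here that the substance lies: $\on{LT}^{\on{naive}}$ arises by functoriality of traces applied to $y^*$, while $\on{LT}^{\on{true}}$ is built from the adjunction $\on{u}_{\Shv(\CY)} \to (\Delta_\CY)_*(\omega_\CY)$, and exhibiting both as compatible cocones over the same simplicial diagram (so that a homotopy can be built level-by-level) is exactly the hard part of \cite{GV}. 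Separately, the step ``decompose $\CY^{\Frob} \simeq \sqcup_y \on{pt}/\on{Aut}(y)$ and invoke the classical pointwise Lefschetz--Verdier calculation'' is itself the content of Varshavsky's theorem on local terms for stacks and is far from a formal reduction; your proposal treats it as routine when it is the real input. Parts (b) and (c) are plausible sketches along the expected lines (proper base change, naturality of the unit of Verdier duality, and coherence of the data), but they inherit the incompleteness of part (a), since the homotopy whose compatibility you assert in (c) has not actually been constructed.
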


\sssec{} \label{sss:non-qc}

Let now $\CY$ be not necessarily quasi-compact. We will consider the poset of quasi-compact open substacks 
$$\CU \overset{j}\hookrightarrow \CY,$$
and the corresponding functors $j_!:\Shv(\CU)\to \Shv(\CY)$. 

\medskip

The category $\Shv(\CY)$ is compactly generated by the essential images of $j_!|_{\Shv(\CU)^c}$. Furthermore, 
the induced map
$$\underset{\CU}{\on{colim}}\, \Tr((\Frob_\CU)_*,\Shv(\CU))\to \Tr((\Frob_\CY)_*,\Shv(\CY))$$
is an isomorphism (see \cite[Sect. 22.1.11]{AGKRRV1}).

\medskip

Using the commutative diagrams in \thmref{t:gv}(b),(c), this allows to define the maps
$$\on{LT}_\CY^{\on{naive}} \text{ and } \on{LT}_\CY^{\on{true}}$$ 
from $\Tr((\Frob_\CY)_*,\Shv(\CY))$ to
$$\underset{\CU}{\on{colim}}\, \on{Funct}(\CU(\BF_q))\simeq \on{Funct}_c(\CY(\BF_q)).$$

Moreover, by \thmref{t:gv}(a), these two maps are canonically homotopic. 

\sssec{The case of $\Bun_G$}

We now specialize to the case of $\Bun_G$. We consider the full subcategory 
\begin{equation} \label{e:embed Nilp again}
\Shv_\Nilp(\Bun_G) \hookrightarrow \Shv(\Bun_G).
\end{equation} 

As was mentioned earlier, it is preserved by the endofunctor $(\Frob_{\Bun_G})_*$.
By the hypothesis in \secref{ss:constraccess}, the embedding \eqref{e:embed Nilp again}
admits a continuous right adjoint. Hence, the functoriality of the categorical trace construction
yields a map
$$\Tr((\Frob_{\Bun_G})_*,\Shv_\Nilp(\Bun_G))\to 
\Tr((\Frob_{\Bun_G})_*,\Shv(\Bun_G)).$$

Composing with the maps $\on{LT}^{\on{naive}}_{\Bun_G}$ and $\on{LT}^{\on{true}}_{\Bun_G}$, we obtain two maps
$$\Tr((\Frob_{\Bun_G})_*,\Shv_\Nilp(\Bun_G))\rightrightarrows \on{Funct}_c(\Bun_G(\BF_q)),$$
that we will denote $\on{LT}^{\on{naive}}$ and $\on{LT}^{\on{true}}$, respectively.

\medskip

However, the identification between $\on{LT}^{\on{naive}}_{\Bun_G}$ and $\on{LT}^{\on{true}}_{\Bun_G}$ (see \secref{sss:non-qc})
gives rise to an identification between $\on{LT}^{\on{naive}}$ and $\on{LT}^{\on{true}}$. 

\begin{rem}

Let us explain the practical implication of the equality $\on{LT}^{\on{naive}}=\on{LT}^{\on{Sht}}$, stated in \thmref{t:lt}. 

\medskip

Let $\CF$ be an an object of $\Shv_\Nilp(\Bun_G)^c$ equipped with a 
weak Weil structure, i.e., a map
\begin{equation} \label{e:}
\alpha:\CF\to (\Frob_{\Bun_G})_*(\CF).
\end{equation} 

To such a pair $(\CF,\alpha)$, we can attach its class
$$\on{cl}(\CF,\alpha)\in \Tr((\Frob_{\Bun_G})_*,\Shv_\Nilp(\Bun_G))$$
(see \cite[Sect. 3.4.3]{GKRV}). Thus, using \corref{c:trace}, to $(\CF,\alpha)$ we can attach an element of  
$$\on{Funct}_c(\Bun_G(\BF_q)),$$ i.e., a compactly supported automorphic function.

\medskip

Now, the content of \thmref{t:lt}
is that the above element of $\on{Funct}_c(\Bun_G(\BF_q))$
equals the function attached to $\CF$, viewed as a weak Weil sheaf via $\alpha$, by the usual sheaf-function
correspondence, i.e., by taking pointwise traces of the Frobenius.

\end{rem}

\ssec{Serre local term}

In this subsection we will define the last remaining map in \secref{t:lt}, denoted $\on{LT}^{\on{Serre}}$.

\sssec{} \label{sss:ps u N Y}

Let $\CY$ be an algebraic stack, and let $\CN\subset T^*(\CY)$ be a conical Zariski-closed subset. 
Consider the (fully faithful) embedding 
\begin{equation} \label{e:Kun N Y}
\Shv_\CN(\CY)\otimes \Shv_\CN(\CY)\to \Shv(\CY\times \CY).
\end{equation} 

Denote 
$$\on{ps-u}_\CY:= \Delta_!(\ul\sfe_\CY)\in \Shv(\CY\times \CY),$$
and let 
$$\on{ps-u}_{\CY,\CN}\in \Shv_\CN(\CY)\otimes \Shv_\CN(\CY)$$
be obtained by applying to $\on{ps-u}_\CY$ the right adjoint to the functor
\eqref{e:Kun N Y}. We will not distinguish notationally between $\on{ps-u}_{\CY,\CN}$
and its image along \eqref{e:Kun N Y}.

\medskip

The counit of the adjunction defines a map
\begin{equation}\label{eq:varepsilon y} 
\on{ps-u}_{\CY,\CN} \to \on{ps-u}_\CY.
\end{equation} 

\sssec{} \label{sss:Serre}

The map \eqref{eq:varepsilon y} gives rise to a natural transformation
\begin{multline} \label{e:cand for * dual}
(\ev^l_\CY\otimes \on{Id}) \circ (\on{Id}\otimes \on{ps-u}_{\CY,\CN}) =
\left((\on{C}^\cdot_c(\CY,-) \circ \Delta_\CY^*)\boxtimes \on{Id}_\CY\right)  \circ (\on{Id}_\CY\boxtimes \on{ps-u}_{\CY,\CN}) \to \\
\to \left((\on{C}^\cdot_c(\CY,-) \circ \Delta_\CY^*)\boxtimes \on{Id}_\CY\right)  \circ (\on{Id}_\CY\boxtimes \on{ps-u}_{\CY}) \simeq  \on{Id}
\end{multline} 
as endofunctors of $\Shv_\CN(\CY)$ (see \secref{sss:ev*} for the notation $\ev^l_\CY$). 

\medskip

Recall, following \cite[Definition 5.5.4]{AGKRRV2}, that the pair $(\CY,\CN)$ is said to be \emph{Serre} 
if the natural transformation \eqref{e:cand for * dual} is an isomorphism. If this is the case, then the data
of 
$$\on{ps-u}_{\CY,\CN}\in  \Shv_\CN(\CY)\otimes \Shv_\CN(\CY) \text{ and }
\ev^l_\CY:\Shv_\CN(\CY)\otimes \Shv_\CN(\CY)\to \Vect$$
define the unit and counit of a self-duality on $\Shv_\CN(\CY)$.

\sssec{} \label{sss:LT Serre}

Suppose that $(\CY,\CN)$ is Serre. As with the true local term morphism, we define
$$\on{LT}_{\CY,\CN}^{\on{Serre}}:\Tr((\Frob_{\CY})_*,\Shv_\CN(\CY)) \to 
\on{Funct}_c(\CY(\BF_q))$$
as the composition
\begin{multline*}
\Tr((\Frob_{\CY})_*,\Shv_\CN(\CY)) \simeq 
\on{C}_c^{\cdot}\big(\CY,
\Delta_\CY^*\circ ((\Frob_{\CY})_* \otimes \on{Id})(\on{ps-u}_{\CY,\CN})\big) \simeq \\
\simeq \on{C}_c^{\cdot}\big(\CY,
\Delta_\CY^*\circ ((\Frob_{\CY})_! \otimes \on{Id})(\on{ps-u}_{\CY,\CN})\big) \overset{\text{\eqref{eq:varepsilon y}}}\longrightarrow 
\on{C}_c^{\cdot}\big(\CY,
\Delta_\CY^*\circ ((\Frob_{\CY})_! \boxtimes \on{Id}_\CY)(\on{ps-u}_{\CY})\big) = \\
=\on{C}_c^{\cdot}\big(\CY,
\Delta_\CY^*\circ (\Frob_{\CY} \times \on{Id}_\CY)_!(\on{ps-u}_{\CY})\big)  = \on{C}_c^{\cdot}\big(\CY,
\Delta_\CY^*\circ (\on{Graph}_{\Frob_\CY})_!(\sfe_{\CY})\big) \simeq \\
\simeq \on{C}_{c}^{\cdot}(\CY^{\Frob},
\sfe_{\CY^{\Frob}}) \simeq \on{Funct}_c(\CY(\BF_q)).
\end{multline*}

\sssec{} \label{sss:projector notation}

We take $\CY=\Bun_G$ and $\CN=\Nilp$. We will denote the endofunctor $\sH_\sR$ by $\sP_{\Nilp,\Bun_G}$
(cf. \cite[Sect. 1.6.1]{AGKRRV2}). 

\medskip

Similarly, for a stack $\CZ$, we will write $\on{Id}_\CZ\boxtimes \sP_{\Nilp,\Bun_G}$
instead of $\on{Id}_\CZ\boxtimes \sH_\sR$. 

\medskip

Using this notation, we have
$$\CK_\sR:=(\on{Id}_{\Bun_G}\boxtimes \sP_{\Nilp,\Bun_G})(\on{ps-u}_{\Bun_G})\in \Shv(\Bun_G\times \Bun_G).$$

\sssec{}

We now quote the following result\footnote{It is this assertion that uses (and is equivalent to) the validity 
of \cite[Conjecture 14.1.8]{AGKRRV1}.}, see \cite[Proposition 2.4.6]{AGKRRV2}:

\begin{thm} \label{t:proj as adj}
For any algebraic stack $\CZ$, the endofunctor of $\Shv(\CZ\times\Bun_G)$ given by 
$$\on{Id}_\CZ \boxtimes \sP_{\Nilp,\Bun_G}$$
identifies with the precomposition of the embedding
$$\Shv(\CZ)\otimes \Shv_{\Nilp}(\Bun_G)\hookrightarrow \Shv(\CZ\times\Bun_G).$$
with its right adjoint.
\end{thm}

In what follows we will denote by $\on{Id}_\CZ \boxtimes \varepsilon$ the natural transformation 
\begin{equation} \label{e:varepsilon}
\on{Id}_\CZ \boxtimes \sP_{\Nilp,\Bun_G}\to \on{Id}_{\Shv(\CZ\times\Bun_G)}
\end{equation}
equal to the counit of the above adjunction. 

\medskip

When $\CZ=\on{pt}$, we will simply write 
$$\varepsilon: \sP_{\Nilp,\Bun_G}\to \on{Id}_{\Shv(\Bun_G)}.$$

\sssec{}

Iterating, from \thmref{t:proj as adj} we obtain:

\begin{cor} \label{c:proj as adj}
The endofunctor $\sP_{\Nilp,\Bun_G} \boxtimes \sP_{\Nilp,\Bun_G}$ of $\Shv(\Bun_G\times \Bun_G)$
identifies with the precomposition of the embedding
\begin{equation} \label{e:double Nilp}
\Shv_{\Nilp}(\Bun_G)\otimes \Shv_{\Nilp}(\Bun_G)\hookrightarrow \Shv(\Bun_G\times\Bun_G).
\end{equation}
with its right adjoint.
\end{cor}

We will denote by $\varepsilon\boxtimes \varepsilon$ the resulting natural transformation
$$\sP_{\Nilp,\Bun_G} \boxtimes \sP_{\Nilp,\Bun_G}\to \on{Id}_{\Shv(\Bun_G\times \Bun_G)}.$$

\sssec{}

Recall that \thmref{t:u Nilp Bun} says that the object $\CK_\sR$ belongs to the essential image of the functor
\eqref{e:double Nilp}. This formally implies that the map
$$(\sP_{\Nilp,\Bun_G} \boxtimes \on{Id}_{\Bun_G})(\CK_\sR) \overset{\varepsilon\boxtimes \on{Id}_{\Bun_G}}\longrightarrow 
\CK_\sR$$
is an isomorphism.

\medskip

In other words, we obtain an identification
$$\CK_\sR\simeq (\sP_{\Nilp,\Bun_G} \boxtimes \sP_{\Bun_G,\Nilp})(\on{ps-u}_{\Bun_G}).$$

By \corref{c:proj as adj}, the resulting map
$$\CK_\sR \overset{\varepsilon\boxtimes \varepsilon}\longrightarrow \on{ps-u}_{\Bun_G}$$
identifies $\CK_\sR$ with the value on $\on{ps-u}_{\Bun_G}$ of the right adjoint to \eqref{e:double Nilp}. 

\medskip

Hence, in the notations of \secref{sss:ps u N Y} above, we have
$$\CK_\sR\simeq \on{ps-u}_{\Bun_G,\Nilp}.$$

\sssec{}

Combining with \thmref{t:duality}, we conclude that the pair $(\Bun_G,\Nilp)$ is Serre.

\medskip

Thus, we obtain a well-defined map
$$\on{LT}^{\on{Serre}} := \on{LT}^{\on{Serre}} _{\Bun_G,\Nilp}.$$

\sssec{}

At this point, all the terms in \thmref{t:lt} have been defined. We have already seen that
$\on{LT}^{\on{naive}} = \on{LT}^{\on{true}}$.
 
\medskip 

The rest of this section is devoted to the proof of the equality $\on{LT}^{\on{Serre}} = \on{LT}^{\on{Sht}}$. 

\medskip

Finally, we show 
$\on{LT}^{\on{Serre}} = \on{LT}^{\on{true}}$ in \secref{s:serre-true}.

\ssec{Comparison of $\on{LT}^{\on{Serre}}$ and $\on{LT}^{\Sht}$} \label{ss:serre sht}

%We begin by identifying Serre and shtuka local terms maps. %We will do this
%in a strong form, stated as 

\sssec{} We begin by constructing upgraded versions of the two 
local term morphisms in question. 

\medskip 

More precisely, we will show that there exist natural transformations
$$
\widetilde{\on{LT}}{}^{\on{Serre}}, \widetilde{\on{LT}}{}^{\Sht}:
\Sht^{\Tr} \to \Sht
$$
of functors $$\Rep(\cG)_\Ran \to \Vect$$
with the property that 
when we evaluate either of these natural transformations on 
$\one_{\Rep(\cG)_\Ran}$, we obtain the 
relevant local term map 
$$\on{LT}^{?}:\Tr((\Frob_{\Bun_G})_*,\Shv_\Nilp(\Bun_G)) =
\Sht^{\Tr}(\one_{\Rep(\cG)_\Ran}) \to 
\Sht(\one_{\Rep(\cG)_\Ran}) \simeq \on{Funct}_c(\Bun_G(\BF_q)).$$

\sssec{}

The natural transformation $\widetilde{\on{LT}}{}^{\Sht}$ (in fact, an isomorphism) has been already defined:
this is the isomorphism arising from \thmref{t:main}. 

\sssec{}

We will now construct $\widetilde{\on{LT}}{}^{\on{Serre}}$.

\medskip

Recall the isomorphism $$\Sht^{\Tr}(-) \simeq \Sht(\sR \star -)$$ of
\thmref{t:trace r}. Thus, we can interpret 
the sought-for map $\widetilde{\on{LT}}{}^{\on{Serre}}$ as a natural transformation
\begin{equation} \label{e:LT Serre R}
\Sht(\sR \star -)\to \Sht(-).
\end{equation}

\sssec{} \label{sss:construct Serre}

By construction, the functors $\Sht(-)$ and $\Sht(\sR \star -)$ send $\CV\in \Rep(\cG)_\Ran$ 
to the vector space obtained by applying 
$$\on{C}^\cdot_c(\Bun_G,(\on{Graph}_{\Frob_{\Bun_G}})^*(-)):\Shv(\Bun_G\times \Bun_G)\to \Vect$$
to the objects
$$(\on{Id}_{\Bun_G}\boxtimes \sH_\CV)(\on{ps-u}_{\Bun_G}) \text{ and }  
(\on{Id}_{\Bun_G}\boxtimes \sH_{\sR\star \CV})(\on{ps-u}_{\Bun_G}),$$
respectively. 

\medskip

The sought-for natural transformation \eqref{e:LT Serre R} is induced by the natural transformation of functors
$$\on{Id}_{\Bun_G}\boxtimes \sH_{\sR\star \CV}\to \on{Id}_{\Bun_G}\boxtimes \sH_{\CV}$$
given by
\begin{multline*}
\on{Id}_{\Bun_G}\boxtimes \sH_{\sR\star \CV} \simeq
(\on{Id}_{\Bun_G}\boxtimes \sH_{\sR})\circ (\on{Id}_{\Bun_G}\boxtimes \sH_{\CV})=\\
=(\on{Id}_{\Bun_G}\boxtimes \sP_{\Nilp,\Bun_G})\circ (\on{Id}_{\Bun_G}\boxtimes \sH_{\CV})
\overset{(\on{Id}_{\Bun_G}\boxtimes \varepsilon)\circ {id}}\longrightarrow \on{Id}_{\Bun_G}\boxtimes \sH_{\CV},
\end{multline*} 
where $\on{Id}_{\Bun_G}\boxtimes \varepsilon$ is as in \eqref{e:varepsilon}. 
%
%
%Recall the notation $\sP_{\Nilp,\Bun_G}$ (resp., $(\on{Id}\otimes \sP_{\Nilp,\Bun_G})$), see \secref{sss:projector notation}.
%Let $\varepsilon$ denote the counit of the adjunction
%\begin{equation} \label{e:varepsilon}
%\sP_{\Nilp,\Bun_G}\to \on{Id},
%\end{equation}
%and similarly for $(\on{Id}\otimes \sP_{\Nilp,\Bun_G})$. 
%
%%\simeq (\on{id}\otimes \sP_{\Nilp,\Bun_G})\circ (\on{Id}\otimes (\CV\star -))(\on{ps-u}_{\Bun_G}),$$
%%\simeq (\on{Id}\otimes (\CV\star -))(\on{ps-u}_{\Bun_G,\Nilp}),$$
%
%\medskip
%
%Now, the sought-for natural transformation \eqref{e:LT Serre R} is induced by the map
%$$(\on{Id}\otimes (\CR\star \CV \star -))(\on{ps-u}_{\Bun_G}) \simeq 
%(\on{id}\otimes \sP_{\Nilp,\Bun_G}) \circ (\on{Id}\otimes (\CV\star -))(\on{ps-u}_{\Bun_G})  \overset{\varepsilon}\to 
%(\on{Id}\otimes (\CV\star -))(\on{ps-u}_{\Bun_G}).$$

%\begin{equation} \label{e:nu map}
%\on{ps-u}_{\Bun_G,\Nilp}\to \on{ps-u}_{\Bun_G}
%\end{equation} 
%of \eqref{eq:varepsilon y}. 

\sssec{}

We will prove: 

\begin{thm}\label{t:serre sht}

There is a canonical isomorphism 
$$\widetilde{\on{LT}}{}^{\on{Serre}} \simeq \widetilde{\on{LT}}{}^{\Sht}$$
of natural transformations
$$\Sht^{\Tr} \to \Sht.$$

\end{thm}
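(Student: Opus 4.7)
The strategy is to identify both natural transformations, at the level of kernels on $\Bun_G \times \Bun_G$, with the same canonical map. For any $\CV \in \Rep(\cG)_{\Ran}$, the counit of the adjunction in \thmref{t:the projector}(a) provides a natural map
\[
\varepsilon_\CV \colon \CK_{\CR \star \CV} \;=\; (\on{Id} \otimes \sP_\Nilp)(\CK_\CV) \;\longrightarrow\; \CK_\CV
\]
in $\Shv(\Bun_G \times \Bun_G)$. By the construction in \secref{sss:construct Serre}, applying $(\on{Graph}_{\Frob_{\Bun_G}})^*$ followed by $\on{C}_c^\cdot(\Bun_G, -)$ to $\varepsilon_\CV$ recovers $\widetilde{\on{LT}}{}^{\on{Serre}}$ at $\CV$. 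The bulk of the argument is to show that $\widetilde{\on{LT}}{}^{\Sht}$ at $\CV$ arises in precisely the same way from $\varepsilon_\CV$.

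To this end, I would unwind $\widetilde{\on{LT}}{}^{\Sht}$ via its defining chain
\[
\Sht(\CR \star -) \;\simeq\; \Sht_{\Loc} \circ \Loc(\CR \star -) \;\overset{\sim}{\longrightarrow}\; \Sht_{\Loc} \circ \Loc(-) \;\simeq\; \Sht,
\]
where the central arrow uses $\Loc(\CR) \simeq \CO_{\LocSys^{\on{restr}}_\cG(X)}$ from \corref{c:diagonal LocSys} together with monoidality of $\Loc$. The key input will be a compatibility between the spectral and automorphic sides: under the identifications of Remark \ref{sss:r defin Sam} and Theorems \ref{t:u Nilp Bun}--\ref{t:duality}, the object $\CR$ corresponds both to the spectral diagonal (via $R_{\LocSys^{\on{restr}}_\cG(X)}$ and \thmref{t:diagonal LocSys}) and to the automorphic unit of self-duality $\on{ps-u}_{\Bun_G, \Nilp} \simeq \CK_\CR$. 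I would then re-run the proof of \thmref{t:trace r} in parameterized form, tracking throughout the natural transformation induced by $\varepsilon_\CV$ rather than only the isomorphism at the final stage, and match the resulting map with the spectral prescription term by term.

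The main obstacle is precisely this bookkeeping in the last step: verifying that the spectral isomorphism $\Loc(\CR) \star \Loc(\CV) \simeq \Loc(\CV)$, when transported through $\Sht_{\Loc}$, coincides with the map obtained by applying $(\on{Graph}_{\Frob_{\Bun_G}})^*$ and $\on{C}^\cdot_c(\Bun_G,-)$ to $\varepsilon_\CV$. To make this formal I would exploit the $\Rep(\cG)_{\Ran}$-linearity of both constructions: both natural transformations are morphisms of $\Rep(\cG)_{\Ran}$-module functors $\Sht(\CR \star -) \to \Sht(-)$, and so are controlled by their behavior on the duality unit $\CK_\CR \simeq \on{ps-u}_{\Bun_G, \Nilp}$. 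Since this duality unit is precisely the object through which the projector $\sP_\Nilp$ is expressed, both $\widetilde{\on{LT}}{}^{?}$ are normalized by the same universal counit map, from which the equality follows.
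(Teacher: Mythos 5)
Your observation that $\widetilde{\on{LT}}{}_{\CV}^{\on{Serre}}$ arises by applying $\on{C}^\cdot_c(\Bun_G,(\on{Graph}_{\Frob_{\Bun_G}})^*(-))$ to the counit $\varepsilon_\CV$ is correct, and you correctly identify the obstacle (matching the localization chain defining $\widetilde{\on{LT}}{}^{\Sht}$ against the automorphic counit). But the device you propose for closing the gap — that both transformations are ``morphisms of $\Rep(\cG)_\Ran$-module functors'' and hence controlled by their behavior on the duality unit — does not hold up. The functors $\Sht(\CR\star-)$ and $\Sht(-)$ map $\Rep(\cG)_\Ran$ to $\Vect$, which carries no natural $\Rep(\cG)_\Ran$-module structure, so ``morphism of module functors'' has no content here. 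More importantly, $\CR$ is not a unit in $\Rep(\cG)_\Ran$ itself: it becomes one only after applying $\Loc$ (\corref{c:diagonal LocSys}), and there is no map $\CR\to\one_{\Rep(\cG)_\Ran}$ in $\Rep(\cG)_\Ran$ from which $\varepsilon_\CV$ descends. The counit $\varepsilon$ is born only at the level of $\Shv(\Bun_G\times\Bun_G)$ via the projector $\sP_\Nilp$, while $\widetilde{\on{LT}}{}^{\Sht}$ is constructed entirely on the spectral side, with no a priori relation to it.

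The missing idea is the precomposition reduction: since $\Loc^\vee$ is fully faithful (\corref{c:Loc dual}) and $\Loc(\CR)\simeq\CO_{\LocSys^{\on{restr}}_\cG(X)}$, the operation $F\mapsto F(\CR\star-)$ on continuous functors $\Rep(\cG)_\Ran\to\Vect$ is fully faithful when restricted to the essential image of $\Loc^\vee$, which contains both $\Sht$ and $\Sht(\CR\star-)$ by \thmref{t:Cong cor}. One may therefore compare $\widetilde{\on{LT}}{}_{\CR\star\CV}^{\on{Serre}}$ and $\widetilde{\on{LT}}{}_{\CR\star\CV}^{\Sht}$ instead, and it is precisely at this point that the commutative-algebra structure on $\CR$ becomes available: both transformations are then identified with the map $m_\CV$ induced by the multiplication $m\colon\CR\star\CR\to\CR$, the spectral one via \corref{c:diag ls alg} and the automorphic one via \corref{c:nu kr}, which in turn rests on \propref{p:nu kr} (the nontrivial assertion that $m$ coincides with the idempotent structure map of $\sP_\Nilp$). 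Your outline invokes neither the reduction nor the algebra structure on $\CR$, and without them the ``term-by-term matching'' cannot be carried out.
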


Clearly, \thmref{t:serre sht} implies the isomorphism $\on{LT}^{\on{Serre}}\simeq \on{LT}^{\Sht}$. 
The proof of \thmref{t:serre sht} will be given in \secref{ss:proof of serre sht}. 

\ssec{An algebra structure on $\sR$}

For the proof of \thmref{t:serre sht} we need to digress and discuss some properties related
to the commutative algebra structure on the object
$\sR \in \Rep(\cG)_\Ran$, see \secref{sss:the projector}. 

\sssec{} %\label{sss:kr first alg}

In what follows we will need one more compatibility property of the commutative algebra structure on $\sR$.

\medskip

Let $\CZ$ be an algebraic stack. Consider the endofunctor $(\on{Id}_\CZ\boxtimes \sP_{\Nilp,\Bun_G})$ of 
$\Shv(\CZ\times \Bun_G)$, see \secref{sss:projector notation}. 

\medskip

On the one hand, the algebra structure on $\CR$ yields a map
\begin{equation} \label{e:m map}
m:(\on{Id}_\CZ\boxtimes \sP_{\Nilp,\Bun_G}) \circ (\on{Id}_{\CZ}\boxtimes \sP_{\Nilp,\Bun_G})\to (\on{Id}_\CZ\boxtimes \sP_{\Nilp,\Bun_G}).
\end{equation} 

On the other hand, we have the map 
\begin{equation} \label{e:idemp map}
(\on{Id}_\CZ\boxtimes \sP_{\Nilp,\Bun_G}) \circ 
(\on{Id}_\CZ\boxtimes \sP_{\Nilp,\Bun_G})
\overset{(\on{Id}_\CZ\boxtimes \varepsilon) \circ \on{id}_{(\on{Id}_\CZ\boxtimes \sP_{\Nilp,\Bun_G})}}\longrightarrow 
(\on{Id}_\CZ\boxtimes \sP_{\Nilp,\Bun_G}),
\end{equation}
where $\on{Id}_\CZ\boxtimes \varepsilon$ is as in \eqref{e:varepsilon}.
(The map \eqref{e:idemp map} is in fact an isomorphism and equals the structure on
$\on{Id}_\CZ\boxtimes \sP_{\Nilp,\Bun_G}$ of idempotent endofunctor.) 

\begin{prop} \label{p:nu kr}
The maps \eqref{e:m map} and \eqref{e:idemp map} are canonically homotopic.
\end{prop}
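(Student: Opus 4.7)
The plan is to reduce the comparison of the two natural transformations to their values on the essential image of $(\on{Id}\otimes \sP_\Nilp)$. By \thmref{t:the projector}(b), this image is $\Shv(\CZ)\otimes \Shv_\Nilp(\Bun_G)$, embedded as a full subcategory of $\Shv(\CZ\times \Bun_G)$ via \eqref{e:Nilp emb Z}, and both $(\on{Id}\otimes \sP_\Nilp)\circ (\on{Id}\otimes \sP_\Nilp)$ and $(\on{Id}\otimes \sP_\Nilp)$ factor through it. I would first observe that, by naturality with respect to the counit $\varepsilon_X \colon (\on{Id}\otimes \sP_\Nilp)(X) \to X$ together with the triangle identity---which forces $(\on{Id}\otimes \sP_\Nilp)(\varepsilon_X)$ to be the canonical idempotent isomorphism---the component of any such natural transformation at an arbitrary $X$ is determined by its component at $(\on{Id}\otimes \sP_\Nilp)(X)$. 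It therefore suffices to compare the two maps after restriction to $\Shv(\CZ)\otimes \Shv_\Nilp(\Bun_G)$.

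On this subcategory I would show that both maps equal the identity, under the canonical identification $(\on{Id}\otimes \sP_\Nilp)(\CF) \simeq \CF$ available for $\CF$ in the subcategory. The case of $\varepsilon \circ (\on{Id}\otimes \sP_\Nilp)$ is immediate from the triangle identity. For the multiplication $m$, the argument uses the fact that $\Shv_\Nilp(\Bun_G)$ carries a canonical module structure over the symmetric monoidal category $\QCoh(\LocSys^{\on{restr}}_\cG(X))$ (see \cite{AGKRRV1}), and that the $\Rep(\cG)_\Ran$-action on $\Shv_\Nilp(\Bun_G)$ factors through $\Loc$ with respect to this module structure. Under this factorization, $\CR$ acts on the subcategory via $\Loc(\CR) \simeq \CO_{\LocSys^{\on{restr}}_\cG(X)}$ (by \corref{c:diagonal LocSys}), and the multiplication $m$ corresponds to the algebra multiplication on $\CO_{\LocSys^{\on{restr}}_\cG(X)}$ (by \corref{c:diag ls alg}). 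Since $\CO_{\LocSys^{\on{restr}}_\cG(X)}$ is the unit of $\QCoh(\LocSys^{\on{restr}}_\cG(X))$ and its multiplication is the canonical unit isomorphism, $m$ acts as the identity on the subcategory.

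The principal subtlety is the input provided by \corref{c:diag ls alg}: the commutative algebra structure on $\CR$, transported from $R_{\LocSys^{\on{restr}}_\cG(X)}$ via the right-lax symmetric monoidal functor $\Gamma_!(\LocSys^{\on{restr}}_\cG(X),-)$, must be coherently identified under $\Loc$ with the canonical algebra structure on the unit $\CO_{\LocSys^{\on{restr}}_\cG(X)}$. Granted this compatibility, the remainder of the argument is a formal manipulation governed by the triangle identity and the idempotence of $(\on{Id}\otimes \sP_\Nilp)$; in particular, both $m$ and $\varepsilon\circ(\on{Id}\otimes \sP_\Nilp)$ are pinned down by being the canonical idempotent multiplication.
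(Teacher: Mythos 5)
Your proposal is correct and follows essentially the same approach as the paper's own proof: reduce to the image of $(\on{Id}\otimes\sP_\Nilp)$ (the paper phrases this as both source and target vanishing on $\on{ker}(\on{Id}\otimes\sP_\Nilp)$, you phrase it via naturality against the counit and the triangle identity — an equivalent formal reduction), and then on $\Shv_\Nilp(\Bun_G)$ conclude by the factorization of the $\Rep(\cG)_\Ran$-action through $\QCoh(\LocSys^{\on{restr}}_\cG(X))$ (\cite[Theorem 10.5.2]{AGKRRV1}) together with \corref{c:diag ls alg}. You correctly isolate the crucial input, namely that the algebra structure on $\CR$ is carried by $\Loc$ to the canonical unit structure on $\CO_{\LocSys^{\on{restr}}_\cG(X)}$.
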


\sssec{}
 
Before we prove \propref{p:nu kr} let us quote its corollary that we will use in the proof of
\thmref{t:serre sht}. 

\medskip

Note that for $\CV\in \Rep(\cG)_\Ran$ we have a tautological identification
\begin{equation} \label{e:V R}
(\on{Id}_{\Bun_G}\boxtimes \sP_{\Nilp,\Bun_G}) (\CK_\CV) \simeq \CK_{\sR\star \CV}
\end{equation}
as objects of $\Shv(\Bun_G\times \Bun_G)$.

\medskip

For $\CW\in  \Rep(\cG)_\Ran$, let
$$\varepsilon_{\CW}:(\on{Id}_\CZ\boxtimes \sP_{\Nilp,\Bun_G})(\CK_{\CW}) \to \CK_{\CW}$$
denote the value of the natural transformation $\on{Id}_{\Bun_G}\boxtimes \varepsilon$ on $\CK_\CW$. 

\medskip

\begin{cor} \label{c:nu kr}
For $\CV\in \Rep(\cG)_\Ran$, the diagram
$$
\CD
(\on{Id}_{\Bun_G}\boxtimes \sP_{\Nilp,\Bun_G}) \circ 
(\on{Id}_{\Bun_G}\boxtimes \sP_{\Nilp,\Bun_G}) (\CK_\CV) @>{m}>> (\on{Id}_{\Bun_G}\boxtimes \sP_{\Nilp,\Bun_G}) (\CK_\CV) \\
@V{\text{\eqref{e:V R}}}V{\sim}V   @V{\text{\eqref{e:V R}}}V{\sim}V   \\
(\on{Id}_{\Bun_G}\boxtimes \sP_{\Nilp,\Bun_G})(\CK_{\sR\star \CV})  @>{\varepsilon_{\sR\star \CV}}>> \CK_{\sR\star \CV}
\endCD
$$
commutes.
\end{cor}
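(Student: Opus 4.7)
The plan is to derive \corref{c:nu kr} immediately from \propref{p:nu kr} by evaluating the asserted homotopy of natural transformations at the single object $\CK_\CV\in \Shv(\Bun_G\times \Bun_G)$, taking $\CZ = \Bun_G$ in the setup of \secref{sss:projector notation}.

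First, I would apply \propref{p:nu kr} to obtain a canonical homotopy between the two maps
$$m_{\CK_\CV},\ (\varepsilon\circ (\on{Id}\otimes \sP_\Nilp))_{\CK_\CV}:(\on{Id}\otimes \sP_\Nilp)\circ (\on{Id}\otimes \sP_\Nilp)(\CK_\CV)\to (\on{Id}\otimes \sP_\Nilp)(\CK_\CV).$$
Then I would unwind the commutative square in \corref{c:nu kr} by invoking the tautological identification \eqref{e:V R} in both vertical arrows: on the top-right this gives $(\on{Id}\otimes \sP_\Nilp)(\CK_\CV)\simeq \CK_{\CR\star \CV}$, and on the top-left, applying $(\on{Id}\otimes \sP_\Nilp)$ once more yields $(\on{Id}\otimes \sP_\Nilp)\circ (\on{Id}\otimes \sP_\Nilp)(\CK_\CV)\simeq (\on{Id}\otimes \sP_\Nilp)(\CK_{\CR\star \CV})$. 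Under these identifications, the top row of the diagram is $m_{\CK_\CV}$ and the bottom row $\varepsilon_{\CR\star \CV}$ is precisely the counit $\varepsilon$ evaluated at the object $(\on{Id}\otimes \sP_\Nilp)(\CK_\CV)$. Commutativity of the square is then the specialization of the homotopy furnished by \propref{p:nu kr}.

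All the real work is concentrated in \propref{p:nu kr}, which is the main obstacle and whose proof presumably relies on \corref{c:diag ls alg} --- to transport the comparison of the two maps to the spectral side, where it becomes the tautological identification of the multiplication on $\CO_{\LocSys^{\on{restr}}_\cG(X)}$ with the counit of the pushforward-pullback adjunction for the diagonal of $\LocSys^{\on{restr}}_\cG(X)$ --- together with the Serre self-duality from \thmref{t:duality} and the idempotent description in \thmref{t:the projector}. In the corollary itself, the only remaining check is that the identification \eqref{e:V R} is compatible with the natural transformations $m$ and $\varepsilon\circ(\on{Id}\otimes \sP_\Nilp)$, but this is automatic from the construction of $\CK_{-}$ via the Hecke action on $(\Delta_{\Bun_G})_!(\ul\sfe_{\Bun_G})$ together with the associativity of the Hecke action recalled in \secref{sss:associativity of Hecke}.
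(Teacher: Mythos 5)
Your proof is correct and matches the paper's implicit argument: the paper states \corref{c:nu kr} without a separate proof, treating it as following immediately from \propref{p:nu kr} by evaluating the homotopy of natural transformations at the object $\CK_\CV$ and then using naturality of the counit $\varepsilon$ along the isomorphism \eqref{e:V R}. Your final remark slightly misattributes the last compatibility check to Hecke associativity rather than to naturality of the counit $\varepsilon$ as a natural transformation, but the conclusion (that it is automatic) is correct, and this does not affect the argument.
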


\sssec{Proof of \propref{p:nu kr}}

Both the source and the target functor vanish on the subcategory $\on{ker}(\on{Id}_\CZ\boxtimes \sP_{\Nilp,\Bun_G})$. Hence, it is sufficient to establish 
the commutativity when evaluated on the full subcategory
$$\Shv(\CZ)\otimes \Shv_\Nilp(\Bun_G)\subset \Shv(\CZ\times \Bun_G).$$

Since all the functors involved act only on the second factor, it is sufficient to show that the map
$$\on{Id} \simeq (\sP_{\Nilp,\Bun_G} \circ \sP_{\Nilp,\Bun_G})|_{\Shv_\Nilp(\Bun_G)} \overset{m}\to  \sP_{\Nilp,\Bun_G}|_{\Shv_\Nilp(\Bun_G)}\simeq \on{Id}$$
is the identity map. 

\medskip 

However, this follows from \cite[Theorem 14.3.2]{AGKRRV1}:

\medskip

This theorem implies that the action of $\Rep(\cG)_\Ran$ on 
$\Shv_\Nilp(\Bun_G)$ factors uniquely
through an action of $\QCoh(\LocSys^{\on{restr}}_\cG(X))$ 
via the functor $\Loc$. 

\medskip

Now the result follows from the fact that the isomorphism
$$\Loc(\sR)\simeq \CO_{\LocSys^{\on{restr}}_\cG(X)}$$
of \corref{c:diagonal LocSys} respects the (commutative) algebra structures. 

\qed[\propref{p:nu kr}]

\ssec{Proof of \thmref{t:serre sht}} \label{ss:proof of serre sht}

\sssec{Proof of \thmref{t:serre sht}, Step 0}

We begin by introducing some notation.

\medskip 

Throughout the argument, we will replace $\Sht^{\Tr}$ with 
$\Sht(\sR \star -)$. In particular, we consider
our natural transformations as mapping
$$\widetilde{\on{LT}}{}^{\on{Serre}},\widetilde{\on{LT}}{}^{\on{Sht}}:
\Sht(\sR \star -) \to \Sht.$$
For a fixed object $\CV \in \Rep(\cG)_\Ran$, we denote the corresponding
maps by 
$$\widetilde{\on{LT}}{}_{\CV}^{\on{Serre}},\widetilde{\on{LT}}{}_{\CV}^{\on{Sht}}:
\Sht(\sR \star \CV) \to \Sht(\CV).$$

\medskip 

To unburden the notation,
we will use $-\otimes-$ to denote 
$-\underset{\CO_{\LocSys^{\on{restr}}_\cG(X)}}{\otimes}-$ and $\CO$ to denote 
$\CO_{\LocSys^{\on{restr}}_\cG(X)}$.

\sssec{Proof of \thmref{t:serre sht}, Step 1}

Observe that we have a commutative diagram
$$\CD 
\bMaps_{\DGCat}(\QCoh(\LocSys^{\on{restr}}_\cG(X),\Vect) @>{F \mapsto 
F(\CO\otimes-)}>> 
\bMaps_{\DGCat}(\QCoh(\LocSys^{\on{restr}}_\cG(X),\Vect) \\
@V{\Loc^{\vee}}VV @VV{\Loc^{\vee}}V \\
\bMaps_{\DGCat}(\Rep(\cG)_\Ran,\Vect) @>{F \mapsto F(\sR\star-)}>> \bMaps_{\DGCat}(\Rep(\cG)_\Ran,\Vect).
\endCD$$
Here we have used the isomorphism $\Loc(\sR) \simeq \CO$.
The vertical arrows are fully faithful by \corref{c:Loc dual}(a), and the
top horizontal arrow is tautologically isomorphic to the identity. Therefore, the bottom arrow
is fully faithful when restricted to the essential image of $\Loc^{\vee}$.

\medskip 

The (isomorphic!) functors $\Sht$ and $\Sht(\sR \star -)$ lie 
in the essential image of $\Loc^{\vee}$ by \thmref{t:Cong cor}.
Therefore, it suffices to identify the natural transformations
$\widetilde{\on{LT}}{}^{\on{Serre}}$ and $\widetilde{\on{LT}}{}^{\on{Sht}}$
after precomposing with $\sR \star -$. 

\medskip

That is, it suffices to identify 
the two induced natural transformations 
$$\Sht(\sR \star (\sR \star -)) \to \Sht(\sR \star -),$$
i.e., the two maps
$$\widetilde{\on{LT}}{}_{\sR\star \CV}^{\on{Serre}},\widetilde{\on{LT}}{}_{\sR\star \CV}^{\on{Sht}}:
\Sht(\sR \star (\sR \star \CV)) \to \Sht(\sR \star \CV),\quad \CV\in \Rep(\cG)_\Ran.$$

\sssec{Proof of \thmref{t:serre sht}, Step 2}

Let $m:\sR \star \sR \to \sR$ denote the multiplication for the algebra
structure on $\sR$.

\medskip 

We obtain a map 
$$m_\CV:\Sht(\sR \star \sR \star \CV) \to 
\Sht(\sR \star \CV).$$

\medskip 

We claim that there is a natural identification
\begin{equation}\label{eq:lambda 1}
m_\CV \simeq \widetilde{\on{LT}}{}^{\Sht}_{\sR \star \CV}
\end{equation}
of morphisms $\Sht(\sR \star \sR \star \CV) \to 
\Sht(\sR \star \CV)$.

\medskip 

Indeed, the compatibility of the isomorphism $\Loc(\sR)\simeq \CO$ with algebra structures 
implies that we have a commutative diagram
$$\CD
\Loc(\sR) \otimes \Loc(\sR \star \CV) 
@>{\sim}>>
\Loc(\sR \star \sR \star \CV) \\
@V{\sim}VV @VV{\Loc(m \star \on{Id}_\CV)}V \\
\Loc(\sR \star \CV) @>{\on{Id}}>> \Loc(\sR \star \CV)
\endCD
$$
where the left vertical arrow is the canonical isomorphism obtained by 
identifying $\Loc(\CV) \simeq \CO$.
Applying $\Sht_{\Loc}$ and the definition yields the claim.

\sssec{Proof of \thmref{t:serre sht}, Step 3}

By the above, it suffices to show that there are natural identifications 
\begin{equation}\label{eq:lambda 2}
m_\CV \simeq \widetilde{\on{LT}}{}^{\on{Serre}}_{\sR \star \CV}
\end{equation}
of morphisms $\Sht(\sR \star \sR \star \CV) \to 
\Sht(\sR \star \CV)$.

\medskip 

Now,  \eqref{eq:lambda 2} is obtained by applying $\on{C}^\cdot_c(\Bun_G,(\on{Graph}_{\Frob_{\Bun_G}})^*(-))$ to
the commutative diagram of \corref{c:nu kr}.

\section{Comparison of $\on{LT}^{\on{true}}$ and $\on{LT}^{\on{Serre}}$} \label{s:serre-true}

\ssec{Statement of the result} \label{ss:setting for Serre}

\sssec{} \label{sss:setting for Serre}

Let $\CY$ be a quasi-compact algebraic stack, and let $\CN$ be a conical Zariski-closed
subset of $T^*(\CY)$. We will assume that the subcategory 
\begin{equation} \label{e:emb N}
\Shv_\CN(\CY)\hookrightarrow \Shv(\CY)
\end{equation} 
is generated by objects that are compact in $\Shv(\CY)$
(in \cite[Sect. A.5.2]{AGKRRV2} this property of $(\CY,\CN)$ was termed ``constraccessible").

\medskip

Assume that $\CN$ is Frobenius-invariant, so the endofunctor
$(\Frob_\CY)_*$ of $\Shv(\CY)$ preserves the subcategory $\Shv_\CN(\CY)\subset \Shv(\CY)$,
see \cite[Sect. 22.3.1 and Lemma 22.3.2]{AGKRRV1}. 

\medskip

In this case, the embedding \eqref{e:emb N} induces a map
\begin{equation} \label{e:emb N Tr}
\Tr((\Frob_\CY)_*,\Shv_\CN(\CY))\to \Tr((\Frob_\CY)_*,\Shv(\CY)).
\end{equation} 

\medskip

Let us denote by
$$\on{LT}^{\on{true}}_{\CY,\CN}:\Tr((\Frob_\CY)_*,\Shv_\CN(\CY))\to \on{Funct}(\CY(\BF_q))$$
the composition of \eqref{e:emb N Tr} with the map
$$\on{LT}^{\on{true}}_{\CY}:\Tr((\Frob_\CY)_*,\Shv(\CY))\to \on{Funct}(\CY(\BF_q)),$$
defined in \secref{sss:true}. 

\sssec{}

Assume now that the pair $(\CY,\CN)$ is Serre (see \secref{sss:Serre} for what this means).

\medskip

Recall that in this case, we also have the map 
$$\on{LT}^{\on{Serre}}_{\CY,\CN}: \Tr((\Frob_\CY)_*,\Shv_\CN(\CY))\to \on{Funct}(\CY(\BF_q)).$$

\sssec{}

Recall now the miraculous endofunctor $\Mir_\CY$, see \cite[Sect. 5.6.2]{AGKRRV2}. Following
\cite[Definition 5.6.6]{AGKRRV2}, we will say that the pair $\CN$ is \emph{miraculous-compatible} if the
endofunctor $\Mir_\CU$ preserves the subcategory \eqref{e:emb N}. 

\medskip

The main result of this section reads:

\begin{thm} \label{t:true and Serre}
Assume that $\CN$ is miraculous-compatible. Then the maps $\on{LT}^{\on{true}}_{\CY,\CN}$ and $\on{LT}^{\on{Serre}}_{\CY,\CN}$
$$\Tr((\Frob_\CY)_*,\Shv_\CN(\CY))\rightrightarrows \on{Funct}(\CY(\BF_q))$$
are equal. 
\end{thm}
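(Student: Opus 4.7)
My plan is to identify the two local term maps via the miraculous functor $\Mir_\CY$, which by the miraculous-compatibility hypothesis restricts to an auto-equivalence of $\Shv_\CN(\CY)$. The central observation is that $\Mir_\CY$ implements, on $\Shv_\CN(\CY)$, the equivalence between the Serre self-duality (built from $\on{ps-u}_{\CY,\CN}$ and $\ev^*_\CY$) and the duality on $\Shv_\CN(\CY)$ inherited from Verdier duality on $\Shv(\CY)$. Both $\on{LT}^{\on{true}}_{\CY,\CN}$ and $\on{LT}^{\on{Serre}}_{\CY,\CN}$ follow the same skeleton---take a self-duality unit in $\Shv_\CN(\CY) \otimes \Shv_\CN(\CY)$, twist one factor by $(\Frob_\CY)_*$, restrict along the diagonal, and take cohomology---and they differ only in the choice of self-duality, a difference exactly measured by $\Mir_\CY$.

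Concretely, I would proceed in three steps. First, using the construction of $\Mir_\CY$ in \cite{AGKRRV2}, I would show that $(\Mir_\CY \otimes \on{Id})$ transports the Serre unit $\on{ps-u}_{\CY,\CN}$ to the unit for Verdier duality on $\Shv_\CN(\CY)$ (namely, the image of $(\Delta_\CY)_*(\omega_\CY)$ under the right adjoint to the Künneth embedding $\Shv_\CN(\CY) \otimes \Shv_\CN(\CY) \hookrightarrow \Shv(\CY \times \CY)$), and that the adjunction map \eqref{eq:varepsilon y} corresponds to the analogous map $\on{u}_{\Shv(\CY)} \to (\Delta_\CY)_*(\omega_\CY)$ from \secref{sss:true} under this identification. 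Second, I would verify that $\Mir_\CY$ commutes with $(\Frob_\CY)_*$; this follows formally since $\Frob_\CY$ is finite (giving $(\Frob_\CY)_! \simeq (\Frob_\CY)_*$) and $\Mir_\CY$ is assembled from sheaf-theoretic operations that are compatible with finite morphisms via base change.

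Third, I would unwind the two definitions and observe that, term by term, the composition computing $\on{LT}^{\on{Serre}}_{\CY,\CN}$ matches the composition computing $\on{LT}^{\on{true}}_{\CY,\CN}$ after applying $\Mir_\CY$ on the appropriate tensor factor. The final targets $\on{C}^\cdot_c(\CY^{\Frob}, \sfe_{\CY^{\Frob}})$ and $\on{C}^\cdot_\blacktriangle(\CY^{\Frob}, \omega_{\CY^{\Frob}})$ are canonically identified with $\on{Funct}(\CY(\BF_q))$ because $\CY^{\Frob}$ is discrete (by Lang--Steinberg), so the difference between $\sfe$- and $\omega$-coefficients is absorbed harmlessly.

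The main obstacle will be the first step: verifying that $\Mir_\CY$ carries the Serre unit to the Verdier unit along with the defining counit maps. This is essentially a compatibility between $\Mir_\CY$ and the adjunction structures for both Künneth embeddings, and will likely require a direct computation using how $\Mir_\CY$ is built from $\Delta_!$, $\Delta_*$, and the pseudo-units in \cite{AGKRRV2}. Once this first step is in place, the Frobenius compatibility and the identification of cohomologies on $\CY^{\Frob}$ are essentially formal, and the comparison of the two local term maps follows by diagram chase.
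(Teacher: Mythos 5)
Your first two steps correctly identify the key ideas of the paper's diagram-chase part (Section 6.3): one uses the miraculous functor to transport the two units $\on{ps-u}_{\CY,\CN}\simeq(\on{Id}\otimes\Mir_\CY)(\on{u}_{\CY,\CN})$, the compatibility of the pairings $\ev^*_\CY\circ(\on{Id}\otimes\Mir_\CY)\simeq\ev^!_\CY$ (this is \cite[Corollary 3.4.7]{AGKRRV2}), and the compatibility of the counit maps $\on{u}_{\CY,\CN}\to\on{u}_\CY$ and $\on{ps-u}_{\CY,\CN}\to\on{ps-u}_\CY$ to assemble a large commutative diagram with the two local term maps as the outer vertical compositions.

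However, your third step contains a genuine gap. You claim that once the formal compatibilities are in place, the two final identifications with $\on{Funct}(\CY(\BF_q))$ match "harmlessly" because $\CY^{\Frob}$ is discrete. This elides the substantive geometric input. The diagram chase reduces the theorem to showing that a specific map
$$\on{C}_c^\cdot\bigl(\CY,\Delta_\CY^*\circ(\on{Id}\otimes\Mir_\CY)\circ(\on{Graph}_{\Frob_\CY})_*(\omega_\CY)\bigr)\to\on{C}^\cdot_\blacktriangle\bigl(\CY,\Delta^!_\CY\circ(\on{Graph}_{\Frob_\CY})_*(\omega_\CY)\bigr)$$
equals the identity on $\on{Funct}(\CY(\BF_q))$. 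This map is constructed abstractly from Verdier duality (via the evaluation map $\CQ\boxtimes\BD^{\on{Verdier}}(\CQ)\to\on{u}_{\CY\times\CY}$, a transposition, and the miraculous functor), and the fact that it gives the identity is precisely the geometric local term theorem (\thmref{t:true and Serre geom}). Its proof is not a formality about $\CY^{\Frob}$ being discrete: it proceeds by reinterpreting the map as computing $\on{funct}^{\on{true}}$ for the sheaf $(\Delta_\CY)_!(\ul\sfe_\CY)$ with its tautological Weil structure, and then invoking the local Grothendieck--Lefschetz trace formula (\thmref{t:GR}) to replace $\on{funct}^{\on{true}}$ by $\on{funct}^{\on{naive}}$, for which the answer (the characteristic function of the diagonal) is a direct calculation. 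Without this input the argument does not close.

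One more minor inaccuracy: you propose to verify that $\Mir_\CY$ commutes with $(\Frob_\CY)_*$, citing finiteness of Frobenius. The paper does not need such a commutation as a separate lemma; the equivariance is built in to the functoriality of the self-duality data under an arbitrary endofunctor $F$ in diagram \eqref{e:true and Serre 2}, and no interchange of $\Mir_\CY$ with $(\Frob_\CY)_*$ is required.
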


\begin{rem}
The assertion of \thmref{t:true and Serre} is far from tautological: it says that two ways to map $\Tr((\Frob_\CY)_*,\Shv_\CN(\CY))$
to $\on{Funct}(\CY(\BF_q))$, corresponding to two different self-dualities on $\Shv_\CN(\CY)$ coincide. 

\medskip

A somewhat analogous problem arises when we calculate the trace of the identity endofunctor on the category
$\QCoh(Z)$, where $Z$ is a smooth proper scheme. There are two ways to calculate the trace that correspond
to two choices of self-duality data on $\QCoh(Z)$: the naive self-duality and Serre self-duality. Each calculation
yields Hodge cohomology of $Z$, i.e.,
$$\underset{i}\oplus\, \Gamma(Z,\Omega^i(Z))[i].$$

However, the resulting two identifications are different, and the difference is given by the Todd class of $Z$. This
observation lies at the core of a proof of the Grothendieck-Riemann-Roch theorem via categorical traces, see
\cite{KP}.

\end{rem}

\sssec{} \label{sss:truncatable}

Let us explain how \thmref{t:true and Serre} implies the equality $\on{LT}^{\on{true}}=\on{LT}^{\on{Serre}}$
(the issue here is the fact that $\Bun_G$ is not quasi-compact). 

\medskip

Let $\CY$ be a not necessarily quasi-compact algebraic stack, and let $\CN\subset T^*(\CY)$ be a conical 
Zariski-closed subset. We will recall some definitions from \cite[Sect. C.1]{AGKRRV2}. 

\medskip

An open substack $\CU\overset{j}\hookrightarrow \CY$ is said to be \emph{cotruncative} if for every quasi-compact 
open $\CU'\subset \CY$, the open embedding 
$$\CU\cap \CU' \overset{j}\hookrightarrow \CU'$$
is such that the functor 
$$j_*:\Shv(\CU\cap \CU')\to \Shv(\CU')$$
admits a right adjoint \emph{as a functor defined by a kernel}. 

\medskip

An open substack $\CU\overset{j}\hookrightarrow \CY$ is said to be universally $\CN$-\emph{cotruncative} if it is cotruncative, and for every 
stack $\CZ$ and $\CN_\CZ\subset T^*(\CZ)$, the functor
$$(\on{id}\times j)_!:\Shv(\CZ\times \CU) \to \Shv(\CZ\times \CY)$$
sends $\Shv_{\CN_\CZ\times \CN}(\CZ\times \CU) \subset \Shv(\CZ\times \CU)$ to 
$\Shv_{\CN_\CZ\times \CN}(\CZ\times \CY) \subset \Shv(\CZ\times \CY)$. 

\medskip 

Recall that $\CY$ is said to be \emph{truncatable} if we can write $\CY$ as a union of quasi-compact 
cotruncative open substacks. Finally, recall that $\CY$ is is said to be universally $\CN$-\emph{truncatable} if we can write $\CY$ as a filtered union of 
quasi-compact universally $\CN$-cotruncative open substacks.

\sssec{} \label{sss:cond non qc}

Assume that $\CN$ is Frobenius-invariant. By \secref{sss:non-qc} we have a well-defined map
$$\on{LT}^{\on{true}}_{\CY}:\Tr((\Frob_\CY)_*,\Shv(\CY))\to \on{Funct}_c(\CY(\BF_q)).$$

Let us make the following assumptions:

\begin{itemize}

\item $\CY$ is universally $\CN$-truncatable;

\item For every quasi-compact universally $\CN$-cotruncative open substack $\CU\subset \CY$, we have:

\begin{itemize}

\item The pair $(\CU,\CN|_{\CU})$ is constraccessible;

\item The pair $(\CU,\CN|_{\CU})$ is Serre;

\item $\CN|_{\CU}$ is miraculous-compatible.

\end{itemize}

\end{itemize} 

It follows that in this case, the subcategory $\Shv_\CN(\CY)$ is generated by objects that are compact
in $\Shv(\CY)$, and that the pair $(\CY,\CN)$ is Serre. So, the map
$$\on{LT}^{\on{Serre}}_{\CY,\CN}:\Tr((\Frob_\CY)_*,\Shv_\CN(\CY))\to \on{Funct}_c(\CY(\BF_q))$$
is also well-defined by \secref{sss:LT Serre}. Moreover, we have a commutative diagram
$$
\CD
\underset{\CU}{\on{colim}}\, \Tr((\Frob_\CU)_*,\Shv_\CN(\CU)) @>{\underset{\CU}{\on{colim}}\, \on{LT}^{\on{Serre}}_{\CU,\CN}}>> 
\underset{\CU}{\on{colim}}\, \on{Funct}(\CU(\BF_q)) \\
@VVV @VV{\sim}V \\
\Tr((\Frob_\CY)_*,\Shv_\CN(\CY)) @>{\on{LT}^{\on{Serre}}_{\CY,\CN}}>> \on{Funct}_c(\CY(\BF_q)).
\endCD
$$

\medskip

It now follows formally from Theorems \ref{t:true and Serre} and \ref{t:gv}(b) that the maps $\on{LT}^{\on{true}}_{\CY,\CN}$ 
and $\on{LT}^{\on{Serre}}_{\CY,\CN}$ are canonically
homotopic. 

\sssec{}

We apply the above discussion to $\CY=\Bun_G$ and $\Nilp=\CN$. The conditions in \secref{sss:cond non qc}
are satisfied by \cite[Theorem 1.7.3]{AGKRRV2}, \cite[Conjecture 14.1.8 and Lemma F.8.10]{AGKRRV1},  
\cite[Corollary 5.8.4]{AGKRRV2} and \cite[Proposition 2.8.10]{AGKRRV2}, respectively. 
This implies the desired equality
$$\on{LT}^{\on{true}}=\on{LT}^{\on{Serre}}.$$

\ssec{A geometric local term theorem} \label{ss:geom lt}

\sssec{}

Let $\CY$ be a quasi-compact algebraic stack. We start by constructing a natural transformation
\begin{equation} \label{e:fund trans}
\on{C}_c^\cdot(\CY, \Delta_\CY^* \circ (\on{Id}\otimes \Mir_\CY)(-)) \to \on{C}^\cdot_\blacktriangle(\CY,\Delta^!_\CY(-)),
\end{equation} 
as functors 
$$\Shv(\CY\times \CY)\rightrightarrows \Vect.$$

\sssec{} \label{sss:construct psi map}

It suffices to specify the value of the natural transformation \eqref{e:fund trans} on compact objects. Thus, we fix
a compact object $\CQ\in \Shv(\CY\times \CY)$. 

\medskip

We note that the map
$$\on{C}^\cdot_\blacktriangle(\CY,\Delta^!_\CY(\CQ))\to \on{C}^\cdot(\CY,\Delta^!_\CY(\CQ))$$
is an isomorphism for $\CQ$. So we need to construct a map 
\begin{equation} \label{e:fund trans 0}
\on{C}_c^\cdot(\CY, \Delta_\CY^* \circ (\on{Id}\otimes \Mir_\CY)(\CQ)) \to \on{C}^\cdot(\CY,\Delta^!_\CY(\CQ)),
\end{equation} 
functorial in $\CQ\in \Shv(\CY\times \CY)^c$.

\medskip

In what follows we will use the notation
$$\on{u}_\CY:=(\Delta_\CY)_*(\omega_\CY)\in \Shv(\CY\times \CY).$$

\medskip

We start with the map
$$\CQ\boxtimes \BD^{\on{Verdier}}(\CQ) \to \on{u}_{\CY\times \CY},$$
given by Verdier duality. Applying the transposition $\sigma_{2,3}$, we interpret it as a map
\begin{equation} \label{e:key serre map pre}
(\CQ\boxtimes \BD^{\on{Verdier}}(\CQ))^{\sigma_{2,3}}\to \on{u}_\CY\boxtimes \on{u}_\CY.
\end{equation}

By definition
\begin{equation} \label{e:Mir to id}
(\on{Id}\otimes \Mir_\CY)(\on{u}_\CY)\simeq \on{ps-u}_\CY
\end{equation}

Applying the functor $\on{Id}\otimes \on{Id}\otimes \Mir_\CY \otimes \on{Id}$ to the map \eqref{e:key serre map pre},
we obtain a map
\begin{equation} \label{e:key serre map}
((\on{Id}\otimes \Mir_\CY)(\CQ)\boxtimes \BD^{\on{Verdier}}(\CQ))^{\sigma_{2,3}}\to \on{u}_\CY\boxtimes \on{ps-u}_\CY.
\end{equation} 

Applying the functor
$$(p_{1,2})_!\circ (\Delta_\CY \times \on{id} \times \on{id})^* \circ \sigma_{2,3}:
\Shv(\CY\times \CY\times \CY\times \CY)\to \Shv(\CY\times \CY),$$
from \eqref{e:key serre map} we obtain a map
$$\on{C}_c^\cdot(\CY, \Delta_\CY^* \circ (\on{Id}\otimes \Mir_\CY)(\CQ)) 
\otimes \BD^{\on{Verdier}}(\CQ) \to \on{u}_\CY.$$

Applying Verdier duality again, we obtain the desired map \eqref{e:fund trans 0}. 

\sssec{}

Let us take $\CQ:=(\on{Graph}_{\Frob_\CY})_*(\omega_\CY)\in \Shv(\CY\times \CY)$. 

\medskip

Then the right-hand side in \eqref{e:fund trans} identifies with 
$$\on{C}^\cdot_\blacktriangle(\CY^\Frob,\omega_{\CY^\Frob})\simeq \on{C}^\cdot(\CY^\Frob,\omega_{\CY^\Frob}).$$

\medskip

Since $\Frob_\CY$ is a proper map,
$$(\on{Id}\otimes \Mir_\CY)((\on{Graph}_{\Frob_\CY})_*(\omega_\CY))\simeq (\on{Graph}_{\Frob_\CY})_!(\ul\sfe_\CY).$$
Hence, the left-hand side in \eqref{e:fund trans} identifies with 
$$\on{C}_c^\cdot(\CY^\Frob,\ul\sfe_{\CY^\Frob}).$$

\sssec{}

We will prove:

\begin{thm} \label{t:true and Serre geom}
The diagram
\begin{equation} \label{e:true and Serre 1}
\CD
\on{C}_c^\cdot(\CY, \Delta_\CY^* \circ (\on{Id}\otimes \Mir_\CY)\circ (\on{Graph}_{\Frob_\CY})_*(\omega_\CY)) 
@>{\text{\eqref{e:fund trans}}}>> 
\on{C}^\cdot_\blacktriangle(\CY,\Delta^!_\CY\circ (\on{Graph}_{\Frob_\CY})_*(\omega_\CY)) \\
@V{\simeq}VV @VV{\simeq}V \\
\on{C}_c^\cdot(\CY^\Frob,\ul\sfe_{\CY^\Frob}) & & \on{C}^\cdot(\CY^\Frob,\omega_{\CY^\Frob}) \\
@V{\simeq}VV @VV{\simeq}V  \\
\on{Funct}(\CY(\BF_q)) @>{\on{id}}>> \on{Funct}(\CY(\BF_q)) 
\endCD
\end{equation} 
commutes. 
\end{thm}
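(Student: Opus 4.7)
The plan is to evaluate the natural transformation \eqref{e:fund trans} on the specific compact object $\CQ := (\on{Graph}_{\Frob_\CY})_*(\omega_\CY)$ and show that the resulting diagram \eqref{e:true and Serre 1} unwinds to the tautological identity. First, I would exploit the factorization $\on{Graph}_{\Frob_\CY} = (\Frob_\CY \times \on{Id}) \circ \Delta_\CY$ together with the fact that $\Frob_\CY$ is finite (so $(\Frob_\CY)_! \simeq (\Frob_\CY)_*$). This gives $\CQ \simeq (\Frob_\CY \times \on{Id})_*(\on{u}_\CY)$, and via \eqref{e:Mir to id} applied on the second factor,
$$
(\on{Id} \otimes \Mir_\CY)(\CQ) \simeq (\Frob_\CY \times \on{Id})_*(\on{ps-u}_\CY) \simeq (\on{Graph}_{\Frob_\CY})_!(\ul\sfe_\CY).
$$
Moreover, since $\on{Graph}_{\Frob_\CY}$ is a finite closed immersion, $\BD^{\on{Verdier}}(\CQ) \simeq (\on{Graph}_{\Frob_\CY})_*(\ul\sfe_\CY)$, and the Verdier pairing $\CQ \boxtimes \BD^{\on{Verdier}}(\CQ) \to \on{u}_{\CY \times \CY}$ of \secref{sss:construct psi map} reduces to the standard adjunction map associated with this closed immersion.

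Next, I would unwind the composition in \secref{sss:construct psi map} by base change along the Cartesian square whose terms are $\CY^\Frob$, $\CY$, $\CY$, and $\CY \times \CY$, where $\on{Graph}_{\Frob_\CY}$ pulled back along $\Delta_\CY$ produces the canonical closed embedding $\iota : \CY^\Frob \hookrightarrow \CY$. After applying the transposition $\sigma_{2,3}$, the support of the morphism \eqref{e:key serre map} lies on a locus in $\CY^4$ which, after the subsequent $(\Delta_\CY \times \on{id} \times \on{id})^*$ and $(p_{1,3})_!$, concentrates along the image of $\iota \times \iota : \CY^\Frob \times \CY^\Frob \to \CY \times \CY$. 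Under this reduction, the map \eqref{e:fund trans} applied to $\CQ$ becomes the canonical morphism $\iota_!(\ul\sfe_{\CY^\Frob}) \to \iota_*(\omega_{\CY^\Frob})$ induced by the Verdier self-pairing of $\ul\sfe_{\CY^\Frob}$ on the discrete stack $\CY^\Frob$.

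Finally, since $\CY^\Frob$ is discrete and identifies with (the \'etale sheafification of) $\CY(\BF_q)$, both $\on{C}^{\cdot}_c(\CY^\Frob, \ul\sfe_{\CY^\Frob})$ and $\on{C}^{\cdot}(\CY^\Frob, \omega_{\CY^\Frob})$ identify tautologically with $\on{Funct}(\CY(\BF_q))$, and the induced map is the identity, which is exactly the content of \eqref{e:true and Serre 1}. The main obstacle to this argument is the careful bookkeeping of the $\sigma_{2,3}$ transposition together with the $\Mir_\CY$ functor appearing in \secref{sss:construct psi map}, since the composition involves several Verdier duality pairings on $\CY^4$ that must be harmonized. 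However, once $\CQ$ is identified as $(\on{Graph}_{\Frob_\CY})_*(\omega_\CY)$, each step reduces to a standard adjunction or base change map attached to the closed immersion $\on{Graph}_{\Frob_\CY}$, and the commutativity of \eqref{e:true and Serre 1} follows from the uniqueness (up to canonical isomorphism) of such structure maps on discrete stacks.
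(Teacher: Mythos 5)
Your initial reductions are correct: the identifications $(\on{Id}\otimes \Mir_\CY)(\CQ) \simeq (\on{Graph}_{\Frob_\CY})_!(\ul\sfe_\CY)$ and $\BD^{\on{Verdier}}(\CQ) \simeq (\on{Graph}_{\Frob_\CY})_*(\ul\sfe_\CY)$ follow from the finiteness of $\Frob_\CY$, as you say. However, the heart of your argument has a genuine gap, and it is precisely the step you treat as routine base change. You claim that, after concentrating on the fixed-point locus, the map \eqref{e:fund trans} applied to $\CQ$ reduces to ``the canonical morphism $\iota_!(\ul\sfe_{\CY^\Frob}) \to \iota_*(\omega_{\CY^\Frob})$'' and that the commutativity then follows from ``uniqueness of such structure maps on discrete stacks.'' Neither of these steps is correct as stated. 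The horizontal arrow in \eqref{e:true and Serre 1} is a \emph{specific} map, built from the Verdier self-pairing of $\CQ$, the transposition $\sigma_{2,3}$ on $\CY^{\times 4}$, the miraculous functor $\Mir_\CY$, and the adjunction back to $\on{u}_\CY$, and after identifying both sides with $\on{Funct}(\CY(\BF_q))$ it could a priori be any linear operator — say, pointwise multiplication by some ``local term'' function. There is no abstract uniqueness principle that forces it to be the identity; that is exactly what must be proved.

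In fact, the assertion you are trying to bootstrap past is essentially the local Lefschetz--Verdier fixed-point theorem for the Frobenius correspondence: the equality of the ``true'' local terms (computed cohomologically from the class of $\BD^{\on{Verdier}}(\CQ)$) with the ``naive'' local terms (traces of Frobenius on stalks). This is the content of \thmref{t:GR}, quoted from \cite{GV}, and it is a deep result rather than a formality of base change. The paper's proof does not try to trace through the $\sigma_{2,3}$/$\Mir_\CY$ bookkeeping directly; instead it applies Verdier duality to reinterpret the map \eqref{e:key serre map Frob 1} as the map \eqref{e:Verdier Frob} attached to the weak Weil sheaf $\CF = (\Delta_\CY)_!(\ul\sfe_\CY)$ with its tautological Weil structure. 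The resulting element of $\on{Funct}(\CY(\BF_q)) \otimes \on{Funct}(\CY(\BF_q))$ is then $\on{funct}^{\on{true}}(\CF,\alpha_{\on{taut}})$, which by \thmref{t:GR} equals $\on{funct}^{\on{naive}}(\CF,\alpha_{\on{taut}})$, and the latter is computed to be the characteristic function of the diagonal by compatibility of the classical sheaves-functions dictionary with $!$-pushforward. To repair your argument you would need to import this local trace theorem explicitly; the ``base change'' framing obscures that this is where the real work lies.
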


The proof will be given in \secref{ss:proof geom}. We will presently show how \thmref{t:true and Serre geom} implies \thmref{t:true and Serre}. 

\ssec{Proof of \thmref{t:true and Serre}} \label{ss:proof true and Serre}

\sssec{} \label{sss:usual duality N}

Let $\ev_\CY$ denote the pairing
$$\on{C}^\cdot_\blacktriangle(\CY,\Delta^!_\CY(-\boxtimes -)):\Shv_\CN(\CY)\otimes \Shv_\CN(\CY)\to \Vect.$$

The assumption that $\Shv_\CN(\CY)$ is generated by objects compact in the ambient category $\Shv(\CY)$ implies
that the restriction of $\ev_\CY$ to 
$$\Shv_\CN(\CY)\otimes \Shv_\CN(\CY)\subset \Shv(\CY)\otimes \Shv(\CY)$$
defines an identification
$$\Shv_\CN(\CY)\simeq \Shv_\CN(\CY)^\vee,$$
see \cite[Sect. A.5.4]{AGKRRV2}. 

\medskip

The unit of this duality, to be denoted $\on{u}_{\CY,\CN}$ is obtained by applying to
$\on{u}_\CY$ the right adjoint to the fully faithful embedding
\begin{equation} \label{e:N Y emb again}
\Shv_\CN(\CY)\otimes \Shv_\CN(\CY)\hookrightarrow \Shv(\CY\times \CY),
\end{equation} 
see \cite[Corollary 5.4.5]{AGKRRV2}. 

\medskip

In what follows, we will not distinguish notationally between $\on{u}_{\CY,\CN}$ and its image along \eqref{e:N Y emb again}. 
The counit of the adjunction gives rise to a map
\begin{equation} \label{e:u N Y}
\on{u}_{\CY,\CN} \to \on{u}_\CY.
\end{equation} 

\sssec{}

The assumption that $\CN$ is miraculous-compatible implies that the endofunctor $\on{Id}_\CY \boxtimes \Mir_\CY$ of $\Shv(\CY\times \CY)$ 
preserves the subcategory 
\begin{equation} \label{e:N embed}
\Shv_\CN(\CY)\otimes \Shv_\CN(\CY)\hookrightarrow \Shv(\CY\times \CY).
\end{equation} 

The resulting endofunctor of $\Shv_\CN(\CY)\otimes \Shv_\CN(\CY)$ identifies with $\on{Id} \otimes \Mir_\CY$.

\medskip

By \cite[Corollary 5.6.10(c)]{AGKRRV2}, the assumption that the pair $(\CY,\CN)$ is Serre implies that the above 
endofunctor $\on{Id} \otimes \Mir_\CY$ intertwines the duality data given by the pair
$(\on{u}_{\CY,\CN},\on{ev}_\CY)$ with one given by $(\on{ps-u}_{\CY,\CN},\on{ev}^l_\CY)$.

\medskip

In particular, we have a canonical isomorphism 
\begin{equation} \label{e:Mir diag N}
\on{ps-u}_{\CY,\CN} \simeq (\on{Id} \otimes \Mir_\CY)(\on{u}_{\CY,\CN})
\end{equation} 
and a datum of commutativity for the diagram
$$
\CD
\Shv_\CN(\CY)\otimes \Shv_\CN(\CY) @>{\on{Id}\otimes \Mir_\CY}>> \Shv_\CN(\CY)\otimes \Shv_\CN(\CY) \\
@V{\on{ev}_\CY}VV @VV{\on{ev}^l_\CY}V \\
\Vect @>{\on{Id}}>> \Vect,
\endCD
$$
i.e., an isomorphism of functors 
\begin{equation} \label{e:fund trans N}
\on{ev}^l_\CY \circ (\on{Id}\otimes \Mir_\CY) \simeq \on{ev}_\CY, \quad 
\Shv_\CN(\CY)\otimes \Shv_\CN(\CY) \rightrightarrows \Vect.
\end{equation}

%Furthermore, the map $\to$ in \eqref{e:fund trans N} is given by the natural
%transformation \eqref{e:fund trans} (see \cite[Proposition 5.7.2]{AGKRRV2}).

\sssec{}

In particular, for an endofunctor $F$ of $\Shv_\CN(\CY)$, we have a commutative diagram
\begin{equation} \label{e:true and Serre 2}
\CD
\Tr(F,\Shv_\CN(\CY))  & @>{\on{id}}>>  & \Tr(F,\Shv_\CN(\CY))  \\
@V{\sim}VV  & & @VV{\sim}V \\
\on{ev}^l_\CY\circ (F\otimes \on{id})(\on{ps-u}_{\CY,\CN})
@>{\sim}>{\text{\eqref{e:Mir diag N}}}> \on{ev}^l_\CY\circ (F\otimes \Mir_\CY)(\on{u}_{\CY,\CN}) @>{\sim}>{\text{\eqref{e:fund trans N}}}>
\on{ev}_\CY\circ (F\otimes \on{id})(\on{u}_{\CY,\CN}) 
\endCD
\end{equation} 

\sssec{}

We now use the following two observations: 

\medskip

\noindent(i) The diagram
$$
\CD
\on{ps-u}_{\CY,\CN}  @>{\text{\eqref{e:Mir diag N}}}>{\sim}> (\on{Id} \otimes \Mir_\CY)(\on{u}_{\CY,\CN})   \\
@V{\text{\eqref{eq:varepsilon y}}}VV @VV{\text{\eqref{e:u N Y}}}V \\
\on{ps-u}_{\CY} @>>{\text{\eqref{e:Mir to id}}}> (\on{Id} \otimes \Mir_\CY)(\on{u}_{\CY}) 
\endCD
$$
commutes. This follows tautologically from the constructions. 

\medskip

\noindent(ii) The isomorphism \eqref{e:fund trans N} is canonically homotopic to
the restriction of the natural transformation \eqref{e:fund trans} along the embedding \eqref{e:N embed}.
This follows from \cite[Proposition 5.7.2]{AGKRRV2}. 

\medskip

Concatenating, we obtain a commutative diagram
$$
\CD
\on{ev}^l_\CY\circ ((\Frob_\CY)_*\otimes \on{id})(\on{ps-u}_{\CY,\CN})) @>{\text{\eqref{e:fund trans N}}\circ \text{\eqref{e:Mir diag N}}}>> 
\on{ev}_\CY\circ ((\Frob_\CY)_*\otimes \on{id})(\on{u}_{\CY,\CN})) \\
@VV{\text{\eqref{eq:varepsilon y}}}V @VV{\text{\eqref{e:u N Y}}}V \\
\on{C}_c^\cdot(\CY,-)\circ (\Delta_\CY)^*\circ (\Frob_\CY\times \on{id})_* (\on{ps-u}_{\CY})
@>{\text{\eqref{e:fund trans}} \circ \text{\eqref{e:Mir to id}}}>>
\on{C}_\blacktriangle^\cdot(\CY,-)\circ (\Delta_\CY)^!\circ (\Frob_\CY\times \on{id})_* (\on{u}_{\CY}). 
\endCD
$$

\sssec{} \label{sss:Serre diag 3}

Hence, concatenating with \eqref{e:true and Serre 2} for $F=(\Frob_\CY)_*$, we obtain a commutative diagram
$$
\CD
\Tr((\Frob_\CY)_*,\Shv_\CN(\CY))   @>{\on{id}}>>   \Tr((\Frob_\CY)_*,\Shv_\CN(\CY))  \\
@V{\sim}VV   @VV{\sim}V \\
\on{ev}^l_\CY\circ ((\Frob_\CY)_*\otimes \on{id})(\on{ps-u}_{\CY,\CN})) @>{\text{\eqref{e:fund trans N}}\circ \text{\eqref{e:Mir diag N}}}>> 
\on{ev}_\CY\circ ((\Frob_\CY)_*\otimes \on{id})(\on{u}_{\CY,\CN})) \\
@VV{\text{\eqref{eq:varepsilon y}}}V @VV{\text{\eqref{e:u N Y}}}V \\
\on{C}_c^\cdot(\CY,-)\circ (\Delta_\CY)^*\circ (\Frob_\CY\times \on{id})_* (\on{ps-u}_{\CY})
@>{\text{\eqref{e:fund trans}} \circ \text{\eqref{e:Mir to id}}}>>
\on{C}_\blacktriangle^\cdot(\CY,-)\circ (\Delta_\CY)^!\circ (\Frob_\CY\times \on{id})_* (\on{u}_{\CY}) 
\endCD
$$

\sssec{}

Finally, we note that the commutative diagram \eqref{e:true and Serre 1} can be rephrased as 
$$
\CD
\on{C}_c^\cdot(\CY,-)\circ (\Delta_\CY)^*\circ (\Frob_\CY\times \on{id})_* (\on{ps-u}_{\CY})
@>{\text{\eqref{e:fund trans}} \circ \text{\eqref{e:Mir to id}}}>>
\on{C}_\blacktriangle^\cdot(\CY,-)\circ (\Delta_\CY)^!\circ (\Frob_\CY\times \on{id})_* (\on{u}_{\CY})  \\
@V{\sim}VV  @VV{\sim}V \\
\on{C}_c^\cdot(\CY^\Frob,\ul\sfe_{\CY^\Frob}) & &  \on{C}^\cdot(\CY^\Frob,\omega_{\CY^\Frob}) \\
@V{\simeq}VV  @VV{\simeq}V  \\
\on{Funct}(\CY(\BF_q))  @>{\on{id}}>>  \on{Funct}(\CY(\BF_q)),
\endCD
$$

Thus, concatenating with the commutative diagram in \secref{sss:Serre diag 3} above, we obtain a commutative diagram
$$
\CD
\Tr((\Frob_\CY)_*,\Shv_\CN(\CY))   @>{\on{id}}>>   \Tr((\Frob_\CY)_*,\Shv_\CN(\CY))  \\
@V{\sim}VV   @VV{\sim}V \\
\on{ev}^l_\CY\circ ((\Frob_\CY)_*\otimes \on{id})(\on{ps-u}_{\CY,\CN})) @>{\text{\eqref{e:fund trans N}}\circ \text{\eqref{e:Mir diag N}}}>> 
\on{ev}_\CY\circ ((\Frob_\CY)_*\otimes \on{id})(\on{u}_{\CY,\CN})) \\
@VV{\text{\eqref{eq:varepsilon y}}}V @VV{\text{\eqref{e:u N Y}}}V \\
\on{C}_c^\cdot(\CY,-)\circ (\Delta_\CY)^*\circ (\Frob_\CY\times \on{id})_* (\on{ps-u}_{\CY})
@>{\text{\eqref{e:fund trans}} \circ \text{\eqref{e:Mir to id}}}>>
\on{C}_\blacktriangle^\cdot(\CY,-)\circ (\Delta_\CY)^!\circ (\Frob_\CY\times \on{id})_* (\on{u}_{\CY}) \\
@V{\sim}VV  @VV{\sim}V \\
\on{C}_c^\cdot(\CY^\Frob,\ul\sfe_{\CY^\Frob}) & &  \on{C}^\cdot(\CY^\Frob,\omega_{\CY^\Frob}) \\
@V{\simeq}VV  @VV{\simeq}V  \\
\on{Funct}(\CY(\BF_q))  @>{\on{id}}>>  \on{Funct}(\CY(\BF_q)),
\endCD
$$
in which the left composite vertical arrow is $\on{LT}^{\on{Serre}}_{\CY,\CN}$, and 
the right composite vertical arrow is $\on{LT}^{\on{true}}_{\CY,\CN}$.

\medskip

This provides the sought-for identification between $\on{LT}^{\on{Serre}}_{\CY,\CN}$ and $\on{LT}^{\on{true}}_{\CY,\CN}$.

\ssec{Proof of \thmref{t:true and Serre geom}} \label{ss:proof geom}

\sssec{}

First, we recall the local version of the Grothendiek-Lefschetz trace formula, following \cite{GV}. 

\medskip

Let $\CY$
be a quasi-compact algebraic stack, and let $\CF\in \Shv(\CY)$ be a \emph{constructible} sheaf, equipped
with a weak Weil structure, i.e., a morphism
$$\alpha:\CF\to (\Frob_\CY)_*(\CF),$$
or equivalently, a morphism
$$\alpha^L:\Frob_\CY^*(\CF)\to \CF.$$

On the one hand, we attach to the pair $(\CF,\alpha^L)$ a function $\on{funct}(\CF)^{\on{naive}}\in \on{Funct}(\CY(\BF_q))$
by the standard procedure of taking the trace of Frobenius on *-fibers of $\CF$ at $\BF_q$-points of $\CF$. I.e., for a Frobenius-invariant point
$$\on{pt} \overset{i_y}\to \CY,$$
we consider the endomorphism
$$i_y^*(\CF)\simeq (\Frob_\CY \circ i_y)^*(\CF)\simeq i_y^*\circ \Frob_\CY^*(\CF)\overset{\alpha^L}\to  i_y^*(\CF),$$
and we set the value of $\on{funct}(\CF)^{\on{naive}}$ at $y\in \CY(\BF_q)$ to be the trace of the above endomorphism. 

\medskip

On the other hand, we can attach to $(\CF,\alpha)$ a function $\on{funct}(\CF)^{\on{true}}\in \on{Funct}(\CY(\BF_q))$, defined
as follows. 

\medskip

Consider the canonical maps
$$\CF\boxtimes \BD^{\on{Verdier}}(\CF)\to (\Delta_\CY)_*(\omega_\CY) \text{ and }
\ul\sfe_\CY\to \CF\sotimes \BD^{\on{Verdier}}(\CF).$$

From the first of these maps we produce the map
\begin{multline} \label{e:Verdier Frob}
\CF\boxtimes \BD^{\on{Verdier}}(\CF)\overset{\alpha\boxtimes \on{id}}\longrightarrow 
(\Frob_\CY\times \on{id})_*(\CF\boxtimes \BD^{\on{Verdier}}(\CF)) \to \\
\to (\Frob_\CY\times \on{id})_*((\Delta_\CY)_*(\omega_\CY))
\simeq (\on{Graph}_{\Frob_\CY})_*(\omega_\CY).
\end{multline} 

The function $\on{funct}(\CF)^{\on{true}}$, viewed as an element of 
$$\on{C}^\cdot(\CY^\Frob,\omega_{\CY^\Frob})\simeq \on{C}^\cdot(\CY,\Delta_\CY^!\circ (\on{Graph}_{\Frob_\CY})_*(\omega_\CY)),$$
corresponds to the map
$$\ul\sfe_\CY \to \CF\sotimes \BD^{\on{Verdier}}(\CF) =\Delta_\CY^!(\CF\boxtimes \BD^{\on{Verdier}}(\CF)) 
\overset{\text{\eqref{e:Verdier Frob}}}\longrightarrow \Delta_\CY^!\circ (\on{Graph}_{\Frob_\CY})_*(\omega_\CY).$$

\medskip

The local Grothendiek-Lefschetz trace formula says:

\begin{thm} \label{t:GR}
The functions $\on{funct}(\CF)^{\on{naive}}$ and $\on{funct}(\CF)^{\on{true}}$ are equal.
\end{thm}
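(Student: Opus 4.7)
The plan is to show that both functions agree at each fixed point $y \in \CY(\BF_q)$ by a base change argument along $i_y:\on{pt}\to \CY$, reducing the identity to the trivial case when $\CY$ is a point. Since both functions in question are by construction elements of $\on{Funct}(\CY(\BF_q))$, it suffices to compare their values at each $y$.

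First I would make the naive side explicit at $y$. Since $y$ is Frobenius-fixed, $\Frob_{\CY}\circ i_y = i_y$, and base change yields a canonical isomorphism $i_y^* \circ (\Frob_{\CY})_* \simeq (\Frob_{\on{pt}})_* \circ i_y^* = i_y^*$. Composing with $i_y^*(\alpha)$, we obtain an endomorphism $\alpha_y$ of the finite-dimensional graded vector space $i_y^*(\CF)$; by construction the value of $\on{funct}(\CF)^{\on{naive}}$ at $y$ is $\Tr(\alpha_y, i_y^*(\CF))$.

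Next I would analyze the true side via the identification
$$\on{C}^\cdot(\CY,\Delta_\CY^!\circ (\on{Graph}_{\Frob_\CY})_*(\omega_\CY)) \simeq \on{C}^\cdot(\CY^\Frob,\omega_{\CY^\Frob}) \simeq \on{Funct}(\CY(\BF_q)).$$
The middle isomorphism comes from the cartesian square
$$
\CD
\CY^\Frob @>>> \CY \\
@VVV @VV{\on{Graph}_{\Frob_\CY}}V \\
\CY @>{\Delta_\CY}>> \CY\times \CY
\endCD
$$
combined with proper base change for the finite map $\on{Graph}_{\Frob_\CY}$. To evaluate the true function at $y$, I would pull back the class defining $\on{funct}(\CF)^{\on{true}}$ along the inclusion of the connected component of $y$ in $\CY^\Frob$; by functoriality of \eqref{e:Verdier Frob} under the Cartesian diagram with $i_y\times i_y$ and $i_y$, this reduces to the Verdier self-duality pairing of the finite-dimensional $i_y^*(\CF)$ with its dual, composed with $\alpha_y$. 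Over a point, Verdier duality is the usual linear duality on vector spaces, and that composition is precisely $\Tr(\alpha_y, i_y^*(\CF))$, matching the naive side.

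The main obstacle is carrying out this base change for algebraic stacks, where the ``point'' $i_y$ is really a gerbe $B\Aut(y)$, and both sides must be normalized consistently against the automorphism group. Concretely, both $\omega_{\CY^\Frob}$ and the duality pairing $i_y^*(\CF)\otimes \BD^{\on{Verdier}}(i_y^*\CF)\to \sfe$ acquire an $|\Aut(y)(\BF_q)|^{-1}$ factor; one must check that these normalizations match, which is exactly the content of the local term identification in \cite{GV} (Theorem~0.4 there). Thus the actual argument is a bookkeeping exercise using proper base change and compatibility of Verdier duality with smooth pullbacks (applied to $B\Aut(y)\to \on{pt}$), reducing to the tautological case of a point.
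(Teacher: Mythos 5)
Your proof plan has a genuine gap that stems from a misconception about where the difficulty lies. You treat the reduction to a point as a formal base-change exercise plus a bookkeeping of $|\Aut(y)(\BF_q)|^{-1}$ factors, but in fact the core assertion—that pulling back the evaluation map $\CF\boxtimes \BD^{\on{Verdier}}(\CF)\to (\Delta_\CY)_*(\omega_\CY)$ along $(i_y\times i_y)$ recovers the pointwise linear-duality pairing on $i_y^*(\CF)$—is not a formal compatibility at all. That is precisely the content of the local Lefschetz--Verdier theorem for the Frobenius correspondence (Illusie, Pink, Varshavsky, and now \cite{GV}), and the fact that the "true" local term computed via Verdier self-duality on $\CY$ agrees with the naive stalk trace requires a non-trivial deformation-type argument, not proper base change. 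Put differently, the step you label "by functoriality of \eqref{e:Verdier Frob} under the Cartesian diagram" does not go through: restriction along $i_y$ does not a priori carry the global Verdier evaluation to the fiberwise one, and there is no "tautological case of a point" to reduce to, because the pairing is not defined fiberwise in the first place. Citing \cite[Theorem 0.4]{GV} for "matching normalizations" makes the argument circular, since that theorem is exactly the equality of local terms you are trying to establish.

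The paper's own treatment is different and cleaner: for $\CF$ compact, $\on{funct}(\CF)^{\on{naive}}$ and $\on{funct}(\CF)^{\on{true}}$ are by definition the images of the class $\on{cl}(\CF,\alpha)\in \Tr((\Frob_\CY)_*,\Shv(\CY))$ under the two maps $\on{LT}^{\on{naive}}_\CY$ and $\on{LT}^{\on{true}}_\CY$, which are homotopic by \thmref{t:gv}(a) (quoted from \cite{GV}); for $\CF$ constructible but not compact, one re-runs the same argument for the renormalized ind-completion of the constructible subcategory. No pointwise reduction is attempted, and the compact-versus-constructible distinction (which your proposal does not address) is where the extra work for the non-compact case goes.
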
 

\begin{rem}
When $\CF$ is \emph{compact}, the assertion of \thmref{t:GR} is a particular case of
that of \thmref{t:gv}. Namely, the functions $\on{funct}(\CF)^{\on{naive}}$ and $\on{funct}(\CF)^{\on{true}}$
are the values of the maps $\on{LT}_\CY^{\on{naive}}$ and $\on{LT}_\CY^{\on{true}}$ on the element
$$\on{cl}(\CF,\alpha)\in \Tr((\Frob_\CY)_*,\Shv(\CY)),$$
respectively.

\medskip

For $\CF$ which is constructible but not compact, the assertion of \thmref{t:GR} can be obtained 
by proving a version of \thmref{t:gv} for the \emph{renormalized} version of the category $\Shv(\CY)$,
namely, one obtained as the ind-completion of the constructible subcategory of $\Shv(\CY)$.

\end{rem} 

\sssec{} \label{sss:no triangle}

We precede the proof of \thmref{t:true and Serre geom} by the following observation. 

\medskip

Let
$\CQ$ be a \emph{constructible} object of $\Shv(\CY\times \CY)$. Note that the procedure 
in \secref{sss:construct psi map} defines a map
\begin{equation} \label{e:fund trans again}
\psi: \on{C}_c^\cdot(\CY, \Delta_\CY^* \circ (\on{Id}\otimes \Mir_\CY)(\CQ)) \to \on{C}^\cdot(\CY,\Delta^!_\CY(\CQ)).
\end{equation} 

\medskip

It is easy to see that the map \eqref{e:fund trans again} equals the composition of the value of the natural
transformation \eqref{e:fund trans}, followed by the canonical map
\begin{equation} \label{e:no triangle}
\on{C}_\blacktriangle^\cdot(\CY,\Delta^!_\CY(\CQ))\to \on{C}^\cdot(\CY,\Delta^!_\CY(\CQ)).
\end{equation} 

\sssec{}

We are ready to launch the proof of \thmref{t:true and Serre geom}. We apply the observation in 
\secref{sss:no triangle} to
$$\CQ:=(\on{Graph}_{\Frob_\CY})_*(\omega_\CY).$$

Note that in this case, the map \eqref{e:no triangle} is an isomorphism, as the corresponding map identifies with
$$\on{C}^\cdot_\blacktriangle(\CY^\Frob,\omega_{\CY^\Frob})\to \on{C}^\cdot(\CY^\Frob,\omega_{\CY^\Frob}).$$

\medskip

We need to show that a certain map 
$$\on{Funct}(\CY(\BF_q))=\on{C}^\cdot_c(\CY^\Frob,\ul\sfe_{\CY^\Frob})\to \on{C}^\cdot(\CY^\Frob,\omega_{\CY^\Frob})\simeq \on{Funct}(\CY(\BF_q))$$
equals the identity, where the middle arrow is the result of the construction in \secref{sss:construct psi map} applied to the above choice of $\CQ$.

\medskip

We interpret the above map as a functional
\begin{equation} \label{e:functional}
\on{Funct}(\CY(\BF_q))\otimes \on{Funct}(\CY(\BF_q)) \simeq \on{C}^\cdot_c(\CY^\Frob,\ul\sfe_{\CY^\Frob})\otimes \on{C}^\cdot_c(\CY^\Frob,\ul\sfe_{\CY^\Frob})\to \sfe,
\end{equation} 
and we wish to show that this functional is given by
$$f_1,f_2\mapsto \underset{y\in \CY(\BF_q)}\Sigma\, \frac{1}{|\Aut(y)(\BF_q)|}\cdot 
f_1(y)\cdot f_2(y).$$

\sssec{}

Let us unwind the construction of the functional \eqref{e:functional}. We start with  the map \eqref{e:key serre map} for the above choice of $\CQ$. 
This is a map
\begin{equation} \label{e:key serre map Frob 0}
\left((\on{Graph}_{\Frob_\CY})_!(\ul\sfe_\CY)\boxtimes (\on{Graph}_{\Frob_\CY})_!(\ul\sfe_\CY)\right)^{\sigma_{2,3}}\to 
(\Delta_\CY)_*(\omega_\CY)\boxtimes (\Delta_\CY)_!(\ul\sfe_\CY).
\end{equation} 

We apply to this map the functor 
$$(p_{2,3})_!\circ (\Delta_\CY\times \on{id}\times \on{id})^* \circ \sigma_{2,3}:
\Shv(\CY\times \CY\times \CY\times \CY)\to \Shv(\CY\times \CY),$$
and we obtain a map
$$\on{C}^\cdot_c(\CY^\Frob,\ul\sfe_{\CY^\Frob}) \otimes (\on{Graph}_{\Frob_\CY})_!(\ul\sfe_\CY) \to (\Delta_\CY)_*(\omega_\CY).$$
We apply to the latter map the adjunction
$$\on{C}_c^\cdot(\CY,-) \circ \Delta_\CY^*:\Shv(\CY\times \CY) \rightleftarrows \Vect:(\Delta_\CY)_*(\omega_\CY),$$
and we obtain the desired pairing
$$\on{C}^\cdot_c(\CY^\Frob,\ul\sfe_{\CY^\Frob})\otimes \on{C}^\cdot_c(\CY^\Frob,\ul\sfe_{\CY^\Frob})\to \sfe.$$

\sssec{}

Let us apply Verdier duality to the above morphisms. The dual of \eqref{e:key serre map Frob 0} is a morphism
\begin{equation} \label{e:key serre map Frob 1}
(\Delta_\CY)_!(\ul\sfe_\CY)\boxtimes (\Delta_\CY)_*(\omega_\CY)\to 
\left((\on{Graph}_{\Frob_\CY})_*(\omega_\CY)\boxtimes (\on{Graph}_{\Frob_\CY})_*(\omega_\CY)\right)^{\sigma_{2,3}}.
\end{equation} 

From this morphism we obtain an element of
$$\on{C}^\cdot(\CY^\Frob,\omega_{\CY^\Frob})\otimes \on{C}^\cdot(\CY^\Frob,\omega_{\CY^\Frob})$$
by the following procedure. 

\medskip

We apply to \eqref{e:key serre map Frob 1} the functor 
$$(p_{2,3})_*\circ (\Delta_\CY\times \on{id}\times \on{id})^! \circ \sigma_{2,3}:
\Shv(\CY\times \CY\times \CY\times \CY)\to \Shv(\CY\times \CY),$$
and we obtain a map
$$(\Delta_\CY)_!(\ul\sfe_\CY) \to \on{C}^\cdot(\CY^\Frob,\omega_{\CY^\Frob})\otimes (\on{Graph}_{\Frob_\CY})_*(\omega_\CY).$$

Applying the adjunction
$$(\Delta_\CY)_!(\ul\sfe_\CY):\Vect\rightleftarrows \Shv(\CY\times \CY):\on{C}^\cdot(\CY,-) \circ \Delta_\CY^!,$$
we obtain the desired element of
\begin{equation} \label{e:elt in prod}
\on{C}^\cdot(\CY^\Frob,\omega_{\CY^\Frob})\otimes \on{C}^\cdot(\CY^\Frob,\omega_{\CY^\Frob})\simeq
\on{Funct}(\CY(\BF_q))\otimes \on{Funct}(\CY(\BF_q)).
\end{equation}

We wish to show that the resulting function is the characteristic function of the diagonal, i.e., its value on a
$(y_1,y_2)\in \CY(\BF_q)\times \CY(\BF_q)$ equals the cardinality of the set of isomorphisms between
the corresponding two points of the groupoid $\CY(\BF_q)$. 

\sssec{}

However, unwinding the definitions, we obtain that the map \eqref{e:key serre map Frob 1} identifies with the map
\eqref{e:Verdier Frob} for $\CF=(\Delta_\CY)_!(\ul\sfe_\CY)$ and $\alpha$ being the tautological map $\alpha_{\on{taut}}$ 
$$(\Delta_\CY)_!(\ul\sfe_\CY) \simeq (\Frob_{\CY\times \CY})_!\circ (\Delta_\CY)_!(\ul\sfe_\CY) \simeq 
(\Frob_{\CY\times \CY})_*\circ (\Delta_\CY)_!(\ul\sfe_\CY).$$

\medskip

From here, we obtain that the element in \eqref{e:elt in prod} constructed above equals
$$\on{funct}^{\on{true}}((\Delta_\CY)_!(\ul\sfe_\CY),\alpha_{\on{taut}}).$$

Applying \thmref{t:GR}, we obtain that the above element equals 
$$\on{funct}^{\on{naive}}((\Delta_\CY)_!(\ul\sfe_\CY),\alpha_{\on{taut}}).$$

\medskip

Now, the classical Grothendiek-Lefschetz trace formula about the compatibility of the assignment
$$(\CF,\alpha) \rightsquigarrow \on{funct}^{\on{naive}}(\CF,\alpha)$$
with the !-pushforward functor implies that the above function equals the direct image (=sum along the fibers) of the constant function
along the map
$$\CY(\BF_q)\to (\CY\times \CY)(\BF_q)\simeq  \CY(\BF_q)\times \CY(\BF_q),$$
as required. 

\qed[\thmref{t:true and Serre geom}]

\end{document}